\newtheorem{theorem}{Theorem}[section]
\newtheorem{lemma}[theorem]{Lemma}
\newtheorem{proposition}[theorem]{Proposition}
\newtheorem{conjecture}[theorem]{Conjecture}
\theoremstyle{definition}
\newtheorem{definition}[theorem]{Definition}
\newtheorem{example}[theorem]{Example}
\newtheorem{remark}[theorem]{Remark}
\numberwithin{equation}{section}
\newcommand{\CC}{\mathbb C}
\newcommand{\cD}{\mathcal D}
\newcommand{\cA}{\mathcal A}
\newcommand{\cH}{\mathcal H}
\newcommand{\cB}{\mathcal B}
\newcommand{\cE}{\mathcal E}
\newcommand{\PP}{\mathbb P}
\newcommand{\QQ}{\mathbb Q}
\newcommand{\RR}{\mathbb R}
\newcommand{\ZZ}{\mathbb Z}
\newcommand{\F}{\mathbb F}
\newcommand{\SL}{\mathop{\mathrm {SL}}\nolimits}
\newcommand{\SO}{\mathop{\mathrm {SO}}\nolimits}
\newcommand{\Orth}{\mathop{\null\mathrm {O}}\nolimits}
\newcommand{\U}{\mathop{\null\mathrm {U}}\nolimits}
\newcommand{\latt}[1]{{\langle{#1}\rangle}}
\newcommand{\ord}{\mathop{\mathrm {ord}}\nolimits}
\def\Grit{\operatorname{Grit}}
\def\div{\operatorname{div}}
\def\ker{\operatorname{ker}}
\def\det{\operatorname{det}}
\newcommand{\abs}[1]{\lvert#1\rvert}
\newenvironment{psmallmatrix}
  {\left(\begin{smallmatrix}}
  {\end{smallmatrix}\right)}
\begin{document}

\title{Free algebras of modular forms on ball quotients}

\author{Haowu Wang}

\address{Max-Planck-Institut f\"{u}r Mathematik, Vivatsgasse 7, 53111 Bonn, Germany}

\email{haowu.wangmath@gmail.com}
\author{Brandon Williams}

\address{Lehrstuhl A für Mathematik, RWTH Aachen, 52056 Aachen, Germany}

\email{brandon.williams@matha.rwth-aachen.de}

\subjclass[2020]{11F55, 11E39, 14G35}

\date{\today}

\keywords{Unitary modular forms, Ball quotients,  Hermitian forms, Complex reflection groups}

\begin{abstract} 
In this paper we study algebras of modular forms on unitary groups of signature $(n,1)$. We give a necessary and sufficient condition for an algebra of unitary modular forms to be free in terms of the modular Jacobian. As a corollary we obtain a criterion that guarantees in many cases that, if $L$ is an even lattice with complex multiplication and the ring of modular forms for its orthogonal group is a polynomial algebra, then the ring of modular forms for its unitary group is also a polynomial algebra. We prove that a number of rings of unitary modular forms are freely generated by applying these criteria to Hermitian lattices over the rings of integers of $\QQ(\sqrt{d})$ for $d=-1,-2,-3$. As a byproduct, our modular groups provide many explicit examples of finite-covolume reflection groups acting on complex hyperbolic space. 
\end{abstract}

\maketitle

\begin{small}
\tableofcontents
\end{small}

\addtocontents{toc}{\setcounter{tocdepth}{1}} 

\section{Introduction}

In this paper we adapt our previous work on free algebras of modular forms on orthogonal groups to modular forms on ball quotients. It is expected that among modular varieties in general, that is, quotients of Hermitian symmetric domains $\cD$ by arithmetic subgroups $\Gamma$, there are only finitely many that are not of general type. (Indeed this is known to hold for orthogonal modular varieties by  theorems of Gritsenko--Hulek--Sankaran \cite{GHS07} and Ma \cite{Ma18}). In particular, one expects that there are only finitely many free algebras of modular forms, and determining them is an interesting open problem. We know from \cite{VP89} that the algebra of modular forms on $\cD$ for $\Gamma$ is free only if the group $\Gamma$ is generated by reflections. By \cite{Got69b, Mes72}, reflections exist only in two infinite families of symmetric domains: namely, complex balls and type IV symmetric domains in Cartan’s classification. Inspired by Vinberg's insight \cite{Vin13}, in \cite{Wan20} the first named author proved a sufficient and necessary condition for the algebra of modular forms on type IV symmetric domains for orthogonal groups of signature $(n,2)$ to be free. This asserts that the algebra of modular forms is free if and only if the modular Jacobian of (potential) generators is a cusp form vanishing precisely along mirrors of reflections with multiplicity one. Using this condition, we obtained a complete classification of such free algebras under some mild conditions (see \cite{Wan20}) and constructed free algebras of orthogonal modular forms for a large class of lattices (see \cite{Wan20a, WW20a, WW20b, WW20c}), which also cover most of the (many) previously known examples in the literature.

Compared to the case of orthogonal groups, much less is known about the rings of modular forms on ball quotients. In the literature one finds only a few explicit examples, e.g.  \cite{Fre02, FS14, FS15, FS19, H86, RT78, TR81, Shi88}. These modular forms are of interest in algebraic geometry and the associated modular varieties have been investigated by several mathematicians.  Allcock, Carlson and Toledo \cite{ACT02} showed that the moduli space of marked cubic surfaces is biholomorphic to the quotient of a 4-dimensional complex ball by a unitary group of of signature $(4, 1)$ over the Eisenstein integers. The modular forms for related groups were studied in \cite{AF02, Fre02}.   Also, Looijenga \cite{LS07} and Allcock--Carlson--Toledo \cite{ACT11} independently described the moduli space of stable cubic threefolds as an open subset of the quotient of a 10-dimensional complex ball by a discrete group of automorphisms.

This paper is also motivated by the construction of arithmetic complex hyperbolic reflection groups. It is difficult to construct finite-covolume reflection groups acting on complex hyperbolic spaces (see \cite{DM86, All00, All00a}) and there is no known classification of these groups. As suggested by Vinberg (see \cite[\S 10]{Bel16}), whenever one can find a free algebra of unitary modular forms, the underlying modular group is automatically an arithmetic group generated by complex reflections. 

We now outline the main results of the paper. 
We define the Jacobian of modular forms on $\U(n,1)$ and prove some of its basic properties (see Theorem \ref{th:Jacobian}). We then establish a sufficient and necessary condition for the algebra of unitary modular forms to be free. More precisely, we prove the following. 
\begin{theorem}[see Theorem \ref{th:freeJacobian} and Theorem \ref{th:modularJacobian}]\label{mth1}
Let $L$ be an even integral Hermitian lattice of signature $(n,1)$, $n>1$ over an imaginary quadratic field $\F=\QQ(\sqrt{d})$. Let $\Gamma$ be a finite-index subgroup of the unitary group $\U(L)$ of $L$. The algebra of modular forms for $\Gamma$ is free if and only if the Jacobian of a set of (potential) generators is a cusp form whose zero divisor is the sum of all mirrors of reflections in $\Gamma$ with a prescribed multiplicity determined by the order of reflections. 
\end{theorem}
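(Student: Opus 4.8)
The plan is to establish the two directions of the biconditional separately, mirroring the strategy that worked in the orthogonal case \cite{Wan20} but adapting each step to the unitary setting via the Jacobian defined in Theorem \ref{th:Jacobian}. Throughout, write $\dim \cD = n$, so that a maximal set of algebraically independent modular forms $f_0, \ldots, f_n$ of weights $k_0, \ldots, k_n$ has cardinality $n+1$, and let $J = J(f_0, \ldots, f_n)$ denote their Jacobian. The first basic input I would record is the transformation and weight behavior of $J$ under $\Gamma$: by the properties proved in Theorem \ref{th:Jacobian}, $J$ is a modular form of weight $\sum_{i=0}^n k_i + n + 1$ (the shift by $n+1$ coming from the derivative of the automorphy factor in each of the $n$ affine coordinates plus the weight itself), transforming with a determinant character. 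This character is what forces $J$ to vanish along the mirror of every reflection in $\Gamma$, and its prescribed order along each mirror is dictated by the order of the corresponding reflection — this is the unitary analogue of the ``vanishing with multiplicity one along reflection mirrors'' phenomenon, where for a reflection of order $m$ the local form of the character forces vanishing to order $m-1$ (or the appropriate index), giving the ``prescribed multiplicity.''

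For the direction asserting that freeness implies the Jacobian condition, I would argue as follows. Suppose the algebra $M_*(\Gamma)$ is the polynomial ring $\CC[f_0, \ldots, f_n]$. Algebraic independence of the generators is equivalent to the non-vanishing of the Jacobian $J$ as a meromorphic (indeed holomorphic) function on $\cD$; thus $J \not\equiv 0$. I would then compute the divisor of $J$ locally. Away from the ramification of $\cD \to \cD/\Gamma$ the quotient map is \'etale and $J$ descends, so $J$ can only vanish along the branch divisor, which is precisely the union of mirrors of reflections. A local computation at a generic point of a mirror of a reflection of order $m$, using that the invariant ring of a cyclic reflection group in the normal coordinate is generated by a power, pins down the exact vanishing order and shows it equals the prescribed multiplicity. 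The cusp condition follows from a weight/Koecher-type count: the total weight $\sum k_i + n + 1$ together with the obligatory vanishing along all mirrors forces the boundary behavior of a cusp form.

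For the converse — the Jacobian condition implies freeness — I would proceed by the same dimension/valence comparison used in \cite{Wan20}. Assume $f_0, \ldots, f_n$ are modular forms whose Jacobian $J$ is a cusp form vanishing exactly along the reflection mirrors with the prescribed multiplicities and nowhere else. First, $J \not\equiv 0$ gives algebraic independence, so $\CC[f_0, \ldots, f_n] \subseteq M_*(\Gamma)$ is a free polynomial subalgebra of the correct transcendence degree. To upgrade this inclusion to an equality I would compare Hilbert series. The key inequality is an analogue of the relation between the weight of the Jacobian and the weights of generators for a finite complex reflection group (the unitary local model at each ramification point), which produces a lower bound on the ``expected'' generator weights; equality in this bound, guaranteed by the fact that $J$ vanishes to exactly the prescribed order and is a cusp form (so contributes no excess vanishing at the cusps), forces $M_*(\Gamma)$ to have Hilbert series equal to $\prod_{i=0}^n (1 - t^{k_i})^{-1}$, hence the inclusion is an equality.

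The hard part, and the place where the unitary setting genuinely differs from the orthogonal one, will be the local analysis at the mirrors and the bookkeeping of the ``prescribed multiplicity'' attached to a reflection of given order. In the orthogonal case reflections have order two and the Jacobian vanishes to order one along each mirror, so the multiplicity bookkeeping is trivial; here the complex reflections in $\U(L)$ can have higher order, and the correct vanishing order of $J$ along a mirror must be read off from the local structure of the cyclic (or more general complex) reflection group fixing that mirror via the Shephard--Todd--Chevalley theory. Getting this local exponent exactly right — and verifying that the global divisor identity for $J$, the weight formula $\sum k_i + n + 1$, and the cusp condition are all mutually consistent — is the technical crux, and it is precisely what is isolated in Theorem \ref{th:modularJacobian}.
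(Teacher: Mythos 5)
Your forward direction (freeness implies the Jacobian condition) follows the paper's line in outline: freeness identifies $(\cA_{\U}/\Gamma)^*$ with $\CC^{n+1}$, the map $z\mapsto(f_0(z),\dots,f_n(z))$ is locally biholomorphic at points with trivial stabilizer, and the local normal form $(x_1,\dots,x_{n+1})\mapsto(x_1^{m_r},x_2,\dots,x_{n+1})$ at a generic point of a mirror pins the vanishing order down to exactly $m_r-1$. Two caveats: you should first establish that $\Gamma$ is generated by reflections (the paper does this via Armstrong's and Gottschling's theorems applied to the smooth, simply connected cone $\cA_{\U}/\Gamma$); without it you only know the ramification divisor is contained in the union of mirrors, not that every mirror of a reflection in $\Gamma$ actually occurs. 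Also, your ``weight/Koecher-type count'' for cuspidality is not an argument; the paper reads cuspidality directly off the Siegel-domain determinant expression of $J_{\U}$ (Theorem \ref{th:Jacobian}(4)) together with the Fourier--Jacobi expansions, since the constant terms $a_0$ are constants and are killed by the derivative rows.

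The genuine gap is in the converse. A Hilbert-series comparison is not available here: there is no dimension formula for $M_k(\Gamma)$ for a general finite-index $\Gamma\le\U(L)$ against which to test $\prod_i(1-t^{k_i})^{-1}$, and the Shephard--Todd--Chevalley numerology for the finite local stabilizers does not globalize to a bound on the graded dimensions of the ring of modular forms. (This is also not how \cite{Wan20} argues in the orthogonal case.) The actual proof of Theorem \ref{th:modularJacobian} is the cofactor-expansion trick: if $f_{n+2}$ is a form of minimal weight outside $\CC[f_1,\dots,f_{n+1}]$, set $J_t=J_{\U}(f_1,\dots,\hat f_t,\dots,f_{n+2})$; by Theorem \ref{th:Jacobian}(3) each $J_t$ vanishes on every mirror to at least the prescribed order, while by hypothesis $J_{\U}$ vanishes to exactly that order and nowhere else, so $g_t=J_t/J_{\U}$ is holomorphic of weight $k_t-k_{n+2}+$ (lower), i.e.\ strictly smaller weight than $f_{n+2}$. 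Expanding a determinant with a repeated first row gives $0=\sum_t(-1)^tk_tf_tJ_t$, hence $k_{n+2}f_{n+2}=\pm\sum_{t\le n+1}(-1)^tk_tf_tg_t$, and minimality of the weight of $f_{n+2}$ forces each $g_t$, and then $f_{n+2}$ itself, into $\CC[f_1,\dots,f_{n+1}]$ --- a contradiction. This identity, and the observation that the prescribed-multiplicity hypothesis is exactly what makes $J_t/J_{\U}$ holomorphic, is the key idea your proposal is missing.
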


In 1998 Borcherds \cite{Bor98} found a remarkable lift that constructs orthogonal modular forms whose zeros lie only on rational quadratic divisors. There is a natural embedding of $\U(n,1)$ into $\Orth(2n,2)$, and by pulling back Borcherds products along this embedding one can construct unitary modular forms vanishing only on mirrors of reflections. This argument was first used in \cite{All00a, AF02} and a full theory was developed in \cite{Hof14}. This provides a direct way to construct the Jacobian of the (potential) generators. 

The Jacobian criterion for free algebras of modular forms on unitary groups is similar to that for orthogonal groups. By relating the two notions of the modular Jacobian (see Proposition \ref{prop:nonzeroJ}) we find the following criterion. We continue to assume that $L$ is a Hermitian lattice of signature $(n,1)$, $n > 1$.

\begin{theorem}[see Theorem \ref{th:twins}]\label{mth2}
Let $d=-1$ or $-3$. Suppose the algebra of modular forms for the orthogonal group of the associated $\ZZ$-lattice of $L$ is free and that for $n$ of its generators the restrictions to the unitary group are identically zero. Then the algebra of modular forms for the unitary group of $L$ is freely generated by the restrictions of the remaining $n+1$ generators. This also holds when we replace the full orthogonal (unitary) group with the discriminant kernel of $L$. 
\end{theorem}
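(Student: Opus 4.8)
The plan is to reduce Theorem~\ref{th:twins} to the unitary Jacobian criterion of Theorem~\ref{mth1} by pulling back the orthogonal Jacobian criterion of \cite{Wan20} along the embedding $\U(n,1)\hookrightarrow\Orth(2n,2)$ induced by regarding the Hermitian $\mathcal O_\F$-lattice $L$ as its associated even $\ZZ$-lattice $L_\ZZ$ of signature $(2n,2)$. First I would fix a set $f_0,\dots,f_{2n}$ of homogeneous free generators of the algebra of orthogonal modular forms for $L_\ZZ$; since the orthogonal domain has dimension $2n$, a free algebra has exactly $2n+1$ generators. After relabelling, write $g_i:=f_i|_{\cB}$ for the restrictions to the ball $\cB$ that are not identically zero, and assume $f_{n+1},\dots,f_{2n}$ restrict to $0$ on $\cB$. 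The arithmetic $n+(n+1)=2n+1$ is exactly the count needed for $g_0,\dots,g_n$ to be a candidate set of free generators of the unitary algebra, whose ball has dimension $n$.

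The heart of the argument is the comparison of Jacobians. By the $(2n,2)$ version of the criterion cited in the introduction, freeness of the orthogonal algebra is equivalent to the statement that the orthogonal Jacobian $J_\Orth=J(f_0,\dots,f_{2n})$ is a cusp form whose divisor is precisely the sum of all reflective mirrors of $L_\ZZ$, each with multiplicity one. I would then restrict $J_\Orth$ to $\cB$ and invoke Proposition~\ref{prop:nonzeroJ} to relate this restriction to the unitary Jacobian $J_\U(g_0,\dots,g_n)$: choosing coordinates on the orthogonal domain adapted to the splitting of its tangent space along $\cB$ into ball directions and normal directions, the $(2n+1)\times(2n+1)$ determinant defining $J_\Orth$ becomes block triangular on $\cB$, with one block the $(n+1)\times(n+1)$ matrix computing $J_\U(g_0,\dots,g_n)$ and the complementary block the determinant of the normal derivatives of the vanishing forms $f_{n+1},\dots,f_{2n}$. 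In particular $J_\U(g_0,\dots,g_n)$ does not vanish identically, which is exactly the nonvanishing established in Proposition~\ref{prop:nonzeroJ} (the ball has codimension $n>1$ and is not swept out by the reflective locus, so $J_\Orth|_{\cB}\not\equiv 0$, and the factorization forces $J_\U\not\equiv 0$); hence $g_0,\dots,g_n$ are algebraically independent and form a genuine candidate set of generators.

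It remains to compute the divisor of $J_\U(g_0,\dots,g_n)$ and match it with the multiplicity prescribed in Theorem~\ref{mth1}. Here the hypothesis $d=-1,-3$ enters decisively: multiplication by a primitive root of unity of $\mathcal O_\F$ induces an automorphism $\sigma$ of the orthogonal domain whose fixed locus is exactly $\cB$, and the complex reflections of $L$ are the order-$k$ transformations ($k\in\{2,3,4,6\}$) whose mirrors cut out the unitary mirrors on $\cB$; since a complex reflection of order $k$ contributes vanishing order $k-1$ to a Jacobian of basic invariants by Shephard--Todd--Chevalley theory, the criterion demands that $J_\U$ vanish to order $k-1$ along each such mirror. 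I would obtain this by tracking how the reflective mirrors of $L_\ZZ$, along which $J_\Orth$ vanishes to order one, meet $\cB$ under $\sigma$: the mirrors whose intersection with $\cB$ is the mirror of an order-$k$ reflection accumulate the required multiplicity $k-1$, while the divisorial contribution of the vanishing forms $f_{n+1},\dots,f_{2n}$ is precisely what is absorbed into the normal-derivative block and therefore does not appear in $J_\U$. Once this accounting is complete, $J_\U(g_0,\dots,g_n)$ is a cusp form vanishing to the prescribed order along every unitary mirror and nowhere else, so Theorem~\ref{mth1} shows that $g_0,\dots,g_n$ freely generate the unitary algebra; the same argument applies with $\U(L)$ and $\Orth(L_\ZZ)$ replaced throughout by their discriminant kernels, since only the reflection structure is used. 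The step I expect to be the main obstacle is exactly this divisor bookkeeping — reconciling the order-one orthogonal multiplicities with the order-$(k-1)$ unitary multiplicities and verifying that the normal-derivative block introduces no extraneous zeros — which is what restricts the clean twinning to $d=-1$ and $d=-3$.
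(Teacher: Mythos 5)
Your overall strategy coincides with the paper's: restrict the generators, use Proposition \ref{prop:nonzeroJ} and the block-triangular form of $J_{\Orth}$ on $\cD_{\U}$ to get $J_{\U}(g_0,\dots,g_n)\not\equiv 0$, and then feed the divisor of $J_{\U}$ into the unitary Jacobian criterion (Theorem \ref{th:modularJacobian}). The setup, the count $2n+1=(n+1)+n$, and the nonvanishing step are all correct and are exactly what the paper does.

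The genuine gap is the multiplicity bookkeeping, which you yourself flag as ``the main obstacle'' and then only assert rather than prove. Saying that the orthogonal mirrors meeting $\cB$ in the mirror of an order-$k$ unitary reflection ``accumulate the required multiplicity $k-1$'' is not automatic, and the mechanism you would need is absent from the proposal. Concretely: a single unitary mirror $r^{\perp}\cap\cD_{\U}$ is the restriction of \emph{several} distinct orthogonal hyperplanes (the unit multiples $(\alpha r)^{\perp}$, $\alpha\in\mathcal{O}_{\F}^{\times}$, and possibly also $((1+i)r)^{\perp}$ or $((1+\omega)r)^{\perp}$ and its unit multiples), and one must determine exactly which of the associated orthogonal reflections lie in $\Gamma$ in order to compute $\mathrm{ord}(J_{\Orth}|_{\cD_{\U}},r^{\perp})$; this is the content of Lemmas \ref{lem:kernelreflection}, \ref{lem:Gaussian} and \ref{lem:Eisenstein}, which your argument never invokes. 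The paper then pins down $\mathrm{ord}(J_{\U},r^{\perp})$ not by direct ``accumulation'' but by combining two one-sided bounds: the congruence $\mathrm{ord}(J_{\U},r^{\perp})\equiv -1 \pmod{\mathrm{ord}(\alpha)}$ from Theorem \ref{th:Jacobian}(3) (forced by the character $\det^{-1}$), and the upper bound $\mathrm{ord}(J_{\U},r^{\perp})\le \mathrm{ord}(J_{\Orth}|_{\cD_{\U}},r^{\perp})$ coming from holomorphy of the complementary block. Neither bound appears in your proposal. Without them you cannot rule out, for example, that $J_{\U}$ vanishes to order $2k-1$ instead of $k-1$ along some mirror; and in the hardest case ($\Gamma=\Orth^{+}(L_{\ZZ})$, $d=-3$, a mirror carrying a triflection but no hexaflection) the paper must additionally prove that $J_{\Orth}|_{\cD_{\U}}$ vanishes to order exactly $3$ and not more, via a primitivity argument showing that any non-unit $a$ with $\sigma_{ar}\in\Orth^{+}(L_{\ZZ})$ would force $a=1+\omega$ up to units and hence $\sigma_{r,\omega}\in\U(L)$, a contradiction. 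This arithmetic case analysis is the substance of the theorem and is what your proposal defers; as written, the proof is not complete.
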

By applying this theorem to some specific Hermitian lattices, we construct 23 free algebras of unitary modular forms in a universal way. We present one example here. There is a unique Hermitian lattice $\mathrm{II}_{5,1}^\cE$ of signature $(5,1)$ over the Eisenstein integers whose underlying $\ZZ$-lattice is the even unimodular lattice $\operatorname{II}_{10,2}$. Hashimoto and Ueda \cite{HU14} proved that the algebra of modular forms for the orthogonal group of $\operatorname{II}_{10,2}$ is freely generated by 11 forms of weights 4, 10, 12, 16, 18, 22, 24, 28, 30, 36, 42. Since the weight of a nonzero modular form on $\U(\mathrm{II}_{5,1}^\cE)$ must be a multiple of $6$, the orthogonal generators of weight 4, 10, 16, 22, 28 vanish identically on the unitary symmetric domain. The above criterion implies that the algebra of modular forms for $\U(\mathrm{II}_{5,1}^\cE)$ is freely generated by 6 forms of weight 12, 18, 24, 30, 36, 42. As a corollary, we see that the group $\U(\mathrm{II}_{5,1}^\cE)$ is generated by reflections, which gives a new proof of a theorem of Allcock \cite{All00a}.

Even when the conditions of Theorem \ref{mth2} do not apply, the sufficient criterion of Theorem \ref{mth1} makes it possible to compute free algebras of unitary modular forms. In this way we construct more than 20 additional free algebras by giving their generators explicitly in terms of Borcherds products and additive theta lifts. 

In total, we construct more than 40 free algebras of unitary modular forms on Hermitian lattices over the Eisenstein integers and Gaussian integers. Most of them seem to be new. For these free algebras, the modular groups are generated by reflections and have finite index in the unitary group of the lattice, so they provide explicit examples of finite-covolume reflection groups acting on complex hyperbolic space.

Free algebras of unitary modular forms on Hermitian lattices over $\QQ(\sqrt{d})$ seem to be very exceptional when $d\neq -1$ and $-3$. We will see that no arithmetic subgroups of a discriminant kernel in such a Hermitian lattice contain complex reflections, so these never lead to free algebras of modular forms. Surprisingly, the ring of modular forms for the full unitary group of the root lattice $U\oplus  U(2) \oplus D_4$ viewed as a Hermitian lattice over $\mathbb{Q}(\sqrt{-2})$ is freely generated in weights $2, 8, 10, 16$. Our proof of this is essentially computational and is described briefly in section \ref{sec:sqrt2}. Despite some searching we know of no other examples.

This paper is structured as follows. In \S \ref{sec:preliminaries} we review the basic theory of modular forms on ball quotients. In \S \ref{sec:Jacobian} we prove Theorem \ref{mth1}. \S \ref{sec:twins} is devoted to the proof of Theorem \ref{mth2}. We construct free algebras of unitary modular forms not covered by Theorem \ref{mth2} case by case in the long section \S \ref{sec:more}. In \S \ref{sec:sqrt2} we describe the free algebra associated to $\U(3, 1)$ over $\mathbb{Q}(\sqrt{-2})$. In \S \ref{sec:nonfree} we present several interesting non-free algebras of unitary modular forms whose structure follows from the factorization of a Jacobian. Finally, we formulate some open questions and conjectures.

\section{Modular forms on ball quotients}\label{sec:preliminaries}
We first review Hermitian lattices and modular forms on complex ball quotients.

\subsection{Hermitian lattices}

Let $\F=\QQ(\sqrt{d})$ be an imaginary quadratic field with ring of integers $\mathcal{O}_\F$,  where $d$ is a square-free negative integer.  We denote by $D_\F$ the discriminant of $\F$, which is $d$ if $d\equiv 1\,(\bmod\,4)$ and $4d$ otherwise.  The maximal order in $\F$ is $$\mathcal{O}_\F=\ZZ+\ZZ\cdot \zeta, \; \text{where} \; \zeta=(D_{\F}+\sqrt{D_\F})/2.$$ The inverse different $\cD_\F^{-1}$ is the dual lattice of $\mathcal{O}_\F$ with respect to its trace form, and it is a principal fractional ideal: $$\cD_\F^{-1} = \{x \in \F: \; \mathrm{Tr}_{\F / \QQ}(xy) \in \mathbb{Z} \; \text{for all} \; y \in \mathcal{O}_\F\} = \frac{1}{\sqrt{D_{\F}}} \mathcal{O}_\F.$$

In this paper we will understand a \emph{Hermitian lattice} $L$ to be a free $\mathcal{O}_{\F}$-module together with a nondegenerate sesquilinear form $$\langle -, - \rangle : L \otimes L \longrightarrow \F.$$ (By convention, $\langle -, - \rangle$ is linear in the first component and conjugate-linear in the second.) To a Hermitian lattice $L$ we associate a $\mathbb{Z}$-lattice called the \emph{trace form} $L_{\ZZ}$, whose underlying abelian group is $L$ and whose bilinear form is $$(-, -) := \mathrm{Tr}_{\F/\QQ} \langle -, - \rangle : L \otimes L \longrightarrow \mathbb{Q}.$$

The Hermitian lattice $L$ is called integral resp. even if $L_{\ZZ}$ is (with respect to its induced bilinear form). Equivalently, $L$ is integral if $\langle -, - \rangle$ takes its values in $\cD_\F^{-1}$ and it is even if $\langle x,x \rangle \in \mathbb{Z}$ for all $x \in L$. The dual lattice $L' = (L_{\mathbb{Z}})'$ can be characterized with respect to the Hermitian form as $$L' = \{x \in L \otimes_{\mathcal{O}_{\F}} \F: \; \langle x, y \rangle \in \cD_\F^{-1} \;\text{for all} \; y \in L\}.$$

Suppose $L$ is even integral. The quotient $L'/L$ is a finite $\mathcal{O}_{\F}$-module and the Hermitian form $h(x) := \langle x, x \rangle$ induces a well-defined Hermitian form $$h : L'/L \longrightarrow \QQ / \ZZ.$$ The pair $(L'/L, h)$ is the \emph{discriminant form} attached to $L$.

We warn that our definition of integral Hermitian lattice is different from that of \cite{All00, All00a}. 

\subsection{Modular forms on \texorpdfstring{$\U(n, 1)$}{}} \label{sec:Taylor}
We always assume that $n\geq 2$. Let $L$ be a Hermitian lattice of signature $(n, 1)$ and define $V := L \otimes_{\mathcal{O}_{\F}} \CC$. The Hermitian symmetric domain attached to the unitary group $\U(V)\cong \U(n,1)$  is the Grassmannian of negative-definite lines: $$\cD_{\U} = \{[z] \in \PP(V): \; \langle z, z \rangle < 0\},$$ on which $\U(V)$ acts by multiplication. For any line $\ell \in \cD_{\U}$, let $\pi_{\ell}$ and $\pi_{\ell^{\perp}}$ denote the unitary projections to $\ell$ and $\ell^{\perp}$, respectively. The definition $$\phi_{\ell}([z]) := \pi_{\ell^{\perp}} \circ \Big( \pi_{\ell} \Big|_{[z]} \Big)^{-1} : \ell \rightarrow [z] \rightarrow \ell^{\perp}$$ determines a biholomorphic map $$\phi_{\ell} : \cD_{\U} \longrightarrow \cB = \{z \in \mathrm{Hom}(\ell, \ell^{\perp}): \; \|z\| < 1\}$$ to the unit ball in $\mathrm{Hom}(\ell, \ell^{\perp}) \cong \mathbb{C}^n$ under which $\ell$ itself is mapped to the origin. Through this identification the unitary group acts on $\cB$ as the group of M\"obius transformations. In this way the quotients $\cD_{\U} / \Gamma$ by arithmetic subgroups $\Gamma \le \U(V)$ can be understood as \emph{ball quotients}. 

Let $\cA_{\U}$ be the principal $\CC^*$-bundle $$\cA_{\U} = \{z \in V: \; [z] \in \cD_{\U}\}.$$

The arithmetic subgroups we will consider are finite-index subgroups $$\Gamma \le \U(L) := \{\gamma \in \U(V):\; \gamma L = L\}.$$ Besides $\U(L)$ itself, the most important example of such a subgroup is the \emph{discriminant kernel} $$\widetilde{\U}(L) = \{\gamma \in \U(L): \; \gamma x - x \in L \; \text{for all} \; x \in L'\}.$$

\begin{definition}
Let $\Gamma$ be a finite-index subgroup of $\U(L)$ and $k$ be a non-negative integer.  A modular form with respect to $\Gamma$ of weight $k$ and character $\chi: \Gamma \to \CC^*$ is a holomorphic function $F: \cA_{\U} \to \CC$ satisfying
\begin{align*}
F(tz)&=t^{-k}F(z),  \quad \text{for all $t\in \CC^*$},\\
F(\gamma z)&=\chi(\gamma) F(z),  \quad \text{for all $\gamma\in \Gamma$.}
\end{align*}
\end{definition}

By Baily--Borel \cite{BB66}, $\cD_{\U} / \Gamma$ is a quasi-projective variety of dimension $n$ over $\CC$.   This variety is compactified by including zero-dimensional cusps,  i.e.  $\Gamma$-orbits of isotropic vectors in $L\otimes \QQ$.  This yields the Satake--Baily--Borel compactification, denoted $(\cD_{\U}/ \Gamma)^*$, which can also be described as $\operatorname{Proj}(M_*(\Gamma))$ where $M_*(\Gamma)$ is the graded algebra of modular forms with respect to $\Gamma$ of integral weight and trivial character.

We will often be concerned with the question of whether a particular modular form $F$ is nonzero. To this end it is useful to consider various expansion of $F$ about cusps. (Note that every Hermitian lattice $L$ of signature $(n, 1)$ with $n \ge 2$ has cusps by the local-global principle.) Fix a nonzero isotropic vector $\ell \in L \otimes \QQ$ and a vector $\ell' \in L \otimes \QQ$ with $\latt{\ell, \ell'} = 1$, and write $z \in \cA_{\U}$ in the form $$z = \tau \ell - i \ell' + \mathfrak{z} \in L \otimes \CC, \; \text{where} \; \mathfrak{z} \perp \ell, \; \mathfrak{z} \perp \ell'.$$ The condition $\latt{z,z} < 0$ is equivalent to constraining $\tau$ to an upper half-plane of the form $\mathrm{im}(\tau) > C(\mathfrak{z})$ for every fixed $\mathfrak{z} \in \CC^{n-1}$. The space $$\cH_{\U} = \{(\tau, \mathfrak{z}) \in \CC \times \CC^{n-1}: \; \mathrm{im}(\tau) > C(\mathfrak{z})\}$$ is the \emph{Siegel domain} model of $\cD_{\U}$ with respect to $\ell$ and $\ell'$. 

For any modular form $F$ on $\cH_{\U}$, write $$F_{\ell, \ell'}(\tau, \mathfrak{z}) := F(\tau \ell - i \ell' + \mathfrak{z}).$$ (We will usually omit the index $\ell, \ell'$ from the notation.) The modularity of $F$ becomes a functional equation of the form $$F(\gamma \cdot (\tau, \mathfrak{z})) = j(\gamma; \tau, \mathfrak{z})^k F(\tau, \mathfrak{z}),$$ where $j$ is the automorphy factor determined by the induced action of $\U(V)$ on $\cH_{\U}$. Using this automorphy factor, we can define modular forms of rational weight with a multiplier system.

The function $F$ is invariant under $\tau \mapsto \tau + N$ for some period $N$, so it admits the \emph{Fourier--Jacobi expansion}
$$F(\tau, \mathfrak{z}) = \sum_{n \in \ZZ} a_n(\mathfrak{z}) e^{2\pi i n \tau / N}.$$
(Note that $a_n(\mathfrak{z})$ are not Jacobi forms in the usual sense.) Holomorphy at the cusp $[\ell]$ is equivalent to the vanishing of $a_n(\mathfrak{z})$ for $n < 0$; and the function $a_0(\mathfrak{z})$ is the value of $F$ at the cusp and is therefore a constant. The function $F$ is called a \emph{cusp form} if it vanishes at all cusps.

The Fourier--Jacobi expansion can be (and has been, cf. \cite{CG13}) used effectively for computations. In this paper we will instead represent $F$ by its Taylor series about $\mathfrak{z} = 0$. There are a few advantages; for one, the approach applies to all lattices equally, while a computational treatment of the Fourier--Jacobi coefficients seems to require a significant setup for each lattice; and also, for most forms of interest (restrictions to the unitary group of Eisenstein series and Gritsenko lifts, cf. \S\ref{sec:embedding}) the coefficients can be computed quite efficiently. Suppose first that $\ell'$ can be chosen to lie in $L$; that is, $L$ splits a unimodular plane over $\mathcal{O}_{\F}$. Then $\U(L)$ contains maps of the form $\gamma : (\tau, \mathfrak{z}) \mapsto ((a\tau + b)(c\tau + d)^{-1}, \mathfrak{z} (c \tau + d)^{-1})$ with $\begin{psmallmatrix} a & b \\ c & d \end{psmallmatrix} \in \mathrm{SL}_2(\mathbb{Z})$, with factor of modularity $j(\gamma; \tau, \mathfrak{z}) = c \tau + d$. In other words, $$F \Big( \frac{a \tau + b}{c \tau + d}, \frac{\mathfrak{z}}{c \tau + d} \Big) = (c \tau + d)^k F(\tau, \mathfrak{z}).$$ This means that $F$ has a Taylor expansion $$F(\tau, \mathfrak{z}) = \sum_{\alpha} f_{\alpha}(\tau) \mathfrak{z}^{\alpha}, \; \; \|\mathfrak{z}\| < c(\tau),$$ converging in a neighborhood of each $\tau \in \mathbb{H}$, in which the coefficient $f_{\alpha}$ is a modular form for $\mathrm{SL}_2(\mathbb{Z})$ of weight $k + |\alpha|$. (Here $\mathfrak{z}^{\alpha} = z_1^{\alpha_1}...z_n^{\alpha_n}$ and $|\alpha|= \alpha_1 + ... + \alpha_n$ for $\mathfrak{z} = (z_1,...,z_n)$ and $\alpha = (\alpha_1,...,\alpha_n)$.) In the case of general $\ell'$, the Taylor expansion can be defined similarly but the coefficients will be modular only under a congruence subgroup.

\subsection{Reflections}\label{sec:reflections}
 Let $L$ be an even integral Hermitian lattice over $\mathcal{O}_\F$. Reflections are automorphisms of finite order whose fixed point set has codimension one.  For any primitive vector $r\in L$ with $\latt{r,r}> 0$ and $\alpha \in \mathcal{O}_\F^\times \setminus \{1\}$, the reflection associated to $r$ and  $\alpha$ is
$$
\sigma_{r, \alpha}: x \mapsto x -(1-\alpha) \frac{ \latt{x, r}}{\latt{r,r}} r .
$$
The hyperplane orthogonal to $r$ is called the mirror of $\sigma_{r, \alpha}$, denoted $r^\perp$. 
Note that the composition of two reflections along the same mirror is $$\sigma_{r, \alpha} \sigma_{r, \beta} = \ell \mapsto \ell - \Big((1 - \alpha) + (1 - \beta) - (1 - \alpha)(1 - \beta) \Big) \frac{\latt{\ell, r}}{\latt{r, r}} r = \sigma_{r, \alpha \beta},$$ or the identity when $\alpha \beta = 1$. In particular, $$\mathrm{ord}(\sigma_{r, \alpha}) = \mathrm{ord}(\alpha) \in \{2, 3, 4, 6\},$$ where $\mathrm{ord}(\alpha)$ is the order of $\alpha$ in $\mathcal{O}_\F^{\times}$. Reflections of order two, three, four, six are also called \emph{biflections}, \emph{triflections}, \emph{tetraflections} and \emph{hexaflections}, respectively.

We will first describe the reflections in the discriminant kernel.
\begin{lemma}\label{lem:kernelreflection}
Let $r$ be a primitive vector of positive norm in $L$ and $\alpha \in \mathcal{O}_\F^\times \setminus \{1\}$. 
\begin{enumerate}
\item When $d=-1$,  $\sigma_{r, \alpha} \in \widetilde{\U}(L)$ if and only if $\latt{r,r}=1$ and $\alpha=-1$;
\item When $d=-3$,  $\sigma_{r, \alpha} \in \widetilde{\U}(L)$ if and only if $\latt{r,r}=1$ and $\alpha \in \{e^{2\pi i / 3}, e^{4\pi i / 3}\}$;
\item When $d\neq -1, -3$,  there are no reflections in $\widetilde{\U}(L)$. 
\end{enumerate}
\end{lemma}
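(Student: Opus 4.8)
The plan is to unwind the definition of the discriminant kernel into a single divisibility condition in $\mathcal{O}_\F$ and then to settle the three cases by comparing ideal norms.

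First I would rewrite the membership condition. For $x\in L'$ one has
\[
\sigma_{r,\alpha}(x)-x=-(1-\alpha)\frac{\latt{x,r}}{\latt{r,r}}\,r,
\]
so $\sigma_{r,\alpha}\in\widetilde{\U}(L)$ means exactly that $(1-\alpha)\latt{x,r}\latt{r,r}^{-1}\,r\in L$ for every $x\in L'$. Because $r$ is primitive we have $L\cap\F r=\mathcal{O}_\F r$, so this is equivalent to
\[
(1-\alpha)\frac{\latt{x,r}}{\latt{r,r}}\in\mathcal{O}_\F\qquad\text{for all }x\in L'.
\]
(Applying this with $x\in L$ already gives $\sigma_{r,\alpha}(L)\subseteq L$, and a reflection of finite order preserving $L$ lies in $\U(L)$; so checking the displayed condition is enough and no separate verification of $\sigma_{r,\alpha}\in\U(L)$ is required.)

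The key step is to compute the set $\{\latt{x,r}:x\in L'\}$, which I claim equals the inverse different $\cD_\F^{-1}$ for every primitive $r$. I would deduce this from the standard fact that the orthogonal projection of a dual lattice onto the span of a saturated sublattice is the dual of that sublattice. Here the sublattice is the rank-one module $\mathcal{O}_\F r$, whose dual inside $\F r$ is $\latt{r,r}^{-1}\cD_\F^{-1}\,r$, and the projection sends $x\mapsto\latt{x,r}\latt{r,r}^{-1}\,r$; comparing the two gives $\{\latt{x,r}:x\in L'\}=\cD_\F^{-1}$. The surjectivity half of the projection fact needs care in the sesquilinear setting: given a target value $c\in\cD_\F^{-1}$ one defines the $\mathcal{O}_\F$-linear map $\mathcal{O}_\F r\to\cD_\F^{-1}$, $r\mapsto\bar c$, and extends it to $L$. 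This extension exists because $\mathcal{O}_\F$ is a Dedekind domain and $\mathcal{O}_\F r$ is saturated, so $L/\mathcal{O}_\F r$ is torsion-free, hence projective, and the inclusion splits; nondegeneracy of the form then realizes the extended functional as $\latt{\,\cdot\,,x}$ for some $x\in L'$, and this $x$ satisfies $\latt{x,r}=c$. With this identity the membership condition becomes the purely arithmetic statement
\[
(1-\alpha)\,\cD_\F^{-1}\subseteq\latt{r,r}\,\mathcal{O}_\F,\qquad\text{i.e.}\qquad \frac{1-\alpha}{\latt{r,r}\,\sqrt{D_\F}}\in\mathcal{O}_\F .
\]

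Finally I would run the case analysis. Writing $N:=\latt{r,r}\in\ZZ_{>0}$ and taking norms, the last condition forces $N^2\lvert D_\F\rvert$ to divide $\mathrm{N}_{\F/\QQ}(1-\alpha)=\lvert 1-\alpha\rvert^2$, which equals $4$ for $\alpha=-1$, $2$ for $\alpha=\pm i$, $3$ for $\alpha=e^{\pm 2\pi i/3}$, and $1$ for the primitive sixth roots $\alpha=-e^{\pm 2\pi i/3}$. For $d\neq-1,-3$ we have $\mathcal{O}_\F^\times=\{\pm1\}$ and $\lvert D_\F\rvert\geq 7$, so $N^2\lvert D_\F\rvert\geq 7>4$ rules out the only candidate $\alpha=-1$; this proves (3). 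For $d=-1$ ($\lvert D_\F\rvert=4$) the divisibility eliminates $\alpha=\pm i$ and forces $N=1,\ \alpha=-1$; for $d=-3$ ($\lvert D_\F\rvert=3$) it eliminates $\alpha=-1$ (as $3\nmid4$) and the sixth roots (norm $1$), leaving $N=1$ and $\alpha=e^{\pm2\pi i/3}$. In each surviving case I would check sufficiency directly: for $d=-1$, $\alpha=-1$, $N=1$ the quantity $\tfrac{2}{\sqrt{-4}}=-i$ lies in $\mathcal{O}_\F$; for $d=-3$, $N=1$ one uses that $(1-e^{2\pi i/3})$ generates the ramified prime over $3$, namely $(\sqrt{D_\F})$, so $\tfrac{1-e^{2\pi i/3}}{\sqrt{-3}}$ is a unit. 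This yields (1) and (2). The main obstacle is the key step—establishing $\{\latt{x,r}:x\in L'\}=\cD_\F^{-1}$ with the correct handling of conjugate-linearity and the Dedekind-ring splitting—after which only the short norm computation remains.
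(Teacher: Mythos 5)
Your overall strategy --- rewrite membership in $\widetilde{\U}(L)$ as the single condition $(1-\alpha)\latt{L',r}\latt{r,r}^{-1}\subseteq\mathcal{O}_\F$, identify the fractional ideal $\latt{L',r}$, and finish by comparing norms --- is the same as the paper's, and your norm computations and the sufficiency checks for $d=-1,-3$ are correct. The one step where you diverge from the paper is also where the gap lies: you claim $\latt{L',r}=\cD_\F^{-1}$ for \emph{every} primitive $r$, and your proof of the surjectivity half rests on splitting the inclusion $\mathcal{O}_\F r\hookrightarrow L$, which requires $L/\mathcal{O}_\F r$ to be torsion-free, i.e.\ $L\cap\F r=\mathcal{O}_\F r$. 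You assert this saturation, but it does not follow from the usual meaning of primitive (not a non-unit scalar multiple of a lattice vector) once $\mathcal{O}_\F$ fails to be a PID. For instance in $\F=\QQ(\sqrt{-5})$ the vector $r=(2,\,1+\sqrt{-5},\,0)\in\mathcal{O}_\F^3$ is divisible by no non-unit scalar (its coordinate ideal is the non-principal prime above $2$), yet $\tfrac{1+\sqrt{-5}}{2}\,r$ again lies in $\mathcal{O}_\F^3$, so $\mathcal{O}_\F r$ is not saturated and your splitting argument --- hence the identity --- is unavailable. Part (3) is exactly the case where the class number can exceed one, so your necessity argument there is incomplete: if $\latt{L',r}=I\cD_\F^{-1}$ for a proper integral ideal $I$, membership only yields $\latt{r,r}^2\abs{D_\F}\mid N_{\F/\QQ}(1-\alpha)\,N(I)$, which is not absurd for $\abs{D_\F}\ge 7$.

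The paper avoids this by proving only the weaker (and sufficient) statement that the integral ideal $\sqrt{D_\F}\latt{L',r}$ is not divisible by any proper principal ideal: if it were contained in $(\beta)$ for a non-unit $\beta\in\mathcal{O}_\F$, then $\latt{L',r/\bar\beta}\subseteq\cD_\F^{-1}$, hence $r/\bar\beta\in L''=L$ by double duality, contradicting primitivity of $r$. Since membership forces $\sqrt{D_\F}\latt{L',r}$ to be contained in the principal fractional ideal generated by $\sqrt{D_\F}\latt{r,r}/(1-\alpha)$, that generator must be a unit, and the norm comparison you carry out then settles all three cases. So your proof is correct as written for $d=-1,-3$ (where $\mathcal{O}_\F$ is a PID and primitivity does imply saturation); for part (3) it should be repaired either by replacing the exact identity $\latt{L',r}=\cD_\F^{-1}$ with the paper's ideal-theoretic lemma, or by explicitly taking ``primitive'' to mean that $\mathcal{O}_\F r$ is saturated in $L$.
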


\begin{proof}
$\sigma_{r, \alpha} \in \widetilde{\U}(L)$ if and only if
$$
(1-\alpha)\frac{ \latt{\ell, r}}{\latt{r,r}} \in \mathcal{O}_\F   \; \text{for all $\ell\in L'$}. 
$$
We define 
$$
\latt{L',r}=\{ \latt{\ell, r}: \ell \in L' \}.
$$
Then $\sqrt{D_{\F}} \latt{L',  r} \subset \mathcal{O}_\F$ is an ideal which is not divisible by any proper principal ideal because $r$ is primitive.  However if $\sigma_{r, \alpha} \in \widetilde{\U}(L)$ then $\sqrt{D_{\F}} \latt{L',  r}$ is divisible by $\sqrt{D_{\F}} \latt{r,r} / (1-\alpha)$, so the latter must be a unit.  By comparing the possible norms of $\sqrt{D_{\F}}\latt{r,r}$ and $(1-\alpha)$,  we obtain (3) and the necessity of the conditions in (1) and (2).  That the conditions in (1) and (2) are sufficient is easy to verify directly.
\end{proof}

The above lemma implies that, when $d\neq -1$ and $-3$, the unitary group of any Hermitian lattice over $\mathcal{O}_\F$ whose associated $\ZZ$-lattice is an even unimodular lattice $\mathrm{II}_{8m+2,2}$  contains no reflections. 

Now we will describe the reflections in the full unitary group of a Hermitian lattice. To do this we have to discuss the relationship between the unitary reflections $\sigma_{r,\alpha}$ and their associated orthogonal reflection $$
\sigma_{r}: \ell \mapsto \ell - \frac{2(\ell,r)}{(r,r)}.
$$ The cases $\F=\QQ(\sqrt{-1})$ and $\F = \QQ(\sqrt{-3})$ are special and must be treated separately.
\begin{lemma}\label{lem:Gaussian}
Assume that $\F=\QQ(\sqrt{-1})$. Let $r$ be a primitive vector of positive norm in $L$.
\begin{enumerate}
    \item $\sigma_{r,-1}\in \U(L)$ if and only if $\sigma_r\in \Orth^+(L_{\ZZ})$;
    \item $\sigma_{r,i} \in \U(L)$ if and only if $\sigma_{(1+i)r}\in \Orth^+(L_{\ZZ})$.
\end{enumerate}
\end{lemma}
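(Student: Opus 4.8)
The plan is to translate both membership conditions into arithmetic statements about the single quantity $\langle x,r\rangle$ as $x$ ranges over $L$, and then to bridge them using the fact that $L$ is a module over $\mathcal{O}_\F=\ZZ[i]$, hence stable under multiplication by $i$. Throughout I write $N=\langle r,r\rangle\in\ZZ_{>0}$, so that the trace form satisfies $(x,r)=2\,\mathrm{Re}\langle x,r\rangle$ and $(r,r)=2N$.

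First I would record the two integrality criteria. Since $r$ is primitive over $\mathcal{O}_\F$, the map $\sigma_{r,\alpha}$ preserves $L$ exactly when the scalar $(1-\alpha)\langle x,r\rangle/N$ lies in $\mathcal{O}_\F$ for every $x\in L$. Thus $\sigma_{r,-1}\in\U(L)$ iff $\tfrac{2}{N}\langle x,r\rangle\in\ZZ[i]$ for all $x$, and $\sigma_{r,i}\in\U(L)$ iff $\tfrac{1-i}{N}\langle x,r\rangle\in\ZZ[i]$ for all $x$. On the orthogonal side, $r$ is also primitive in the $\ZZ$-lattice $L_\ZZ$ (an integer dividing $r$ would be a unit of $\mathcal{O}_\F$), and I would likewise check that $s:=(1+i)r$ is primitive in $L_\ZZ$, since $(1+i)/m\in\ZZ[i]$ forces $m=\pm1$. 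Hence $\sigma_r\in\Orth(L_\ZZ)$ iff $\tfrac{2(x,r)}{(r,r)}=\tfrac{2}{N}\mathrm{Re}\langle x,r\rangle\in\ZZ$ for all $x$; and, using $\langle s,s\rangle=(1+i)(1-i)N=2N$ and $(x,s)=2\,\mathrm{Re}\big((1-i)\langle x,r\rangle\big)$ (by conjugate-linearity in the second slot), we get $\sigma_s\in\Orth(L_\ZZ)$ iff $\tfrac{2(x,s)}{(s,s)}=\tfrac{1}{N}\mathrm{Re}\big((1-i)\langle x,r\rangle\big)\in\ZZ$ for all $x$.

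The heart of the argument is that each orthogonal condition, which only constrains a real part, in fact recovers the full complex integrality once we quantify over all of $L$. Indeed, replacing $x$ by $ix\in L$ multiplies $\langle x,r\rangle$ by $i$, turning a real part into (minus) an imaginary part: $\mathrm{Re}\langle ix,r\rangle=-\mathrm{Im}\langle x,r\rangle$, and similarly $\mathrm{Re}\big((1-i)\langle ix,r\rangle\big)=-\mathrm{Im}\big((1-i)\langle x,r\rangle\big)$. Therefore imposing the real-part condition for both $x$ and $ix$ is equivalent to requiring the corresponding complex number to lie in $\ZZ[i]$. This yields the implication $\Orth\Rightarrow\U$ in both parts; the reverse implication is immediate, since the unitary condition trivially contains its own real part. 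Finally, because $(r,r)=2N>0$ and $(s,s)=4N>0$, a reflection in a positive-norm vector preserves the connected components of the type IV domain attached to $L_\ZZ$, so membership in $\Orth(L_\ZZ)$ may be upgraded to $\Orth^+(L_\ZZ)$ at no cost.

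The main obstacle I anticipate is bookkeeping rather than conceptual: getting the conjugate-linearity convention right in $\langle x,(1+i)r\rangle=(1-i)\langle x,r\rangle$, correctly computing the norms $(r,r)$ and $(s,s)$ that appear in the denominators, and confirming the two primitivity claims in $L_\ZZ$. The one genuinely structural point to isolate and state cleanly is the stability of $L$ under multiplication by $i$, since this is precisely what forces the seemingly weaker orthogonal (real-part) condition to be equivalent to the stronger unitary (complex) condition.
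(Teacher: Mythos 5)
Your proof is correct and follows essentially the same route as the paper's: both reduce each membership statement to the same divisibility condition on $\langle x,r\rangle$ as $x$ ranges over $L$, with identical scalars, norms, and primitivity checks. The only difference is presentational: the paper passes through the dual lattice $L'$ (using that its trace-form and Hermitian-form descriptions agree), whereas you establish the needed equivalence directly via $\mathrm{Re}\langle ix,r\rangle=-\mathrm{Im}\langle x,r\rangle$ and the $i$-stability of $L$ --- which is precisely the content of that agreement.
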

\begin{proof}
(1) Since $r\in L$ is primitive, $r$ and $(1+i)r$ are both primitive in $L_{\ZZ}$. By definition, $\sigma_{r,-1}\in \U(L)$ if and only if $2\latt{L,r}/\latt{r,r} \subseteq \ZZ[i]$, that is, $r/\latt{r,r} \in L'$. This is equivalent to $$(L,r/\latt{r,r})=2(L,r)/(r,r) \subseteq \ZZ,$$ or $\sigma_r\in \Orth^+(L_{\ZZ})$. 

(2) $\sigma_{r,i} \in \U(L)$ if and only if $$(1-i)\frac{\latt{L,r}}{\latt{r,r}} \subseteq \ZZ[i],$$ or equivalently, $(1+i)r/(2\latt{r,r}) \in L'$. On the other hand, $\sigma_{(1+i)r}\in \Orth^+(L_{\ZZ})$ if and only if $$2 \frac{(L,(1+i)r)}{((1+i)r,(1+i)r)}= \frac{(L,(1+i)r)}{(r,r)}\subseteq \ZZ,$$ which is equivalent to $(1+i)r/(r,r) = (1+i)r / (2 \langle r, r \rangle) \in L'$.
\end{proof}

Now we consider the case $\F=\QQ(\sqrt{-3})$.
\begin{lemma}\label{lem:Eisenstein}
Assume that $\F=\QQ(\sqrt{-3})$. Let $r$ be a primitive vector of positive norm in $L$.
\begin{enumerate}
    \item $\sigma_{r,-\omega}\in \U(L)$ if and only if $\sigma_r\in \Orth^+(L_{\ZZ}), \; \text{where} \; \omega = e^{\pi i / 3}$;
    \item $\sigma_{r,\omega} \in \U(L)$ if and only if $\sigma_{(1+\omega)r}\in \Orth^+(L_{\ZZ})$;
    \item $\sigma_{r,-1}\in \U(L)$ and $\sigma_{r,-\omega}\not\in\U(L)$ if and only if there exists a positive integer $t$ such that $(r,r)=4t$ and $(L,(1+\omega)r)=3t\ZZ$.
\end{enumerate}
\end{lemma}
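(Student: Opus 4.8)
The plan is to reduce each reflection-membership condition to a divisibility statement about the fractional ideal $\mathfrak{a}:=\latt{L,r}=\{\latt{\ell,r}:\ell\in L\}$ of $\mathcal{O}_\F$, and then to read off the orthogonal conditions by passing to the trace form. Throughout set $h:=\latt{r,r}\in\ZZ_{>0}$, so that $(r,r)=2h$, and record the identities $1-\omega=\bar\omega\in\mathcal{O}_\F^\times$, $\,1+\omega=\sqrt{-3}\,\bar\omega$ (an element of norm $3$), and $1-(-1)=2$; we will also use that $2$ is inert and $\sqrt{-3}$ ramified in $\mathcal{O}_\F$. Since $r$ is primitive, $\sigma_{r,\alpha}\in\U(L)$ holds if and only if $(1-\alpha)\latt{\ell,r}/h\in\mathcal{O}_\F$ for every $\ell\in L$, i.e. $(1-\alpha)\mathfrak{a}\subseteq h\mathcal{O}_\F$. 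Because $r\in L$ and $\mathfrak{a}\subseteq\cD_\F^{-1}=\tfrac1{\sqrt{-3}}\mathcal{O}_\F$, the ideal is squeezed as $h\mathcal{O}_\F\subseteq\mathfrak{a}\subseteq\tfrac1{\sqrt{-3}}\mathcal{O}_\F$; equivalently $\mathfrak{b}:=\sqrt{-3}\,\mathfrak{a}$ is an integral ideal with $(\sqrt{-3}\,h)\subseteq\mathfrak{b}\subseteq\mathcal{O}_\F$.

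For parts (1) and (2) I would argue exactly as in Lemma \ref{lem:Gaussian}. Plugging $1+\omega=\sqrt{-3}\,\bar\omega$ into the criterion turns $\sigma_{r,-\omega}\in\U(L)$ into $\sqrt{-3}\,\mathfrak{a}\subseteq h\mathcal{O}_\F$; on the other side $\sigma_r\in\Orth^+(L_\ZZ)$ means $\mathrm{Tr}(\tfrac1h\latt{\ell,r})\in\ZZ$ for all $\ell$, i.e. $\tfrac1h\mathfrak{a}\subseteq\cD_\F^{-1}$, which is the same inclusion. Part (2) is identical after replacing $r$ by $(1+\omega)r$ and using $((1+\omega)r,(1+\omega)r)=\mathrm{Tr}(3h)=6h$ together with $\tfrac{-\sqrt{-3}}{3}=\tfrac1{\sqrt{-3}}$ to rewrite the orthogonal condition as $\mathfrak{a}\subseteq h\mathcal{O}_\F$, matching the unitary condition coming from $1-\omega=\bar\omega$.

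The heart of the lemma is part (3). In the ideal language the triflection $\sigma_{r,-\omega}$ lies in $\U(L)$ iff $\mathfrak{b}\subseteq(h)$, while the biflection $\sigma_{r,-1}$ lies in $\U(L)$ iff $2\mathfrak{b}\subseteq(\sqrt{-3}\,h)$, i.e. $\mathfrak{b}\subseteq\tfrac12(\sqrt{-3}\,h)$. Combined with the standing inclusion $(\sqrt{-3}\,h)\subseteq\mathfrak{b}$, the biflection condition confines $\mathfrak{b}$ to the $\mathcal{O}_\F$-modules between these two bounds, and since $\tfrac12(\sqrt{-3}\,h)/(\sqrt{-3}\,h)\cong\mathcal{O}_\F/(2)\cong\F_4$ is a field there are exactly two candidates: $\mathfrak{b}=(\sqrt{-3}\,h)$, and $\mathfrak{b}=\tfrac12(\sqrt{-3}\,h)$, the latter possible only when $h$ is even. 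The first satisfies $\mathfrak{b}\subseteq(h)$ (triflection present); the second does not, as $\tfrac{\sqrt{-3}}2\notin\mathcal{O}_\F$. Hence ``biflection present, triflection absent'' is equivalent to $h$ being even, say $h=2t$, together with $\mathfrak{b}=(\sqrt{-3}\,t)$, i.e. $\mathfrak{a}=(t)$. This gives $(r,r)=2h=4t$, and a short trace computation $(L,(1+\omega)r)=\mathrm{Tr}(\overline{(1+\omega)}\,\mathfrak{a})=\mathrm{Tr}(\mathfrak{b})=\mathrm{Tr}((\sqrt{-3}\,t))=3t\ZZ$ (using $\overline{1+\omega}=-\sqrt{-3}\,\omega$, the stability of the ideal $\mathfrak{b}$ under the unit $-\omega$, and $\mathrm{Tr}(\sqrt{-3}\,\mathcal{O}_\F)=3\ZZ$) yields the stated divisor condition.

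The step I expect to be most delicate is the converse of (3): recovering $\mathfrak{b}=(\sqrt{-3}\,t)$ from the trace datum $(L,(1+\omega)r)=3t\ZZ$ alone, since $\mathfrak{b}\mapsto\mathrm{Tr}(\mathfrak{b})$ is not injective. I would handle this by the dichotomy for traces of ideals: writing $\mathfrak{b}=e\,\mathfrak{b}_0$ with $e=\mathrm{cont}(\mathfrak{b})\in\ZZ_{>0}$ and $\mathfrak{b}_0$ primitive, one has $\mathrm{Tr}(\mathfrak{b})=3e\ZZ$ if $\mathfrak{p}_3:=(\sqrt{-3})$ divides $\mathfrak{b}_0$ and $\mathrm{Tr}(\mathfrak{b})=e\ZZ$ otherwise (this follows from $\mathrm{Tr}(\mathcal{O}_\F)=\ZZ$, $\mathrm{Tr}(\mathfrak{p}_3)=3\ZZ$, and the fact that $3\mid N(b_0)$ iff $b_0\in\mathfrak{p}_3$). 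Imposing $\mathrm{Tr}(\mathfrak{b})=3t\ZZ$ together with $(\sqrt{-3}\cdot 2t)\subseteq\mathfrak{b}\subseteq\mathcal{O}_\F$ then leaves only $e=t$, $\mathfrak{b}_0=\mathfrak{p}_3$: the alternative $e=3t$, $\mathfrak{p}_3\nmid\mathfrak{b}_0$ is excluded because it would force $(\sqrt{-3}\cdot 2t)\subseteq 3t\mathcal{O}_\F$, which fails at $\mathfrak{p}_3$ (as $v_{\mathfrak{p}_3}(2\sqrt{-3})=1<2=v_{\mathfrak{p}_3}(3)$). This pins down $\mathfrak{b}=(\sqrt{-3}\,t)$, whence the biflection is present and the triflection absent, completing the equivalence.
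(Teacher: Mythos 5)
Your proof is correct. For parts (1) and (2) it is essentially the paper's argument: both translate $\sigma_{r,\alpha}\in\U(L)$ into the inclusion $(1-\alpha)\latt{L,r}\subseteq\latt{r,r}\,\mathcal{O}_\F$ and the orthogonal condition into a containment of $\latt{L,r}$ in a multiple of $\cD_\F^{-1}$, using that $1-\omega$ is a unit and $1+\omega$ generates the ramified prime. Part (3) is where you genuinely diverge. The paper writes $(r,r)=2a$, $(L,(1+\omega)r)=b\ZZ$, notes $b\mid 3a$, shows that $\sigma_{r,-1}\in\U(L)$ forces $3a\in\{b,2b\}$, and then identifies the case $3a=b$ (via part (2)) as exactly the condition $\sigma_{r,\omega}\in\U(L)$; the converse is thereby absorbed into the group-theoretic fact that a biflection and a triflection along the same mirror compose to a hexaflection, so no separate reconstruction of $\latt{L,r}$ from the trace datum is needed. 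You instead squeeze the integral ideal $\mathfrak{b}=\sqrt{-3}\,\latt{L,r}$ between $(\sqrt{-3}h)$ and $\tfrac12(\sqrt{-3}h)$ and use that $\mathcal{O}_\F/(2)$ is a field with four elements to see there are only two candidates, and your converse rests on a self-contained dichotomy for $\mathrm{Tr}(\mathfrak{b})$ in terms of its content and the ramified prime. Both routes are sound; the paper's is shorter because it recycles part (2), while yours makes the structural source of the two cases (inertness of $2$ in $\QQ(\sqrt{-3})$) explicit and would adapt more directly to the analogous Lemma \ref{lem:other1} for other fields. One presentational remark: in the last step you assert $e=t$, $\mathfrak{b}_0=\mathfrak{p}_3$ but only justify excluding the branch $e=3t$; to pin down $\mathfrak{b}_0=\mathfrak{p}_3$ exactly you should invoke $\mathfrak{b}_0\mid(2\sqrt{-3})$ from the standing containment $(2\sqrt{-3}t)\subseteq\mathfrak{b}$ — although for the desired conclusion it already suffices that $\mathfrak{p}_3\mid\mathfrak{b}_0$ (biflection present) and $2\nmid\mathfrak{b}_0$ by primitivity (triflection absent).
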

\begin{proof}
(1) The assumption that $r\in L$ is primitive implies that $r$ and $(1+\omega)r$ are both primitive in $L_{\ZZ}$. By definition, $\sigma_{r,-\omega}\in \U(L)$ if and only if $(1+\omega)\latt{L,r}/\latt{r,r}\subseteq \ZZ[\omega]$, or equivalently
    $$
    \frac{(1+\overline{\omega})r}{\sqrt{-3}\latt{r,r}}=-\omega \cdot \frac{r}{\latt{r,r}}\in L'.
    $$
    This is equivalent to $r/\latt{r,r}\in L'$ and therefore $\sigma_r\in \Orth^+(L_{\ZZ})$.

(2) $\sigma_{r,\omega}\in \U(L)$ if and only if $(1-\omega)\latt{L,r}/\latt{r,r}\in \ZZ[\omega]$. This is equivalent to $\latt{L,r}/\latt{r,r}\subseteq \ZZ[\omega]$, and therefore to
    $$
    \frac{r}{\sqrt{-3}\latt{r,r}}=-\omega \cdot \frac{(1+\omega)r}{3\latt{r,r}}\in L'.
    $$
    On the other hand, $\sigma_{(1+\omega)r}\in \Orth^+(L_{\ZZ})$ if and only if $$2\frac{(L,(1+\omega)r)}{((1+\omega)r,(1+\omega)r)} \subseteq \ZZ,$$ that is, $$\frac{(1+\omega)r}{3\latt{r,r}}\in L'.$$ This implies the claim.
    
(3)    Write $(r,r)=2a$ and $(L,(1+\omega)r)=b\ZZ$ for some positive integers $a$ and $b$. Since $(r,(1+\omega)r)=3a$, it follows that $b|3a$.
    Now $\sigma_{r,-1}\in \U(L)$ if and only if $2\latt{L,r}/\latt{r,r}\subseteq \ZZ[\omega]$; that is,
    $$
    \frac{2r}{\sqrt{-3}\latt{r,r}}=-\omega\cdot\frac{2(1+\omega)r}{3\latt{r,r}} \in L'.
    $$
    This is equivalent to 
    $$
    \left(L, \frac{2(1+\omega)r}{3\latt{r,r}} \right) \subseteq \ZZ, \; \text{i.e.} \; \frac{4(L,(1+\omega)r)}{3(r,r)} \subseteq \ZZ.
    $$
    This is equivalent to $3a=2b$ or $3a=b$. The case $3a = b$ occurs if and only if $$\sigma_{(1 + \omega)r} \in \Orth^+(L_{\ZZ}),$$ or in other words (by part (2)) $\sigma_{r, \omega} \in \U(L)$. Therefore $\sigma_{r, -1} \in \U(L)$ and $\sigma_{r, -\omega} \notin \U(L)$ if and only if there exists a positive integer $t$ such that $a=2t$ and $b=3t$, i.e. \[ (r, r) = 4t \; \text{and} \; (L, (1 + \omega)r) = 3t \mathbb{Z}. \qedhere \]
\end{proof}

Observe in particular that biflections of type (3) do \emph{not} have corresponding orthogonal reflections: if $\sigma_{r, -1} \in \U(L)$ and $\sigma_{r, -\omega} \notin \U(L)$, then  $$\frac{2(L, (1 + \omega)r)}{3 (r, r)} = \frac{1}{6t} \cdot 3t \mathbb{Z} \not \subseteq \mathbb{Z}$$ and therefore $\sigma_{(1 + \omega)r} \not\in \Orth^+(L_{\ZZ})$; also, by part (1), $\sigma_{r, -\omega} \notin \U(L)$ implies $\sigma_r \not\in \Orth^+(L_{\ZZ})$. Biflections of this sort do exist and several examples will appear throughout the paper.

Finally, we will describe the reflections over other imaginary quadratic fields.

\begin{lemma}\label{lem:other23}
Assume that $d\equiv 2,3 \mod 4$ and $d\neq -1$. Let $r$ be a primitive vector of positive norm in $L$. Then $\sigma_{r,-1}\in \U(L)$ if and only if $\sigma_{\sqrt{d}r}$ belongs to $\Orth^+(L_{\ZZ})$. In this case $\sigma_r$ also belongs to $\Orth^+(L_{\ZZ})$.
\end{lemma}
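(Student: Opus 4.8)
The plan is to rewrite each of the three conditions as a membership statement in the dual lattice $L'$ and then read them off against one another. For a primitive vector $v\in L_{\ZZ}$ of positive norm, the orthogonal reflection $\sigma_v$ lies in $\Orth^+(L_{\ZZ})$ exactly when $\frac{2(\ell,v)}{(v,v)}\in\ZZ$ for all $\ell$, i.e.\ when $\frac{2v}{(v,v)}\in L'$; and, as in Lemma \ref{lem:Gaussian}, $\sigma_{r,-1}\in\U(L)$ exactly when $2\langle L,r\rangle/\langle r,r\rangle\subseteq\mathcal{O}_\F$. Since $d\equiv 2,3\pmod 4$ we have $\mathcal{O}_\F=\ZZ[\sqrt d]$ and $\cD_\F=(2\sqrt d)$, so $\cD_\F^{-1}=\tfrac{1}{2\sqrt d}\mathcal{O}_\F$; these are the only arithmetic facts about $\F$ that I need.

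Writing $N=\langle r,r\rangle\in\ZZ$, a short trace computation gives $\langle\sqrt d r,\sqrt d r\rangle=-dN$ and hence $(\sqrt d r,\sqrt d r)=\mathrm{Tr}_{\F/\QQ}(-dN)=-2dN$, so the reflection criterion for $\sigma_{\sqrt d r}$ reads $\frac{\sqrt d r}{-dN}\in L'$. I would then expand the defining condition of $L'$: for $\ell\in L$,
$$\Big\langle \ell,\ \frac{\sqrt d r}{-dN}\Big\rangle=\frac{\overline{\sqrt d}}{-dN}\langle\ell,r\rangle=\frac{1}{\sqrt d\,N}\langle\ell,r\rangle,$$
and multiplying the requirement $\frac{1}{\sqrt d\,N}\langle\ell,r\rangle\in\cD_\F^{-1}$ by the generator $2\sqrt d$ of $\cD_\F$ turns it into $\frac{2}{N}\langle\ell,r\rangle\in\mathcal{O}_\F$. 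Letting $\ell$ range over $L$, this is precisely $2\langle L,r\rangle/\langle r,r\rangle\subseteq\mathcal{O}_\F$, i.e.\ $\sigma_{r,-1}\in\U(L)$. This chain of equivalences is the entire biconditional.

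For the last assertion I would use that $L'$ is stable under multiplication by $\mathcal{O}_\F$ (because $\cD_\F^{-1}$ is): multiplying $\frac{\sqrt d r}{-dN}\in L'$ by $\sqrt d$ yields $\frac{dr}{-dN}=-\frac{r}{N}\in L'$, hence $\frac{r}{N}=\frac{2r}{(r,r)}\in L'$, which is exactly $\sigma_r\in\Orth^+(L_{\ZZ})$. The one genuinely non-formal point — and the step I expect to be the main obstacle — is the legitimacy of the dual-lattice criterion for $\sigma_{\sqrt d r}$, which requires $\sqrt d r$ to be primitive in $L_{\ZZ}$. This is the analogue of the remarks in Lemmas \ref{lem:Gaussian} and \ref{lem:Eisenstein} that $(1+i)r$ and $(1+\omega)r$ are primitive, and I would settle it ideal-theoretically: expressing $r=\sum_j\gamma_j e_j$ in an $\mathcal{O}_\F$-basis, primitivity of $r$ means the coordinate ideal $(\gamma_1,\dots,\gamma_m)$ equals $\mathcal{O}_\F$; if $\tfrac{\sqrt d}{k}r\in L$ for some integer $k>1$ then $\tfrac{\sqrt d}{k}(\gamma_1,\dots,\gamma_m)=\tfrac{\sqrt d}{k}\mathcal{O}_\F\subseteq\mathcal{O}_\F$, forcing $\tfrac{\sqrt d}{k}\in\ZZ[\sqrt d]$ and hence $k=1$. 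With this primitivity in hand the criterion applies and the argument closes.
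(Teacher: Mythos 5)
Your proof is correct and follows essentially the same route as the paper: both arguments translate $\sigma_{r,-1}\in\U(L)$, $\sigma_{\sqrt{d}r}\in\Orth^+(L_{\ZZ})$ and $\sigma_r\in\Orth^+(L_{\ZZ})$ into the dual-lattice conditions $r/(\sqrt{d}\latt{r,r})\in L'$ (for the first two) and $r/\latt{r,r}\in L'$ (for the third), and compare them using that $L'$ is an $\mathcal{O}_\F$-module. Your explicit verification that $\sqrt{d}\,r$ is primitive in $L_{\ZZ}$ is a point the paper leaves implicit, and it is handled correctly.
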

\begin{proof}
$\sigma_{r,-1}\in \U(L)$ if and only if $2\latt{L,r}/\latt{r,r}\subseteq \ZZ[\sqrt{d}]$; that is, $r/(\sqrt{d}\latt{r,r})\in L'$. On the other hand, $\sigma_r\in \Orth^+(L_{\ZZ})$ if and only if $r/\latt{r,r}\in L'$, and $\sigma_{\sqrt{d}r}\in \Orth^+(L_{\ZZ})$ if and only if $r/(\sqrt{d}\latt{r,r})\in L'$. This proves the lemma.
\end{proof}

\begin{lemma}\label{lem:other1}
Assume that $d\equiv 1 \mod 4$ and $d\neq -3$. Let $r$ be a primitive vector of positive norm in $L$. 
\begin{enumerate}
    \item $\sigma_{r,-1}\in \U(L)$ and $\sigma_r \in \Orth^+(L_{\ZZ})$ if and only if $\sigma_{\sqrt{d}r}\in \U(L)$;
    \item $\sigma_{r,-1}\in \U(L)$ and $\sigma_r \not\in \Orth^+(L_{\ZZ})$ if and only if there exists a positive integer $t$ such that $(r,r)=4t$ and $(L,\sqrt{d}r)=dt\ZZ$. 
\end{enumerate}
\end{lemma}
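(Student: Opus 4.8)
The plan is to reduce each reflection condition to a statement about which rational multiples of $r$ lie in the dual lattice $L'$, exactly as in the proofs of Lemmas~\ref{lem:Eisenstein} and~\ref{lem:other23}, and then to compare these membership conditions. Write $a=\langle r,r\rangle$, so that $(r,r)=2a$ and $(\sqrt{d}r,\sqrt{d}r)=-2da$. Since $r$ is primitive in $L$ (hence in $L_{\ZZ}$), and $\sqrt{d}r$ is primitive in $L_{\ZZ}$ as well because $\sqrt{d}=2\zeta-d$ has content $\gcd(d,2)=1$ in $\mathcal{O}_{\F}$, the same computation as in Lemma~\ref{lem:other23} (with the factor adjusted for $D_\F=d$ rather than $4d$) shows that $\sigma_{r,-1}\in\U(L)$, $\sigma_r\in\Orth^+(L_{\ZZ})$ and $\sigma_{\sqrt{d}r}\in\Orth^+(L_{\ZZ})$ are equivalent, respectively, to $\tfrac{2r}{\sqrt{d}a}\in L'$, $\tfrac{r}{a}\in L'$ and $\tfrac{r}{\sqrt{d}a}\in L'$. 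In part (1) the transformation $\sigma_{\sqrt{d}r}$ is understood as the orthogonal reflection, playing the same role as in Lemma~\ref{lem:other23}.

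For part (1) I would package these three conditions using the fractional $\mathcal{O}_{\F}$-ideal $I=\{\lambda\in\F:\lambda r\in L'\}$ and its rescaling $J=aI$. Since $J$ is an $\mathcal{O}_{\F}$-module and $\tfrac{1}{\sqrt{d}}\mathcal{O}_{\F}=\cD_\F^{-1}$, the three conditions read $2\cD_\F^{-1}\subseteq J$, $\mathcal{O}_{\F}\subseteq J$ and $\cD_\F^{-1}\subseteq J$ respectively. The implication $\sigma_{\sqrt{d}r}\in\Orth^+(L_{\ZZ})\Rightarrow(\sigma_{r,-1}\in\U(L)$ and $\sigma_r\in\Orth^+(L_{\ZZ}))$ is immediate from $2\cD_\F^{-1}\subseteq\cD_\F^{-1}$ and $\mathcal{O}_{\F}\subseteq\cD_\F^{-1}$. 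For the converse the point is the identity $2\cD_\F^{-1}+\mathcal{O}_{\F}=\cD_\F^{-1}$: indeed $[\cD_\F^{-1}:\mathcal{O}_{\F}]=|D_\F|=|d|$ is odd because $d\equiv 1\bmod 4$, so multiplication by $2$ is an automorphism of the quotient $\cD_\F^{-1}/\mathcal{O}_{\F}$ and hence $2\cD_\F^{-1}$ already surjects onto it. Thus $\mathcal{O}_{\F}\subseteq J$ together with $2\cD_\F^{-1}\subseteq J$ forces $\cD_\F^{-1}\subseteq J$, which is part (1).

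For part (2) I would argue numerically, in the spirit of Lemma~\ref{lem:Eisenstein}(3). Put $(L,\sqrt{d}r)=b\ZZ$. Taking $\ell=\zeta r\in L$ and using $\mathrm{Tr}_{\F/\QQ}(\zeta\sqrt{d})=d$ gives $(\zeta r,\sqrt{d}r)=-da=|d|a$, so $b\mid |d|a$; write $s=|d|a/b\in\ZZ_{>0}$. Because $\tfrac{2r}{\sqrt{d}a}=\tfrac{2}{da}\sqrt{d}r$ and $\tfrac{r}{\sqrt{d}a}=\tfrac{1}{da}\sqrt{d}r$ are rational multiples of $\sqrt{d}r$, pairing against $L$ shows $\sigma_{r,-1}\in\U(L)\iff da\mid 2b\iff s\in\{1,2\}$, while $\sigma_{\sqrt{d}r}\in\Orth^+(L_{\ZZ})\iff da\mid b\iff s=1$. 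Now assume $\sigma_{r,-1}\in\U(L)$, so $s\in\{1,2\}$. By part (1), under this hypothesis $\sigma_r\in\Orth^+(L_{\ZZ})$ holds if and only if $\sigma_{\sqrt{d}r}\in\Orth^+(L_{\ZZ})$, i.e. if and only if $s=1$. Hence $\sigma_r\notin\Orth^+(L_{\ZZ})$ is equivalent to $s=2$, i.e. $b=|d|a/2$, which forces $a$ to be even. Writing $a=2t$ yields $(r,r)=4t$ and $(L,\sqrt{d}r)=|d|t\ZZ=dt\ZZ$, and the steps reverse to give the converse.

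The crux of part (1), and the main conceptual obstacle, is the parity observation that $|d|$ is odd: this is exactly what makes $2\cD_\F^{-1}+\mathcal{O}_{\F}=\cD_\F^{-1}$ and explains why the discrepancy between the unitary and orthogonal conditions is purely $2$-adic (so that $a$ must be even in case (2)). The most error-prone computations are the trace identity $\mathrm{Tr}_{\F/\QQ}(\zeta\sqrt{d})=d$, which pins down the relation $b\mid|d|a$ and the clean dichotomy $s\in\{1,2\}$, and the careful tracking of the factor $2$ coming from $(r,r)=2\langle r,r\rangle$ together with the value $D_\F=d$ (versus $D_\F=4d$ in Lemma~\ref{lem:other23}). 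I would also need to check that $\sqrt{d}r$ is primitive in $L_{\ZZ}$, so that the orthogonal reflection $\sigma_{\sqrt{d}r}$ is normalized correctly.
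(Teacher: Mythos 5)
Your proof is correct and follows essentially the same route as the paper's: both reduce the three reflection conditions to the memberships $2r/(\sqrt{d}\langle r,r\rangle)$, $r/\langle r,r\rangle$, $r/(\sqrt{d}\langle r,r\rangle)\in L'$, and your ideal identity $2\cD_\F^{-1}+\mathcal{O}_\F=\cD_\F^{-1}$ (from $|d|$ being odd) is precisely what the paper's explicit combination $\frac{1+\sqrt{d}}{2}\cdot\frac{2r}{\sqrt{d}\langle r,r\rangle}-\frac{r}{\langle r,r\rangle}=\frac{r}{\sqrt{d}\langle r,r\rangle}$ witnesses. Part (2) — the divisibility dichotomy $|d|a\in\{b,2b\}$ followed by the reduction to part (1) — matches the paper's argument step for step, and your reading of $\sigma_{\sqrt{d}r}$ as the orthogonal reflection agrees with the paper's own proof.
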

\begin{proof}
(1) $\sigma_{r,-1}\in \U(L)$ if and only if $2r/(\sqrt{d}\latt{r,r})\in L'$, and $\sigma_r\in \Orth^+(L_{\ZZ})$ if and only if $r/\latt{r,r}\in L'$. On the other hand, $\sigma_{\sqrt{d}r}\in \Orth^+(L_{\ZZ})$ if and only if $r/(\sqrt{d}\latt{r,r})\in L'$. Since 
$$
\frac{1+\sqrt{d}}{2}\cdot \frac{2r}{\sqrt{d}\latt{r,r}} - \frac{r}{\latt{r,r}} = \frac{r}{\sqrt{d}\latt{r,r}},
$$
the claim follows.

(2) Suppose $(r,r)=2a$ and $(L, \sqrt{d}r)=b\ZZ$ for some positive integers $a$ and $b$. Since $$((1+\sqrt{d})r/2,\sqrt{d}r)=-da,$$ it follows that $b|ad$. We see that $\sigma_{r,-1}\in \U(L)$ if and only if 
$$
\frac{4(L,\sqrt{d}r)}{d(r,r)} \subseteq \ZZ, \; \text{or equivalently} \; ad=2b \; \text{or} \; ad=b.
$$
The case $ad=b$, (i.e. the condition in (2) does not hold) occurs if and only if $$2(L,\sqrt{d}r)/(d(r,r))=\ZZ,$$ or equivalently $\sigma_{\sqrt{d}r}\in \Orth^+(L_{\ZZ})$. By (1) this is equivalent to $\sigma_r\in \Orth^+(L_{\ZZ})$.
\end{proof}

\subsection{Embedding U(n,1) in O(2n,2)}\label{sec:embedding}

Suppose $L$ is a Hermitian lattice of signature $(n, 1)$. There is a natural embedding of $\U(L)$ in the orthogonal group $\Orth(L_{\mathbb{Z}})$ which allows us to construct unitary modular forms as restrictions of orthogonal modular forms. We will review this in more detail.

Conceptually it is more convenient to take the underlying $\ZZ$-lattice $L_{\ZZ}$ and to express the action of $\mathcal{O}_{\F}$ on $L$ as a complex structure on $V := L_{\ZZ} \otimes_{\ZZ} \RR$: $$J \in \mathrm{SO}(V) \; \text{with} \; J^2 = -\mathrm{Id}.$$ In particular $L_{\ZZ}$ admits the endomorphism $$\zeta := \frac{D_\F}{2} + \frac{\sqrt{|D_\F|}}{2} J.$$ The Hermitian form on $L$ can be recovered from the bilinear form $(-, -)$ on $L_{\ZZ}$ via $$\latt{x, y} = \frac{1}{2}(x, y)  + \frac{1}{2i} (Jx, y).$$ The unitary group is then $$\U(V) = \{\gamma \in \mathrm{O}(V): \; J \gamma J = -\gamma\} \subseteq \Orth(V).$$
Since $\U(V)$ is connected, one has $\U(V)\subseteq \SO(V)$.

The Hermitian symmetric domain attached to $\Orth(V)$ is a connected component $$\cD^+(L) \subseteq \cD(L) := \{[z] \in \PP(L \otimes_{\ZZ} \mathbb{C}): \; (z, z) = 0, \; (z, \overline{z}) < 0\}.$$ Points $[z] = [x+iy] \in \PP(L \otimes \CC)$ may be identified with oriented negative-definite planes $\mathrm{span}(x, y) \subseteq V$. The type I domain $\cD_{\U}(L)$ is the subset of those planes which are complex lines (with respect to the complex structure $J$): $$\cD_{\U}(L) = \{[x - i Jx]: \; x \in V, \; (x,x) < 0\} \subseteq \cD^+(L).$$

Orthogonal modular forms are homogeneous holomorphic functions on the affine cone $$\cA(L) = \{z \in L \otimes_\ZZ \CC: \; [z] \in \cD^+(L)\}.$$ Any orthogonal modular form $F : \cA(L) \rightarrow \mathbb{C}$ for an arithmetic subgroup $\Gamma \le \Orth^+(L)$ restricts to a unitary modular form $$F_{\U} : \cA_{\U}(L) = \{z \in \cA(L): \; [z] \in \cD_{\U}(L)\} \longrightarrow \CC$$ for the group $\Gamma_{\U} := \Gamma \cap \U(L)$ of the same weight and character.

In this paper we will only need to consider lattices whose trace form decomposes as $$L_{\ZZ} = (\mathcal{O}_\F v_1 + \mathcal{O}_\F v_2) \oplus L_0,$$ where $\mathcal{O}_\F v_1, \mathcal{O}_\F v_2$ are isotropic planes that admit multiplication by $\mathcal{O}_\F$ and $L_0$ is positive-definite. In this case we can identify orthogonal modular forms with their restrictions to the tube domain of vectors in $\cA(L)$ of the form $$(\tau, \mathfrak{z}, w) := \tau v_1 + w v_2 + \lambda \zeta(v_1) + \zeta(v_2) + \mathfrak{z},$$ where $\tau, w \in\mathbb{H}$ and $\mathfrak{z} \in L_0 \otimes \mathbb{C}$, and where $\lambda \in \mathbb{C}$ is uniquely determined by the fact that this vector has norm zero. A vector of this form lies in $\mathcal{A}_{\U}(L)$ if and only if $J \mathfrak{z} = i \mathfrak{z}$ and if $$w v_2 + \zeta(v_2) = (w + D_{\F}/2) v_2 + (\sqrt{|D_{\F}|}/2) Jv_2 = x - i Jx \; \text{for some}\; x \in L \otimes \mathbb{C},$$ i.e. $$w = \frac{-D_{\F} + \sqrt{D_{\F}}}{2} =: -\overline{\zeta}.$$ It is natural to decompose $\mathfrak{z}$ further into its holomorphic and antiholomorphic parts: $$\mathfrak{z} = \mathfrak{z}_{\text{hol}} + \mathfrak{z}_{\text{anti}}, \; \; \mathfrak{z}_{\text{hol}} = \frac{\mathfrak{z} - i J \mathfrak{z}}{2}, \; \mathfrak{z}_{\text{anti}} = \frac{\mathfrak{z} + i J \mathfrak{z}}{2}.$$ If we write an orthogonal modular form $F$ as a function on the tube domain in the form $$F(\tau, \mathfrak{z}_{\text{hol}}, \mathfrak{z}_{\text{anti}}, w),$$ then its restriction to the unitary group is $$F_{\U}(\tau, \mathfrak{z}_{\text{hol}}) := F(\tau, \mathfrak{z}_{\text{hol}}, 0, -\overline{\zeta}), \; \text{where}\; \tau \in \mathbb{H}, \; |\mathfrak{z}_{\text{hol}}| \; \text{sufficiently small}.$$

\subsection{Borcherds products}

Let $(L,h)$ be an even integral Hermitian lattice of signature $(n, 1)$ over $\mathcal{O}_{\F}$. For any vector $r \in L'$ of positive norm, the \emph{rational quadratic divisors} $r^{\perp}$ on $\mathcal{D}^+(L)$ and $\cD_{\U}(L)$ are the intersections $$r^{\perp} \cap \cD^+(L) = \{[z] \in \cD^+(L): \; (z, r) = 0\}$$ and $$r^{\perp} \cap \cD_{\U}(L) = \{[z] \in \cD_{\U}(L): \; \latt{z, r} = 0\} = (r^{\perp} \cap \cD^+(L)) \cap \cD_{\U}(L).$$ For any coset $\beta \in L'/L$ and any positive $m \in \mathbb{Z} + h(\beta)$, the \emph{Heegner divisors} are the $\widetilde{\Orth}^+(L)$-invariant resp. $\widetilde{\U}(L)$-invariant divisors $$\mathbf{H}(m, \beta) = \bigcup_{\substack{r \in L + \beta \\ h(r) = m}} r^{\perp} \cap \cD^+(L),$$ $$\mathbf{H}_{\U}(m, \beta) = \bigcup_{\substack{r \in L + \beta \\ h(r) = m}} r^{\perp} \cap \cD_{\U}(L).$$

In 1998 Borcherds \cite{Bor98} established a remarkable multiplicative lift which sends nearly holomorphic modular forms for the Weil representation of $\SL_2(\ZZ)$ to meromorphic orthogonal modular forms (called \emph{Borcherds products}) with infinite product expansions and explicit divisors that are supported on rational quadratic divisors.  Using the embedding of $\U(n,1)$ in $\Orth(2n,2)$, in \cite{Hof14} Hofmann constructed Borcherds products on unitary groups. This lift is essential for our arguments.

Note that the restriction of a Borcherds product on $\Orth(2n, 2)$ to $\U(n, 1)$ never vanishes identically. However, it is quite possible for a nontrivial Borcherds product of weight $0$ on $\Orth(2n, 2)$ to descend to a constant function on $\U(n, 1)$. This is because the unitary Heegner divisors satisfy inclusions $$\mathbf{H}_{\U}(m, \beta) \subseteq \mathbf{H}_{\U}(|\lambda|^2 m, \lambda \beta), \; \; \lambda \in \mathcal{O}_{\F} \backslash \{0\},$$ while the orthogonal Heegner divisors satsify these inclusions only for $\lambda \in \mathbb{Z}$.

For lattices over the Eisenstein or Gaussian integers an additional complication may arise. For any nontrivial unit $\alpha \in \mathcal{O}_{\F}^{\times} \backslash \{\pm 1\}$, the hyperplanes $r^{\perp}$ and $(\alpha r)^{\perp}$ define the same divisor in $\cD_{\U}(L)$ but distinct divisors in $\cD^+(L)$. In particular a Borcherds product which is irreducible as an orthogonal modular form may admit a holomorphic square (when $\mathcal{O}_{\F} = \mathbb{Z}[i]$) or cube (when $\mathcal{O}_{\F} = \mathbb{Z}[e^{2\pi i / 3}]$) root as a unitary modular form.

\begin{example}
Consider the signature $(2,1)$ Hermitian lattice $$L=H(2)\oplus \mathcal{O}_{\F}v$$ over $\F=\QQ(\sqrt{-1})$, where $v$ satisfies $\latt{v,v}=1$ and where $H(2)=\mathcal{O}_{\F}\ell+\mathcal{O}_{\F}\ell'$ is the $2$-rescaling of a hyperbolic plane over $\mathcal{O}_{\F}=\ZZ[i]$ satisfying $$\latt{\ell,\ell}=\latt{\ell,\ell'}=0,\; \latt{\ell,\ell'}=i, \; \latt{\ell',\ell}=-i.$$ The corresponding $\ZZ$-lattice $L_{\ZZ}$ is $2U(2)\oplus 2A_1$, with Gram matrix $\begin{psmallmatrix} 0 & 0 & 0 & 0 & 0 & 2 \\ 0 & 0 & 0 & 0 & 2 & 0 \\ 0 & 0 & 2 & 0 & 0 & 0 \\ 0 & 0 & 0 & 2 & 0 & 0 \\ 0 & 2 & 0 & 0 & 0 & 0 \\ 2 & 0 & 0 & 0 & 0 & 0 \end{psmallmatrix}$ with respect to the basis $\{\ell, -i\ell, v, iv, \ell', i\ell'\}$. Consider the Borcherds lifts on $\widetilde{\Orth}^+(L_{\ZZ})$ of the vector-valued modular forms with the following principal parts at $\infty$:
\begin{align*}
&F_1:& &2e_0+q^{-1/4}(e_{(0,0,1/2,0,0,0)}+e_{(0,0,0,1/2,0,0)});&\\
&F_2:& &2e_0+q^{-1/4}(e_{(0,1/2,1/2,0,0,0)}+e_{(0,1/2,0,1/2,0,0)});&\\
&F_3:& &2e_0+q^{-1/4}(e_{(1/2,0,1/2,0,0,0)}+e_{(1/2,0,0,1/2,0,0)});&\\
&F_4:& &8e_0+q^{-1/2}e_{(0,0,1/2,1/2,0,0)}.&
\end{align*}
Their divisors are
\begin{align*}
\div(F_1) &=\mathbf{H}(1/4, v/2) + \mathbf{H}(1/4, iv/2);  \\
\div(F_2) &=\mathbf{H}(1/4, i\ell/2 + v/2) + \mathbf{H}(1/4, i\ell/2 + iv/2);  \\
\div(F_3) &=\mathbf{H}(1/4, \ell/2 + v/2) + \mathbf{H}(1/4, \ell/2 + iv/2);  \\
\div(F_4) &=\mathbf{H}(1/2, (1+i)v/2).
\end{align*}
Observe that
\begin{align*}
&i\cdot \mathbf{H}(1/4, v/2) = \mathbf{H}(1/4, iv/2),& &i\cdot \mathbf{H}(1/2, (1+i)v/2) = \mathbf{H}(1/2, (1+i)v/2),& \\
&i\cdot \mathbf{H}(1/4, i\ell/2 + v/2)=\mathbf{H}(1/4, \ell/2 + iv/2),& &i\cdot \mathbf{H}(1/4, i\ell/2 + iv/2) = \mathbf{H}(1/4, \ell/2 + v/2).&
\end{align*}
Let $f_i$ be the restriction of $F_i$ to $\cD_{\U}(L)$. Then $f_1$ has multiplicity two zeros exactly along the hyperplanes in $\mathbf{H}_{\U}(1/4, v/2)$ so its square root is a well-defined modular form of weight $1/2$. The functions $f_2$ and $f_3$ have simple zeros along the same hyperplanes so by Koecher's principle $f_2$ equals $f_3$ up to scalar. Finally, $f_4$ has only double zeros; moreover, if $r^\perp$ lies in $\div(f_1)$, then $r^{\perp} = ((1+i)r)^\perp$ also lies in $\div(f_4)$. Therefore $f_4/f_1$ is holomorphic even though $F_4/F_1$ is not.
\end{example}

\section{Free algebras of modular forms and the Jacobian}\label{sec:Jacobian}
In \cite{AI05} Aoki and Ibukiyama defined the Jacobian of Siegel modular forms. In \cite{Wan20} the first named author proved some properties of the Jacobian of orthogonal modular forms and established a necessary and sufficient condition for an algebra of orthogonal modular forms being free. This condition is essentially the existence of the Jacobian of (potential) generators vanishing exactly on all mirrors of reflections with multiplicity one. In this section we define the Jacobian of unitary modular forms and prove an analogous criterion for algebras of modular forms on unitary groups.

Let  $\Gamma$ be a finite-index subgroup of $\U(L)$.   For $1 \leq j \leq n+1$,  let  $F_j$ be a modular form with respect to $\Gamma$ of weight $k_j$ and trivial character.  We view $F_j$ as holomorphic functions defined on the affine cone $\cA_{\U} \subseteq \CC^{n, 1}$.  With respect to the coordinates $(z_1,...,z_{n+1})$ on $\CC^{n, 1}$, the Jacobian of $(F_1,...,F_{n+1})$ is defined in the usual way:
$$
J_{\U}:=J_{\U}(F_1, ...,F_{n+1})=\frac{\partial (F_1,  F_2, ..., F_{n+1})}{\partial (z_1, z_2, ..., z_{n+1})}. 
$$

\begin{theorem}\label{th:Jacobian}
\begin{enumerate}
\item The Jacobian $J_{\U}$ is a cusp form for $\Gamma$ of weight $n+1 + \sum_{j=1}^{n+1}k_j$ and the character $\det^{-1}$, where $\det$ is the determinant character. 
\item The Jacobian $J_{\U}$ is not identically zero if and only if these forms $F_j$ are algebraically independent over $\CC$.
\item Let $r\in L$ and $\alpha \in \mathcal{O}_\F^\times \setminus \{1\}$.  If $\sigma_{r,\alpha}\in \Gamma$,  then $$\mathrm{ord}(J_{\U}, r^\perp) \equiv -1 \, \mathrm{mod}\, \ord(\alpha),$$ where $\ord(\alpha)$ is the order of $\alpha$ in the group of units.
\item If we view $F_j$ as functions of $\tau, z_1,...,z_{n-1}$ on the Siegel domain $\cH_{\U}$ attached to a zero-dimensional cusp, then the Jacobian takes the form
$$
\left\lvert \begin{array}{cccc}
k_1F_1 & k_2F_2 & \cdots & k_{n+1}F_{n+1} \\ 
\frac{\partial F_1}{\partial \tau} & \frac{\partial F_2}{\partial \tau} & \cdots & \frac{\partial F_{n+1}}{\partial \tau} \\
\frac{\partial F_1}{\partial z_1} & \frac{\partial F_2}{\partial z_1} & \cdots & \frac{\partial F_{n+1}}{\partial z_1} \\ 
\vdots & \vdots & \ddots & \vdots \\ 
\frac{\partial F_1}{\partial z_{n-1}} & \frac{\partial F_2}{\partial z_{n-1}} & \cdots & \frac{\partial F_{n+1}}{\partial z_{n-1}}
\end{array}   \right\rvert.
$$
\end{enumerate}
\end{theorem}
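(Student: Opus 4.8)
My plan is to treat the four assertions in the order (1)-weight-and-character, then (4), then the cusp-form half of (1), then (2), and finally (3), since (4) is the natural tool for the cusp-form claim and the character in (1) feeds directly into (3). For the transformation law in (1) I would differentiate the two defining functional equations. Viewing each $\gamma\in\Gamma\subseteq\U(V)\subseteq\GL_{n+1}(\CC)$ as a linear map and applying the chain rule to $F_j(\gamma z)=F_j(z)$ shows that the full Jacobian matrix $(\partial F_i/\partial z_k)$ satisfies $\mathrm{Jac}(F)(\gamma z)\,\gamma=\mathrm{Jac}(F)(z)$; taking determinants gives $J_{\U}(\gamma z)=\det(\gamma)^{-1}J_{\U}(z)$, i.e. the character $\det^{-1}$. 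Differentiating the homogeneity $F_j(tz)=t^{-k_j}F_j(z)$ in $z$ multiplies the $j$-th row of $\mathrm{Jac}(F)$ by an extra factor $t^{-1}$, so $J_{\U}(tz)=t^{-(n+1)-\sum_j k_j}J_{\U}(z)$, giving weight $n+1+\sum_j k_j$.

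For (4) I would fix a zero-dimensional cusp, choose $\ell,\ell'$ as in the Siegel-domain construction, and complete $\{\ell,\ell'\}$ to a $\CC$-basis $\{\ell,\ell',e_1,\dots,e_{n-1}\}$ of $V$; the section normalizes the $\ell'$-coordinate to $-i$, so that $\tau$ and $z_1,\dots,z_{n-1}$ are the coordinates along $\ell$ and the $e_i$. The key is to eliminate the transverse ($\ell'$-)derivative using the Euler identity $\sum_b w_b\,\partial_{w_b}F_j=-k_jF_j$ coming from homogeneity. A single column operation — scaling the $\ell'$-column by its coordinate value $-i$ and then adding the appropriate multiples of the remaining columns — replaces that column by $(-k_jF_j)_j$ while turning the others into the derivatives $\partial_\tau F_j$ and $\partial_{z_i}F_j$. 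Up to a nonzero constant (from the linear change of basis, together with a transposition and a row swap) this is exactly the displayed determinant. The cusp-form half of (1) then drops out: in the Fourier--Jacobi expansion at this cusp each value $a_0^{(j)}$ is constant, so as $\mathrm{im}(\tau)\to\infty$ every row of that determinant except the first tends to zero, whence $J_{\U}\to 0$; since the cusp was arbitrary, $J_{\U}$ vanishes at all cusps.

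Statement (2) is the classical functional-dependence criterion for $n+1$ holomorphic functions on the connected $(n+1)$-dimensional manifold $\cA_{\U}$, and the argument is identical to the orthogonal case in \cite{Wan20}: if $P(F_1,\dots,F_{n+1})\equiv 0$ for a nonzero polynomial $P$ of minimal total degree, then differentiating produces a nontrivial linear relation among the rows of $\mathrm{Jac}(F)$ — minimality guarantees the coefficients $(\partial P/\partial F_j)(F)$ do not all vanish identically — so $J_{\U}\equiv 0$; the converse is the standard Jacobian criterion.

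The delicate point, and the step I expect to require the most care, is (3). Fix $\sigma=\sigma_{r,\alpha}\in\Gamma$ of order $m=\mathrm{ord}(\alpha)$. In a linear coordinate system adapted to the decomposition $V=\CC r\oplus r^\perp$ the map $\sigma$ is the diagonal matrix $\mathrm{diag}(\alpha,1,\dots,1)$, so $\det\sigma=\alpha$ and, by (1), $J_{\U}(\sigma z)=\alpha^{-1}J_{\U}(z)$. In these coordinates the normal coordinate $u$ (proportional to $\latt{z,r}$; here the conjugate-linearity of $\latt{-,-}$ together with $\bar\alpha=\alpha^{-1}$ for roots of unity is what makes $u$ scale by $\alpha$ rather than $\bar\alpha$) satisfies $u\circ\sigma=\alpha u$, while the coordinates along $r^\perp$ are fixed. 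Expanding $J_{\U}=\sum_{j\ge d}u^j h_j$ near a generic point of the mirror with $h_d\not\equiv 0$, the transformation law forces $\alpha^j h_j=\alpha^{-1}h_j$, hence $\alpha^{j+1}=1$ whenever $h_j\not\equiv 0$; for the leading term this reads $d+1\equiv 0\pmod m$, which is exactly $\mathrm{ord}(J_{\U},r^\perp)\equiv -1\pmod{\mathrm{ord}(\alpha)}$. The only real subtlety is pinning down the action of $\sigma$ on the normal coordinate and the value $\det\sigma=\alpha$ precisely; once these are fixed the congruence is immediate.
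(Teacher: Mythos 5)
Your proposal is correct and follows essentially the same route as the paper's proof: the chain rule for the weight and the character $\det^{-1}$, the reduction of the cusp condition to the Siegel-domain determinant combined with the Fourier--Jacobi expansions, the citation of the orthogonal case for algebraic independence, and the local expansion of $J_{\U}$ in the normal coordinate $u$ with $u\circ\sigma_{r,\alpha}=\alpha u$ for part (3). The only cosmetic difference is in part (4), where you eliminate the transverse derivative via the Euler identity while the paper homogenizes $f_j$ as $z_n^{-k_j}f_j$ and expands directly; these are the same computation.
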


\begin{proof}
(1) Clearly $J_{\U}$ is a holomorphic function on $\cA_{\U}$.  The chain rule shows that it is homogeneous of degree $-(n+1) - \sum_{j=1}^{n+1} k_j$ and transforms by $$J_{\U}(\gamma z) = \det^{-1}(\gamma) J_{\U}(z), \; \gamma \in \Gamma.$$ Therefore it is a modular form for $\Gamma$ of the indicated weight and character. At any zero-dimensional cusp, using the expression for the Jacobian on the Siegel domain $\cH_{\U}$ (part (4) in this Theorem) and the Fourier--Jacobi expansions of $F_1,...,F_{n+1}$ shows that $J_{\U}$ vanishes; therefore $J_{\U}$ is a cusp form.

(2)  This is the same argument as \cite[Theorem 2.5 (2)]{Wan20}. 

(3) Let $F\in M_k(\Gamma,\chi)$ be a unitary modular form for some character $\chi$, and suppose $F$ vanishes along $r^{\perp}$ to order $m \in \mathbb{N}_0$. Write vectors $z \in \cA_{\U}$ in the form $$z = z_1 \cdot r + z_2, \; \text{where} \; z_1 \in \CC, \; z_2 \in r^\perp,$$ so the Taylor expansion of $F$ about $r^{\perp}$ takes the form $$F(z) = F(z_1 r + z_2) = z_1^m F(z_2) + O(z_1^{m+1}).$$ Then
\begin{align*}
F(\sigma_{r,\alpha}(z) ) & =  F(\alpha z_1\cdot r + z_2) = (\alpha z_1)^m f(z_2) + O(z_1^{m+1}).
\end{align*}
When $\sigma_{r, \alpha} \in \Gamma$ and $\chi = \mathrm{det}^{-1}$, comparing this to the Taylor expansion
$$F(\sigma_{r, \alpha}(z)) = \mathrm{det}^{-1}(\sigma_{r, \alpha}) F(z) = \alpha^{-1} \Big( z_1^m f(z_2) + O(z_1^{m+1}) \Big)$$ yields $\alpha^m = \alpha^{-1}$ and therefore $\mathrm{ord}(\alpha) | (m+1).$

(4) Choose a zero-dimensional cusp with associated Siegel domain $\cH_{\U}$. Let $f_j$ be unitary modular forms defined on the domain $\cH_{\U}$. With respect to a suitable coordinate system the functions $$F_j(\tau, z_1,...,z_n):=z_n^{-k_j}f_j(\tau,z_1,...,z_{n-1})$$ are modular forms on the affine cone $\cA_{\U}$. A direct calculation shows that the two definitions of Jacobian coincide up to a nonzero constant multiple. 
\end{proof}

The above theorem yields \cite[Lemma 2.1]{FS15} which asserts that the Jacobian of the transformation $(\tau,\mathfrak{z})\mapsto \gamma \cdot (\tau,\mathfrak{z})$ for $\gamma\in \U(L)$ is 
$$
\det(\gamma)j(\gamma;\tau,\mathfrak{z})^{-(n+1)}.
$$

\begin{remark} Recall that our computations of modular forms on ball quotients represent them as power series in the variables $z_1,...,z_{n-1}$ with coefficients in some ring $M_*(\Gamma_0)$ of elliptic modular forms in the variable $\tau$, convergent on some open subset of the Siegel domain. The ring of formal power series of this type is not preserved under $\partial_{\tau}$. However, it is preserved under the \emph{Serre derivative} with respect to the variable $\tau$, $$\mathcal{S} F(\tau, \mathfrak{z}) = \frac{\partial}{\partial \tau} F(\tau, \mathfrak{z}) - \frac{k \pi i}{6} E_2(\tau) F(\tau, \mathfrak{z}),$$ where $E_2(\tau) = 1 - 24 \sum_{n=1}^{\infty} \sum_{d | n} d e^{2\pi i n \tau}$ and where $F$ has weight $k$. It is not hard to see that the Jacobian can also be written as the determinant
$$J_{\U}(F_1,...,F_{n+1}) = \left\lvert \begin{array}{cccc}
k_1F_1 & k_2F_2 & \cdots & k_{n+1}F_{n+1} \\ 
\mathcal{S} F_1 & \mathcal{S} F_2 & \cdots & \mathcal{S} F_{n+1} \\
\frac{\partial F_1}{\partial z_1} & \frac{\partial F_2}{\partial z_1} & \cdots & \frac{\partial F_{n+1}}{\partial z_1} \\ 
\vdots & \vdots & \ddots & \vdots \\ 
\frac{\partial F_1}{\partial z_{n-1}} & \frac{\partial F_2}{\partial z_{n-1}} & \cdots & \frac{\partial F_{n+1}}{\partial z_{n-1}}
\end{array}   \right\rvert,
$$ that is, as a determinant of elements of $M_*(\Gamma_0)[[z_1,...,z_{n-1}]]$.
\end{remark}

Similarly to \cite[Theorem 3.5]{Wan20},  we study the Jacobian of generators of a free algebra of unitary modular forms.
\begin{theorem}\label{th:freeJacobian}
Suppose $M_*(\Gamma)$ is a free algebra generated by forms $F_j$ of weight $k_j$ for $1\leq j \leq n+1$.  
\begin{enumerate}
\item The modular group $\Gamma$ is generated by reflections.
\item The Jacobian $J_{\U}=J_{\U}(F_1,...,F_{n+1})$ is a nonzero cusp form for $\Gamma$ of weight $n+1 + \sum_{j=1}^{n+1}k_j$ and character $\det^{-1}$. 
\item The divisor of $J_{\U}$ is the sum of mirrors $r^{\perp}$ of reflections in $\Gamma$, each with multiplicity $$\mathrm{ord}(J_{\U}, r^{\perp}) = -1 + \max\{\mathrm{ord}(\alpha): \; \sigma_{r, \alpha} \in \Gamma\}.$$
\item Let $\{ \Gamma \pi_1, ..., \Gamma \pi_s\}$ denote the $\Gamma$-equivalence classes of mirrors of reflections in $\Gamma$. Then for each $1 \le i \le s$ there exists a modular form $J_i$ for $\Gamma$ with some character (or multiplier system) and divisor $\div(J_i) = \Gamma \pi_i$. 
\end{enumerate}
\end{theorem}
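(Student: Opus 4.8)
The plan is to dispatch the four assertions in the order (2), (1), (3), (4), as each relies on the previous. Part (2) is immediate from Theorem~\ref{th:Jacobian}: since $M_*(\Gamma)=\CC[F_1,\dots,F_{n+1}]$ is a polynomial ring in $n+1$ variables while $\dim\cD_{\U}=n$, the generators are algebraically independent over $\CC$, so Theorem~\ref{th:Jacobian}(2) gives $J_{\U}\neq 0$ and Theorem~\ref{th:Jacobian}(1) supplies the weight $n+1+\sum_j k_j$, the character $\det^{-1}$, and the cusp condition. For part (1), the freeness of $M_*(\Gamma)$ lets me invoke the theorem of Vinberg--Popov \cite{VP89} quoted in the introduction: an arithmetic quotient admits a free algebra of modular forms only when the group is generated by the elements acting as reflections on the domain, and on the ball these are precisely the complex reflections $\sigma_{r,\alpha}$ of \S\ref{sec:reflections}.

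The heart of the matter is part (3). First I would record the lower bound: by Theorem~\ref{th:Jacobian}(3), for each reflection $\sigma_{r,\alpha}\in\Gamma$ one has $\ord(J_{\U},r^{\perp})\equiv -1 \pmod{\ord(\alpha)}$; since the reflections in $\Gamma$ fixing a given mirror $r^{\perp}$ form, by the composition law $\sigma_{r,\alpha}\sigma_{r,\beta}=\sigma_{r,\alpha\beta}$ of \S\ref{sec:reflections}, a finite cyclic group of order $m_r:=\max\{\ord(\alpha):\sigma_{r,\alpha}\in\Gamma\}$, this forces $\ord(J_{\U},r^{\perp})\geq m_r-1$. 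The reverse inequality, together with the absence of any non-mirror component, is where polynomiality must be used essentially. The map $F=(F_1,\dots,F_{n+1})$ identifies the affine cone over $\cD_{\U}/\Gamma$ with $\operatorname{Spec}\CC[F_1,\dots,F_{n+1}]\cong\CC^{n+1}$, and $J_{\U}=\det(\partial F_i/\partial z_j)$ is exactly the Jacobian of this branched quotient map. At a generic point $p$ of a mirror $r^{\perp}$ the stabilizer $\Gamma_p$ is precisely the cyclic reflection group of order $m_r$, and the local quotient $\cD_{\U}/\Gamma_p$ is modeled on $\CC^{n}$ modulo the reflection $w_1\mapsto\zeta w_1$ ($\zeta=e^{2\pi i/m_r}$); the classical Chevalley--Shephard--Todd computation of the Jacobian of a basic system of invariants then shows $J_{\U}$ vanishes to order exactly $m_r-1$ along $r^{\perp}$ and is nonzero off the mirror. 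At a generic point of a divisor that is not a mirror the stabilizer contains no reflection, so $F$ is a local biholomorphism and $J_{\U}\neq 0$ there. Since the cusps and the remaining ramification locus have codimension $\geq 2$ and so contribute no divisor components, this yields $\div(J_{\U})=\sum_r (m_r-1)\,r^{\perp}$, summed over $\Gamma$-classes of mirrors, which is the assertion. (Equivalently one can argue by Riemann--Hurwitz: the remark following Theorem~\ref{th:Jacobian} identifies weight-$(n+1)$ forms of character $\det^{-1}$ with $\Gamma$-invariant top forms, so the ramification divisor of $\cD_{\U}\to\cD_{\U}/\Gamma$ is linearly equivalent to $(n+1+\sum_j k_j)\mathcal{L}=\div(J_{\U})$, and comparing with the lower bound forces equality; this is the analogue of \cite[Theorem~3.5]{Wan20}.)

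For part (4), I would use that $X=\operatorname{Proj}M_*(\Gamma)=\PP(k_1,\dots,k_{n+1})$ is a weighted projective space, hence $\QQ$-factorial with $\operatorname{Pic}(X)\otimes\QQ=\QQ\cdot\mathcal{L}$. The image $\bar\pi_i$ of a mirror class is a prime divisor, so a positive multiple $N\bar\pi_i$ is Cartier and linearly equivalent to $C\mathcal{L}$ for integers $N,C$; its defining section is a modular form $G_i\in M_C(\Gamma)$ of trivial character whose divisor on $\cD_{\U}$ is a positive multiple of $\pi_i$ with no other interior component. Extracting the appropriate root, $J_i:=G_i^{1/N}$ is then a modular form of fractional weight with a multiplier system and $\div(J_i)=\Gamma\pi_i$.

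I expect the exactness in part (3) to be the main obstacle: the congruence of Theorem~\ref{th:Jacobian}(3) only yields $\ord(J_{\U},r^{\perp})\geq m_r-1$, and upgrading this to an equality while simultaneously ruling out spurious components requires the local reflection-group (Chevalley--Shephard--Todd) structure of the quotient, which is available only because $M_*(\Gamma)$ is assumed polynomial. Verifying that the stabilizer of a generic mirror point is exactly the cyclic group of reflections about that mirror, with no additional automorphisms, is the delicate point underlying this step.
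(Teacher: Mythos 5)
Your proposal is correct, and on the two substantive parts it tracks the paper closely: part (2) is read off from Theorem \ref{th:Jacobian} exactly as in the paper, and your part (3) is the same local analysis of the quotient map $\pi_{\U}\colon \cA_{\U}\to\cA_{\U}/\Gamma\hookrightarrow\CC^{n+1}\setminus\{0\}$ --- trivial stabilizer off the mirrors gives $J_{\U}\neq 0$ there, and at a generic mirror point the cyclic stabilizer of order $m_r$ yields the local model $(x_1,x_2,\dots)\mapsto(x_1^{m_r},x_2,\dots)$ and hence vanishing order exactly $m_r-1$; your Chevalley--Shephard--Todd phrasing is precisely this computation. You genuinely diverge in (1) and (4). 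For (1) you quote \cite{VP89} as a black box, whereas the paper derives reflection-generation from the smoothness and simple connectivity of $\cA_{\U}/\Gamma\cong\CC^{n+1}\setminus\{0\}$ via the theorems of Armstrong and Gottschling; the paper's route is self-contained, and note that Gottschling's statement (non-reflection elements have fixed loci of codimension $\geq 2$) is exactly what both you and the paper must invoke again inside part (3) to rule out spurious divisor components and to identify the generic mirror stabilizer, so it cannot really be skipped. For (4) you argue through the divisor class group of the weighted projective space $\operatorname{Proj}M_*(\Gamma)$, while the paper instead raises $J_{\U}$ to the power $a=\operatorname{ord}(\det|_{\Gamma})$ so that $J_{\U}^a=P(F_1,\dots,F_{n+1})$ and matches the irreducible factors of $P$ with the mirror classes; the two arguments are equivalent (both rest on $\CC[F_1,\dots,F_{n+1}]$ being a UFD), but the paper's version produces the $J_i$ explicitly as roots of the irreducible factors of the Jacobian, which is then used elsewhere (e.g.\ Theorem \ref{th:3extra} and \S\ref{sec:nonfree}). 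One small slip in your (4): if $\div(G_i)=N\,m_{r_i}\,\Gamma\pi_i$ on $\cD_{\U}$ (the pullback of $N\bar\pi_i$ acquires the ramification multiplicity $m_{r_i}$), then the root to extract is the $(N m_{r_i})$-th, not the $N$-th as written; your phrase ``the appropriate root'' saves the argument, but the displayed formula $J_i=G_i^{1/N}$ would still carry multiplicity $m_{r_i}$ along $\pi_i$.
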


\begin{proof}
(1) Since $M_*(\Gamma)$ is free,  the projective variety $(\cD_{\U} / \Gamma)^*$ is a weighted projective space and the affine cone $(\cA_{\U} / \Gamma)^*$ over $(\cD_{\U} / \Gamma)^*$ is simply the affine space $\CC^{n+1}$.  Recall that $(\cA_{\U} / \Gamma)^*$ is obtained from $\cA_{\U} / \Gamma$ by adding zero and finitely many one-dimensional cones.  Thus $\cA_{\U} / \Gamma$ is smooth and simply connected.  By a theorem of Armstrong \cite{Arm68} (cf. \cite[Theorem 3.2]{Wan20}),  $\Gamma$ is generated by elements having fixed points, and by a theorem of Gottschling \cite{Got69a} (cf. \cite[Theorem 3.1]{Wan20}), $\Gamma$ is generated by reflections.

(2) This is a consequence of Theorem \ref{th:Jacobian}.

(3) The Baily--Borel compactification $(\cD_{\U}/\Gamma)^*=\operatorname{Proj} (M_*(\Gamma))$ is identified with weighted projective space $\PP(k_1,...,k_{n+1})$ by the map
$$
[z] \mapsto [F_1(z),...,F_{n+1}(z)].
$$
Moreover,  $\cA_{\U}/\Gamma$ is an open subset of $\operatorname{Specm}(M_*(\Gamma)) \backslash \{0\}=\CC^{n+1} \backslash \{0\}$,  and $$z \mapsto (F_1(z),...,F_{n+1}(z))$$ induces a holomorphic map
$$
\pi_{\U}: \cA_{\U} \to \cA_{\U}/\Gamma \hookrightarrow \CC^{n+1}\setminus \{0\}.
$$
If $v\in \cA_{\U}$ has trivial stabilizer $\Gamma_v$, then $\pi_{\U}$ is biholomorphic on a neighborhood of $v$ since the action of $\Gamma$ on $\cA_{\U}$ is properly discontinuous.  Therefore the Jacobian $J_{\U}(v)$ is nonzero.  In particular, Gottschling's theorem implies that $J_{\U}$ is nonzero away from mirrors of reflections in $\Gamma$. Conversely, Theorem \ref{th:Jacobian} $(3)$ implies that $J_{\U}$ indeed vanishes on all mirrors of reflections in $\Gamma$.

We now calculate the multiplicity.   Suppose $r\in L$ is such that $J_{\U}$ vanishes on $r^{\perp}$.  For a generic point $x$ in the mirror $r^\perp $,  the stabilizer $\Gamma_x$ is generated by reflections $\sigma_{r, \alpha}$, $\alpha \in \mathcal{O}_\F^{\times}$.  Let $m_r$ be the order of $\Gamma_x$, and choose coordinates $(x_1, x_2,..., x_{n+1})$ at $x$ such that $r^\perp=\{x_1=0\}$. Locally at $x$, $\pi_{\U}$ has the form
$$
(x_1,x_2,..., x_{n+1}) \mapsto (x_1^{m_r},x_2,..., x_{n+1}),
$$
and we see that $J_{\U}$ vanishes along $\{x_1=0\}$ with multiplicity $m_r-1$.

(4) Let $a$ be the order of the character $\det$ on $\Gamma$. Then $J_{\U}^a\in M_*(\Gamma)$ can be expressed uniquely as a polynomial $P$ in the generators $F_j$. In $\cA_{\U}$, we have $$\div(J_{\U}^a)=a\sum_{i=1}^s c_i\Gamma \pi_i,$$ and the divisor of $J_{\U}^a$ in $\cA_{\U}/ \Gamma \hookrightarrow \CC^{n+1} \backslash \{0\}$ is $a_i\cdot\pi_i$, where $c_i$ are the multiplicities of the divisors and $a_i:=ac_i/(c_i+1)$ are integers. If $P=P_1\cdots P_t$ is the irreducible decomposition over $\CC$, then we obtain the corresponding decomposition of the zero locus in $\PP(k_1,...,k_{n+1})$: $$Z(P)=Z(P_1)\cup \cdots \cup Z(P_t).$$ In particular, each $Z(P_i)$ must correspond to a $\pi_j$ (without loss of generality, $i = j$), so the function $P_i(F_1,...,F_{n+1})$ is a modular form for $\Gamma$ with trivial character and divisor $$\mathrm{div}\, P_i(F_1,...,F_{n+1}) = ac_i \Gamma \pi_i.$$ Its $(ac_i)$-th root is the desired $J_i$.
\end{proof}

We now establish the modular Jacobian criterion for unitary modular forms,  which provides an efficient way to construct free algebras of unitary modular forms.
\begin{theorem}\label{th:modularJacobian}
Let $\Gamma$ be a finite-index subgroup of $\U(L)$.  Assume that there exist $n+1$ modular forms for $\Gamma$ with trivial character whose Jacobian is a nonzero modular form and vanishes precisely on all mirrors of reflections in $\Gamma$ with multiplicity $m-1$,  where $m$ is the maximal order of reflections in $\Gamma$ along the mirror.  Then $M_*(\Gamma) $ is freely generated by the $n+1$ forms and $\Gamma$ is generated by reflections.   
\end{theorem}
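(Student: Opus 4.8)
The plan is to prove freeness by showing directly that \emph{every} homogeneous $F\in M_*(\Gamma)$ of trivial character is a polynomial in $F_1,\dots,F_{n+1}$. Since $J_{\U}\neq 0$ by hypothesis, Theorem \ref{th:Jacobian}(2) already tells us that $F_1,\dots,F_{n+1}$ are algebraically independent, so $A:=\CC[F_1,\dots,F_{n+1}]$ is a polynomial ring of Krull dimension $n+1$. Once I establish $M_*(\Gamma)=A$, the algebra is automatically a free (polynomial) algebra on the $F_j$, and the assertion that $\Gamma$ is generated by reflections then follows immediately from Theorem \ref{th:freeJacobian}(1). So the entire content is the inclusion $M_*(\Gamma)\subseteq A$.

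\textbf{The Euler relation via Cramer's rule.} The engine I would use is a substitution-Jacobian identity. Fix a zero-dimensional cusp and work on the associated Siegel domain $\cH_{\U}$. For homogeneous $F\in M_k(\Gamma)$ of trivial character, introduce the $n+1$ forms
$$
J_i:=J_{\U}(F_1,\dots,F_{i-1},F,F_{i+1},\dots,F_{n+1}),\qquad g_i:=J_i/J_{\U}.
$$
By Theorem \ref{th:Jacobian}(4) the columns $c_j=(k_jF_j,\partial_\tau F_j,\partial_{z_1}F_j,\dots,\partial_{z_{n-1}}F_j)^{T}$ assemble into a matrix with determinant $J_{\U}$, invertible wherever $J_{\U}\neq 0$. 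Writing the analogous column $b=(kF,\partial_\tau F,\partial_{z_1}F,\dots)^{T}$ attached to $F$ as $b=\sum_i g_i\,c_i$ by Cramer's rule (so that $g_i=J_i/J_{\U}$ is exactly the determinant with the $i$-th column replaced by $b$), and reading off the top entry, I obtain the Euler-type relation
$$
kF=\sum_{i=1}^{n+1}k_i\,g_i\,F_i,
$$
valid first on the dense open set $\{J_{\U}\neq 0\}$ and then everywhere by holomorphy once the $g_i$ are known to be holomorphic.

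\textbf{The main obstacle: $g_i$ is a holomorphic modular form.} This is where the sharp divisor hypothesis is indispensable, and I expect it to be the hard part. Both $J_i$ and $J_{\U}$ are modular of character $\det^{-1}$, so $g_i$ is $\Gamma$-invariant of trivial character and weight $k-k_i$. For holomorphy on $\cD_{\U}$: by hypothesis $J_{\U}$ vanishes on $\cD_{\U}$ \emph{precisely} along the reflection mirrors $r^{\perp}$, to order $m-1$ (with $m$ the maximal reflection order along $r^{\perp}$), and is nonzero off the mirrors. Applying the argument of Theorem \ref{th:Jacobian}(3) to $J_i$ — which also has character $\det^{-1}$ — gives $\mathrm{ord}(J_i,r^{\perp})\equiv -1 \pmod{m}$, hence $\mathrm{ord}(J_i,r^{\perp})\geq m-1$. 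Thus the mirror poles of $J_i/J_{\U}$ cancel and $g_i$ is holomorphic on all of $\cD_{\U}$. Holomorphy at the cusps is then automatic from Koecher's principle (we are assuming $n\geq 2$), so $g_i$ extends to a genuine holomorphic modular form; in particular if $k-k_i<0$ it vanishes identically. Everything hinges on matching the vanishing orders along mirrors, which is exactly the precise divisor condition in the statement.

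\textbf{Induction on the weight.} I would conclude by induction on $k$. Forms of weight $0$ are constants and lie in $A$. For $k\geq 1$, each $g_i$ has weight $k-k_i<k$ since every generator satisfies $k_i\geq 1$; by the inductive hypothesis $g_i\in A$ (this includes the case $g_i=0$ when $k-k_i<0$). Substituting into the Euler relation and dividing by $k$ yields $F=\tfrac{1}{k}\sum_i k_i g_i F_i\in A$. Hence $M_*(\Gamma)=A=\CC[F_1,\dots,F_{n+1}]$, which is freely generated because the $F_j$ are algebraically independent, completing the proof.
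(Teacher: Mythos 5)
Your proposal is correct and follows essentially the same route as the paper's proof: the same Cramer's-rule/determinant identity $kF=\sum_i k_i g_i F_i$ with $g_i=J_i/J_{\U}$, the same use of Theorem \ref{th:Jacobian}(3) together with the sharp divisor hypothesis to show each $g_i$ is holomorphic, and the same descent in weight (the paper phrases it as a minimal-weight counterexample rather than induction, but this is the identical argument).
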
 
\begin{proof}
The proof is similar to that of \cite[Theorem 5.1]{Wan20}. Due to the importance of this theorem throughout the rest of the paper, we provide a short proof for the reader's convenience.

Let $f_i$ ($1\leq i\leq n+1$) be $n+1$ unitary modular forms of weight $k_i$ whose Jacobian $J_{\U}$ satisfies the conditions of the theorem.
Suppose that $M_*(\Gamma)$ is not generated by $f_i$. We choose a modular form $f_{n+2} \in M_{k_{n+2}}(\Gamma)$ of minimal weight such that $f_{n+2} \notin \CC[f_1,...,f_{n+1}]$. For $1\leq t \leq n+2$ we define $$J_{t} = J_{\U}(f_1,..., \hat f_t, ..., f_{n+2})$$ as the Jacobian of the $n+1$ modular forms $f_i$ omitting $f_t$ (so in particular $J_{\U}=J_{n+2}$). By Theorem \ref{th:Jacobian}, the Jacobian $J_t$ is a modular form of weight $k = n+1+ \sum_{i \ne t} k_i$ and character $\det^{-1}$ on $\Gamma$. Moreover, for each reflection $\sigma_{r, \alpha} \in \Gamma$, $J_t$ vanishes on $r^\perp$ with multiplicity at least $\mathrm{ord}(\alpha)-1$. Therefore the quotient $g_t := J_t/J_{\U}$ is a holomorphic modular form in $M_*(\Gamma)$.
We compute
$$
0 = \mathrm{det} \begin{psmallmatrix} k_1 f_1 & k_2 f_2 & \cdots & k_{n+2} f_{n+2} \\ k_1 f_1 & k_2 f_2 & \cdots & k_{n+2} f_{n+2} \\ \nabla f_1 & \nabla f_2 & ... & \nabla f_{n+2} \end{psmallmatrix} = \sum_{t=1}^{n+2} (-1)^t k_t f_t J_t = \Big( \sum_{t=1}^{n+2} (-1)^t k_t f_t g_t \Big) \cdot J_{\U},
$$
and therefore
$$
(-1)^{n+1}k_{n+2}f_{n+2}= \sum_{t=1}^{n+1}(-1)^t k_t f_t g_t
$$
because $g_{n+2}=1$. In particular, each $g_t$ has weight strictly less than that of $f_{n+2}$. The construction of $f_{n+2}$ implies that $g_t \in \CC[f_1,...,f_{n+1}]$ and then $f_{n+2} \in \CC[f_1,...,f_{n+1}]$, a contradiction. Therefore $M_*(\Gamma)$ is generated by the $f_i$. By Theorem \ref{th:freeJacobian} (1), $\Gamma$ is generated by reflections.
\end{proof}

\section{Twins of free algebras of modular forms}\label{sec:twins}
In previous work \cite{Wan20a, WW20a, WW20b, WW20c} we constructed a number of free algebras of orthogonal modular forms (some new, some previously known). For each of these the Jacobian $J_{\Orth}$ of the generators is a nonzero cusp form that vanishes exactly on the mirrors of reflections in the modular group with multiplicity one. Some of the modular groups $\Gamma$ we considered are orthogonal groups of lattices $L$ with complex multiplication by the Eisenstein or Gaussian integers (or both). In this section we will show that the Jacobian criterion implies that the algebras of modular forms for the unitary groups $\Gamma \cap \U(L)$ are also freely generated.

We first describe the connection between two types of Jacobians.

\begin{proposition}\label{prop:nonzeroJ}
Let $L$ be an even Hermitian lattice of signature $(n,1)$ with $n > 1$.  Let $\Gamma$ be a finite-index subgroup of $\Orth^+(L_\ZZ)$.  Let $F_0,...,F_{2n}$ be orthogonal modular forms for $\Gamma$, with restrictions $f_0,...,f_{2n}$ to $\Gamma \cap \U(L)$. Suppose $$f_{n+1},...,f_{2n} \equiv 0$$ but the restriction of the orthogonal Jacobian to $\U(L)$ is not identically zero. Then $f_0,...,f_n$ are algebraically independent over $\CC$.
\end{proposition}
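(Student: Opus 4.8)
The plan is to reduce the statement to a non-vanishing of the unitary Jacobian and then to factor the restricted orthogonal Jacobian. By Theorem \ref{th:Jacobian}(2) applied to the unitary group, the forms $f_0,\ldots,f_n$ are algebraically independent over $\CC$ precisely when $J_\U(f_0,\ldots,f_n)\not\equiv 0$, so it suffices to extract this from the hypothesis $(J_\Orth(F_0,\ldots,F_{2n}))_\U\not\equiv 0$. First I would write $J_\Orth$ in the tube-domain coordinates of \S\ref{sec:embedding}. Since $L_\ZZ$ has signature $(2n,2)$, the orthogonal Jacobian of the $2n+1$ forms $F_0,\ldots,F_{2n}$ equals, up to a nonzero constant (the orthogonal analogue of Theorem \ref{th:Jacobian}(4)), the determinant of the $(2n+1)\times(2n+1)$ matrix whose first row is $(k_jF_j)_j$ and whose remaining $2n$ rows are the derivatives along $\tau$, the $n-1$ directions $\mathfrak{z}_{\text{hol}}$, the $n-1$ directions $\mathfrak{z}_{\text{anti}}$, and $w$.

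The unitary sublocus is cut out by $\mathfrak{z}_{\text{anti}}=0$ and $w=-\overline{\zeta}$, so I split the rows into $n+1$ \emph{tangent} operators (the weight row together with $\partial_\tau$ and the $\partial_{\mathfrak{z}_{\text{hol}}}$) and $n$ \emph{normal} operators ($\partial_{\mathfrak{z}_{\text{anti}}}$ and $\partial_w$); the key numerical coincidence is that the number of normal directions equals the number of forms $F_{n+1},\ldots,F_{2n}$ whose restrictions vanish. For $j\geq n+1$ the restriction $f_j$ is identically zero, so every tangent derivative of $F_j$ also vanishes on the sublocus, and only the normal entries of the columns $n+1,\ldots,2n$ survive. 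Ordering the columns as $(0,\ldots,n\,;\,n+1,\ldots,2n)$ and the rows as (tangent; normal), the restricted matrix becomes block triangular with vanishing upper-right block, whence
$$
(J_\Orth)_\U = \det(A)\cdot\det(B)
$$
up to a nonzero constant, where $A$ is the tangent block on $F_0,\ldots,F_n$ and $B$ the normal block on $F_{n+1},\ldots,F_{2n}$. Since tangent differentiation commutes with restriction, $A$ is exactly the matrix computing $J_\U(f_0,\ldots,f_n)$ in the form of Theorem \ref{th:Jacobian}(4), so $\det(A)=J_\U(f_0,\ldots,f_n)$ up to a constant. As the left-hand side is not identically zero, neither factor is, giving $J_\U(f_0,\ldots,f_n)\not\equiv 0$ and hence the claim.

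I expect the main difficulty to be bookkeeping rather than conceptual. One must set up the coordinates of \S\ref{sec:embedding} precisely, confirm the dimension count that makes the two diagonal blocks square (the $n$ normal directions matching the $n$ vanishing restrictions), and verify that the tangent block genuinely reproduces the Siegel-domain form of the unitary Jacobian. Once the zero block is in place the factorization is immediate, and the possible vanishing of $\det(B)$ is irrelevant, since a nonzero product forces each factor to be nonzero.
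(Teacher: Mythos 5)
Your argument is correct and follows essentially the same route as the paper's proof: restrict the orthogonal Jacobian to the tube-domain coordinates of \S\ref{sec:embedding}, observe that the tangent derivatives ($\partial_\tau$, $\partial_{\mathfrak{z}_{\text{hol}}}$, and the weight row) of $F_{n+1},\dots,F_{2n}$ vanish on $\cD_{\U}$, and read off the resulting block-triangular determinant $(J_{\Orth})_{\U}=J_{\U}(f_0,\dots,f_n)\cdot\det(B)$. The dimension count and the identification of the tangent block with the Siegel-domain form of $J_{\U}$ are exactly as in the paper, so no further changes are needed.
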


\begin{proof}
Let $J_{\Orth}$ be the orthogonal Jacobian of $F_0,...,F_{2n}$ and let $J_{\U}$ be the unitary Jacobian of $f_0,...,f_n$. We use the embedding of $\U(n,1)$ in $\Orth(2n,2)$ and the notation in \S \ref{sec:embedding}. View the $F_j$ as functions on the orthogonal tube domain about a fixed zero-dimensional cusp and write them in the form $F_j(\tau, \mathfrak{z}_{\text{hol}}, \mathfrak{z}_{\text{anti}}, w)$, such that $$f_j(\tau, \mathfrak{z}_{\text{hol}}) = F_j(\tau, \mathfrak{z}_{\text{hol}}, 0, -\overline{\zeta}).$$ By construction, $\partial_{\tau}$ and $\partial_{\mathfrak{z}_{\text{hol}}}$ span the tangent space of the Siegel domain $\cH_{\U}$ for $\U(n,1)$. Since $F_{n+1},...,F_{2n}$ vanish identically on $\cH_{\U}$, this is also true for their derivatives with respect to $\tau$ and $\mathfrak{z}_{\text{hol}}$. Moreover, the derivatives of $f_0,...,f_n$ with respect to $\tau$ and $\mathfrak{z}_{\text{hol}}$ equal the restrictions of the derivatives of $F_j$. It follows that $J_{\Orth}$ restricts to the determinant of a block matrix $$J_{\Orth}(\tau, \mathfrak{z}_{\text{hol}}, 0, -\overline{\zeta}) = \mathrm{det} \begin{pmatrix} k_0 f_0 & ... & k_n f_n & 0 & ... & 0 \\ \partial_{\tau} f_0 & ... & \partial_{\tau} f_n & 0 & ... & 0 \\ \nabla_{\mathfrak{z}_{\text{hol}}} f_0 & ...& \nabla_{\mathfrak{z}_{\text{hol}}} f_n & 0 & ... & 0 \\ \nabla_{\mathfrak{z}_{\text{anti}}}|_0 F_0 & ... & \nabla_{\mathfrak{z}_{\text{anti}}}|_0 F_n & \nabla_{\mathfrak{z}_{\text{anti}}}|_0 F_{n+1} & ... & \nabla_{\mathfrak{z}_{\text{anti}}}|_0 F_{2n} \\ \partial_w |_{-\overline{\zeta}} F_0 & ... & \partial_w |_{-\overline{\zeta}} F_n & \partial_w |_{-\overline{\zeta}} F_{n+1} & ... & \partial_w |_{-\overline{\zeta}} F_{2n}  \end{pmatrix}$$ in which $J_{\U}$ is the determinant of the upper-left block. Since $J_{\Orth}$ is not identically zero, $J_{\U}$ is also not identically zero.
\end{proof}

\begin{theorem}\label{th:twins}
Let $L$ be an even Hermitian lattice of signature $(n,1)$ over $\F=\QQ(\sqrt{-1})$ or $\QQ(\sqrt{-3})$ with $n>1$.  Let $\Gamma= \widetilde{\Orth}^+(L_\ZZ)$ or $\Orth^+(L_\ZZ)$ and define $\Gamma_{\U}:=\Gamma \cap \U(L)$.  Suppose $M_*(\Gamma)$ is freely generated by $2n+1$ orthogonal  modular forms, $n$ of whose restrictions to $\cD_{\U}$ are identically zero.  Then $M_*(\Gamma_{\U})$ is freely generated by the restrictions to $\cD_{\U}$ of the remaining $n+1$ generators.
\end{theorem}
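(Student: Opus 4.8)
The plan is to verify the hypotheses of the modular Jacobian criterion (Theorem~\ref{th:modularJacobian}) for the $n+1$ nonzero restrictions. Write $F_0,\dots,F_{2n}$ for the free generators of $M_*(\Gamma)$, of weights $k_0,\dots,k_{2n}$, relabelled so that $f_{n+1}=\dots=f_{2n}\equiv 0$ while $f_0,\dots,f_n\not\equiv 0$; each $f_j$ is a modular form for $\Gamma_\U$ with trivial character. By Theorem~\ref{th:modularJacobian} it suffices to show that the unitary Jacobian $J_\U:=J_\U(f_0,\dots,f_n)$ is nonzero and that $\div(J_\U)=\sum_r (m_r-1)\,r^\perp$, summed over the mirrors of reflections $\sigma_{r,\alpha}\in\Gamma_\U$, where $m_r$ is the maximal order of such a reflection along $r^\perp$.

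First I would establish nonvanishing and the key factorization. Since $M_*(\Gamma)$ is free, the orthogonal Jacobian $J_\Orth:=J_\Orth(F_0,\dots,F_{2n})$ is, by the orthogonal counterpart of Theorem~\ref{th:freeJacobian} (see \cite{Wan20}), a nonzero cusp form whose divisor is the reduced sum of the mirrors $s^\perp$ of the order-two reflections $\sigma_s\in\Gamma$. As $\cD_\U$ is irreducible and $J_\Orth$ vanishes only on a locally finite union of mirrors, $J_\Orth|_{\cD_\U}\equiv 0$ would force $\cD_\U\subseteq s^\perp$ for some $s$; but the identity $\latt{z,s}=(z,s)$ for $[z]\in\cD_\U$ (which follows from $Jz=iz$) shows that $\latt{z,s}\equiv 0$ on the open set $\cD_\U$ is impossible for $s\neq 0$. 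Hence $J_\Orth|_{\cD_\U}\not\equiv 0$, and Proposition~\ref{prop:nonzeroJ} gives that $f_0,\dots,f_n$ are algebraically independent; moreover its proof yields, up to a nonzero constant, the factorization $J_\Orth|_{\cD_\U}=J_\U\cdot G$, where $G$ is the determinant of the transverse ($\mathfrak{z}_{\mathrm{anti}}$- and $w$-) derivatives of $F_{n+1},\dots,F_{2n}$. Thus $J_\U\neq 0$, $G$ is a holomorphic modular form for $\Gamma_\U$, and $\div(J_\U)\le \div(J_\Orth|_{\cD_\U})$.

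Next I would compute the multiplicities. The identity $\latt{z,s}=(z,s)$ shows that every orthogonal mirror restricts to a unitary mirror, and that the orthogonal mirrors through a generic point of a unitary mirror $r^\perp$ are exactly the $s^\perp$ with $s\in\mathcal{O}_\F r$ primitive and $\sigma_s\in\Gamma$; since $\latt{z,s}|_{\cD_\U}=\bar a\,\latt{z,r}|_{\cD_\U}$ for $s=ar$ vanishes to first order along $r^\perp$, it follows that $\ord(J_\Orth|_{\cD_\U},r^\perp)=c_r$, where $c_r$ is the number of primitive $s\in\mathcal{O}_\F r$, up to sign, with $\sigma_s\in\Gamma$. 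On the other hand $\det(\sigma_{r,\alpha})=\alpha$ while the real determinant of $\sigma_{r,\alpha}$ is $1$ (because $\U(V)\subseteq\SO(V)$), so the order-of-vanishing argument of Theorem~\ref{th:Jacobian}(3), applied to $J_\U$ (character $\det^{-1}$) and to $G$ (whose character sends $\sigma_{r,\alpha}$ to $\alpha$), gives $\ord(J_\U,r^\perp)\equiv -1$ and $\ord(G,r^\perp)\equiv 1\pmod{m_r}$; in particular $\ord(J_\U,r^\perp)\ge m_r-1$ and $\ord(G,r^\perp)\ge 1$. Adding, $c_r=\ord(J_\U,r^\perp)+\ord(G,r^\perp)\ge m_r$.

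The main obstacle is the reverse bound $c_r\le m_r$, which then forces $\ord(J_\U,r^\perp)=m_r-1$ and $\ord(G,r^\perp)=1$. I would prove $c_r=m_r$ by a dihedral argument. The factorization already gives $c_r\ge\ord(J_\U,r^\perp)\ge 1$, so at least one orthogonal reflection lies over each unitary mirror; the orthogonal reflections $\sigma_s$ ($s\in\mathcal{O}_\F r$, $\sigma_s\in\Gamma$) act on the real plane $P=(\mathcal{O}_\F r)\otimes\RR$ as reflections, and any product of two of them lies in $\U(V)\cap\Gamma$ and fixes $r^\perp$ pointwise, hence is a unitary reflection $\sigma_{r,\alpha}$. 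Thus these $\sigma_s$ generate a finite dihedral group on $P$ whose rotation subgroup is precisely the cyclic group of the $m_r$ unitary reflections along $r^\perp$; such a group has exactly $m_r$ reflections, so $c_r=m_r$. Here Lemmas~\ref{lem:Gaussian} and \ref{lem:Eisenstein} supply the explicit dictionary between the $\sigma_s$ and the $\sigma_{r,\alpha}$; note in particular that the exotic biflections of Lemma~\ref{lem:Eisenstein}(3), which have $c_r=0$ as they possess no orthogonal counterpart, cannot occur under our hypotheses, since $c_r\ge 1$ was forced above. Combining with the previous paragraph gives $\div(J_\U)=\sum_r(m_r-1)\,r^\perp$ exactly, so Theorem~\ref{th:modularJacobian} applies and $M_*(\Gamma_\U)$ is freely generated by $f_0,\dots,f_n$. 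The case $\Gamma=\widetilde{\Orth}^+(L_\ZZ)$ is handled identically, and is simpler: only biflections resp.\ triflections occur, and the exotic biflections, which require $\latt{r,r}\ge 2$, are then absent.
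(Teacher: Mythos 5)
Your proposal is correct and reaches the conclusion by the same overall skeleton as the paper (nonvanishing of $J_{\U}$ via Proposition~\ref{prop:nonzeroJ} and the block factorization $J_{\Orth}|_{\cD_{\U}}=J_{\U}\cdot G$, then the criterion of Theorem~\ref{th:modularJacobian}), but the decisive multiplicity computation is done by a genuinely different route. The paper determines $c_r=\ord(J_{\Orth}|_{\cD_{\U}},r^\perp)$ case by case from Lemmas~\ref{lem:kernelreflection}, \ref{lem:Gaussian} and \ref{lem:Eisenstein} (obtaining $2$, $3$, $6$ or $3$ according to the type of reflection), and in the triflection subcase must rule out extra vanishing by an ad hoc arithmetic contradiction involving the element $1+\omega$; you instead prove $c_r=m_r$ uniformly, getting $c_r\ge m_r$ from the congruence $\ord(G,r^\perp)\equiv 1\pmod{m_r}$ (using that $G$ carries the character $\det_{\U}$, an ingredient absent from the paper) and $c_r\le m_r$ from the dihedral group generated on the plane $\CC r$ by the orthogonal reflections lying over $r^\perp$. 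This is cleaner, avoids the case analysis entirely, and would in principle work over any imaginary quadratic field. Strictly speaking the dihedral argument alone already yields $c_r=m_r$, so the $G$-character step is redundant, though harmless. One point deserves more care than you give it: you must also know that $J_{\U}$ does not vanish along a component of $\div(J_{\Orth}|_{\cD_{\U}})$ that fails to be the mirror of a reflection in $\Gamma_{\U}$, and your justification of ``every orthogonal mirror restricts to a unitary mirror'' by the identity $\latt{z,s}=(z,s)$ only shows the restriction is a rational quadratic divisor, not that it carries a reflection of $\Gamma_{\U}$. This is rescued either by the explicit dictionary of Lemmas~\ref{lem:Gaussian} and \ref{lem:Eisenstein} (which you do invoke), or by noting that $\Gamma$ is normalized by the unit group, so the orthogonal reflections over a fixed unitary divisor come in groups of at least two and the rotation subgroup of your dihedral group is then automatically a nontrivial group of unitary reflections in $\Gamma_{\U}$; either way it is a presentational slip rather than a genuine gap.
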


\begin{proof}
We denote the $2n+1$ forms by $F_j$, $0 \le j \le 2n$ and their restrictions to $\cD_{\U}$ by $f_j$, and suppose $f_j = 0$ for $j \ge n+1$. Let $J_{\Orth}$ be the orthogonal Jacobian of the $F_j$. Note that $J_{\Orth}$ has its zero locus supported on Heegner divisors so its restriction to $\cD_{\U}$ is not identically zero. By Proposition \ref{prop:nonzeroJ},  the unitary Jacobian $J_{\U}$ of these $f_j$ with $0\leq j \leq n$ is also nonzero. By Theorem \ref{th:Jacobian}, $J_{\U}$ vanishes on all mirrors of reflections in $\Gamma_{\U}$; and by Lemma \ref{lem:kernelreflection}, Lemma \ref{lem:Gaussian}, Lemma \ref{lem:Eisenstein} and our discussions in \S\ref{sec:reflections}, the restriction of $J_{\Orth}$ to $\cD_{\U}$ vanishes only on mirrors of reflections in $\Gamma_{\U}$. Therefore $J_{\U}$ vanishes exactly on all mirrors of reflections in $\Gamma_{\U}$.  To finish the proof, we have to show that the orders of vanishing of $J_{\U}$ are exactly those required to apply Theorem \ref{th:modularJacobian}.

We first consider the simpler case $\Gamma= \widetilde{\Orth}^+(L_\ZZ)$. In this case, $J_{\Orth}$ vanishes precisely on all mirrors associated with $2$-roots of $L$.  When $d=-1$, by Lemma \ref{lem:kernelreflection}, the restriction of $J_{\Orth}$ vanishes precisely on all mirrors of reflections in $\widetilde{\U}(L)$ with multiplicity 2. By Theorem \ref{th:Jacobian}, $J_{\U}$ vanishes on the mirror of a reflection in $\widetilde{\U}(L)$ with multiplicity $2t-1$ for some positive integer $t$, because the order of the corresponding reflection is $2$. Thus the multiplicity of every divisor of $J_{\U}$ is one, as required. The proof when $d=-3$ is similar.

Now we consider the case $\Gamma=\Orth^+(L_{\ZZ})$. We only prove the result when $d=-3$ as $d=-1$ may be treated similarly. Let $r$ be a primitive vector of positive norm in $L$ for which $\sigma_r\in \Orth^+(L_{\ZZ})$. First suppose that the unitary reflection $\sigma_{r,\omega}$ lies in $\U(L)$, where $\omega = e^{\pi i / 3}$, so $J_{\U}$ vanishes on the mirror $r^\perp$ with multiplicity $6t-1$ for some positive integer $t$. By Lemma \ref{lem:Eisenstein}, $\sigma_{r,\omega}\in \U(L)$ if and only if $\sigma_{(1+\omega)r}\in \Orth^+(L_{\ZZ})$; since $r$ and $(1+\omega)r$ are both primitive in $L_{\ZZ}$ and their associated reflections lie in $\Orth^+(L_{\ZZ})$, the multiplicity of $r^\perp$ in the divisor of the restriction of $J_{\Orth}$ is 6. Therefore $J_{\U}$ must vanish on $r^\perp$ to multiplicity exactly $5$. Next suppose that $\sigma_{r,\omega}\not\in \U(L)$. By Lemma \ref{lem:Eisenstein}, $\sigma_{r,-\omega} \in \U(L)$ and $J_{\U}$ vanishes on the mirror $r^\perp$ with multiplicity $3t-1$ for some positive integer $t$. If the restriction of $J_{\Orth}$ vanishes on $r^\perp$ with multiplicity 3, then $J_{\U}$ vanishes on $r^\perp$ with multiplicity $2$ as claimed. Otherwise, the restriction of $J_{\Orth}$ vanishes on $r^\perp$ to order greater than 3, so there exists $a\in \mathcal{O}_{\F} \setminus \mathcal{O}_{\F}^\times$ such that $ar$ is primitive in $L_{\ZZ}$ and $\sigma_{ar}\in \Orth^+(L_{\ZZ})$. This implies that $2(L,ar)/(ar,ar)\subseteq \ZZ$ and therefore $r/(\bar{a}\latt{r,r})\in L'$. Thus $$\latt{r,r/(\bar{a}\latt{r,r})}=1/a\in \sqrt{-3}^{-1}\cdot\ZZ[\omega].$$ Since $ar$ was primitive in $L_{\ZZ}$, 
$a$ must equal $1+\omega$ up to a unit multiple. This is a contradiction because $\sigma_{r,\omega}\not\in \U(L)$.
\end{proof}

In practice, for the full unitary group it is often quite easy to find generators satisfying the conditions of Theorem \ref{th:twins} due to the following lemma.

\begin{lemma}\label{lem:zero}
Suppose $\U(L)$ admits nonzero modular forms of integral weight $k$ and trivial character. 
\begin{enumerate}
    \item If $d=-1$, then $k$ is a multiple of $4$. 
    \item If $d=-3$, then $k$ is a multiple of $6$. 
\end{enumerate}
\end{lemma}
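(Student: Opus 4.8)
The plan is to exploit the scalar transformations coming from the units of $\mathcal{O}_\F$, which act trivially on the ball $\cD_\U$ but nontrivially on the affine cone $\cA_\U$. For any $\alpha \in \mathcal{O}_\F^\times$, the map $\alpha \cdot \mathrm{Id}\colon z \mapsto \alpha z$ preserves the Hermitian form, since $\latt{\alpha x, \alpha y} = \alpha\overline{\alpha}\,\latt{x,y} = \latt{x,y}$ (the norm of a unit being $1$), and it preserves the lattice, since $\alpha L = L$. Hence $\alpha \cdot \mathrm{Id} \in \U(L)$, and it acts on $\cA_\U$ as genuine scalar multiplication by $\alpha$.

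First I would apply the two defining conditions of a weight-$k$ modular form with trivial character to the element $\gamma = \alpha \cdot \mathrm{Id}$. Homogeneity gives $F(\alpha z) = \alpha^{-k} F(z)$, while modularity with trivial character gives $F(\alpha z) = F(z)$. Since $F \not\equiv 0$, comparing the two identities forces $\alpha^{-k} = 1$, i.e.\ $\alpha^k = 1$, and this must hold for every unit $\alpha \in \mathcal{O}_\F^\times$.

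It then remains only to read off the numerical constraint from the structure of $\mathcal{O}_\F^\times$. When $d = -1$ the unit $\alpha = i$ has order $4$, so $i^k = 1$ is equivalent to $4 \mid k$, which is (1); when $d = -3$ the unit $\alpha = e^{\pi i / 3}$ is a primitive sixth root of unity generating $\mathcal{O}_\F^\times$, so $\alpha^k = 1$ is equivalent to $6 \mid k$, which is (2).

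There is no serious obstacle here: the whole argument is a one-line computation once one observes that the unit scalars lie in $\U(L)$. The only point needing (minor) care is the verification that $\alpha \cdot \mathrm{Id}$ belongs to $\U(L)$ and not merely to $\U(V)$, which is immediate from $\alpha\mathcal{O}_\F = \mathcal{O}_\F$. I would also note in passing that the identical reasoning gives the weaker divisibility $2 \mid k$ over any other imaginary quadratic field, where $\mathcal{O}_\F^\times = \{\pm 1\}$, which is consistent with the fact that the richer reflection phenomena (and hence the free algebras) appear only for $d = -1$ and $d = -3$.
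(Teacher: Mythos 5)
Your argument is exactly the paper's proof: one observes that scalar multiplication by a unit $\alpha\in\mathcal{O}_\F^\times$ lies in $\U(L)$, then compares the homogeneity relation $F(\alpha z)=\alpha^{-k}F(z)$ with invariance $F(\alpha z)=F(z)$ to force $\mathrm{ord}(\alpha)\mid k$, and reads off $4\mid k$ resp.\ $6\mid k$ from the unit groups of $\ZZ[i]$ and $\ZZ[e^{\pi i/3}]$. The proposal is correct and takes essentially the same route.
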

\begin{proof}
For any unit $\alpha$ of the integers of $\QQ(\sqrt{d})$, the map $\alpha: z \mapsto \alpha\cdot z$ defines an automorphism in $\U(L)$. Suppose $\alpha$ has order $a$. For a modular form $F\in M_k(\U(L))$,
$$
F(z)=F(\alpha z)=\alpha^{-k}F(z),
$$
which (if $F$ is nonzero) implies $a | k$.
\end{proof}

By applying this to the structure results for $M_*(\Orth^+(L))$ in our previous work we immediately obtain the following free algebras of modular forms for $\U(L)$. Note that in the pairs of lattices $2U\oplus 2A_1$ -- $2U(2)\oplus 2A_1$ and $2U\oplus D_4$ -- $2U(2)\oplus D_4$ of Table \ref{tab:1full} and $2U\oplus A_2$ -- $2U(3)\oplus A_2$ and $2U\oplus D_4$ -- $2U(2)\oplus D_4$ of Table \ref{tab:3full}, there are isomorphisms of both the full orthogonal groups and the full unitary groups; and that in both tables for $L_\ZZ=2U\oplus E_8$  we have $\Orth^+(L_\ZZ)=\widetilde{\Orth}^+(L_\ZZ)$ and $\U(L)=\widetilde{\U}(L)$.

\begin{table}[ht]
\caption{Free algebras of modular forms for $\U(L)$ over $\QQ(\sqrt{-1})$}\label{tab:1full}
\renewcommand\arraystretch{1.5}
\noindent\[
\begin{array}{|c|c|c|}
\hline 
L_\ZZ & \text{weights of generators of $M_*(\Orth^+(L_\ZZ))$} & \text{weights of generators of $M_*(\U(L))$} \\ 
\hline 
2U\oplus D_4 & 4,  6,  10,  12,  16,  18,  24  & 4,  12,  16,  24 \\
\hline
2U\oplus D_6 &  4,6,8,10,12, 12,  14,16,18 & 4,  8,  12,  12,  16 \\
\hline
2U\oplus D_8 &   4,6,8, 8,  10,10,12,12,14,16,18 & 4,  8,  8,  12,  12,  16 \\
\hline 
2U\oplus E_8 & 4, 10, 12, 16, 18, 22, 24, 28, 30, 36, 42 & 4,  12,  16,  24,  28,  36 \\
\hline 
U\oplus U(2) \oplus D_4 & 2,6,8,10,12,16,20 & 8,  12,  16,  20 \\
\hline 
2U\oplus 2A_1 & 4,6,8,10,12 & 4,  8,  12 \\
\hline
2U\oplus 4A_1 & 4,4,6,6,8,10,12 & 4,  4,  8,  12 \\
\hline
U\oplus U(2)\oplus 2A_1 & 2, 4, 6, 8, 12 & 4,  8,  12 \\
\hline
\end{array} 
\]
\end{table}

\begin{table}[ht]
\caption{Free algebras of modular forms for $\U(L)$ over $\QQ(\sqrt{-3})$}\label{tab:3full}
\renewcommand\arraystretch{1.5}
\noindent\[
\begin{array}{|c|c|c|}
\hline 
L_\ZZ & \text{weights of generators of $M_*(\Orth^+(L_\ZZ))$} & \text{weights of generators of $M_*(\U(L))$} \\ 
\hline 
2U\oplus A_2 & 4, 6, 10, 12, 18 & 6, 12, 18 \\
\hline
2U\oplus D_4 & 4,  6,  10,  12,  16,  18,  24  & 6, 12, 18, 24 \\
\hline
2U\oplus E_8 & 4, 10, 12, 16, 18, 22, 24, 28, 30, 36, 42 & 12, 18, 24, 30, 36, 42 \\
\hline 
\end{array} 
\]
\end{table}

We formulate the pairs of free algebras of modular forms for the discriminant kernels $\widetilde{\Orth}^+(L_\ZZ)$ and $\widetilde{\U}(L)$ in Table \ref{tab:1kernel} and Table \ref{tab:3kernel}. In Table \ref{tab:1kernel},  the unitary Jacobian $J_{\U}$ and orthogonal Jacobian $J_{\Orth}$ are related by $$J_{\U} = \Big(J_{\Orth} \Big|_{\cD_{\U}(L)}\Big)^{1/2},$$ and in Table \ref{tab:3kernel} they are related by $$J_{\U} = \Big( J_{\Orth} \Big|_{\cD_{\U}(L)} \Big)^{2/3}.$$ To find $n$ generators of orthogonal type whose restrictions are identically zero in this case, we can often use the fact that some generators for $\widetilde{\Orth}^+(L_\ZZ)$ are in fact modular forms for $\Orth^+(L_{\ZZ})$ with a character. In this case we can use an argument similar to Lemma \ref{lem:zero} to decide when their restrictions are identically zero. As an example, for $L_{\ZZ}=2U\oplus E_6$ in Table \ref{tab:3kernel}, the generators of even weight and the squares of the generators of odd weight belong to $M_*(\Orth^+(L_{\ZZ}))$, so the restrictions of the weight 4, 7, 10, 16 generators vanish. This trick quickly solves all cases except the last lattice in Table \ref{tab:1kernel} and the last three lattices in Table \ref{tab:3kernel}. For the four cases, we compute the restrictions of generators using the Taylor expansion of unitary modular forms introduced at the end of \S \ref{sec:Taylor}. In this way, we can determine the kernel of the restriction map. We then modify the generators such that they contain $n$ forms in the kernel.  This gives a desired set of generators.

\begin{table}[ht]
\caption{Free algebras of modular forms for $\widetilde{\U}(L)$ over $\QQ(\sqrt{-1})$}\label{tab:1kernel}
\renewcommand\arraystretch{1.5}
\noindent\[
\begin{array}{|c|c|c|}
\hline 
L_\ZZ & \text{weights of generators of $M_*(\widetilde{\Orth}^+(L_\ZZ))$} & \text{weights of generators of $M_*(\widetilde{\U}(L))$} \\ 
\hline 
2U\oplus D_4 &  4,  6,  8,  8,  10,  12,  18 & 4,  8,  8,  12 \\
\hline
2U\oplus D_6 &  4,6,6,8,10,12,14,16,18 & 4,  6,  8,  12,  16 \\
\hline
2U\oplus D_8 &   4,4,6,8,10,10,12,12,14,16,18 & 4,  4,  8,  12,  12,  16 \\
\hline 
U\oplus U(2) \oplus D_4 & 2, 4, 4, 4, 4, 6, 10 & 4,  4,  4,  4 \\
\hline 
2U(2) \oplus D_4 & 2,  2,  2,  2,  2,  2,  6 & 2,  2,  2,  2 \\
\hline
\end{array} 
\]
\end{table}

\begin{table}[ht]
\caption{Free algebras of modular forms for $\widetilde{\U}(L)$ over $\QQ(\sqrt{-3})$}\label{tab:3kernel}
\renewcommand\arraystretch{1.5}
\noindent\[
\begin{array}{|c|c|c|}
\hline 
L_\ZZ & \text{weights of generators of $M_*(\widetilde{\Orth}^+(L_\ZZ))$} & \text{weights of generators of $M_*(\widetilde{\U}(L))$} \\ 
\hline 
2U\oplus A_2 &  4,6,9,10,12 & 6, 9, 12  \\
\hline
2U\oplus D_4 & 4,  6,  8,  8,  10,  12,  18 & 6,  8, 12, 18 \\
\hline 
2U\oplus E_6 & 4,6,7,10,12,15,16,18,24 & 6, 12, 15, 18, 24 \\
\hline 
U\oplus U(3)\oplus A_2 & 1, 3, 3, 3, 4 & 3, 3, 3\\
\hline
2U(2)\oplus A_2 & 2,2,2,2,3 & 2, 2, 3 \\
\hline 
2U(3)\oplus A_2 & 1, 1, 1,  1,  1 & 1, 1, 1 \\
\hline
2U(2)\oplus D_4 & 2, 2, 2, 2, 2, 2, 6 & 2, 2, 2, 6\\
\hline
\end{array} 
\]
\end{table}

Finally, we consider three exceptional cases over $\QQ(\sqrt{-3})$. When $L_\ZZ=2U\oplus E_6$,  $2U(2)\oplus A_2$,  or $U\oplus U(3)\oplus A_2$,  the algebra $M_*(\Orth^+(L_\ZZ))$ is not free. However, the algebra of modular forms for the maximal reflection subgroup $\Orth_r(L_{\ZZ})$ is free and it is easy to describe the structure of $M_*(\Orth^+(L_\ZZ))$ in terms of the larger algebra $M_*(\Orth_r(L_\ZZ))$. Using these facts, we can show that $M_*(\U(L))$ is freely generated by restrictions of the generators of $M_*(\Orth^+(L_\ZZ))$.  

\begin{theorem}\label{th:3extra}
Let $L$ be a Hermitian lattice over $\QQ(\sqrt{-3})$ whose associated $\ZZ$-lattice is  $2U\oplus E_6$,  $2U(2)\oplus A_2$,  or $U\oplus U(3)\oplus A_2$.  Then $M_*(\U(L))$ is freely generated by forms of the following weights:
\begin{align*}
& 2U\oplus E_6 & &  6, 12, 18, 24, 30 & \\
& 2U(2)\oplus A_2 & & 6, 6, 12 & \\
& U\oplus U(3)\oplus A_2 & & 6, 12, 18 &  
\end{align*}
\end{theorem}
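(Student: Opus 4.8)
The obstruction here is precisely that $M_*(\Orth^+(L_\ZZ))$ fails to be free, so Theorem \ref{th:twins} does not apply directly. The plan is to replace $\Orth^+(L_\ZZ)$ throughout by its maximal reflection subgroup $\Orth_r(L_\ZZ)$, whose algebra of modular forms \emph{is} free, and to run the twin construction there. Accordingly I would begin by recording the free generators of $M_*(\Orth_r(L_\ZZ))$ — there are $2n+1$ of them, since $\cD^+(L)$ has dimension $2n$ — together with the realization of $M_*(\Orth^+(L_\ZZ))$ as the subring fixed by the finite quotient $G := \Orth^+(L_\ZZ)/\Orth_r(L_\ZZ)$. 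For each of the three lattices $G$ is generated by the diagram automorphism of the positive-definite root part ($E_6$ in the first case, $A_2$ in the other two).

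The crucial structural point is that $\U(L) \subseteq \Orth_r(L_\ZZ)$, so that $\Orth_r(L_\ZZ) \cap \U(L) = \U(L)$. Here one uses that the diagram automorphism is \emph{conjugate-linear} with respect to the complex structure $J$ attached to the action of $\mathcal{O}_\F$ on $L_\ZZ$: it anticommutes with $J$ and hence violates the defining relation $J\gamma J = -\gamma$ of the unitary group $\U(V)$. Consequently no nontrivial coset of $\Orth_r(L_\ZZ)$ in $\Orth^+(L_\ZZ)$ meets $\U(L)$, and every unitary automorphism is a product of orthogonal reflections.

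With this identification I apply the twin construction with $\Gamma = \Orth_r(L_\ZZ)$ and $\Gamma_{\U} = \U(L)$. A direct count shows that in each case exactly $n$ of the $2n+1$ free generators have weight not divisible by $6$; by Lemma \ref{lem:zero} these restrict to zero on $\cD_\U$. Since the orthogonal Jacobian $J_{\Orth_r}$ has divisor supported on Heegner divisors, its restriction to $\cD_\U$ is not identically zero, so Proposition \ref{prop:nonzeroJ} applies and shows that the remaining $n+1$ restrictions $f_0,\dots,f_n$ — of weights $6,12,18,24,30$; $6,6,12$; and $6,12,18$ respectively — are algebraically independent, with nonzero unitary Jacobian $J_\U$. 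The multiplicity computation from the proof of Theorem \ref{th:twins} now applies verbatim, since it uses only the freeness of $M_*(\Orth_r(L_\ZZ))$ and the reflection dictionary of Lemma \ref{lem:Eisenstein}, and since all reflections of $\U(L) = \Orth_r(L_\ZZ)\cap\U(L)$ correspond to orthogonal reflections already contained in $\Orth_r(L_\ZZ)$; thus $J_\U$ vanishes exactly along the mirrors of reflections of $\U(L)$ to the order $m-1$ prescribed by Theorem \ref{th:modularJacobian}. That theorem then yields that $M_*(\U(L))$ is freely generated by $f_0,\dots,f_n$ and that $\U(L)$ is generated by reflections.

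It remains to match the statement's phrasing, namely that the generators are restrictions of generators of $M_*(\Orth^+(L_\ZZ))$ rather than of $M_*(\Orth_r(L_\ZZ))$. For this I would check that each surviving generator, necessarily of weight divisible by $6$, is $G$-invariant and therefore already lies in $M_*(\Orth^+(L_\ZZ))$; equivalently, that the $G$-anti-invariant generators are exactly those of weight prime to $6$, which restrict to zero. I expect the genuine work of the proof to concentrate in this bookkeeping together with the structural claim $\U(L)\subseteq \Orth_r(L_\ZZ)$: both require the explicit free generators of $M_*(\Orth_r(L_\ZZ))$ and the explicit diagram automorphism for each of the three lattices, and so must be handled case by case, whereas the passage from the Jacobian to freeness is entirely formal given the earlier theorems.
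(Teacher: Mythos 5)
Your overall strategy -- pass from $\Orth^+(L_\ZZ)$ to the maximal reflection subgroup $\Orth_r(L_\ZZ)$, whose ring of modular forms is free, apply Proposition \ref{prop:nonzeroJ} to its $2n+1$ generators, and finish with Theorem \ref{th:modularJacobian} -- is the same as the paper's. But two of your intermediate claims are false, and the second one is precisely the difficulty that forces these three lattices out of the scope of Theorem \ref{th:twins}.

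First, the bookkeeping. It is not true that exactly the generators of weight not divisible by $6$ restrict to zero. For $2U\oplus E_6$ the free generators of $M_*(\Orth_r(L_\ZZ))$ have weights $4,6,7,10,12,15,16,18,24$; five of these weights are not multiples of $6$, yet only four generators (weights $4,7,10,16$) restrict to zero. Lemma \ref{lem:zero} applies only to forms that transform with trivial character under the scalar automorphisms $z\mapsto\alpha z$ of $\U(L)$; the weight-$15$ generator $g_2$ is modular only for $\Orth_r(L_\ZZ)$ (it carries a character under $\Orth^+(L_\ZZ)$), so the lemma says nothing about it, and indeed its restriction is nonzero. Consequently the weight-$30$ unitary generator is not the restriction of a weight-$30$ generator of $M_*(\Orth_r(L_\ZZ))$ (there is none); it is $g_2^2$, which is a generator of $M_*(\Orth^+(L_\ZZ))=\CC[f_1,\dots,f_{2n-1},g_1^2,g_1g_2,g_2^2]$. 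The same pattern ($n+1$-st generator equal to the square of a surviving $g_2$) occurs for the other two lattices, where your ``count of weights prime to $6$'' fails even more visibly.

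Second, and more seriously, your assertion that ``all reflections of $\U(L)$ correspond to orthogonal reflections already contained in $\Orth_r(L_\ZZ)$,'' so that the multiplicity computation of Theorem \ref{th:twins} applies verbatim, is exactly what fails here. By Lemma \ref{lem:Eisenstein}(3) there are biflections $\sigma_{r,-1}\in\U(L)$ with $\sigma_{r,\pm\omega}\notin\U(L)$, and for these neither $\sigma_r$ nor $\sigma_{(1+\omega)r}$ lies in $\Orth^+(L_\ZZ)$. The restriction of $J_{\Orth_r}$ therefore does \emph{not} vanish on their mirrors, so the divisor comparison that drives Theorem \ref{th:twins} cannot produce the required simple zero of $J_\U$ there, and a priori gives no upper bound on the multiplicities either. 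The paper's proof handles this by taking the Jacobian of $f_1,\dots,f_n,g_2^2$, which factors as $g_2\cdot J_\U(f_1,\dots,f_n,g_2)$; the extra factor $g_2$ restricts (up to scalar) to the form $\widetilde J_0$ whose divisor is exactly the set of these ``orphan'' mirrors. One then constructs an explicit comparison form $\widetilde J_\U$ as a product of fractional powers of restricted Borcherds products (some of them non-reflective, e.g.\ $h_1^{2/3}h_2^{1/3}$ of weight $95$ for $2U\oplus E_6$), verifies it has the same weight as $J_\U$, and concludes $J_\U=\mathrm{const}\cdot\widetilde J_\U$ by Koecher's principle. This explicit construction and weight match is the actual content of the proof and is entirely absent from your argument. (Your side claim that $\U(L)\subseteq\Orth_r(L_\ZZ)$ because the diagram automorphism is conjugate-linear is also unsubstantiated, and the existence of the biflections above makes it doubtful; fortunately it is not needed.)
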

\begin{proof}
Let $\Orth_r(L_{\ZZ})$ be the subgroup of $\Orth^+(L_\ZZ)$ generated by all reflections.  For each of these lattices,  the algebra $M_*(\Orth_r(L_{\ZZ}))$ is free and we can choose its $2n+1$ generators $f_1$, $f_2$, ...., $f_{2n-1}$, $g_1$ and $g_2$ such that
$$
M_*(\Orth^+(L_{\ZZ})) = \CC[f_1, f_2, ..., f_{2n-1}, g_1^2, g_1g_2, g_2^2].
$$ 
In all cases we find that there are $n$ generators, including one of $g_i$, whose restrictions to $\cD_{\U}(L)$ are identically zero; we label them $f_{n+1},..., f_{2n-1}, g_1$.  Applying Proposition \ref{prop:nonzeroJ} to the generators of $M_*(\Orth_r(L_{\ZZ}))$, we find that the restrictions of $f_1,...,f_{n}$ and $g_2$ are algebraically independent.

Let $J_{\U}$ be the Jacobian $$J_{\U}:=J_{\U}(f_1,...,f_{n},g_2^2)=g_2\cdot J_{\U}(f_1,...,f_{n},g_2),$$ which (by the above) is nonzero. We construct a modular form $\widetilde{J}_{\U}$ vanishing exactly on all mirrors of reflections in $\U(L)$ using restrictions of Borcherds products. (This is somewhat subtle because there are unitary reflections that do not correspond to orthogonal reflections, so some of the Borcherds products involved are not reflective; see Lemma \ref{lem:Eisenstein}.) We find that $J_{\U}$ and $\widetilde{J}_{\U}$ have the same weight.  Since $J_{\U}/\widetilde{J}_{\U}$ is a holomorphic modular form of weight zero, it is a constant.  The theorem then follows from Theorem \ref{th:modularJacobian}. 

By part (4) of Theorem \ref{th:freeJacobian}, the mirrors of unitary reflections that do not have an associated orthogonal reflection occur as the divisor of a modular forms $\widetilde{J}_0$. In all cases we observe that $g_2$ and $\widetilde{J}_0$ have the same weight; therefore, the restriction of $g_2$ is equal to $\widetilde{J}_0$ up to a scalar. 

To make this clearer, we will explain the case $L_{\ZZ}=2U\oplus E_6$ in more detail. From \cite{WW20a} we know that $\widetilde{\Orth}^+(L_{\ZZ})=\Orth_r(L_{\ZZ})$ is generated by reflections. The forms $f_1,...,f_4$ are generators of weight 6, 12, 18, 24; the forms $f_5, f_6, f_7$ are generators of weight 4, 10, 16; and the forms $g_1$ and $g_2$ are generators of weight 7 and 15 respectively.  The Jacobian $J_{\U}$ has weight $95$. The product $\widetilde{J}_{\U}$ is $h_1^{2/3}h_2^{1/3}$, where $h_1$ and $h_2$ are the restrictions to the unitary group of the Borcherds products whose input functions have the following principal parts:
\begin{align*}
&h_1:& &240e_0+q^{-1}e_0,&\\
&h_2:& &90e_0+q^{-2/3}\sum_{v}e_v, \; \text{where} \; (v,v)=4/3, \ord(v)=3.&
\end{align*}
The restriction of $g_2$ to $\cD_{\U}(L)$ is a constant multiple of $h_2^{1/3}$.
\end{proof}

\section{More free algebras of unitary modular forms}\label{sec:more}
In this section we construct free algebras of unitary modular forms which are not covered by Theorem \ref{th:twins} and therefore require considerably more effort.  Let $\widetilde{\U}_r(L)$ be the maximal reflection subgroup in $\widetilde{\U}(L)$ and let $\U_r(L)$ be the maximal reflection subgroup in $\U(L)$. The free algebras for $\widetilde{\U}_r(L)$ and $\U_r(L)$ are formulated in Table \ref{tab:1free} and Table \ref{tab:3free}. In general, $\widetilde{\U}_r(L)$ and $\U_r(L)$ need not be finite-index subgroups of $\U(L)$; however, we will show that they do have finite index in all of the cases below.

\begin{table}[ht]
\caption{Free algebras of modular forms for $\Gamma$ over $\QQ(\sqrt{-1})$}\label{tab:1free}
\renewcommand\arraystretch{1.5}
\noindent\[
\begin{array}{|c|c|c|}
\hline 
L_\ZZ & \Gamma &  \text{weights of generators of $M_*(\Gamma)$} \\ 
\hline
2U\oplus 2A_1 & \widetilde{\U}_r(L) & 4, 4,  6 \\
\hline 
2U\oplus 2A_1(2) & \U_r(L) & 2, 4, 6 \\
\hline
U\oplus U(2)\oplus 2A_1 & \widetilde{\U}_r(L) & 2,  2,  2 \\
\hline
2U\oplus D_4(2) & \U_r(L) & 4, 4, 6, 6 \\
\hline
\end{array} 
\]
\end{table}

\begin{table}[ht]
\caption{Free algebras of modular forms for $\Gamma$ over $\QQ(\sqrt{-3})$}\label{tab:3free}
\renewcommand\arraystretch{1.5}
\noindent\[
\begin{array}{|c|c|c|}
\hline 
L_\ZZ & \Gamma &  \text{weights of generators of $M_*(\Gamma)$} \\ 
\hline 
2U\oplus A_2(2) & \U(L) & 6,  6,  12 \\
\hline
2U\oplus A_2(3) & \U_r(L) & 2,  4,  6 \\
\hline
2U\oplus 2A_2 & \U(L) & 6, 12, 12, 18 \\
\hline
2U\oplus 3A_2 &  \U(L) & 6, 6, 12, 12, 18 \\
\hline
U\oplus U(3) \oplus 2A_2 & \U(L) &  6, 12, 18, 30 \\
\hline
U\oplus U(3) \oplus 3A_2 & \U(L) &  6, 12, 18, 24, 30 \\
\hline
2U\oplus D_4(2) & \U_r(L) & 6, 6, 6, 6 \\
\hline
\end{array} 
\]
\end{table}

The proofs are applications of the Jacobian criterion Theorem \ref{th:modularJacobian}, together with analysis of the quasi-pullbacks of modular forms to the group $\U(1, 1)$ (more concretely, the leading coefficients in their Taylor series on the Siegel domain). The computations of the Borcherds products were carried out in Sage \cite{sagemath}. In many cases we obtain additional algebras of unitary modular forms for other groups. When we apply the Jacobian criterion to some reflection groups, we do not know in advance that these subgroups have finite-index in $\U(L)$. This difficulty can often be overcome by a trick which is exemplified in the proof of part (1) of Theorem \ref{thm:2U+2A1}. We will now study these lattices case by case. 

\subsection{The \texorpdfstring{$2U\oplus 2A_1$}{} lattice over \texorpdfstring{$\ZZ[i]$}{}}\label{sec:2U+2A1}
The Hermitian lattice can be written as $L = H \oplus \latt{1}$; i.e. $\mathbb{Z}[i]^3$ with Gram matrix $\begin{psmallmatrix} 0 & 0 & 1/2 \\ 0 & 1 & 0 \\ 1/2 & 0 & 0 \end{psmallmatrix}$.
There are three reflective Borcherds products $F_1, F_2, F_3$ on $2U\oplus 2A_1$. Their input forms have the following principal parts at $\infty$:
\begin{align*}
&F_1:&  &8e_0+q^{-1/4}(e_{(0, 1/2,0)}+ e_{(0,i/2, 0)} );& \\
&F_2:& &68e_0+q^{-1}e_{(0, 0, 0)};& \\
&F_3:&  & 20e_0 + q^{-1/2}e_{(0, 1/2 + i/2, 0)}. & 
\end{align*}
Note that $F_2/F_1$ is holomorphic of weight $30$.  We denote the restrictions to the unitary group by $f_1$, $f_2$, $f_3$ respectively.  Each $f_i$ has only zeros of multiplicity two and therefore admits a square root; from their divisors, we see that the following modular forms are holomorphic:
\begin{align*}
&\mathrm{div}(\sqrt{f_1}) = \sum_{\substack{r \in L' \, \text{primitive} \\ \latt{r, r} = 1/4}} r^{\perp}, &\mathrm{wt}(\sqrt{f_1}) = 2;\\
&\mathrm{div}(\sqrt{f_3 / f_1}) = \sum_{\substack{r \in L' \, \text{primitive} \\ \latt{r, r} = 1/2}} r^{\perp}, &\mathrm{wt}(\sqrt{f_3 / f_1}) = 3; \\
&\mathrm{div}(\sqrt{f_2 / f_3}) = \sum_{\substack{r \in L' \, \text{primitive} \\ \latt{r, r} = 1}} r^{\perp}, &\mathrm{wt}(\sqrt{f_2 / f_3}) = 12.
\end{align*}
From the discussions in \S\ref{sec:reflections}, $f_3$ vanishes exactly on those hyperplanes $r^{\perp}$ with $\sigma_{r, i} \in \U(L)$; and $f_2/f_3$ vanishes exactly on the $r^{\perp}$ with $\sigma_{r, -1} \in \U(L)$ but $\sigma_{r, i} \not \in \U(L)$.
 

\begin{lemma} Let $\cE_4$ be the Eisenstein series of weight $4$ on $\U(L)$; that is, the restriction to $\cD_{\U}(L)$ of the orthogonal  Eisenstein series of weight $4$. Under the pull-back from $M_*(\U(L))$ to $M_*(\mathrm{SL}_2(\mathbb{Z}))$, up to nonzero multiples, $$f_1 \mapsto 0, \quad \sqrt{f_3/f_1} \mapsto  \eta^6, \quad \cE_4 \mapsto E_4.$$ In particular, $f_1$, $\sqrt{f_3/f_1}$ and $\cE_4$ are algebraically independent over $\CC$.
\end{lemma}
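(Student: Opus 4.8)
The plan is to read ``pull-back'' concretely as the leading Taylor coefficient along the sub-Siegel domain $\{\mathfrak z=0\}$ and to obtain each of the three images from divisor data together with the behaviour at the cusp. I would fix the splitting $L=H\oplus\latt{1}$ with $H=\mathcal{O}_\F\ell+\mathcal{O}_\F\ell'$, $\latt{\ell,\ell'}=1$, and $\latt{1}=\mathcal{O}_\F v$, $\latt{v,v}=1$, and recall from \S\ref{sec:Taylor} that relative to the cusp $[\ell]$ a point of $\cA_\U$ is written $z=\tau\ell-i\ell'+\mathfrak z\,v$ (here $\mathfrak z\in\CC$ since $n=2$). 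Thus $\{\mathfrak z=0\}$ is exactly $v^\perp\cap\cD_\U$, which is the $\U(1,1)$-subdomain on which $\mathrm{SL}_2(\mathbb{Z})\le\U(L)$ acts as in \S\ref{sec:Taylor}, and the pull-back of a form $F$ is its restriction $F(\tau,0)$, a modular form for $\mathrm{SL}_2(\mathbb{Z})$ (possibly with character) of the same weight. Since $D_\F=-4$ one computes $L'=H\oplus\tfrac12\mathcal{O}_\F v$, so that $v/2\in L'$ is primitive of norm $\latt{v/2,v/2}=1/4$ and $(v/2)^\perp=v^\perp$.

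For $f_1$: the divisor of $\sqrt{f_1}$ is the sum of all $r^\perp$ with $r\in L'$ primitive of norm $1/4$, and $v/2$ is one such $r$; hence $v^\perp\subseteq\div(\sqrt{f_1})\subseteq\div(f_1)$, so $f_1(\tau,0)\equiv 0$, i.e.\ $f_1\mapsto 0$. For $\cE_4$: its value at the cusp $[\ell]$ is the nonzero constant $1$, and this zeroth Fourier--Jacobi coefficient is unchanged upon setting $\mathfrak z=0$; therefore $\cE_4(\tau,0)$ is a weight-$4$ form for $\mathrm{SL}_2(\mathbb{Z})$ with constant term $1$, and as $M_4(\mathrm{SL}_2(\mathbb{Z}))=\CC E_4$ we conclude $\cE_4\mapsto E_4$.

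The substantive point is $\sqrt{f_3/f_1}\mapsto\eta^6$. I would first note that the restriction $g(\tau):=\sqrt{f_3/f_1}(\tau,0)$ is not identically zero: $\div(\sqrt{f_3/f_1})$ is supported on the mirrors $r^\perp$ with $r$ primitive of norm $1/2$, and no such $r$ is proportional to $v$, because the primitive vector on the line $\CC v$ is $v/2$, of norm $1/4$; hence $v^\perp\not\subseteq\div(\sqrt{f_3/f_1})$. So $g$ is a nonzero holomorphic form of weight $3$ for $\mathrm{SL}_2(\mathbb{Z})$ carrying the character obtained by restricting the multiplier of $\sqrt{f_3/f_1}$ along the embedding $\mathrm{SL}_2(\mathbb{Z})\hookrightarrow\U(L)$. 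I would then identify this character with the multiplier system of $\eta^6$ and check that $g$ vanishes at $[\ell]$ to the same order $1/4$, by tracking the generators $T,S$ of $\mathrm{SL}_2(\mathbb{Z})$ or, more directly, by extracting the leading term of $g$ from the infinite products defining $f_1,f_3$ and their Weyl vectors. Once the multiplier and leading exponent agree, $g/\eta^6$ is a holomorphic weight-$0$ form with trivial character, hence a constant, giving $g=c\,\eta^6$. This matching of multiplier system and leading coefficient is the main obstacle, as it is the only step that genuinely uses the Borcherds-product data rather than formal divisor bookkeeping.

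Finally I would deduce algebraic independence directly. Because the weights are $4,3,4$, any relation splits into isobaric parts, so it suffices to exclude a homogeneous relation
\[
\sum_{a,b,c}\lambda_{a,b,c}\,f_1^{\,a}\,\bigl(\sqrt{f_3/f_1}\bigr)^{b}\,\cE_4^{\,c}=0.
\]
Restricting to $v^\perp$ annihilates every term with $a\ge 1$ (since $f_1\mapsto 0$) and leaves, up to nonzero scalars, the identity $\sum_{b,c}\lambda_{0,b,c}\,\eta^{6b}E_4^{\,c}=0$ in $M_*(\mathrm{SL}_2(\mathbb{Z}))$. Grouping by the residue of $b$ modulo $4$ separates distinct multiplier systems (the multiplier of $\eta^{6b}$ depends only on $b\bmod 4$), and within each class the surviving monomials differ by powers of the non-constant function $\Delta/E_4^3$, where $\Delta=\eta^{24}$, hence are linearly independent; so all $\lambda_{0,b,c}=0$. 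The remaining relation is divisible by $f_1$, and since the ring of holomorphic functions on $\cA_\U$ is an integral domain we may cancel $f_1$ and induct on the degree in $f_1$, forcing every $\lambda_{a,b,c}=0$. Thus $f_1$, $\sqrt{f_3/f_1}$ and $\cE_4$ are algebraically independent over $\CC$.
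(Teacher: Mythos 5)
Your proposal is correct and follows essentially the same route as the paper: read the pull-back as restriction to the sub-ball $v^\perp\cap\cD_{\U}$, kill $f_1$ by observing that $v/2\in L'$ is primitive of norm $1/4$ so that $v^\perp$ lies in $\div(f_1)$, observe that $\sqrt{f_3/f_1}$ survives because no primitive norm-$1/2$ vector of $L'$ is proportional to $v$, and then conclude algebraic independence from the independence of $\eta^6$ and $E_4$ together with the vanishing of $f_1$. Two small remarks. First, the step you single out as ``the main obstacle'' --- matching the multiplier system and the cusp order of $g=\sqrt{f_3/f_1}\,(\tau,0)$ against those of $\eta^6$ --- is not needed: once you know $g$ is a nonzero holomorphic modular form of weight $3$ for $\mathrm{SL}_2(\ZZ)$ with \emph{some} multiplier system, the valence formula (total vanishing order $3/12=1/4$) forces $g/\eta^6$ to be a holomorphic weight-$0$ function with a finite-order character and no zeros or poles, hence a constant; this is exactly the one-line argument the paper uses, so no tracking of $T$ and $S$ or of Weyl vectors is required. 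Second, for $\cE_4$ you argue via the constant Fourier--Jacobi coefficient and $\dim M_4(\mathrm{SL}_2(\ZZ))=1$, whereas the paper restricts the orthogonal Eisenstein series to $2U$ to get $E_4(\tau)E_4(w)$ and evaluates at $w=i$ (where $E_4(i)\neq 0$); both are valid, and yours is marginally more self-contained. Your inductive cancellation argument for algebraic independence is a correct expansion of the one sentence the paper devotes to this point.
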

\begin{proof} Since $H = r^{\perp}$ for the vector $r = (0,  1/2, 0)$ of norm $\latt{r, r} = 1/4$, we see from the divisor of $f_1$ that it pulls back to zero. We also see from the divisor of $\sqrt{f_3 / f_1}$ that it does \emph{not} pull back to zero; as a modular form of weight $3$ for $\mathrm{SL}_2(\mathbb{Z})$ the pullback must be a constant times $\eta^6$. Finally, since the Eisenstein series on $\Orth^+(L_{\ZZ})$ pulls back to $$E_4(\tau)E_4(w) \in M_4(\Orth^+(H_{\ZZ})) \cong M_4(\mathrm{SL}_2(\mathbb{Z}) \times \mathrm{SL}_2(\mathbb{Z})),$$ setting $w = i$ yields \[ \cE_4(\tau, \mathfrak{z}_{\text{hol}}) = E_4(\tau) E_4(i) + O(\mathfrak{z}_{\text{hol}}). \] These forms are algebraically independent because the images $\eta^6$, $E_4$ in $M_*(\mathrm{SL}_2(\mathbb{Z}))$ are algebraically independent and because $f_1$ vanishes there.
\end{proof}

Let $\widetilde{\U}_r(L)$ be the subgroup of $\widetilde{\U}(L)$ generated by all unitary reflections, and let $\widetilde{\U}_1(L)$ be the subgroup generated only by biflections $\sigma_{r, -1} \in \widetilde{\U}(L)$ for which $\sigma_{r, i} \notin \U(L)$.

\begin{theorem}\label{thm:2U+2A1}
The following algebras of modular forms associated to $L$ are freely generated:
\begin{align*}
    M_*(\widetilde{\U}_1(L)) &= \CC[\sqrt{f_1}, \sqrt{f_3/f_1}, \cE_4] \; \text{is generated in weights} \; 2, 3, 4. \\
    M_*(\widetilde{\U}_r(L)) &= \CC[f_1,  f_3/f_1,  \cE_4] \; \text{is generated in weights} \; 4, 4, 6. \\
    M_*(\U(L)) &= \CC[f_1^2,  (f_3/f_1)^2,  \cE_4] \; \text{is generated in weights} \; 4, 8, 12.
\end{align*}
\end{theorem}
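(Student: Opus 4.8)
The plan is to apply the Jacobian criterion (Theorem~\ref{th:modularJacobian}) to each of the three groups, exploiting the fact that all three sets of generators are powers of the same three forms. Write $A=\sqrt{f_1}$, $B=\sqrt{f_3/f_1}$ and $C=\cE_4$, of weights $2,3,4$, and abbreviate the three reduced reflective divisors by $R_{1/4}=\div(A)$, $R_{1/2}=\div(B)$ and $R_1=\div(\sqrt{f_2/f_3})$. Setting $J_1:=J_{\U}(A,B,C)$, the chain rule gives
$$J_{\U}(A^2,B^2,C)=4AB\,J_1,\qquad J_{\U}(A^4,B^4,C)=16\,A^3B^3\,J_1,$$
so the Jacobians attached to parts (2) and (3) are determined by $J_1$. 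Since $\div(A^mB^m)=m(R_{1/4}+R_{1/2})$ and $R_{1/4},R_{1/2},R_1$ are pairwise disjoint, the whole theorem reduces to a single computation: that $\div(J_1)=R_1$ with multiplicity one.

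The delicate point in part (1) is that $\widetilde{\U}_1(L)$ is defined as a reflection subgroup and is not obviously of finite index, so Theorem~\ref{th:modularJacobian} does not apply to it directly. The trick is to replace it by a manifestly finite-index group. Because $A^2=f_1$ and $B^2=f_3/f_1$ are modular for $\widetilde{\U}(L)$, the forms $A,B$ carry characters $\chi_A,\chi_B$ of order dividing $2$ on $\widetilde{\U}(L)$, and $\Gamma_1:=\ker\chi_A\cap\ker\chi_B$ is of finite index in $\widetilde{\U}(L)$, hence in $\U(L)$; on $\Gamma_1$ all of $A,B,C$ have trivial character. A biflection $\sigma_{r,-1}$ fixes its mirror pointwise, so evaluating $A(\sigma_{r,-1}z)=\chi_A(\sigma_{r,-1})A(z)$ at a point of the mirror where $A$ does not vanish gives $\chi_A(\sigma_{r,-1})=1$, and likewise for $B$; conversely, along a mirror where $A$ (resp.\ $B$) vanishes to order one the biflection acts by $-1$ on the normal coordinate, so $\chi_A=-1$ (resp.\ $\chi_B=-1$) there. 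Thus the biflections lying in $\Gamma_1$ are exactly the pure ones on $R_1$ (those with $\sigma_{r,i}\notin\U(L)$), while the biflections on $R_{1/4}$ and $R_{1/2}$ are excluded. I will identify $\Gamma_1$ with $\widetilde{\U}_1(L)$ only after computing $J_1$.

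To compute $\div(J_1)$: the preceding lemma shows $A,B,C$ are algebraically independent, so $J_1\neq0$ by Theorem~\ref{th:Jacobian}(2), and $J_1$ has weight $12$. Since the pure biflections lie in $\Gamma_1$, Theorem~\ref{th:Jacobian}(3) forces $J_1$ to vanish along $R_1$ to odd order $\ge1$. Every biflection has determinant $-1$, so $\det^2\equiv1$ on $\Gamma_1$ and $J_1^2$ has trivial character; together with the weight-$24$ form $f_2/f_3$, whose divisor is exactly $2R_1$, this makes $J_1^2/(f_2/f_3)$ a holomorphic modular form of weight $0$ (with some finite-order character) on the finite-index group $\Gamma_1$, and hence a nonzero constant. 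Therefore $J_1^2=c\,f_2/f_3$, so $\div(J_1)=R_1$ with multiplicity one; in particular $J_1$ does not vanish on $R_{1/4}$ or $R_{1/2}$. Theorem~\ref{th:modularJacobian} now applies to $\Gamma_1$ with generators $A,B,C$, proving $M_*(\Gamma_1)=\CC[A,B,C]$ and that the reflection mirrors of $\Gamma_1$ are precisely $R_1$; this identifies $\Gamma_1=\widetilde{\U}_1(L)$ and establishes part~(1).

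With $\widetilde{\U}_1(L)$ now known to be of finite index, so are $\widetilde{\U}_r(L)$ and $\U(L)$. For part (2), $\div J_{\U}(f_1,f_3/f_1,\cE_4)=R_{1/4}+R_{1/2}+R_1$ with multiplicity one; by Lemma~\ref{lem:kernelreflection} every reflection of $\widetilde{\U}_r(L)$ is a biflection, and these biflections occupy exactly the three families $R_{1/4},R_{1/2},R_1$, so the Jacobian vanishes precisely on the reflection mirrors to order $2-1=1$. For part (3), $\div J_{\U}(f_1^2,(f_3/f_1)^2,\cE_4)=3R_{1/4}+3R_{1/2}+R_1$; the families $R_{1/4},R_{1/2}$ carry tetraflections (order $4$) while $R_1$ carries only biflections (order $2$), so the multiplicities $3,3,1$ are exactly $m-1$, and the generators $f_1^2,(f_3/f_1)^2,\cE_4$ of weights $8,12,4$ have trivial character on $\U(L)$ by Lemma~\ref{lem:zero}. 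Theorem~\ref{th:modularJacobian} then yields parts (2) and (3). The main obstacle is the uniform divisor computation $J_1^2=c\,f_2/f_3$ of the third paragraph together with the finite-index trick of the second; once these are in place, the chain-rule identities and the reflection classification of \S\ref{sec:reflections} make the three cases fall out together.
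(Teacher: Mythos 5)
Your proposal follows essentially the same route as the paper: define $\Gamma_1$ as the intersection of the kernels of the characters of $\sqrt{f_1}$ and $\sqrt{f_3/f_1}$ on $\widetilde{\U}(L)$, pin down $J_{\U}(\sqrt{f_1},\sqrt{f_3/f_1},\cE_4)$ as a constant multiple of $\sqrt{f_2/f_3}$ by a weight-zero quotient argument, apply Theorem \ref{th:modularJacobian} to get part (1) and the identification $\Gamma_1=\widetilde{\U}_1(L)$, and then deduce parts (2) and (3) from the chain-rule factorizations of the other two Jacobians. Your variant of squaring $J_1$ before dividing by $f_2/f_3$ (to sidestep the multiplier system of $\sqrt{f_2/f_3}$) is a small refinement of the paper's direct quotient $J_{\U_1}/\sqrt{f_2/f_3}$; the explicit character computation on mirrors identifying which biflections lie in $\Gamma_1$ is also carried out in the same spirit as the paper's remarks.

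There is one genuine error, in part (3): you justify the trivial character of $f_1^2$ and $(f_3/f_1)^2$ on $\U(L)$ ``by Lemma \ref{lem:zero}.'' That lemma only says that a \emph{nonzero} trivial-character form must have weight divisible by $4$; it is a necessary condition on the weight, not a sufficient condition for a given form to have trivial character. Indeed $f_1$ itself has weight $4$ but carries a nontrivial quadratic character (as the paper notes right after the theorem, via its quasi-pullback to $\U(H)$), so the implication you invoke is false. The correct justification, used in the paper, is that $F_1^2$ and $F_3^2$ are modular with trivial character for the full orthogonal group $\Orth^+(2U\oplus 2A_1)$, so their restrictions $f_1^2$ and $f_3^2$ (hence also $(f_3/f_1)^2$) have trivial character on $\U(L)$. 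With that one-line repair, the rest of your argument goes through.
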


\begin{proof}
(1) Define $\Gamma_1$ as the intersection of the kernels of the characters of $\sqrt{f_1}$ and $\sqrt{f_3 / f_1}$ on the discriminant kernel $\widetilde{\U}(L)$. Since $\widetilde{\U}_1(L)$ is generated by exactly those reflections whose mirrors are not zeros of either $\sqrt{f_1}$ or $\sqrt{f_3 / f_1}$ we find that $\widetilde{\U}_1(L)$ is a subgroup of $\Gamma_1$. Thus $J_{\U_1}:=J_{\U}(\sqrt{f_1}, \sqrt{f_3/f_1}, \cE_4)$ is a nonzero modular form of weight 12 that vanishes on the mirrors of reflections in $\Gamma_1$. It follows that $J_{\U_1}/ (\sqrt{f_2/f_3})$ is a modular form of weight zero and therefore a constant. We obtain $\Gamma_1=\widetilde{\U}_1(L)$ and the ring structure from Theorem \ref{th:modularJacobian}. 
    
(2) Using (1), we see that the Jacobian $$J_{\U}(f_1, f_3/ f_1, \cE_4) = \sqrt{f_1} \cdot \sqrt{f_3 / f_1} \cdot J_{\U}(\sqrt{f_1}, \sqrt{f_3 / f_1}, \cE_4) = \text{const} \cdot \sqrt{f_2}$$ has simple zeros exactly on the mirrors of reflections in $\widetilde{\U}(L)$. The claim follows from Theorem \ref{th:modularJacobian}.  

(3) This is already covered by Theorem \ref{th:twins} (see Table \ref{tab:1full}), but we give a more direct proof here. Since $F_1^2$ and $F_3^2$ are modular forms with trivial character on $\Orth^+(2U\oplus 2A_1)$, their restrictions are modular forms with trivial character on $\U(L)$. The Jacobian is $$J_{\U}(f_1^2, (f_3/f_1)^2, \cE_4) = f_1^{3/2} (f_3 / f_1)^{3/2} J_{\U}(f_1^{1/2}, (f_3 / f_1)^{1/2}, \cE_4) = \text{const} \cdot f_3^{3/2} (f_2/f_3)^{1/2},$$ with triple zeros exactly on mirrors $r^{\perp}$ of tetraflections $\sigma_{r, i}$ and simple zeros on the mirrors of primitive biflections. The claim follows from Theorem \ref{th:modularJacobian}.
\end{proof}
From the above considerations one can also determine the ring of modular forms for the discriminant kernel. Let $\chi_1$ and $\chi_3$ be the characters of $f_1$ and $f_3/f_1$ on the discriminant kernel. Since $f_1$ has double zeros, its quasi-pullback to $\widetilde{\U}(H) = \U(H)$ has weight 6. By Lemma \ref{lem:zero} it must have a nontrivial character; therefore, $f_1$ itself has nontrivial character. On the other hand, $F_1^2$ and $F_3$ are modular forms with trivial character on $\widetilde{\Orth}^+(2U\oplus 2A_1)$, so their restrictions $f_1^2$ and $f_3$ have trivial character on $\widetilde{\U}(L)$; i.e. $\chi_1=\chi_3$ is quadratic. Altogether, we find that $M_*(\widetilde{\U}(L))$ is generated by $$f_1^2, (f_3 / f_1)^2, f_3, \mathcal{E}_4$$ modulo the obvious relation.

\begin{remark}
It is a theorem of Resnikoff--Tai \cite{RT78} that the algebra of modular forms for the group $\Gamma_{\text{RT}}:=\mathrm{SU}(2,1)\cap \mathrm{SL}(3,\ZZ[i])$ is generated by forms of weight $4$, $8$, $10$, $12$, $17$. This follows quickly from Theorem \ref{thm:2U+2A1}. If $\chi_1$, $\chi_2$ and $\chi_3$ are the characters of $\sqrt{f_1}$, $\sqrt{f_3/f_1}$ and $\sqrt{f_2/f_3}$ on $\U(L)$ respectively, then $$\Gamma_{\text{RT}} = \mathrm{ker}(\mathrm{det}) = \mathrm{ker}(\chi_1 \chi_2 \chi_3).$$ 
From the decomposition of the Jacobian into irreducibles (Theorem \ref{th:freeJacobian} (4)), we see that $\chi_1$, $\chi_2$ and $\chi_3$ generate the character group of $\U(L)$. 
By considering the values of $\chi_1,\chi_2,\chi_3$ on reflections, we determine all characters of $\U(L)$ whose kernels contain $\Gamma_{\text{RT}}$. These characters are exactly $\chi_1\chi_2\chi_3$, $\chi_1^2\chi_2^2$, $\chi_1^3\chi_2^3\chi_3$ and the identity. The first three characters correspond to modular forms $\sqrt{f_2}$, $f_3$ and $f_3 \sqrt{f_2}$ respectively.  We thus conclude that 
$$
M_*(\Gamma_{\text{RT}})=\CC[\cE_4, f_1^2, f_3, (f_3/f_1)^2, \sqrt{f_2}].
$$
It was later proved by Resnikoff--Tai \cite[Theorem 1.1]{TR81} that there are two defining relations for this ring. If the weight $k$ generator is labelled $X_k$, then one of these relations follows from the definitions:
$$
X_{10}^2=X_8X_{12};
$$
Finally observe that $X_{17}^2 / X_{10} = f_2/f_3$ lies in $M_{24}(\U(L))$ so it can be expressed uniquely as a polynomial $P$ in terms of the generators $X_4$, $X_8$, $X_{12}$. Therefore the second relation is of the form
$$
X_{17}^2-X_{10}P(X_4, X_8, X_{12})=0.
$$
\end{remark}

\subsection{The \texorpdfstring{$2U\oplus 2A_1(2)$}{} lattice over \texorpdfstring{$\ZZ[i]$}{}}\label{sec:2U+2A12}
The lattice $L = 2U \oplus 2A_1(2)$ can be realized over $\mathbb{Z}[i]$ as the Hermitian lattice $H + \mathbb{Z}[i](2)$ with Gram matrix $\begin{psmallmatrix} 0 & 0 & 1/2 \\ 0 & 2 & 0 \\ 1/2 & 0 & 0 \end{psmallmatrix}$. 
There are four reflective Borcherds products on the orthogonal group of $L$. Their input forms have the following principal parts at $\infty$.
\begin{align*}
&F_1:&  &2e_0+q^{-1/8}(e_{(0, 1/4, 0)} + e_{(0, i/4, 0)} + e_{(0, -1/4, 0)} + e_{(0, -i/4, 0)});& \\
&F_2:&  &20e_0+q^{-1/2}(e_{(0, 1/2, 0)} + e_{(0, i/2, 0)} );& \\
&F_3:&  &8e_0 + q^{-1/4}(e_{(0, (1+i)/4, 0)} + e_{(0, (1-i)/4, 0)} + e_{(0, (-1+i)/4, 0)} + e_{(0, (-1-i)/4, 0)}); & \\
&F_4:& &24e_0+q^{-1}e_{(0, 0, 0)}.& 
\end{align*}
Note that $F_2/F_1$ is holomorphic.  The restrictions to $\cD_{\U}(L)$ are labelled $f_1$, $f_2$, $f_3$, $f_4$ respectively. Then $f_3 / f_1$ and $f_2 / f_3$ are holomorphic, and:
\begin{enumerate}
\item $r^\perp \in \div(f_3)$ if and only if $\sigma_{r, i}\in \U(L)$;
\item $r^\perp \in \div(f_2/f_3)$ if and only if $\sigma_{r, -1}\in \U(L)\setminus \widetilde{\U}(L) $ but $\sigma_{r, i}\not\in \U(L)$;
\item $r^\perp \in \div(f_4)$ if and only if $\sigma_{r, -1}\in \widetilde{\U}(L)$ but $\sigma_{r, i}\not\in \U(L)$.
\end{enumerate}

Similarly to the previous case, we obtain the following pullbacks to $\U(H)$:
$$f_1 \mapsto 0, \; f_3/f_1 \mapsto  \eta^{6}, \; \cE_4 \mapsto E_4.$$ (Here $\cE_4$ is again the restriction of the orthogonal Eisenstein series to $\U(L)$.) In particular, $f_1$, $f_3/f_1$ and $\cE_4$ are algebraically independent over $\CC$.

\begin{theorem}
Let $\U_1(L)$ be the subgroup of $\U(L)$ generated by order-two reflections. Then 
\begin{align*}
M_*(\U_1(L)) =& \CC[f_1,  f_3/f_1,  \cE_4] \; \text{is generated in weights} \; 1, 3, 4. \\
M_*(\U_r(L)) =& \CC[f_1^2,  (f_3/f_1)^2,  \cE_4] \; \text{is generated in weights} \; 2, 4, 6. \\
M_*(\U(L)) =& \CC[f_1^4,  (f_3/f_1)^4, f_3^2,  \cE_4] \; \text{is generated in weights} \; 4, 4, 8, 12 \;\\ & \text{with a relation in weight} \; 16.
\end{align*}
\end{theorem}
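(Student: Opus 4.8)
The plan is to handle the three groups in increasing size, deriving parts (1) and (2) from the Jacobian criterion (Theorem \ref{th:modularJacobian}) and then deducing part (3) from part (2) by an elementary invariant-theoretic argument. I first record the weights $\mathrm{wt}(f_1)=1$, $\mathrm{wt}(f_3)=4$, $\mathrm{wt}(f_2)=10$, $\mathrm{wt}(f_4)=12$, so that $\mathrm{wt}(f_3/f_1)=3$, $\mathrm{wt}(f_2/f_3)=6$ and $\mathrm{wt}(\cE_4)=4$. By the discussion preceding the theorem the reflections of $\U(L)$ fall into three families: tetraflections on $\div(f_3)$, biflections $\sigma_{r,-1}\in\U(L)\setminus\widetilde{\U}(L)$ on $\div(f_2/f_3)$, and biflections $\sigma_{r,-1}\in\widetilde{\U}(L)$ on $\div(f_4)$; moreover the square of each tetraflection is again a biflection on the same mirror. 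Since $f_1,f_3$ are restrictions of orthogonal Borcherds products and the two orthogonal Heegner divisors $r^\perp,(ir)^\perp$ collapse to one unitary mirror, every $f_i$ has only double zeros; consequently $\sqrt{f_2 f_4}$ is a holomorphic form of weight $11$ that vanishes simply along every biflection mirror, and the two relevant Jacobians will be identified with it.

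For part (1), the even orders of vanishing of $f_1,f_3$, hence of $f_3/f_1$, force (via Theorem \ref{th:Jacobian}(3)) every biflection to act trivially on $f_1$ and $f_3/f_1$; together with the Eisenstein series $\cE_4$ these three forms of weights $1,3,4$ are thus modular of trivial character on $\U_1(L)$, and by the preceding lemma algebraically independent. To circumvent the fact that $\U_1(L)$ is not yet known to have finite index, I would follow the proof of Theorem \ref{thm:2U+2A1}(1): let $\Gamma_1$ be the joint kernel in $\U(L)$ of the finite-order characters of $f_1$ and $f_3/f_1$, a finite-index group containing $\U_1(L)$. Computing on $\U(L)$, I expect $J_{\U}(f_1,f_3/f_1,\cE_4)=\mathrm{const}\cdot\sqrt{f_2 f_4}$: both sides have weight $11$, the left side is nonzero and vanishes along the reflection mirrors of $\Gamma_1$, while the right side has simple zeros along every biflection mirror; matching divisors (and observing that no tetraflection can lie in $\Gamma_1$, since a simple zero contradicts $\mathrm{ord}\equiv -1\bmod 4$) shows the quotient is a holomorphic weight-$0$ form, hence constant. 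Theorem \ref{th:modularJacobian} then gives $\Gamma_1=\U_1(L)$ and $M_*(\U_1(L))=\CC[f_1,f_3/f_1,\cE_4]$.

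For part (2), the forms $f_1^2$ and $(f_3/f_1)^2$ vanish to orders divisible by $4$ along tetraflection mirrors, so now \emph{all} reflections fix them, making these forms of weights $2,6,4$ trivial-character modular forms on $\U_r(L)$. By the chain rule for Jacobians,
\[
J_{\U}(f_1^2,(f_3/f_1)^2,\cE_4)=4\,f_1\,(f_3/f_1)\,J_{\U}(f_1,f_3/f_1,\cE_4)=\mathrm{const}\cdot f_3\sqrt{f_2 f_4},
\]
a form of weight $15$ vanishing to order $3$ along $\div(f_3)$ and to order $1$ along $\div(f_2/f_3)$ and $\div(f_4)$. These are exactly the multiplicities $m-1$ dictated by the maximal reflection orders ($4$ for tetraflections, $2$ for biflections), so the same finite-index trick applied to the character kernel $\Gamma_2\supseteq\U_r(L)$ and Theorem \ref{th:modularJacobian} yield $\Gamma_2=\U_r(L)$ and $M_*(\U_r(L))=\CC[f_1^2,(f_3/f_1)^2,\cE_4]$.

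For part (3), since $\U_r(L)\le\U(L)$ is now known to have finite index we have $M_*(\U(L))\subseteq M_*(\U_r(L))=\CC[f_1^2,(f_3/f_1)^2,\cE_4]$, and by Lemma \ref{lem:zero} every form in $M_*(\U(L))$ has weight divisible by $4$. A monomial $(f_1^2)^a((f_3/f_1)^2)^b\cE_4^c$ has weight $\equiv 2(a+b)\bmod 4$, so the weight-divisible-by-$4$ subalgebra is precisely the second Veronese $\CC[f_1^4,(f_3/f_1)^4,f_3^2,\cE_4]$ (with $f_3^2=f_1^2(f_3/f_1)^2$), subject to the single relation $f_3^4=f_1^4(f_3/f_1)^4$ in weight $16$. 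It then suffices to note that these four forms genuinely lie in $M_*(\U(L))$: $\cE_4$ is an Eisenstein series, while $f_1^4=F_1^4|_{\U}$ and $f_3^2=F_3^2|_{\U}$ (hence $(f_3/f_1)^4=f_3^4/f_1^4$) are restrictions of orthogonal modular forms of trivial character, and since $\U(L)\le\Orth^+(L_{\ZZ})$ they inherit trivial character on $\U(L)$. This gives $M_*(\U(L))=\CC[f_1^4,(f_3/f_1)^4,f_3^2,\cE_4]$ with the stated relation. The hard part will be the two Jacobian identifications with $\sqrt{f_2 f_4}$ and $f_3\sqrt{f_2 f_4}$: these depend on the precise divisor bookkeeping of \S\ref{sec:reflections} --- exactly which orthogonal Heegner divisors collapse under restriction to $\cD_{\U}(L)$ (the source of the uniform double zeros over $\ZZ[i]$) --- and, for part (3), on verifying that $F_1^4$ and $F_3^2$ have trivial character on $\Orth^+(L_{\ZZ})$; I would confirm all of these by the Borcherds product computations in Sage used in the neighboring subsections.
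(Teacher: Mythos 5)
Your proof is correct, and for parts (1) and (2) it is essentially the paper's own argument: the paper's proof of this theorem is a one-line reference to Theorem \ref{thm:2U+2A1}, and you reproduce that template faithfully --- trivial character of $f_1$, $f_3/f_1$ on biflections via the even vanishing orders, the character-kernel trick $\Gamma_1 \supseteq \U_1(L)$ to secure finite index, identification of the weight-$11$ and weight-$15$ Jacobians with $\sqrt{f_2f_4}$ and $f_3\sqrt{f_2f_4}$ by Koecher's principle, and Theorem \ref{th:modularJacobian}. (One small point of order in part (1): first identify $J_{\U}=\mathrm{const}\cdot\sqrt{f_2f_4}$ using only that all biflections lie in $\Gamma_1$, and \emph{then} exclude tetraflections from $\Gamma_1$ via $\mathrm{ord}\equiv -1\bmod 4$; also the congruence you invoke is really the computation $\chi(\sigma_{r,\alpha})=\alpha^{m}$ from the proof of Theorem \ref{th:Jacobian}(3) rather than its statement.) Part (3) is where you genuinely, and necessarily, depart from the cited template: Theorem \ref{thm:2U+2A1}(3) applies the Jacobian criterion directly to $\U(L)$, which is impossible here since $M_*(\U(L))$ is not free. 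Your replacement --- sandwiching $M_*(\U(L))$ between $\CC[f_1^4,(f_3/f_1)^4,f_3^2,\cE_4]$ and the weight-divisible-by-$4$ part of $\CC[f_1^2,(f_3/f_1)^2,\cE_4]$, which coincide by the parity computation $2a+6b+4c\equiv 2(a+b)\bmod 4$ --- is a clean second-Veronese argument; the paper handles the analogous non-free cases (e.g.\ $2U\oplus A_2(3)$ in \S\ref{sec:2U+A23}) by instead pinning down the orders of the characters $\chi_1,\chi_2$ of the Borcherds factors, which amounts to the same bookkeeping. The two approaches buy the same thing, and in both the load-bearing unproved step is the same: that $f_1^4$ and $f_3^2$ actually descend to $\U(L)$ with trivial character. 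You defer this to a Borcherds-product computation, which matches the paper's own level of detail (it asserts the analogous fact for $F_1^2$, $F_3^2$ in Theorem \ref{thm:2U+2A1}(3) without proof), so no genuine gap remains.
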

\begin{proof} The proof follows essentially the same argument as Theorem \ref{thm:2U+2A1}.
\end{proof}

We remark that the quasi-pullback of $f_1$ to $\U(H)$ is $\eta^6$ and therefore $f_1^2 \not\in M_2(\widetilde{\U}(L))$. Thus $\U_r(L)$ does not contain $\widetilde{\U}(L)$. Indeed one can show that $M_*(\widetilde{\U}(L))$ is generated by the forms $\cE_4, f_1^4, \psi_6, f_3^2, \psi_{10}, (f_3 / f_1)^4$ of weights $4, 4, 6, 8, 10, 12$, where $\psi_6$ and $\psi_{10}$ are the unitary restrictions of Gritsenko lifts of weights $6$ and $10$ that have a quadratic character under $\Orth^+(L_{\ZZ})$. This can proven using the method of quasi-pullbacks because $\psi_6$ and $\psi_{10}$ have zeros along $\U(H)$ of order exactly $6$ resp. $2$.

\subsection{The \texorpdfstring{$U\oplus U(2)\oplus 2A_1$}{} lattice over \texorpdfstring{$\ZZ[i]$}{}}\label{sec:U+U2+2A1}
The $U \oplus U(2) \oplus 2A_1$ lattice can be realized over $\mathbb{Z}[i]$ as the Hermitian lattice $H(1+i) \oplus \langle v \rangle$ where $v$ is a vector of norm $1$, with Gram matrix $\begin{psmallmatrix} 0 & 0 & (1+i)/2 \\ 0 & 1 & 0 \\ (1-i)/2 & 0 & 0 \end{psmallmatrix}$.
There are eight reflective Borcherds products on $U\oplus U(2)\oplus 2A_1$, and their input forms have the following principal parts at $\infty$.
\begin{align*}
&F_1:&  &4e_0+q^{-1/4}(e_{(0, 1/2, 0)} + e_{(0, i/2, 0)} );& \\
&F_2:&  &4e_0+q^{-1/4}(e_{(0,1/2,(1+i)/2)}+ e_{(0,i/2,(1+i)/2)} );& \\
&F_3:&  &4e_0+q^{-1/4}(e_{((1+i)/2,1/2,0)}+ e_{((1+i)/2,i/2,0)} );& \\
&F_4:&  &8e_0+q^{-1/2}e_{(0,(1+i)/2, (1+i)/2)};& \\
&F_5:&  &8e_0+q^{-1/2}e_{((1+i)/2, 0, (1+i)/2)};& \\
&F_6:&  &8e_0+q^{-1/2}e_{((1+i)/2, (1+i)/2, 0)};& \\
&F_7:&  &12e_0+q^{-1/2}e_{(0,(1+i)/2, 0)};& \\
&F_8:&  &36e_0+q^{-1}e_{(0, 0, 0)}.&
\end{align*}
Note that $F_8/F_1F_2F_3$ is holomorphic.  We denote the restriction of $F_j$ by $f_j$. From their divisors we see that the $f_j$ have the following properties:
\begin{enumerate}
\item Each $f_j$ has a holomorphic square root.
\item $f_7=f_1f_2f_3$ and $f_8=f_1f_2f_3f_4f_5f_6$ up to scalar multiples.
\item $r^\perp \in \div(f_8)$ if and only if $\sigma_{r, i}\in \U(L)$.
\item The forms $f_1^2$, $f_2^2$, $f_3^2$, $\sqrt{f_4}$, $\sqrt{f_5}$ and $\sqrt{f_6}$ are invariant under the tetraflections whose mirrors lie in $\div(f_1f_2f_3)$.
\end{enumerate}

\begin{lemma}
The Jacobian $J_{\U}(f_1, f_2, f_3)$ is not identically zero. Equivalently, $f_1$, $f_2$ and $f_3$ are algebraically independent over $\CC$.
\end{lemma}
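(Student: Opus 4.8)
The plan is to prove the equivalent statement $J_{\U}(f_1,f_2,f_3)\not\equiv 0$ by exhibiting a nonzero coefficient in its Taylor expansion along a carefully chosen one-dimensional sub-ball. Write $L=\mathbb{Z}[i]e_1\oplus\mathbb{Z}[i]e_2\oplus\mathbb{Z}[i]e_3$ in the basis realizing the given Gram matrix, so that $e_2$ is the norm-one vector $v$ and $e_1,e_3$ span $H(1+i)$. I work in the Siegel domain attached to the cusp $\ell=e_3$, with coordinates $(\tau,\mathfrak z)$ in which $\mathfrak z$ is the coordinate along $e_2$; then $\{\mathfrak z=0\}=e_2^{\perp}$ is the modular curve attached to $H(1+i)$. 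The first step is to record the vanishing orders of $f_1,f_2,f_3$ along this sub-ball. Since $\tfrac12 e_2=(0,1/2,0)$ is a norm-$1/4$ vector in the coset defining $F_1$ and its mirror is exactly $e_2^{\perp}$, we have $e_2^{\perp}\in\div(f_1)$; the cosets defining $F_2$ and $F_3$ are supported off the $e_2$-line, so $e_2^{\perp}\notin\div(f_2)\cup\div(f_3)$. Because $f_1$ admits a holomorphic square root with a simple zero along $e_2^{\perp}$, we get $f_1=c_1(\tau)\mathfrak z^2+O(\mathfrak z^3)$ with $c_1\not\equiv0$, whereas $f_2=a_2(\tau)+O(\mathfrak z)$ and $f_3=a_3(\tau)+O(\mathfrak z)$ with $a_2,a_3\not\equiv0$ the restrictions of $f_2,f_3$ to the curve.

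Next I would compute the leading Taylor coefficient of the Jacobian using the determinant formula of Theorem \ref{th:Jacobian}(4). The coefficient of $\mathfrak z^0$ vanishes because the first column $(2f_1,\partial_\tau f_1,\partial_{\mathfrak z}f_1)$ is $O(\mathfrak z)$; expanding the determinant along this column, the coefficient of $\mathfrak z^1$ equals
\[
4\,c_1\,(a_2 a_3' - a_3 a_2'),\qquad {}' = \partial_\tau,
\]
that is, $4c_1$ times the Wronskian of $a_2$ and $a_3$. Since $c_1\not\equiv0$, the Jacobian is nonzero as soon as this Wronskian is nonzero, i.e.\ as soon as $a_2$ and $a_3$ are linearly independent over $\CC$. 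Thus the two-variable algebraic independence has been reduced to a one-variable linear independence of the pullbacks to the modular curve $\{\mathfrak z=0\}$.

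The crux, and the one genuinely computational step, is to show $a_2$ and $a_3$ are not proportional. I would do this by intersecting the Heegner divisors of $f_2$ and $f_3$ with $\{\mathfrak z=0\}$. A direct check with the Gram matrix shows that the norm-$1/4$ vector $(0,1/2,(1+i)/2)$ in the coset defining $F_2$ satisfies $\latt{e_3,(0,1/2,(1+i)/2)}=0$, so its mirror passes through the cusp $[e_3]$; by contrast $\latt{e_3,r}\neq0$ for every vector $r$ in either coset defining $F_3$ (these have nonzero $e_1$-component modulo $\mathbb{Z}[i]$). Symmetrically, a mirror of $F_3$ passes through $[e_1]$ while no mirror of $F_2$ does. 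Hence $a_2$ vanishes at the cusp $[e_3]$ while $a_3$ does not, so the two forms—of equal weight and both nonzero—cannot be proportional and their Wronskian is nonzero. This yields $J_{\U}(f_1,f_2,f_3)\not\equiv0$, and by Theorem \ref{th:Jacobian}(2) the forms $f_1,f_2,f_3$ are algebraically independent. The main obstacle is precisely this last step: the reduction to a Wronskian is formal, but establishing linear independence requires extracting concrete information—the position of a single mirror relative to the cusps, equivalently the leading Fourier coefficients—from the explicit Borcherds products.
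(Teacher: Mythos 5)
Your overall strategy---restricting to the sub-ball $e_2^{\perp}$ attached to $H(1+i)$ and reducing the nonvanishing of $J_{\U}(f_1,f_2,f_3)$ to the nonvanishing of a Wronskian of the two restrictions $a_2,a_3$---is genuinely different from the paper's argument, and the reduction itself is correct: if $f_1$ vanishes along $\{\mathfrak{z}=0\}$ to order $m\geq 1$ with leading coefficient $c_1$ while $a_2,a_3\not\equiv 0$, then the coefficient of $\mathfrak{z}^{m-1}$ in the determinant formula of Theorem \ref{th:Jacobian}(4) is $2mc_1(a_2a_3'-a_3a_2')$, so everything hinges on $a_2$ and $a_3$ not being proportional. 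The first half of your final step is also sound: the mirror of $r=(0,1/2,(1+i)/2)$ is a hyperplane $\{\mathfrak{z}=\mathrm{const}\neq 0\}$ in the Siegel coordinates at $[e_3]$, so the vanishing of $f_2$ along it forces every Fourier--Jacobi coefficient to vanish at that value of $\mathfrak{z}$; since the zeroth coefficient is a constant, it vanishes identically and $a_2$ is indeed a cusp form at $[e_3]$.

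The gap is the other half: the inference ``no mirror of $F_3$ passes through $[e_3]$, hence $a_3$ does not vanish there'' is a non sequitur. The order of vanishing of a modular form at a zero-dimensional cusp is not detected by its divisor in the interior of the domain; for a Borcherds product it is governed by the Weyl vector at that cusp, which you never compute. (The paper itself illustrates the phenomenon: by Theorem \ref{th:Jacobian}(1) every Jacobian is a cusp form regardless of where its divisor lies, and $\Delta=\eta^{24}$ is a Borcherds product with empty divisor on $\mathbb{H}$ that nonetheless vanishes at the cusp.) It is therefore possible a priori that $a_3$ also vanishes at $[e_3]$, in which case your argument yields nothing; the symmetric statement at $[e_1]$ does not rescue it, since proportional forms would then simply share both cusp zeros. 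Closing the gap requires the very computation you defer, namely the constant Fourier coefficient of $a_3$ (equivalently the Weyl vector of $F_3$ relative to $e_3$). The paper sidesteps all of this: by \cite{WW20c} the ring $M_*(\Gamma)$ for the subgroup $\Gamma\leq\Orth^+(L_{\ZZ})$ generated by the discriminant kernel and the involution swapping the two copies of $A_1$ is freely generated by $m_2,F_1^2,F_2^2,F_3^2,m_6$ with $m_2,m_6$ of weights $2$ and $6$; these two restrict to zero by Lemma \ref{lem:zero}, and Proposition \ref{prop:nonzeroJ} then gives the algebraic independence of $f_1^2,f_2^2,f_3^2$ with no cusp analysis at all.
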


\begin{proof}
We apply Proposition \ref{prop:nonzeroJ} to this case. By \cite[Theorem 9.1]{WW20c}, the algebra of modular forms for the subgroup $\Gamma$ of $\Orth^+(L_{\ZZ})$ generated by the discriminant kernel and by the involution $\varepsilon$ that swaps the two copies of $A_1$ is freely generated by the modular forms $m_2, F_1^2, F_2^2, F_3^2, m_6$, where $m_2$ and $m_6$ are additive theta lifts that are modular for the full group $\Orth^+(L_{\ZZ})$. By Lemma \ref{lem:zero} the restrictions of $m_2, m_6$ to $\cD_{\U}(L)$ must be identically zero. It follows that the remaining forms $f_1^2$, $f_2^2$ and $f_3^2$ are algebraically independent.
\end{proof}

\begin{theorem}
\begin{enumerate}
\item Let $\widetilde{\U}_1(L)$ be the subgroup of $\widetilde{\U}(L)$ generated by biflections whose mirrors lie in the divisor of $f_4f_5f_6$.  Then 
$$
M_*(\widetilde{\U}_1(L)) = \CC[\sqrt{f_1}, \sqrt{f_2}, \sqrt{f_3}] \; \text{is generated in weights} \; 1, 1, 1.
$$
\item We have
\begin{align*}
M_*(\widetilde{\U}_r(L)) =& \CC[f_1, f_2, f_3] \; \text{is generated in weights} \; 2, 2, 2. \\
M_*(\widetilde{\U}(L)) =& \CC[f_1^2, f_2^2, f_3^2, f_1f_2f_3] \; \text{is generated in weights} \; 4, 4, 4, 6 \\
& \text{with a relation in weight}\; 12.
\end{align*}
\item Let $\U_1(L)$ be the subgroup of $\U(L)$ generated by tetraflections whose mirrors lie in the zero locus of $f_1f_2f_3$. Then
$$
M_*(\U_1(L)) = \CC[\sqrt{f_4}, \sqrt{f_5}, \sqrt{f_6}] \; \text{is generated in weights} \; 2, 2, 2.
$$
\item Let $\U_2(L)$ be the subgroup of $\U(L)$ generated by tetraflections whose mirrors lie in the zero locus of $f_4f_5f_6$. Then
$$
M_*(\U_2(L)) = \CC[\sqrt{f_1f_2f_3}, \cE_4, \cE_8] \; \text{is generated in weights} \; 3, 4, 8.
$$
\item Let $\U_3(L)$ be the subgroup of $\U(L)$ generated by $\U_1(L)$ and by biflections whose mirrors lie in the zero locus of $f_4f_5f_6$. Then
$$
M_*(\U_3(L)) = \CC[f_1^2, f_2^2, f_3^2] \; \text{is generated in weights} \; 4, 4, 4.
$$
\end{enumerate}
\end{theorem}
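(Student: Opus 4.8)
The plan is to derive all five parts from the modular Jacobian criterion of Theorem~\ref{th:modularJacobian}: for each group one exhibits three algebraically independent modular forms and identifies their Jacobian, up to a nonzero constant, with an explicit holomorphic form whose divisor is exactly the sum of mirrors of reflections in that group, each carrying the multiplicity $\ord(\sigma)-1$ dictated by the order of the reflection. Algebraic independence is immediate from the preceding lemma whenever the generators are built from $f_1,f_2,f_3$ (parts (1), (2), (5)); for the triples $\sqrt{f_4},\sqrt{f_5},\sqrt{f_6}$ (part (3)) and $\sqrt{f_1f_2f_3},\cE_4,\cE_8$ (part (4)) it is verified by restricting to a one-dimensional cusp and checking independence of the resulting elliptic modular forms, exactly as in the lemma preceding Theorem~\ref{thm:2U+2A1}.

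All the Jacobians reduce to a single computation via the chain-rule identity $J_{\U}(g_1^2,g_2^2,g_3^2)=8\,g_1g_2g_3\,J_{\U}(g_1,g_2,g_3)$ and the existence of the square roots $\sqrt{f_1},\dots,\sqrt{f_6}$. First, $J_{\U}(\sqrt{f_1},\sqrt{f_2},\sqrt{f_3})$ is a nonzero form of weight $6$ which, by Theorem~\ref{th:Jacobian}(3), vanishes to odd order along the biflection mirrors in $\div(f_4f_5f_6)$; comparing weight and divisor with $\sqrt{f_4f_5f_6}$ forces
$$J_{\U}(\sqrt{f_1},\sqrt{f_2},\sqrt{f_3})=\mathrm{const}\cdot\sqrt{f_4f_5f_6}.$$
The remaining Jacobians then follow mechanically:
\begin{align*}
J_{\U}(f_1,f_2,f_3)&=\mathrm{const}\cdot\sqrt{f_8},\\
J_{\U}(\sqrt{f_4},\sqrt{f_5},\sqrt{f_6})&=\mathrm{const}\cdot(f_1f_2f_3)^{3/2},\\
J_{\U}(f_1^2,f_2^2,f_3^2)&=\mathrm{const}\cdot(f_1f_2f_3)^{3/2}(f_4f_5f_6)^{1/2},\\
J_{\U}(\sqrt{f_1f_2f_3},\cE_4,\cE_8)&=\mathrm{const}\cdot(f_4f_5f_6)^{3/2},
\end{align*}
where the last two are obtained by weight-and-divisor matching once nonvanishing is known. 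In each line the order of vanishing matches the criterion exactly: a biflection mirror (order $2$) carries multiplicity $1$, while a tetraflection mirror (order $4$) carries multiplicity $3$, precisely as read off from the exponents above.

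To pin down the reflection groups I would reuse the character-kernel device of Theorem~\ref{thm:2U+2A1}. For a group such as $\widetilde{\U}_1(L)$ (or $\U_1(L)$, $\U_2(L)$, $\U_3(L)$), define an auxiliary group $\Gamma'$ as the intersection on $\widetilde{\U}(L)$ (resp. $\U(L)$) of the kernels of the characters attached to the chosen generators. By construction the generating reflections lie in $\Gamma'$, so $\Gamma'$ contains the named group; and the Jacobian of the generators---whose divisor is the sum of mirrors of reflections in $\Gamma'$---coincides with the explicit form above, which vanishes only along the intended mirrors. Hence $\Gamma'$ has no further reflections, and Theorem~\ref{th:modularJacobian} yields simultaneously that $\Gamma'$ equals the named group and that its ring is freely generated as stated. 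For the non-free algebra in part~(2) I would instead invoke the free structure $M_*(\widetilde{\U}_r(L))=\CC[f_1,f_2,f_3]$ just proved and pass to the invariant subring under $\widetilde{\U}(L)\supseteq\widetilde{\U}_r(L)$: since $F_i^2$ descends with trivial character while $F_i$ in general does not, the invariants are generated by $f_1^2,f_2^2,f_3^2$ and $f_1f_2f_3$, with the single obvious relation $(f_1f_2f_3)^2=f_1^2f_2^2f_3^2$ in weight $12$.

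The main obstacle is the bookkeeping underlying the third paragraph: checking that each auxiliary group $\Gamma'$ contains no reflections beyond those generating the named group, and that the exhibited Jacobian vanishes to exactly the prescribed order along each stratum of mirrors and nowhere else. This rests on the Gaussian reflection classification of Lemma~\ref{lem:kernelreflection} and Lemma~\ref{lem:Gaussian}, combined with the precise assignment of the divisors of $F_1,\dots,F_8$ to the norm strata---$\div(f_1f_2f_3)$ carrying the tetraflections and $\div(f_4f_5f_6)$ the remaining reflections. Once these divisors are correctly matched to the strata, the weight counts close each identity and the criterion applies uniformly across all five parts.
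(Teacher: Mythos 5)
Your overall strategy --- exhibit the generators, identify each Jacobian by weight-and-divisor matching against products of the $f_i$, and invoke Theorem \ref{th:modularJacobian}, with the character-kernel trick to handle the a priori unknown finite-index property of the reflection subgroups --- is the paper's strategy for parts (1), (2) and (5), and all of your Jacobian identities agree with the paper's (note $f_8 = f_1\cdots f_6$ up to scalar, so your $(f_1f_2f_3)^{3/2}(f_4f_5f_6)^{1/2}$ is the paper's $f_1f_2f_3\,f_8^{1/2}$). Your derivation of $M_*(\widetilde{\U}(L))$ as the character-invariant subring of $\CC[f_1,f_2,f_3]$ also matches, though you should record why the three quadratic characters of $f_1,f_2,f_3$ on $\widetilde{\U}(L)$ are nontrivial and pairwise distinct (the paper gets nontriviality from the nonzero weight-$2$ pullback to $\U(H)$ together with Lemma \ref{lem:zero}, and distinctness then follows from $\chi_1\chi_2\chi_3=1$); without this, $f_1f_2$ and its companions could also be invariant and the stated list of generators would be wrong.

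The one step that would fail as written is the algebraic-independence check for part (3). You propose to restrict $\sqrt{f_4},\sqrt{f_5},\sqrt{f_6}$ to a one-dimensional sub-ball ``exactly as in the lemma preceding Theorem \ref{thm:2U+2A1}''. But none of $f_4,f_5,f_6$ vanishes identically on the sub-ball $\U(H)=v^{\perp}$ used there (their Heegner divisors $\mathbf{H}_{\U}(1/2,\beta)$ contain no vector proportional to $v$, since the relevant cosets have nonzero $\ell'$-component), and any three modular forms on a curve are algebraically dependent, so that restriction can prove nothing. You would have to restrict instead to a mirror inside $\div(f_4)$, say, and then actually verify that the restrictions of $\sqrt{f_5}$ and $\sqrt{f_6}$ have nonconstant ratio there --- a computation you have not set up. The paper sidesteps this by proving part (5) first and then checking directly that $f_4,f_5,f_6$ span the three-dimensional space $M_4(\U_3(L))$, whose basis $f_1^2,f_2^2,f_3^2$ is already known to be algebraically independent. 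Likewise for part (4) the paper does not use a pullback but applies Proposition \ref{prop:nonzeroJ} after replacing the generator $\cE_{12}$ of the free orthogonal algebra by $F_7^2$; your route could work (here $\sqrt{f_1f_2f_3}=\sqrt{f_7}$ does vanish on $v^{\perp}$), but it still requires checking that the restrictions of $\cE_4$ and $\cE_8$ to that curve are algebraically independent, which you have only asserted.
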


In section \S\ref{sec:twins} we saw that $M_*(\U(L))$ is also a free algebra generated by forms of weights $4, 8, 12$. Their Jacobian is $f_8^{3/2}$ of weight $27$.  This can also be proved directly by the argument below.

\begin{proof} (1) $\sqrt{f_1}, \sqrt{f_2}, \sqrt{f_3}$ are algebraically independent and modular without character on $\widetilde{\U}_1(L)$, so their Jacobian is a weight $6$ modular form that vanishes on the mirrors of biflections in $\widetilde{\U}_1(L)$. By Koecher's principle it equals $\sqrt{f_8 / (f_1 f_2 f_3)} = \sqrt{f_4 f_5 f_6}$. The claim then follows from Theorem \ref{th:modularJacobian}. Since $$J_{\U}(f_1, f_2, f_3) = \sqrt{f_1} \cdot \sqrt{f_2} \cdot \sqrt{f_3} \cdot J_{\U}(\sqrt{f_1}, \sqrt{f_2}, \sqrt{f_3}) = \sqrt{f_8}$$ has simple zeros on all mirrors we also obtain the structure of $M_*(\widetilde{\U}_r(L))$.

(2) The pullbacks of $f_2$ and $f_3$ to $\widetilde{\U}(H) = \U(H)$ are nonzero modular forms of weight $2$, so Lemma \ref{lem:zero} implies that $f_2, f_3$ have a nontrivial character on $\widetilde{\U}(L)$. Since $f_1$ is the image of $f_2$ under a tetraflection it also has nontrivial character. On the other hand, $F_1^2$, $F_2^2$, $F_3^2$ and $F_7$ are all additive lifts, so their restrictions $f_1^2$, $f_2^2$, $f_3^2$ and $f_7=f_1f_2f_3$ to $\widetilde{\U}(L)$ must have trivial character. The claim now follows from the structure of $M_*(\widetilde{\U}_r(L))$.

(5) This is also an application of Theorem \ref{th:modularJacobian}, because $$J_{\U}(f_1^2, f_2^2, f_3^2) = f_1^{3/2} f_2^{3/2} f_3^{3/2} J_{\U}(f_1^{1/2}, f_2^{1/2}, f_3^{1/2}) = f_1 f_2 f_3 f_8^{1/2}$$ has only triple zeros on the mirrors lying in the zero locus of $f_1f_2f_3$ and simple zeros on all other mirrors.

(3) We only explain why the generators are algebraically independent over $\CC$. It is clear that $f_4, f_5, f_6 \in M_4(\U_3(L))$. A direct calculation shows that they span this space. Thus $f_4$, $f_5$ and $f_6$ are algebraically independent over $\CC$.

(4) By direct calculations, we can replace the generator $\cE_{12}$ in \cite[Theorem 9.4]{WW20c} with $F_7^2$. An application of Proposition \ref{prop:nonzeroJ} to $\Orth^+(L)$ shows that the restrictions of $\cE_4$, $\cE_8$ and $F_7^2$ are algebraically independent.  We then obtain the algebraic independence of generators and the claim follows from the Jacobian criterion.
\end{proof}

\subsection{The \texorpdfstring{$2U\oplus A_2(2)$}{} lattice over \texorpdfstring{$\ZZ[\omega]$}{}}\label{sec:2U+A22}
The lattice $L = 2U \oplus A_2(2)$ is the trace form of the Hermitian lattice over $\mathbb{Z}[e^{\pi i/3}]$ with Gram matrix $\begin{psmallmatrix} 0 & 0 & 1/\sqrt{-3} \\ 0 & 2 & 0 \\ -1/\sqrt{-3} & 0 & 0 \end{psmallmatrix}.$
There are three reflective Borcherds products, and their input forms have the following principal parts.
\begin{align*}
&F_1:&  &6e_0+q^{-1/6}\sum_{v} e_v, \quad (v,v)=1/3, \; \ord(v)=6;& \\
&F_2:&  &30e_0+q^{-1/2}\sum_{u} e_u, \quad (u,u)=1,\; \ord(u)=2;& \\
&F_3:&  &24e_0 + q^{-1}e_0. & 
\end{align*}
We also need an additional, non-reflective Borcherds product:
\begin{align*}
&G:&  &36e_0+q^{-2/3}\sum_{w} e_w - q^{-1/6}\sum_{v} e_v, \quad (w,w)=4/3, \; \ord(w)=3.
\end{align*}
We denote their restrictions to $\cD_{\U}(L)$ by $f_1$, $f_2$, $f_3$, $g$ respectively.  Each of the $f_i$ and $g$ have zeros only of multiplicity three, so their cube roots are well-defined modular forms. In addition,
\begin{enumerate}
\item $f_2/f_1$ is holomorphic. 
\item $r^\perp \in \div(f_1)$ if and only if $\sigma_{r, \omega}\in \U(L)$.
\item $r^\perp \in \div(f_2/f_1)$ if and only if $\sigma_{r, -\omega}\in \U(L) \backslash \widetilde{\U}(L)$ but $\sigma_{r, \omega}\not\in \U(L)$.
\item $r^\perp \in \div(f_3)$ if and only if $\sigma_{r, -\omega}\in \widetilde{\U}(L)$ but $\sigma_{r, \omega}\not\in \U(L)$.
\item $r^\perp \in \div(g)$ if and only if $\sigma_{r, -1}\in \U(L)$ but $\sigma_{r, \omega}\not\in \U(L)$.
\end{enumerate}
 
\begin{lemma}
The forms $f_1$, $f_2/f_1$ and $\cE_6$ are algebraically independent over $\CC$. 
\end{lemma}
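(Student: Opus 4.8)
The plan is to follow the same pattern as the algebraic-independence lemmas in \S\ref{sec:2U+2A1} and \S\ref{sec:2U+A22}: I would restrict the three forms to a one-dimensional sub-ball isomorphic to the modular curve for $\mathrm{SL}_2(\ZZ)$ on which $f_1$ vanishes identically, thereby reducing the claim to algebraic independence of their images in $M_*(\mathrm{SL}_2(\ZZ)) = \CC[E_4, E_6]$. (Here $\cE_6$ denotes the restriction to $\cD_\U(L)$ of the weight-$6$ orthogonal Eisenstein series.) Concretely, I would fix a primitive triflection root $r \in L'$ of norm $\latt{r,r} = 1/3$ with $\sigma_{r,\omega} \in \U(L)$, so that $r^\perp \in \div(f_1)$ by item (2) of the list preceding the lemma, and chosen so that its orthogonal complement $H = r^\perp$ is a Hermitian hyperbolic plane over $\ZZ[\omega]$. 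Then $\cD_\U(H) = r^\perp \cap \cD_\U(L)$ is a copy of the upper half-plane on which $\U(H) \cong \U(1,1)$ acts, and restriction along this sub-domain sends modular forms for $\U(L)$ to modular forms for $\mathrm{SL}_2(\ZZ)$, possibly with a multiplier system.

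The next step is to compute the three restrictions. By construction $r^\perp \subseteq \div(f_1)$, so $f_1$ restricts to $0$. Since $f_2/f_1$ is holomorphic of weight $12$ (item (1)) and its zero divisor consists of the biflection mirrors of item (3), which do not contain $\cD_\U(H)$, its restriction is a nonzero form; as in the earlier subsections one identifies it by its weight and multiplier with a power of $\eta$, and in particular it is a cusp form. Finally, as in \S\ref{sec:embedding} the weight-$6$ orthogonal Eisenstein series restricts on the two hyperbolic planes to $E_6(\tau)E_6(w)$, and substituting the complex-multiplication value $w = -\overline{\zeta}$ gives $\cE_6(\tau, \mathfrak{z}_{\mathrm{hol}}) = E_6(\rho)\,E_6(\tau) + O(\mathfrak{z}_{\mathrm{hol}})$ with $\rho = e^{2\pi i/3}$, since $-\overline{\zeta}$ is $\mathrm{SL}_2(\ZZ)$-equivalent to $\rho$. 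Here the weight $6$ is essential: over $\QQ(\sqrt{-3})$ one has $E_4(\rho) = 0$ but $E_6(\rho) \neq 0$, so the weight-$4$ Eisenstein series would restrict to zero, whereas $\cE_6$ restricts to a nonzero multiple of $E_6$ (the unique weight-$6$ form up to scale).

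To conclude, suppose $P(f_1, f_2/f_1, \cE_6) = 0$ for some polynomial $P$, and expand $P = \sum_{j \geq 0} f_1^{\,j}\, Q_j(f_2/f_1, \cE_6)$ by powers of $f_1$. Restriction to $\cD_\U(H)$ annihilates every term with $j \geq 1$ and leaves $Q_0$ evaluated at a nonzero cusp form and a nonzero multiple of $E_6$. A cusp form and $E_6$ are algebraically independent in $\CC[E_4, E_6]$ (the former vanishes at $\infty$, so it is not proportional to $E_6^2$, and any relation would be weighted-homogeneous), hence $Q_0 \equiv 0$ as a polynomial. Since $M_*(\U(L))$ is an integral domain and $f_1 \neq 0$, I may cancel $f_1$ and repeat, obtaining $Q_j \equiv 0$ for all $j$ by induction and hence $P \equiv 0$.

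The main obstacle is the lattice-geometric input of the first two paragraphs: one must exhibit a triflection root $r$ of norm $1/3$ whose orthogonal complement is genuinely a hyperbolic plane (so that the restriction lands in $M_*(\mathrm{SL}_2(\ZZ))$), and then verify the two non-vanishing facts --- that $\cD_\U(H)$ is not contained in $\div(f_2/f_1)$ and that the restricted Eisenstein series is nonzero of cusp-vanishing type for $f_2/f_1$. These are finite checks using the explicit Gram matrix and the reflection classification of Lemma \ref{lem:Eisenstein}, but they carry all the content specific to $2U\oplus A_2(2)$; everything afterwards is formal and identical to the earlier subsections. As an alternative to the hyperbolic-plane restriction, one could prove the equivalent statement that $f_1$, $f_2$, $\cE_6$ are algebraically independent and apply Proposition \ref{prop:nonzeroJ}, provided a free algebra of orthogonal modular forms on a suitable overgroup is available.
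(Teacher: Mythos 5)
Your overall strategy is the one the paper uses: restrict to the one\-dimensional sub-ball $\cD_{\U}(H)$ (equivalently, set $\mathfrak{z}_{\text{hol}}=0$ in the Taylor expansion on the Siegel domain), observe that $f_1$ restricts to $0$ and that $\cE_6$ restricts to $E_6(\tau)E_6(\rho)$ with $E_6(\rho)\ne 0$, and then run the expand-by-powers-of-$f_1$ argument. Those parts are correct and match the paper's proof.

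The gap is in the identification of the restriction of $f_2/f_1$. You propose to identify it ``by its weight and multiplier with a power of $\eta$,'' by analogy with the weight-$3$ pullback $\eta^6$ in \S\ref{sec:2U+2A1}. That analogy fails here: $f_2/f_1$ has weight $12$ and trivial character on $\U(L)$ (it is one of the ring generators), so its restriction lies in the two-dimensional space $M_{12}(\mathrm{SL}_2(\ZZ))=\CC E_4^3\oplus\CC\Delta$, and weight and multiplier alone do not determine it. Knowing only that the restriction is nonzero is not enough: if it happened to be a multiple of $E_6^2=E_4^3-1728\Delta$, it would be algebraically dependent on the restriction of $\cE_6$ and your final step would collapse. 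What is actually needed is that the restriction has nonzero cuspidal component, and this is exactly the content of the paper's proof, which computes the leading Taylor coefficients explicitly: $f_1$ and $f_2$ both vanish to order $3$ along $\mathfrak{z}_{\text{hol}}=0$ with leading coefficients proportional to $\eta(\tau)^{12}$ and $\eta(\tau)^{36}$, so $f_2/f_1$ restricts to a multiple of $\eta^{24}=\Delta$. You do flag this as a ``finite check,'' but the check you describe would not suffice as stated; one needs either the explicit quasi-pullback computation or a divisor argument (e.g.\ showing that no mirror from item (3) meets $\cD_{\U}(H)$, so that the valence formula forces the restriction to be $c\Delta$). Two minor slips that do not affect the argument: the roots in $\div(f_1)$ satisfy $(r,r)=1/3$, i.e.\ $\latt{r,r}=1/6$ rather than $1/3$, and by item (2) they are hexaflection roots ($\sigma_{r,\omega}$ of order $6$), not triflection roots.
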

\begin{proof}
Let $\rho = e^{2\pi i / 3}$. Computing the Taylor expansion on the Siegel domain about $\U(H)$ yields 
\begin{align*} f_1(\tau, \mathfrak{z}_{\text{hol}}) &= \eta(\tau)^{12} \eta(\rho)^{12} \mathfrak{z}_{\text{hol}}^3 + \frac{1}{60480} \eta(\tau)^{12} E_6(\tau) \eta(\rho)^{12} E_6(\rho) \mathfrak{z}_{\text{hol}}^9 + O(\mathfrak{z}_{\text{hol}}^{15}), \\
f_2(\tau, \mathfrak{z}_{\text{hol}}) &= 3\sqrt{-3} \eta(\tau)^{36} \eta(\rho)^{36} \mathfrak{z}_{\text{hol}}^3 - \frac{3 \sqrt{-3}}{2240} \eta(\tau)^{36} E_6(\tau) \eta(\rho)^{36} E_6(\rho) \mathfrak{z}_{\text{hol}}^9 + O(\mathfrak{z}_{\text{hol}}^{15}),\\
\cE_6(\tau, \mathfrak{z}_{\text{hol}}) &= E_6(\tau) E_6(\rho) - \frac{12096}{55} \eta(\tau)^{24} \eta(\rho)^{24} \mathfrak{z}^6 + O(\mathfrak{z}^{12}).\end{align*}
In particular, the values at $\mathfrak{z}_{\text{hol}} = 0$ of $f_1, f_2/f_1, \cE_6$ are respectively $0$ and multiples of the algebraically independent forms $\Delta, E_6 \in M_*(\mathrm{SL}_2(\mathbb{Z}))$.
\end{proof}

\begin{theorem}
\begin{enumerate}
\item Let $\U_1(L)$ be the subgroup of $\U(L)$ generated by reflections whose mirrors lie in the divisor of $f_3g$.  Then 
$$
M_*(\U_1(L)) = \CC[f_1^{1/3}, (f_2/f_1)^{1/3}, \cE_6] \; \text{is generated in weights $1, 4, 6$}.
$$
\item Let $\U_2(L)$ be the subgroup of $\U(L)$ generated by $\U_1(L)$ and by order-three reflections whose mirrors lie in the divisor of $f_2$.  Then 
$$
M_*(\U_2(L)) = \CC[f_1, f_2/f_1, \cE_6] \; \text{is generated in weights $3, 6, 12$}.
$$
\item Let $\U_3(L)$ be the subgroup of $\U(L)$ generated by all reflections whose mirrors lie in the divisor of $f_2f_3$.  Then 
$$
M_*(\U_3(L)) = \CC[f_1^2, g^{1/3}, \cE_6] \; \text{is generated in weights $6, 6, 6$}.
$$
\item For the full unitary group,
$$
M_*(\U(L)) = \CC[f_1^2,  f_2/f_1,  \cE_6] \; \text{is generated in weights $6, 6, 12$}.
$$
\end{enumerate}
\end{theorem}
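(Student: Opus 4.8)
The four statements share a single mechanism, so the plan is to prove them together by applying the Jacobian criterion (Theorem \ref{th:modularJacobian}) to each group after first computing one ``master'' Jacobian for the full group. The starting point is the classification of reflective divisors (1)--(5) above, which partitions the mirrors of reflections in $\U(L)$ into four families according to the maximal order of a reflection fixing them: the hexaflection mirrors $H_6=\div(f_1)_{\mathrm{red}}$ (maximal order $6$), the two families of triflection mirrors $T'=\div(f_2/f_1)_{\mathrm{red}}$ and $T''=\div(f_3)_{\mathrm{red}}$ (maximal order $3$), and the biflection mirrors $B=\div(g)_{\mathrm{red}}$ (maximal order $2$). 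Since each of $f_1,f_2/f_1,f_3,g$ has only triple zeros, the weight-normalized cube roots $\phi_6=f_1^{1/3}$, $\phi_3'=(f_2/f_1)^{1/3}$, $\phi_3''=f_3^{1/3}$ and $\phi_2=g^{1/3}$ are holomorphic modular forms (with multiplier) of weights $1,4,4,6$ and reduced divisors $H_6,T',T'',B$ respectively.

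First I would prove the master identity
$$J_{\U}(f_1,f_2,\cE_6)=c\,\phi_6^{5}(\phi_3')^{2}(\phi_3'')^{2}\phi_2,\qquad c\in\CC^{\times},$$
a form of weight $27$. Its left-hand side is nonzero, because $f_1,f_2,\cE_6$ are algebraically independent (this follows from the preceding lemma, since $f_1,f_2/f_1,\cE_6$ are), and by Theorem \ref{th:Jacobian}(3) it vanishes on each mirror of a reflection $\sigma_{r,\alpha}\in\U(L)$ to order $\equiv-1\ (\mathrm{mod}\ \ord(\alpha))$, hence to order at least $5,2,2,1$ along $H_6,T',T'',B$. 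The right-hand side is holomorphic of the same weight with divisor exactly $5H_6+2T'+2T''+B$, so the quotient is a holomorphic modular form of weight $0$ and therefore a nonzero constant. This shows at once that $J_{\U}(f_1,f_2,\cE_6)$ vanishes on precisely the mirrors of $\U(L)$, with the multiplicities $m-1$ prescribed by Theorem \ref{th:modularJacobian}.

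The remaining cases follow by rewriting the Jacobian of each set of generators through the multilinearity of the determinant in Theorem \ref{th:Jacobian}(4): substituting $f^{a}$ for a generator scales its column by $af^{a-1}$, while $f_2=(f_2/f_1)\cdot f_1$ yields $J_{\U}(f_1,f_2,\cE_6)=f_1\,J_{\U}(f_1,f_2/f_1,\cE_6)$. For the full group one obtains $J_{\U}(f_1^2,f_2/f_1,\cE_6)=2J_{\U}(f_1,f_2,\cE_6)$, of divisor $5H_6+2T'+2T''+B$; for $\U_2(L)$ one gets $J_{\U}(f_1,f_2/f_1,\cE_6)$, proportional to $\phi_6^{2}(\phi_3')^{2}(\phi_3'')^{2}\phi_2$ of divisor $2H_6+2T'+2T''+B$; and for $\U_1(L)$ one gets $J_{\U}(\phi_6,\phi_3',\cE_6)$, proportional to $(\phi_3'')^{2}\phi_2$ of divisor $2T''+B$. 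The group $\U_3(L)$ involves $g$ rather than $f_2$, so it is handled by the same division argument applied directly to $J_{\U}(f_1^2,g^{1/3},\cE_6)$, which comes out proportional to $\phi_6^{5}(\phi_3')^{2}(\phi_3'')^{2}$ of divisor $5H_6+2T'+2T''$. In each case this divisor is exactly the mirror divisor of the group in question, carrying multiplicity $m-1$ where $m$ is the maximal order of a reflection of that group along the mirror. The crucial bookkeeping is the behaviour along $H_6$: the full group and $\U_3(L)$ retain the order-six hexaflections there (multiplicity $5$), whereas along $H_6$ the group $\U_2(L)$ contains only the order-three reflections $\sigma_{r,\omega^2}$ (multiplicity $2$), and $\U_1(L)$ contains no reflection along $H_6$ or $T'$ at all.

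Two issues remain and are where the real work lies. First, $\U_1(L),\U_2(L),\U_3(L)$ are defined as reflection subgroups and are not a priori of finite index, so Theorem \ref{th:modularJacobian} does not apply to them verbatim. I would resolve this exactly as in Theorem \ref{thm:2U+2A1}(1): replace each $\U_i(L)$ by the kernel $\Gamma_i$ of the multiplier systems of its generators, which is automatically of finite index and contains $\U_i(L)$ (each generating reflection lies in these kernels since its mirror is not a zero of the generators, by the relation between a form's multiplier and its vanishing orders implicit in the proof of Theorem \ref{th:Jacobian}(3)); the exact divisor of the Jacobian then forces the reflections of $\Gamma_i$ to be precisely the expected ones --- in particular no hexaflection can survive along $H_6$ inside $\Gamma_2$, since that would push the vanishing order there up to $5$ --- so that $\Gamma_i=\U_i(L)$. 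Second, one must verify that the stated generators carry trivial character on the relevant group (for the full group, $f_1^2$ and $f_2/f_1$ descend from orthogonal Borcherds products of trivial character, as in the $2U\oplus2A_1$ case) and, for part (3), that $f_1^2,g^{1/3},\cE_6$ are algebraically independent; the latter is seen by restricting to $\U(H)$, where $f_1^2$ vanishes while $g^{1/3}$ and $\cE_6$ restrict to nonzero algebraically independent forms, read off from their leading Taylor coefficients as in the preceding lemma. With the finite-index reduction and these routine checks in place, Theorem \ref{th:modularJacobian} delivers both the freeness and the generation by reflections in all four cases; I expect the delicate multiplicity bookkeeping along $H_6$ together with the finite-index argument to be the main obstacle.
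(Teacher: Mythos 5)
Your proposal is correct in substance and runs on the same engine as the paper's proof: the algebraic--independence lemma gives a nonzero Jacobian, Koecher's principle identifies it with the appropriate product of (cube roots of) the reflective forms $f_1$, $f_2/f_1$, $f_3$, $g$, and Theorem \ref{th:modularJacobian} then delivers freeness; the paper likewise passes between the four cases by multilinearity of the Jacobian, and your finite-index reduction is exactly the kernel-of-multiplier trick of Theorem \ref{thm:2U+2A1}(1) that the paper invokes implicitly. The one place where your ordering creates a genuine (though fixable) issue is the justification of the master identity. Theorem \ref{th:Jacobian}(3) is stated and proved for Jacobians of forms with \emph{trivial} character, whereas $f_1$ (weight $3$) and $f_2$ (weight $15$) necessarily carry nontrivial characters on $\U(L)$ by Lemma \ref{lem:zero}; hence $J_{\U}(f_1,f_2,\cE_6)$ transforms with character $\det^{-1}\chi_1\chi_2$ and the congruence $\ord(J_{\U},r^{\perp})\equiv -1 \bmod \ord(\alpha)$ does not follow verbatim. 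It does hold here, because the value of $\chi_1\chi_2$ at each reflection is forced by the vanishing orders (e.g.\ at a hexaflection $\sigma_{r,\omega}$ both $f_1$ and $f_2$ vanish to order $3$, so $\chi_1\chi_2(\sigma_{r,\omega})=\omega^{6}=1$), but this must be checked explicitly. The paper avoids the issue by going in the opposite direction: it first treats $\U_1(L)$, where $f_1^{1/3}$ and $(f_2/f_1)^{1/3}$ have trivial multiplier by construction (their divisors avoid the mirrors generating $\U_1(L)$), identifies $J_{\U}(f_1^{1/3},(f_2/f_1)^{1/3},\cE_6)$ with a constant times $f_3^{2/3}g^{1/3}$, and then obtains the Jacobians for $\U_2(L)$, $\U_3(L)$ and $\U(L)$ from this by multilinearity --- i.e.\ your ``master identity'' is a consequence rather than the starting point. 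With that reordering (or the explicit character computation) your argument is complete; the remaining checks you defer (trivial characters of $f_1^2$, $f_2/f_1$, $g^{1/3}$ on the relevant groups, and the algebraic independence of $f_1^{2}$, $g^{1/3}$, $\cE_6$ via the value $g(\tau,0)=\mathrm{const}\cdot\eta(\tau)^{36}$) are indeed routine and coincide with what the paper does.
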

\begin{proof} In all cases we check that Theorem \ref{th:modularJacobian} applies.

(1) The Jacobian $$J_{\U}=J_{\U}(f_1^{1/3}, (f_2 / f_1)^{1/3}, \cE_6)$$ is nonzero of weight $14$ by the previous lemma. By definition of $\U_1(L)$,  $f_1^{1/3}, (f_2 / f_1)^{1/3}$ have trivial character, and $J_{\U}$ has at least double zeros on the mirrors of triflections and simple zeros on the mirrors of biflections in $\U_1(L)$. Therefore $J_{\U} / (f_3^{2/3} g)$ is holomorphic of weight $0$, so it is a constant.

(2) The Jacobian in this case is $$J_{\U}(f_1, f_2 / f_1, \cE_6) = f_1^{2/3} (f_2 / f_1)^{2/3} J(f_1^{1/3}, (f_2 / f_1)^{1/3}, \cE_6) = f_2^{2/3} f_3^{2/3} g$$ and it vanishes along all mirrors to the correct order.

(3) We compute the zero-value $g(\tau, 0) = \text{const} \cdot \eta(\tau)^{36}$ to see that $f_1, g, \cE_6$ are also algebraically independent. Their Jacobian has weight $21$ and must be a nonzero multiple of $f_1^{5/3} (f_2/f_1)^{2/3} f_3^{2/3}$.

(4) Finally, the Jacobian of these three forms is $$J_{\U}(f_1^2, f_2 / f_1, \cE_6) = f_1^{5/3} (f_2 / f_1)^{2/3} J_{\U}(f_1^{1/3}, (f_2 / f_1)^{1/3}, \cE_6) = f_1^{5/3} (f_2 / f_1)^{2/3} f_3^{2/3} g,$$ with zeros on mirrors of reflections of exactly the necessary order.
\end{proof}

\subsection{The \texorpdfstring{$2U\oplus A_2(3)$}{} lattice over \texorpdfstring{$\ZZ[\omega]$}{}}\label{sec:2U+A23}
There are three reflective Borcherds products on the lattice $L_{\ZZ} = 2U\oplus A_2(3)$, with the following input forms:
\begin{align*}
&F_1:&  &2e_0+q^{-1/9}\sum_{v} e_v, \quad (v,v)=2/9,\; \ord(v)=9;& \\
&F_2:&  &18e_0+q^{-1/3}\sum_{u} e_u, \quad (u,u)=2/3,\; \ord(u)=3;& \\
&F_3:&  &24e_0 + q^{-1}e_0. & 
\end{align*}
Their restrictions to $\cD_{\U}(L)$ are labelled $f_1$, $f_2$, $f_3$ respectively. They have the following properties:
\begin{enumerate}
\item $f_2/f_1$ is holomorphic. 
\item $r^\perp \in \div(f_1)$ if and only if $\sigma_{r, \omega}\in \U(L)$.
\item $r^\perp \in \div(f_2/f_1)$ if and only if $\sigma_{r, -\omega}\in \U(L) \backslash \widetilde{\U}(L)$ but $\sigma_{r, \omega}\not\in \U(L)$.
\item $r^\perp \in \div(f_3)$ if and only if $\sigma_{r, -\omega}\in \widetilde{\U}(L)$ but $\sigma_{r, \omega}\not\in \U(L)$.
\end{enumerate}

We also need the non-reflective Borcherds product
\begin{align*}
&F_4:&  &24e_0+q^{-4/9}\sum_{w} e_w - q^{-1/9}\sum_{v} e_v, \quad (w,w)=8/9,\; \ord(w)=9.
\end{align*}

Its restriction to the unitary group has only zeros of multiplicity three so it admits a holomorphic cube root $h$ of weight $4$.
 
\begin{lemma}
The forms $f_1$, $h$ and $\cE_6$ are algebraically independent over $\CC$.
\end{lemma}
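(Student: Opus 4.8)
The plan is to compute the Taylor expansion of the three forms on the Siegel domain attached to the rank-one cusp and to read off their restrictions to the sub-ball fixed by $\U(H)$, exactly as in the preceding lemma for $2U \oplus A_2(2)$. Under this restriction the sub-unitary group $\U(H)$ is commensurable with $\mathrm{SL}_2(\ZZ)$, so the leading Taylor coefficients are classical elliptic modular forms, and it suffices to exhibit, among the three restrictions, enough algebraically independent elements of $M_*(\mathrm{SL}_2(\ZZ))$ together with a nonvanishing argument for any form whose pullback is zero.

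First I would record the three restrictions. The form $\cE_6$ is the restriction of the orthogonal weight-$6$ Eisenstein series and pulls back to $E_6(\tau) E_6(\rho)$ (with $\rho = e^{2\pi i/3}$), a nonzero multiple of $E_6$. For $f_1$, the Taylor expansion about $\U(H)$ begins with a positive power of $\mathfrak{z}_{\text{hol}}$, its leading coefficient being a nonzero multiple of a power of $\eta$; hence $f_1$ pulls back to $0$, which is the exact analogue of the vanishing $f_1 \mapsto 0$ in the $A_2(2)$ lemma and reflects the fact that the sub-ball $\U(H)$ is a mirror in $\div(f_1)$. The remaining form $h$ has weight $4$, so its pullback lies in the one-dimensional space $M_4(\mathrm{SL}_2(\ZZ)) = \CC\, E_4$ and is necessarily a scalar multiple of $E_4$; the content of the computation is to show that this scalar is nonzero.

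Granting these restrictions $0$, $c\,E_4$, $c'\,E_6$ with $c, c' \neq 0$, algebraic independence follows by the standard integral-domain argument. Suppose $P(f_1, h, \cE_6) = 0$ for a polynomial $P$, and write $P = \sum_{j \geq 0} X^j Q_j(Y,Z)$. Restricting to $\U(H)$ kills every term with $j \geq 1$ and yields $Q_0(c E_4, c' E_6) = 0$; since $E_4$ and $E_6$ are algebraically independent, $Q_0 \equiv 0$, so $X \mid P$. As the ring of holomorphic functions on the connected cone $\cA_{\U}$ is an integral domain and $f_1 \not\equiv 0$, we may cancel $f_1$ and repeat, strictly decreasing $\deg_X P$ at each step until $P \equiv 0$.

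The main obstacle is the nonvanishing claim for $h$, namely that $h(\tau, 0)$ is a nonzero multiple of $E_4$ rather than identically zero. Because $h^3 = F_4|_{\cD_{\U}(L)}$ and $F_4$ is the non-reflective Borcherds product whose input has principal part involving $q^{-4/9}$ and $q^{-1/9}$, one must verify that no mirror in $\div(F_4)$ restricts to the entire sub-ball $\U(H)$; equivalently, one computes the leading Taylor (or Fourier--Jacobi) coefficient of $h$ at this cusp and checks that it does not vanish. This is a finite computation with the product expansion, carried out in Sage, and once it is in hand the remaining steps are purely formal.
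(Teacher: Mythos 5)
Your overall strategy is the paper's: set $\mathfrak{z}_{\text{hol}} = 0$ in the Taylor expansions on the Siegel domain, observe that $f_1$ restricts to $0$ while the other two forms restrict to algebraically independent elliptic modular forms, and then run the standard ``divide out by $f_1$'' induction in the integral domain of holomorphic functions on $\cA_{\U}$. That skeleton, including the treatment of $f_1$ and of $\cE_6 \mapsto c'E_6$, is correct.

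However, there is a genuine error in your identification of the restriction of $h$. You argue that since $h$ has weight $4$ its pullback must lie in $M_4(\mathrm{SL}_2(\ZZ)) = \CC\, E_4$. But $h$ is the holomorphic cube root of the restriction of the weight-$12$ Borcherds product $F_4$, and as such it is only modular with a multiplier system of order three (the paper later verifies that this character $\chi_2$ has order exactly $3$). Consequently its restriction to the sub-ball is a weight-$4$ form with a cubic multiplier for $\mathrm{SL}_2(\ZZ)$, and the paper's computation identifies it as a nonzero multiple of $\eta^8$, not of $E_4$. This matters: the restriction of $h$ is in fact a cusp form at $\infty$, so within your framework (``it must be $cE_4$'') the computation would force $c = 0$ and the algebraic-independence argument would collapse, since there is no scalar $c$ with $\eta^8 = cE_4$. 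The repair is small but necessary: drop the claim that the pullback lies in $M_4(\mathrm{SL}_2(\ZZ))$, allow the multiplier, and conclude instead that $h \mapsto c\,\eta^8$ with $c \neq 0$. Algebraic independence of $\eta^8$ and $E_6$ (equivalently of $\Delta = \eta^{24}$ and $E_6$) then feeds into your cancellation argument exactly as before.
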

\begin{proof}
Setting $\mathfrak{z}_{\text{hol}} = 0$ in the expansions of these forms on the Siegel domain sends $f_1$ to $0$ and maps $h, \cE_6$ to multiples of the algebraically independent forms $\eta^8$ and $E_6$.
\end{proof}

Using Theorem \ref{th:modularJacobian},  we prove the following result.

\begin{theorem}
We define $\U_1(L)$ as the subgroup of $\U(L)$ generated by triflections.  Then we have
\begin{align*}
    M_*(\U_1(L)) =& \CC[f_1, h, \cE_6] \; \text{is generated in weights} \; 1, 4, 6.\\
    M_*(\U_r(L)) =& \CC[f_1^2, h, \cE_6] \; \text{is generated in weights}\; 2, 4, 6. \\
    M_*(\U(L)) =& \CC[f_1^6, f_1^2h, h^3, \cE_6] \; \text{is generated in weights} \; 6, 6, 6, 12 \\ &\text{with a relation in weight $18$}.
\end{align*}
\end{theorem}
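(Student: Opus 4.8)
The plan is to handle the two reflection groups $\U_1(L)$ and $\U_r(L)$ by the Jacobian criterion of Theorem~\ref{th:modularJacobian}, and then to realize the full group $\U(L)$ as a ring of invariants inside the free algebra $M_*(\U_r(L))$. The algebraic independence of $f_1,h,\cE_6$ (hence of $f_1^2,h,\cE_6$) is given by the preceding lemma, so by Theorem~\ref{th:Jacobian}(2) every Jacobian below is nonzero. The key numerical input is that $f_1$, $f_2/f_1$ and $f_3$ each vanish to order $3$ along their mirrors (read off from the divisors of the Borcherds products after restriction to $\cD_\U(L)$), while $h$ vanishes only along non-reflective divisors and $\cE_6$ along none; thus $\div(f_1)$ is a union of hexaflection mirrors and $\div(f_2/f_1),\div(f_3)$ are unions of triflection mirrors, and $\div(f_2f_3)$ is the full set of triflection mirrors.

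For $\U_1(L)$ (generated by triflections) the generators have trivial character: $\cE_6$ restricts a full-group Eisenstein series, $h$ does not vanish on any reflection mirror, and a triflection $\sigma_{r,\omega^2}$ acts on the order-$3$ zero of $f_1$ by $(\omega^2)^3=1$. The Jacobian $J_\U(f_1,h,\cE_6)$ has weight $14$ and, by Theorem~\ref{th:Jacobian}(3), vanishes to order at least $2$ along every triflection mirror. Since $(f_2f_3)^{2/3}$ is holomorphic of weight $14$ with exactly double zeros along these mirrors and none elsewhere, the quotient $J_\U(f_1,h,\cE_6)/(f_2f_3)^{2/3}$ is holomorphic of weight $0$, hence a nonzero constant by Koecher's principle. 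This pins the divisor of the Jacobian exactly, and Theorem~\ref{th:modularJacobian} gives $M_*(\U_1(L))=\CC[f_1,h,\cE_6]$. The same argument with $f_1^2,h,\cE_6$ applies to $\U_r(L)$: now a hexaflection $\sigma_{r,\omega}$ acts on $f_1$ by $\omega^3=-1$, so $f_1^2$ has trivial character, and one checks $J_\U(f_1^2,h,\cE_6)=\mathrm{const}\cdot f_1(f_2f_3)^{2/3}$ of weight $15$, with a quintuple zero along $\div(f_1)$ and double zeros along $\div(f_2/f_1)$ and $\div(f_3)$ — exactly $\ord(\sigma)-1$ for the maximal reflection $\sigma$ on each mirror.

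For the full group I would use that $\U_r(L)$ is normal in $\U(L)$ (reflections are carried to reflections by conjugation), so $M_*(\U(L))=M_*(\U_r(L))^{G}$ with $G:=\U(L)/\U_r(L)$. Each of $\CC f_1^2,\CC h,\CC\cE_6$ is a $\U(L)$-stable line (a $\gamma$-translate has the same $\U(L)$-invariant divisor, so differs by a constant), so $G$ acts diagonally through characters and embeds into $(\CC^*)^2$. The central scalar $\rho\cdot\mathrm{Id}$, $\rho=e^{2\pi i/3}\in\mathcal{O}_\F^\times$, lies in $\U(L)$ and acts by $f_1^2\mapsto\rho f_1^2$, $h\mapsto\rho^{-1}h$, $\cE_6\mapsto\cE_6$, so $G$ contains a copy of $\ZZ/3$. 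The $\ZZ/3$-invariants of $\CC[f_1^2,h,\cE_6]$ are the monomials $f_1^{2a}h^b\cE_6^c$ with $a\equiv b\pmod 3$, which are generated by $f_1^6,\,f_1^2h,\,h^3,\,\cE_6$ subject to the single relation $(f_1^2h)^3=f_1^6\cdot h^3$ in weight $18$. Moreover Lemma~\ref{lem:zero} forces $M_*(\U(L))$ to be supported in weights divisible by $6$, and the weight-$6\ZZ$ part of $\CC[f_1^2,h,\cE_6]$ is precisely $\CC[f_1^6,f_1^2h,h^3,\cE_6]$; this already gives the inclusion $M_*(\U(L))\subseteq\CC[f_1^6,f_1^2h,h^3,\cE_6]$.

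The main obstacle is the reverse inclusion, i.e. proving that $G$ is no larger than this central $\ZZ/3$ — equivalently that $f_1^6,f_1^2h,h^3$ really are invariant under all of $\U(L)$ and not merely under $\langle\U_r(L),\rho\,\mathrm{Id}\rangle$. I would settle this either by computing $\U_r(L)$ and its index from the explicit classification of reflections in Lemma~\ref{lem:Eisenstein}, verifying that every element of $\U(L)$ is a product of reflections and a central unit, or by exhibiting $f_1^2h$ and $h^3$ as restrictions of honest full-group forms with visibly trivial character (for instance matching $h^3$ with the Borcherds product $F_4$ and $f_1^2h$ with a weight-$6$ theta lift, via their Taylor expansions on the Siegel domain). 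Once $G\cong\ZZ/3$ is established, the presentation with its weight-$18$ relation follows from the invariant computation above; the remaining verifications are routine weight bookkeeping given the Borcherds product divisors.
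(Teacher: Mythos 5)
Your treatment of $M_*(\U_1(L))$ and $M_*(\U_r(L))$ is essentially the paper's: trivial characters of $f_1,h,\cE_6$ on the reflection groups, nonvanishing of the Jacobian from the lemma, and identification of $J_{\U}(f_1,h,\cE_6)$ with $f_2^{2/3}f_3^{2/3}$ (resp.\ $J_{\U}(f_1^2,h,\cE_6)$ with $f_1^{5/3}(f_2f_3/f_1)^{2/3}$) by Koecher's principle, followed by Theorem~\ref{th:modularJacobian}. Your observation that Lemma~\ref{lem:zero} already yields the inclusion $M_*(\U(L))\subseteq\CC[f_1^6,f_1^2h,h^3,\cE_6]$, since that ring is exactly the part of $\CC[f_1^2,h,\cE_6]$ in weights divisible by $6$, is also correct and is implicitly the easy half of the paper's argument.

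The genuine gap is the reverse inclusion, and you have named it yourself: nothing in your argument shows that the image of $\U(L)/\U_r(L)$ acting on the lines $\CC f_1^2$ and $\CC h$ is no larger than the $\ZZ/3$ generated by the central scalar, i.e.\ that $f_1^6$, $h^3$ and especially $f_1^2h$ actually have trivial character on all of $\U(L)$. Your two proposed remedies are not carried out, and the first (computing $\U_r(L)$ and checking that every element of $\U(L)$ is a reflection product times a central unit) is exactly the kind of computation the Jacobian method is designed to avoid. The paper closes the gap differently and more cheaply, in two steps. First, $F_1^3$ is a weight-three Eisenstein series transforming under $\Orth^+(L_{\ZZ})$ with a quadratic character, and $F_4$ is exhibited as $\psi_1^2+\psi_2^2+\psi_3^2-2(\psi_1\psi_2+\psi_2\psi_3+\psi_3\psi_1)+\phi$ for Gritsenko lifts $\psi_i,\phi$, hence has trivial character on $\Orth^+(L_{\ZZ})$; restricting gives $f_1^6,h^3\in M_*(\U(L))$, so the character $\chi_1$ of $f_1$ has order exactly $6$ and the character $\chi_2$ of $h$ has order exactly $3$. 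Second, since $M_6(\U_r(L))\ne M_6(\U(L))$ forces $\U(L)\ne\U_r(L)$, the group $\U(L)$ is not generated by reflections, so by Theorem~\ref{th:freeJacobian}(1) the ring $M_*(\U(L))$ cannot be free; in particular it cannot equal the polynomial ring $\CC[f_1^6,h^3,\cE_6]$, so some monomial $f_1^ah^b$ with $0<a<6$, $0<b<3$ and $6\mid a+4b$ must also be invariant. The only such monomial is $f_1^2h$, which simultaneously pins down the relation $\chi_1^2\chi_2=1$ and produces the fourth generator with its weight-$18$ relation. Without either this non-freeness argument or an explicit identification of $f_1^2h$ as a full-group form, your proof does not establish the statement for $\U(L)$.
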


\begin{proof}
(1) By construction, $f_1$ and $h$ transform under the triflections that are not squares of hexaflections without a character, since they are nonzero on the associated mirrors (which make up the zero locus of $f_2 f_3 / f_1$). Since $f_1$ has order three zeros on the mirrors of hexaflections $\sigma_{r, \omega}$ it also has trivial character under the squares $\sigma_{r, \omega}^2$. The Jacobian of $f_1, h, \cE_6$ has weight $14$ and at least double zeros on the mirrors of all triflections, so it is a multiple of the weight $14$ form $$f_1^{2/3} (f_2 f_3 / f_1)^{2/3} = f_2^{2/3} f_3^{2/3}.$$ The claim follows from Theorem \ref{th:modularJacobian}.

(2) The Jacobian in this case is $$J(f_1^2, h, \cE_6) = f_1 J(f_1, h, \cE_6) = f_1^{5/3} (f_2 f_3 / f_1)^{2/3}$$ with the necessary orders on all mirrors. 

(3) Lemma \ref{lem:zero} implies that $f_1$ has a character $\chi_1$ of order at least $6$. Since $F_1^3$ is an Eisenstein series of weight three that transforms under $\Orth^+(L_{\ZZ})$ with a quadratic character, the order of $\chi_1$ is exactly $6$. Similarly, $h$ has a character $\chi_2$ of order at least $3$. There are three weight six Gritsenko lifts $\psi_1, \psi_2, \psi_3 \in S_6(\widetilde{\Orth}^+(L_{\ZZ}))$ which are permuted transitively by $\mathrm{Aut}(L'/L)$ and an invariant weight twelve lift $\phi \in S_{12}(\Orth^+(L_{\ZZ}))$ with $$F_4 = \psi_1^2 + \psi_2^2 + \psi_3^2 - 2(\psi_1 \psi_2 + \psi_2 \psi_3 + \psi_3 \psi_1) + \phi,$$ showing that $F_4$ has trivial character on $\Orth^+(L_{\ZZ})$, so the character $\chi_2$ of $h$ has order three. It follows that $$\mathbb{C}[f_1^6, h^3, \cE_6] \subseteq M_*(\U(L)) \subsetneq \mathbb{C}[f_1^2, h, \cE_6].$$ Since $\U(L) \ne \U_r(L)$ is not generated by reflections, its algebra of modular forms is not free, so we must have another generator of the form $f_1^a h^b$ with $0 < a < 6$ and $0 < b < 3$ and $a + 4b \equiv 0$ mod $6$. This implies $\chi_1^2 = \chi_2^{-1}$, so the missing generator is $f_1^2 h$.
\end{proof}

\subsection{The \texorpdfstring{$2U\oplus 2A_2$}{} lattice over \texorpdfstring{$\ZZ[\omega]$}{}}\label{sec:2U+2A2}
Consider the lattice $L_{\ZZ} = 2U \oplus 2A_2$ as a Hermitian lattice over $\QQ(\sqrt{-3})$. By \cite{Wan20} there are no free algebras of modular forms for any subgroups $\Gamma \le \Orth^+(L_{\ZZ})$ containing $\widetilde{\Orth}^+(L_{\ZZ})$. However, we will see that the unitary group of $L$ does admit free algebras.
There are two reflective Borcherds products on $2U\oplus 2A_2$, whose input forms have the following principal parts:
\begin{align*}
&F_1:&  &12e_0+q^{-1/3}\sum_{v} e_v, \quad (v,v)=2/3,\; \ord(v)=3;& \\
&F_2:&  &84e_0 + q^{-1}e_0. & 
\end{align*}
We also need an additional non-reflective Borcherds product:
\begin{align*}
&F_3:&  &108e_0+q^{-2/3}\sum_{w} e_w, \quad (w,w)=4/3, \ord(w)=3.
\end{align*}
Their restrictions are labelled $f_1$, $f_2$, $f_3$ respectively.  We need the following facts:
\begin{enumerate}
\item $f_2/f_1$ is holomorphic. 
\item $r^\perp \in \div(f_1)$ if and only if $\sigma_{r, \omega}\in \U(L)$.
\item $r^\perp \in \div(f_2/f_1)$ if and only if $\sigma_{r, -\omega}\in \widetilde{\U}(L)$ but $\sigma_{r, \omega}\not\in \U(L)$.
\item $r^\perp \in \div(f_3)$ if and only if $\sigma_{r, -1}\in \U(L)$ but $\sigma_{r, \omega}\not\in \U(L)$.
\end{enumerate}

Weak Jacobi forms of integral weights and lattice index $A_2$ form a free $M_*(\SL_2(\ZZ))$-module generated by forms of weight $0$, $-2$ and $-3$, denoted by $\phi_{0,A_2}$, $\phi_{-2,A_2}$, $\phi_{-3,A_2}$ (see \cite{Wir92}). We can use the Gritsenko lift to construct orthogonal modular forms from Jacobi forms (see \cite{Gri91, CG13}). Recall from \S\ref{sec:twins} that $M_*(\U(L))$ with $L_{\ZZ}=2U\oplus A_2$ is freely generated by forms of weights 6, 12, 18. As generators one can take $\cE_{A_2,6}$ and the two unitary restrictions of Gritsenko lifts $$\Grit(\Delta \phi_{0, A_2}), \; \Grit(\Delta\phi_{-3,A_2})^2.$$
We construct three Gritsenko lifts on $2U\oplus 2A_2$:
\begin{align*}
G_{12} &= \Grit(\Delta\phi_{0,A_2}\otimes\phi_{0,A_2}), \\
G_{18} &= \Grit(\Delta\phi_{-3,A_2}\otimes \phi_{0,A_2})^2 +\Grit(\Delta\phi_{0,A_2}\otimes \phi_{-3,A_2})^2,\\
G_{12}^* &= \Grit(\Delta\phi_{-3,A_2}\otimes\phi_{-3,A_2})^2.
\end{align*}
Note that $G_{12}^*=F_1^2$. Let $g_k$ be the restriction of $G_k$ to $\cD_{\U}(L)$. The pullback trick yields the following lemma. 

\begin{lemma}
The unitary modular forms $f_1$, $g_{12}$, $g_{18}$ and $\cE_6$ are algebraically independent over $\CC$.
\end{lemma}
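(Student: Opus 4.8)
The plan is to restrict all four forms to the sub-ball-quotient cut out by the mirror of one of the two unit vectors spanning the second copy of $A_2$, and to combine the vanishing of $f_1$ on this mirror with the algebraic independence of the three generators of $M_*(\U(2U\oplus A_2))$ established in \S\ref{sec:twins}.

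Write $L=H_\F\oplus\langle v_1\rangle\oplus\langle v_2\rangle$ over $\ZZ[\omega]$, where $\langle v_1\rangle,\langle v_2\rangle$ are the rank-one unit lattices whose trace forms are the two copies of $A_2$, and let $\mathfrak z=(z_1,z_2)$ be the corresponding coordinates on the Siegel domain attached to a zero-dimensional cusp of $H_\F$. The locus $\{z_2=0\}$ is exactly the mirror $v_2^\perp$, and $v_2^\perp\cap L=H_\F\oplus\langle v_1\rangle$ is the Hermitian lattice with trace form $2U\oplus A_2$. Thus restriction to $v_2^\perp$ yields a ring homomorphism $\mathrm{res}$ from $M_*(\U(L))$ to holomorphic functions on $\cD_{\U}(2U\oplus A_2)$, and the images computed below lie in $M_*(\U(2U\oplus A_2))$.

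First I would compute these images. Since restriction commutes with the Gritsenko lift and specializes its index, setting $z_2=0$ replaces the second tensor factor $\psi(\tau,z_2)$ by its value $\psi(\tau,0)$. As $\phi_{0,A_2}(\tau,0)$ is a nonzero constant while $\phi_{-3,A_2}(\tau,0)=0$ (the weight $-3$ form is antisymmetric and vanishes at the origin), only the first summand of $g_{18}$ survives and
\[
\mathrm{res}(g_{12})=\phi_{0,A_2}(\tau,0)\cdot\Grit(\Delta\phi_{0,A_2})\big|_\U,\qquad
\mathrm{res}(g_{18})=\phi_{0,A_2}(\tau,0)^2\cdot\Grit(\Delta\phi_{-3,A_2})^2\big|_\U,
\]
which are nonzero multiples of the weight-$12$ and weight-$18$ generators of $M_*(\U(2U\oplus A_2))$. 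Moreover $\mathrm{res}(\cE_6)$ is a weight-$6$ form, hence a scalar multiple of the weight-$6$ generator $\cE_{A_2,6}$ (the weight-$6$ part of the free algebra is one-dimensional), and it is nonzero because its value at the cusp $\mathfrak z=0$ is $E_6(\tau)E_6(\rho)\neq0$. Thus $\mathrm{res}(g_{12}),\mathrm{res}(g_{18}),\mathrm{res}(\cE_6)$ are, up to nonzero scalars, the three free generators from \S\ref{sec:twins}, and are therefore algebraically independent. Finally, $v_2$ is a primitive unit vector and a direct check gives $(L_\ZZ,(1+\omega)v_2)=3\ZZ$, so $\sigma_{(1+\omega)v_2}\in\Orth^+(L_\ZZ)$, and Lemma \ref{lem:Eisenstein}(2) yields $\sigma_{v_2,\omega}\in\U(L)$; hence $v_2^\perp\in\div(f_1)$ and $\mathrm{res}(f_1)=0$, while $f_1\not\equiv0$ as the restriction of a Borcherds product.

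With these facts the conclusion is formal. Suppose $P(f_1,g_{12},g_{18},\cE_6)=0$ and expand $P=\sum_{j\ge0}X_1^jP_j(X_2,X_3,X_4)$ in the first variable. Applying the ring homomorphism $\mathrm{res}$ and using $\mathrm{res}(f_1)=0$ kills every term with $j\ge1$, so $P_0(\mathrm{res}\,g_{12},\mathrm{res}\,g_{18},\mathrm{res}\,\cE_6)=0$ and hence $P_0=0$ by the algebraic independence just proved. Since $M_*(\U(L))$ is an integral domain and $f_1\not\equiv0$, we may cancel a factor of $f_1$ from $\sum_{j\ge1}f_1^jP_j=0$ and repeat the argument, obtaining $P_1=0$, and inductively $P_j=0$ for all $j$; thus $P=0$. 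The main obstacle is the restriction bookkeeping of the third paragraph, and in particular verifying that $f_1$ vanishes identically along $v_2^\perp$ (equivalently that $v_2^\perp$ is a mirror of $\div(f_1)$) and that the antisymmetric summand of $g_{18}$ drops out while the symmetric one reproduces the weight-$18$ generator; the final peeling-off of powers of $f_1$ is then routine.
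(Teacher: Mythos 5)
Your proof is correct and takes essentially the same route as the paper's: the paper's one-line argument is precisely that under the pullback to $\U(2U\oplus A_2)$ (restriction to $v_2^{\perp}$) the form $f_1$ is sent to zero while $\cE_6, g_{12}, g_{18}$ are sent to the three algebraically independent free generators. You have simply supplied the details the paper leaves implicit — the evaluation of the Gritsenko lifts at $z_2=0$ using $\phi_{-3,A_2}(\tau,0)=0$, the verification that $v_2^{\perp}\in\div(f_1)$, and the routine peeling-off of powers of $f_1$.
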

\begin{proof} The images of $\cE_6, g_{12}, g_{18}$ under the pullback to $\U(2U \oplus A_2)$ are algebraically independent and $f_1$ is mapped to zero.
\end{proof}

\begin{theorem}
$$
M_*(\U(L)) = \CC[\cE_6, g_{12}, g_{12}^*, g_{18}] \; \text{is generated in weights} \; 6, 12, 12, 18.
$$
\end{theorem}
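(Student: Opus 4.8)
The plan is to apply the Jacobian criterion, Theorem \ref{th:modularJacobian}, to the four forms $\cE_6$, $g_{12}$, $g_{12}^*$, $g_{18}$. Since $L_\ZZ = 2U\oplus 2A_2$ has signature $(6,2)$, the Hermitian lattice $L$ has signature $(3,1)$, so $n=3$ and these are exactly $n+1=4$ forms. First I would record that all four have trivial character on $\U(L)$: $\cE_6$ is the restriction of an orthogonal Eisenstein series, $g_{12}$ and $g_{18}$ are restrictions of (sums of squares of) Gritsenko lifts, and $g_{12}^*=f_1^2$ is the restriction of the genuine modular form $G_{12}^*=\Grit(\dots)^2$. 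By the preceding lemma the forms $f_1,g_{12},g_{18},\cE_6$ are algebraically independent over $\CC$; since $g_{12}^*=f_1^2$, the quadruple $\cE_6,g_{12},g_{12}^*,g_{18}$ is algebraically independent as well, so by Theorem \ref{th:Jacobian}(2) its Jacobian $J_{\U}$ is a nonzero cusp form of weight $4+(6+12+12+18)=52$ and character $\det^{-1}$.

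The heart of the proof is to show that $\div(J_{\U})$ is exactly the sum of all mirrors of reflections in $\U(L)$, each counted with multiplicity one less than the maximal order of a reflection along it. By facts (2)--(4), $\U(L)$ contains hexaflections along $H:=\div(f_1)$, triflections along $T:=\div(f_2/f_1)$ and biflections along $B:=\div(f_3)$, of maximal orders $6$, $3$ and $2$ respectively. Applying Theorem \ref{th:Jacobian}(3) to a hexaflection, a triflection and a biflection shows that $J_{\U}$ vanishes to order at least $5$, $2$ and $1$ along $H$, $T$ and $B$; that is, $\div(J_{\U})\ge 5H+2T+B$.

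To see that equality holds I would build a comparison form of the same weight. Over $\ZZ[\omega]$ a single unitary reflection mirror arises as the common restriction of three distinct orthogonal hyperplanes lying in one orbit under $\mathcal{O}_\F^\times$ (the ``cube'' phenomenon), and as a consequence each of $f_1$, $f_2/f_1$ and $f_3$ vanishes along its mirrors with multiplicity exactly $3$. Hence the fractional powers below are holomorphic modular forms (with characters) and \[ \Phi := f_1^{5/3}\,(f_2/f_1)^{2/3}\,f_3^{1/3} \] is a holomorphic form with $\div(\Phi)=5H+2T+B$ and weight $10+24+18=52$. Then $J_{\U}/\Phi$ is a holomorphic modular form of weight $0$, hence a nonzero constant; consequently $\div(J_{\U})=5H+2T+B$ exactly, with no superfluous components (in particular this forces $H\cup T\cup B$ to exhaust the reflection mirrors). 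Thus $J_{\U}$ vanishes precisely on all mirrors of reflections in $\U(L)$ to the multiplicity prescribed in Theorem \ref{th:modularJacobian}, which gives that $M_*(\U(L))$ is freely generated by $\cE_6,g_{12},g_{12}^*,g_{18}$.

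I expect the main obstacle to be the input to this argument rather than the argument itself, namely establishing the divisor identifications (2)--(4) and, crucially, that $f_1$, $f_2/f_1$ and $f_3$ each vanish with multiplicity exactly $3$. This rests on Hofmann's description of how orthogonal Borcherds products restrict to $\cD_{\U}(L)$ together with the collapsing of $\mathcal{O}_\F^\times$-orbits of hyperplanes, and in practice is confirmed by the explicit (Sage) computation of the products $F_1,F_2,F_3$; the self-consistency that the weights $10+24+18$ of the fractional factors sum to exactly the Jacobian weight $52$ is itself a strong check that these multiplicities are correct. By comparison, the algebraic independence supplied by the lemma and the triviality of the characters are routine.
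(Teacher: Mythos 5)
Your proof is correct and follows exactly the paper's route: the paper likewise establishes modularity of the four generators under $\U(L)$, invokes the algebraic-independence lemma for the nonvanishing of the Jacobian, matches its weight $52$ against $f_1^{5/3}(f_2/f_1)^{2/3}f_3^{1/3}$, and concludes via Theorem \ref{th:modularJacobian}. You have simply written out the Koecher-principle comparison and the multiplicity bookkeeping that the paper compresses into ``similarly to the previous cases.''
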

\begin{proof} All of the claimed generators are modular under the full unitary group $\U(L)$. Their Jacobian has weight $52$ which coincides with the weight of $f_1^{5/3} (f_2 / f_1)^{2/3} f_3^{1/3}$. The claim follows from Theorem \ref{th:modularJacobian} similarly to the previous cases.
\end{proof}

The product $F_3$ splits in $\widetilde{\Orth}^+(L_{\ZZ})$ as $H_1 \cdot H_2$ where $H_1, H_2$ are products of weight $27$. Let $h_1, h_2$ be their restrictions to the unitary group. One of these (say, $h_1$) vanishes on the mirror of the reflection $\sigma$ that exchanges the two copies of $A_2$, and the other does not. Let $\U_1(L) \le \U(L)$ be the subgroup generated by the discriminant kernel and by $\sigma$. Then
$$
M_*(\U_1(L))=\CC[f_1, \cE_6, h_2^{1/3}, g_{12}] \; \text{is generated in weights} \; 6, 6, 9, 12,
$$
and the Jacobian of the generators is $$J_{\U}(f_1, \cE_6, h_2^{1/3}, g_{12}) = f_2^{2/3} h_1^{1/3}.$$ Note that $h_1^{1/3}$ and $h_2^{1/3}$ are necessarily the unitary restrictions of the Gritsenko lifts of weight $9$ that are skew-symmetric resp. symmetric under swapping the two copies of $A_2$: up to multiples, $$h_1^{1/3}, h_2^{1/3} = \Grit(\Delta \cdot (\phi_{-3,A_2} \otimes \phi_{0, A_2} \mp \phi_{0, A_2} \otimes \phi_{-3, A_2}) ).$$ Since any modular form in $M_*(\widetilde{\U}(L))$ can be split into parts that are symmetric and anti-symmetric under $\sigma$, we find that $$M_*(\widetilde{\U}(L)) = M_*(\U_1(L)) \oplus h_1^{1/3} M_*(\U_1(L))$$ is generated in weights $6,6,9,9,12$ with a single relation in weight $18$.

\subsection{The \texorpdfstring{$2U\oplus 3A_2$}{} lattice over \texorpdfstring{$\ZZ[\omega]$}{}}\label{sec:2U+3A2}
Similarly, by \cite{Wan20} there are no free algebras of modular forms for any subgroups $\Gamma \le \Orth^+(L_{\ZZ})$ containing $\widetilde{\Orth}^+(L_{\ZZ})$. However, there are free algebras for some subgroups of $\U(L)$. There are two reflective Borcherds products for $L_{\ZZ}$: 
\begin{align*}
&F_1:&  &6e_0+q^{-1/3}\sum_{v} e_v, \quad (v,v)=2/3,\; \ord(v)=3;& \\
&F_2:&  &78e_0 + q^{-1}e_0. & 
\end{align*}
We also need an additional non-reflective Borcherds product:
\begin{align*}
&F_3:&  &180e_0+q^{-2/3}\sum_{w} e_w, \quad (w,w)=4/3,\; \ord(w)=3.
\end{align*}
We label their unitary restrictions $f_1$, $f_2$, $f_3$ respectively. Similarly to the previous subsection, we have
\begin{enumerate}
\item $f_2/f_1$ is holomorphic. 
\item $r^\perp \in \div(f_1)$ if and only if $\sigma_{r, \omega}\in \U(L)$.
\item $r^\perp \in \div(f_2/f_1)$ if and only if $\sigma_{r, -\omega}\in \widetilde{\U}(L)$ but $\sigma_{r, \omega}\not\in \U(L)$.
\item $r^\perp \in \div(f_3)$ if and only if $\sigma_{r, -1}\in \U(L)$ but $\sigma_{r, \omega}\not\in \U(L)$.
\end{enumerate}

We construct three forms on $2U\oplus 3A_2$ as Gritsenko lifts (or their squares):
\begin{align*}
G_{12} =& \Grit(\Delta \phi_{0, A_2} \otimes \phi_{0, A_2} \otimes \phi_{0, A_2}), \\
G_{12}^* =& \Grit(\Delta \phi_{-3, A_2} \otimes \phi_{-3, A_2} \otimes \phi_{0, A_2})^2 + \Grit(\Delta \phi_{-3, A_2} \otimes \phi_{0, A_2} \otimes \phi_{-3, A_2})^2  \\
& + \Grit(\Delta \phi_{0, A_2} \otimes \phi_{-3, A_2} \otimes \phi_{-3, A_2})^2, \\
G_{18} = & \Grit(\Delta \phi_{-3, A_2} \otimes \phi_{0, A_2} \otimes \phi_{0, A_2})^2 + \Grit(\Delta \phi_{0, A_2} \otimes \phi_{-3, A_2}  \otimes \phi_{0, A_2})^2 \\
& + \Grit(\Delta \phi_{0, A_2} \otimes \phi_{0,  A_2} \otimes \phi_{-3, A_2})^2.
\end{align*}
Note that $F_1$ is also a Gritsenko lift of the singular-weight Jacobi form $\Delta \phi_{-3, A_2} \otimes \phi_{-3, A_2} \otimes \phi_{-3, A_2}$. Moreover, $F_1^2$ is modular without character on $\Orth^+(2U\oplus 3A_2)$.  We denote by $g_k$ the restriction of $G_k$ to $\cD_{\U}(L)$.

\begin{lemma}
The unitary modular forms $f_1, g_{12}, g_{12}^*, g_{18}, \cE_6$ are algebraically independent over $\CC$.
\end{lemma}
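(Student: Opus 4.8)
The plan is to prove algebraic independence by the pullback trick, restricting along the sub-ball-quotient $\cD_{\U}(2U \oplus 2A_2) \subseteq \cD_{\U}(L)$ obtained by deleting one copy of $A_2$; concretely, by setting to zero the holomorphic coordinate $\mathfrak{z}_{\text{hol}}$ attached to the third $A_2$ summand. This restriction is a ring homomorphism $M_*(\U(2U\oplus 3A_2)) \to M_*(\U(2U\oplus 2A_2))$, both rings are integral domains, and we already know from \S\ref{sec:2U+2A2} that the target is the free algebra $\CC[\cE_6, g_{12}, g_{12}^*, g_{18}]$. So it suffices to compute the image of each of the five forms and then run a standard degree argument.

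First I would record the two facts about $A_2$-Jacobi forms (cf. \cite{Wir92}) on which everything depends: the weight-zero generator satisfies $\phi_{0, A_2}(\tau, 0) = c_0$ for a nonzero constant $c_0$, while the anti-invariant generator $\phi_{-3, A_2}$ vanishes along the reflection hyperplanes, so in particular $\phi_{-3, A_2}(\tau, 0) = 0$. Because the restriction of a Gritsenko lift is the Gritsenko lift of the restriction of its Jacobi input, setting the third elliptic variable to zero retains exactly those tensor summands carrying $\phi_{0, A_2}$ in the third slot and kills those carrying $\phi_{-3, A_2}$ there. Applying this termwise yields the pullback dictionary
\[
f_1 \mapsto 0, \quad g_{12} \mapsto c_0\, g_{12}, \quad g_{12}^* \mapsto c_0^2\, g_{12}^*, \quad g_{18} \mapsto c_0^2\, g_{18}, \quad \cE_6 \mapsto \mathrm{const}\cdot \cE_6,
\]
where the forms on the right are the free generators of $M_*(\U(2U\oplus 2A_2))$. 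Indeed $f_1 = \Grit(\Delta\, \phi_{-3, A_2} \otimes \phi_{-3, A_2} \otimes \phi_{-3, A_2})$ has a $\phi_{-3, A_2}$ in every slot and hence dies, whereas in $g_{12}^*$ and $g_{18}$ precisely the summands whose third tensor factor is $\phi_{-3, A_2}$ drop out, leaving the defining sums of the $2U\oplus 2A_2$ forms scaled by a power of $c_0$; and the Eisenstein series restricts to a nonzero multiple of $\cE_6$. Thus four of the five forms map to nonzero multiples of four algebraically independent generators, and $f_1$ maps to zero.

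Finally I would deduce the lemma. Suppose $P(f_1, g_{12}, g_{12}^*, g_{18}, \cE_6) = 0$ for a polynomial $P$, and write $P = \sum_{k\ge 0} X_0^{k}\, Q_k$ as a polynomial in the variable $X_0$ corresponding to $f_1$, with $Q_k \in \CC[X_1,\dots,X_4]$. Applying the restriction homomorphism annihilates every term with $k \ge 1$ and sends the relation to $Q_0(c_0 g_{12}, c_0^2 g_{12}^*, c_0^2 g_{18}, \mathrm{const}\cdot\cE_6) = 0$ on $\cD_{\U}(2U\oplus 2A_2)$; algebraic independence of the free generators forces $Q_0 = 0$. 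Since $f_1 \ne 0$ in the integral domain $M_*(\U(L))$, dividing out $f_1$ gives a relation of strictly smaller $X_0$-degree, and induction on that degree yields $Q_k = 0$ for all $k$, hence $P = 0$.

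The only genuine work is the pullback computation in the second paragraph, and the main obstacle there is matching the three-fold tensor Gritsenko lifts defining $g_{12}^*$ and $g_{18}$ on $3A_2$ with the two-fold lifts on $2A_2$; the vanishing of $\phi_{-3, A_2}$ at the origin together with $\phi_{0, A_2}(\tau,0) = c_0 \neq 0$ is exactly what makes this bookkeeping come out cleanly.
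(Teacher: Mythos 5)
Your proposal is correct and is exactly the paper's argument: the paper's proof is the one-line observation that the pullbacks of $g_{12}, g_{12}^*, g_{18}, \cE_6$ to the unitary group of $2U\oplus 2A_2$ generate that (free) ring while $f_1$ vanishes there, and your second and third paragraphs simply spell out the tensor-factor bookkeeping and the standard induction on the $X_0$-degree that the paper leaves implicit. No gaps.
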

\begin{proof} The pullbacks of $g_{12}, g_{12}^*, g_{18}, \cE_6$ to the unitary group of $2U \oplus 2A_2$ over $\ZZ[\omega]$ generate its ring of modular forms, and $f_1$ has a zero there.
\end{proof}

\begin{theorem}
$$
M_*(\U(L)) = \CC[f_1^2, \cE_6, g_{12}, g_{12}^*, g_{18}] \; \text{is generated in weights} \; 6,6,12,12,18.
$$
\end{theorem}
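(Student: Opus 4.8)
The plan is to apply the Jacobian criterion of Theorem \ref{th:modularJacobian} to the five candidate generators $f_1^2, \cE_6, g_{12}, g_{12}^*, g_{18}$, exactly in the style of the $2U\oplus 2A_2$ case. First I would verify that all five are modular forms for the full unitary group $\U(L)$ with trivial character. For $\cE_6$ this is clear; $f_1^2$ is the restriction of $F_1^2$, which is stated to be modular without character on $\Orth^+(2U\oplus 3A_2)$; and $g_{12}, g_{12}^*, g_{18}$ are restrictions of the symmetric Gritsenko-lift combinations $G_{12}, G_{12}^*, G_{18}$, which are invariant under the permutation action on the three copies of $A_2$ and hence modular on the full orthogonal group, so their restrictions carry trivial character on $\U(L)$ by the discussion in \S\ref{sec:embedding}.

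Next I would invoke the preceding lemma, which gives the algebraic independence of $f_1, g_{12}, g_{12}^*, g_{18}, \cE_6$; since $f_1^2$ is algebraically independent from a collection exactly when $f_1$ is, the five generators are algebraically independent over $\CC$, and Theorem \ref{th:Jacobian}(2) guarantees that their Jacobian $J_{\U}=J_{\U}(f_1^2, \cE_6, g_{12}, g_{12}^*, g_{18})$ is not identically zero. Here $n=4$, so by Theorem \ref{th:Jacobian}(1) it is a cusp form of weight $n+1+\sum_j k_j = 5 + (6+6+12+12+18) = 59$ and character $\det^{-1}$.

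The heart of the argument is to identify this Jacobian. Reading off weights from the principal parts, $f_1$ has weight $3$, $f_2$ weight $39$ and $f_3$ weight $90$, so $f_1^{5/3}(f_2/f_1)^{2/3}f_3^{1/3}$ has weight $5+24+30=59$, matching $J_{\U}$. To show $J_{\U}$ equals this product up to a nonzero scalar, I would first note that the three families of mirrors are pairwise disjoint: the mirrors in $\div(f_1)$ carry a hexaflection $\sigma_{r,\omega}\in\U(L)$, those in $\div(f_2/f_1)$ carry a triflection $\sigma_{r,-\omega}\in\widetilde{\U}(L)$ with $\sigma_{r,\omega}\notin\U(L)$, and those in $\div(f_3)$ carry a biflection $\sigma_{r,-1}\in\U(L)$ with $\sigma_{r,\omega}\notin\U(L)$; if any two of these reflections shared a mirror, their composite would force $\sigma_{r,\omega}\in\U(L)$, contradicting the defining conditions. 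Because the six units of $\ZZ[\omega]$ collapse triples of orthogonal hyperplanes to a single unitary hyperplane, each of $f_1$, $f_2/f_1$, $f_3$ vanishes to order divisible by three along the mirrors in its divisor, so $f_1^{5/3}(f_2/f_1)^{2/3}f_3^{1/3}$ is holomorphic and vanishes to orders $5$, $2$ and $1$ on the hexaflection, triflection and biflection mirrors respectively. By Theorem \ref{th:Jacobian}(3), $J_{\U}$ vanishes to at least these orders on the same mirrors, so the quotient $J_{\U}/(f_1^{5/3}(f_2/f_1)^{2/3}f_3^{1/3})$ is a holomorphic modular form of weight $0$, hence a nonzero constant.

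Consequently $J_{\U}$ vanishes precisely on the mirrors of reflections in $\U(L)$, with multiplicity $m-1$ where $m\in\{6,3,2\}$ is the maximal order of a reflection along each mirror, and Theorem \ref{th:modularJacobian} immediately yields that $M_*(\U(L))$ is freely generated by $f_1^2, \cE_6, g_{12}, g_{12}^*, g_{18}$. The main obstacle is the exact multiplicity bookkeeping of the third paragraph: one must confirm that $J_{\U}$ has no zeros off the reflection mirrors and exactly the stated orders on them, equivalently that the three Borcherds products account for the entire zero divisor of $J_{\U}$ with the correct fractional exponents. As in the $2U\oplus 2A_2$ case this reduces to the disjointness of the mirror families together with the order-three vanishing forced by the $\ZZ[\omega]$-units, but I would verify these points carefully rather than merely asserting the analogy.
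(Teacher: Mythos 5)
Your proposal is correct and follows essentially the same route as the paper: check that the five forms are modular with trivial character on $\U(L)$, use the preceding lemma for algebraic independence (hence a nonzero Jacobian of weight $59$), identify $J_{\U}$ with $f_1^{5/3}(f_2/f_1)^{2/3}f_3^{1/3}$ by comparing weights and vanishing orders via Koecher's principle, and conclude with Theorem \ref{th:modularJacobian}. The extra bookkeeping you flag (disjointness of the three mirror families and the order-three vanishing forced by the units of $\ZZ[\omega]$) is left implicit in the paper but is exactly the right thing to verify.
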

\begin{proof} By construction, the Eisenstein series $\cE_6$ and the Gritsenko lifts are modular under $\Orth^+(L_{\ZZ})$. Thus these generators are modular without character on $\U(L)$. The Jacobian $$J_{\U} = J_{\U}(f_1^2, \cE_6, g_{12}, g_{12}^*, g_{18})$$ has weight 59 and at least fifth-order zeros on the mirrors of hexaflections, double zeros on the mirrors of triflections and simple zeros on biflections of $L$. By Koecher's principle it coincides with $f_1^{5/3} (f_2/f_1)^{2/3} f_3^{1/3}$. Theorem \ref{th:modularJacobian} implies the desired result.
\end{proof}

Similarly to the case $2U\oplus 2A_2$, we define $\U_1(L)$ as the subgroup of $\U(L)$ generated by the discriminant kernel and by the permutations of three copies of $A_2$. There is a modular form $h$ of weight $45$ which is a factor of $F_3$ and vanishes precisely along mirrors of reflections corresponding to these permutations.  Then the algebra of modular forms for $\U_1(L)$ is freely generated by forms of weight 3, 6, 6, 9, 12. The Jacobian is $f_2^{2/3}h^{1/3}$. As generators we can take $f_1$, $\cE_6$, $g_{12}$ and the restrictions of Gritsenko lifts
\begin{align*}
&\Grit(\Delta(\phi_{-3,A_2}\otimes\phi_{-3,A_2}\otimes\phi_{0,A_2}+\phi_{-3,A_2}\otimes\phi_{0,A_2}\otimes\phi_{-3,A_2}+\phi_{0,A_2}\otimes\phi_{-3,A_2}\otimes\phi_{-3,A_2})),\\
&\Grit(\Delta(\phi_{-3,A_2}\otimes\phi_{0,A_2}\otimes\phi_{0,A_2}+\phi_{0,A_2}\otimes\phi_{-3,A_2}\otimes\phi_{0,A_2}+\phi_{0,A_2}\otimes\phi_{0,A_2}\otimes\phi_{-3,A_2})).
\end{align*}

\subsection{The \texorpdfstring{$U\oplus U(3)\oplus 2A_2$}{} lattice over \texorpdfstring{$\ZZ[\omega]$}{}}\label{sec:U+U3+2A2}
We need three Borcherds products on $U\oplus U(3) \oplus 2A_2$: 
\begin{align*}
&F_1:&  &30e_0+q^{-1/3}\sum_{v} e_v, \quad (v,v)=2/3, \;\ord(v)=3;& \\
&F_2:&  &30e_0 + q^{-1}e_0; &\\
&F_3:&  &270e_0+q^{-2/3}\sum_{u} e_u, \quad (u,u)=4/3,\; \ord(u)=3.&
\end{align*}
We remark that $F_1^3$ is a product of fifteen distinct holomorphic Borcherds products of weight 3 over $\widetilde{\Orth}^+(L_{\ZZ})$. These span a five-dimensional space that coincides exactly with the space of weight 3 additive theta lifts. In addition $F_3$ is a product of fifteen holomorphic Borcherds products of weight 9. The unitary restrictions of $F_j$ are labelled $f_j$.  We have the following facts:
\begin{enumerate}
\item $f_1 = f_2$.
\item $r^\perp \in \div(f_1)$ if and only if $\sigma_{r, \omega}\in \U(L)$.
\item $r^\perp \in \div(f_3)$ if and only if $\sigma_{r, -1}\in \U(L)$ but $\sigma_{r, \omega}\not\in \U(L)$.
\end{enumerate}

\begin{remark}
It was proved by Freitag--Salvati-Manni \cite{FS14} that $\widetilde{\U}(L)$ is the subgroup of $\U(L)$ generated by triflections and that $M_*(\widetilde{\U}(L))$ is generated by the unitary restrictions of the fifteen Borcherds products of weight 3, and that these forms continue to span a 5-dimensional space.
\end{remark}

We will consider the ring of modular forms for the full unitary group $\U(L)$.

\begin{lemma}
The forms $f_1$, $\cE_6$, $\cE_{12}$ and $\cE_{18}$ are algebraically independent over $\CC$.
\end{lemma}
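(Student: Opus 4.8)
The plan is to follow the pullback strategy used in the preceding lemmas, reducing algebraic independence on the three-dimensional ball $\cD_{\U}(L)$ to a statement on a two-dimensional sub-ball where one of the four forms vanishes identically. Since $\cD_{\U}(L)$ has dimension three, $M_*(\U(L))$ has transcendence degree four, so algebraic independence of $f_1,\cE_6,\cE_{12},\cE_{18}$ is the assertion that they form a maximal algebraically independent system.

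First I would pass to the Hermitian sublattice $M = U\oplus U(3)\oplus A_2$ of signature $(2,1)$ obtained by deleting one copy of $A_2$. This copy is a rank-one $\mathcal{O}_\F$-module $\mathcal{O}_\F r$ with $r$ a positive-norm root, so its orthogonal complement realizes $\cD_{\U}(M)$ as the mirror $r^{\perp}\cap\cD_{\U}(L)$, and restriction of modular forms yields a homomorphism $M_*(\U(L))\to M_*(\U(M))$. The generator $r$ of the deleted $A_2$-summand carries a reflection $\sigma_{r,\omega}\in\U(L)$, so by fact (2) the sub-ball $\cD_{\U}(M)$ lies in $\div(f_1)$ and hence $f_1$ restricts to $0$; meanwhile $\cE_6,\cE_{12},\cE_{18}$ restrict to nonzero Eisenstein-type forms on $\U(M)$.

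The key step is to check that these three restrictions are algebraically independent. As $\cD_{\U}(M)$ has dimension two, $M_*(\U(M))$ has transcendence degree three, so three independent forms again form a maximal system; I expect to obtain this either by recognizing the restricted Eisenstein series as generators of the (free) algebra $M_*(\U(M))$, or—absent a ready structure theorem—by computing the leading Taylor coefficients of $\cE_6,\cE_{12},\cE_{18}$ along a further one-dimensional sub-ball $\U(U\oplus U(3))$ and checking that they specialize to algebraically independent forms on $\SL_2(\ZZ)$, exactly as in the $2U\oplus A_2(2)$ and $2U\oplus A_2(3)$ cases.

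Granting this, the conclusion is the standard argument: suppose $P(f_1,\cE_6,\cE_{12},\cE_{18})=0$ and expand $P=\sum_{k\ge 0} f_1^{k}\,P_k(\cE_6,\cE_{12},\cE_{18})$. Restricting to $\cD_{\U}(M)$ annihilates every term with $k\ge 1$ and leaves $P_0$ evaluated at the algebraically independent restrictions, forcing $P_0=0$; since $f_1\not\equiv 0$ (restrictions of Borcherds products never vanish identically), dividing by $f_1$ and inducting on the $f_1$-degree gives $P=0$. The main obstacle is precisely the algebraic independence of the three restricted Eisenstein series on $\cD_{\U}(M)$: this is the only nonformal point, and it is where an explicit Taylor-coefficient computation (or an invocation of the ring structure of $M_*(\U(M))$) is needed.
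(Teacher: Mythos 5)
Your proposal follows essentially the same route as the paper: restrict to the sub-ball attached to $U\oplus U(3)\oplus A_2$, where $f_1$ vanishes identically, and reduce everything to the algebraic independence of the three restricted Eisenstein series. The paper settles that last (correctly flagged) nonformal point by identifying the pullbacks of $\cE_6,\cE_{12},\cE_{18}$ as algebraically independent invariants of degrees $2,4,6$ of $\mathrm{Aut}(L'/L)$ acting on the three weight-$3$ Eisenstein series that freely generate $M_*(\widetilde{\U}(U\oplus U(3)\oplus A_2))$ --- which is close to the first of your two suggested routes.
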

\begin{proof} The pullbacks of $\cE_6$, $\cE_{12}$ and $\cE_{18}$ to the unitary group of $U \oplus U(3) \oplus A_2$ can be computed explicitly; they are algebraically independent invariants of degrees $2, 4, 6$ of $\mathrm{Aut}(L'/L)$  acting on the three Eisenstein series of weight $3$ in $M_*(\widetilde{\U}(U \oplus U(3) \oplus A_2))$ that generate the latter ring. The product $f_1$ vanishes on $U \oplus U(3) \oplus A_2$, and the claim follows.
\end{proof}

\begin{theorem}
$$
M_*(\U(L)) = \CC[f_{1}^2, \cE_6, \cE_{12}, \cE_{18}] \; \text{is generated in weights} \; 6, 12, 18, 30.
$$
\end{theorem}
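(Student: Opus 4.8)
The plan is to verify the hypotheses of the modular Jacobian criterion (Theorem \ref{th:modularJacobian}) for the four forms $f_1^2, \cE_6, \cE_{12}, \cE_{18}$ on $\U(L)$, of weights $30, 6, 12, 18$. First I would confirm that all four are modular with trivial character: the three Eisenstein restrictions are unramified by construction, and although $f_1$ itself has weight $15$ and hence (by Lemma \ref{lem:zero}) carries a nontrivial character on $\U(L)$, the square $f_1^2$ is unramified, since $F_1^2$ is modular without character on $\Orth^+(L_\ZZ)$ and restriction preserves trivial characters. By the preceding lemma $f_1, \cE_6, \cE_{12}, \cE_{18}$ are algebraically independent over $\CC$, so $f_1^2, \cE_6, \cE_{12}, \cE_{18}$ are as well; by Theorem \ref{th:Jacobian} their Jacobian $J_{\U} := J_{\U}(f_1^2, \cE_6, \cE_{12}, \cE_{18})$ is a nonzero cusp form of weight $4 + (30+6+12+18) = 70$ and character $\det^{-1}$.

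The core of the argument is to compute $\div(J_{\U})$ exactly. By the classification of reflections recorded above, the only mirrors of reflections in $\U(L)$ are the hexaflection mirrors, which are precisely the components of $\div(f_1)$ (maximal order $m = 6$), and the biflection mirrors, which are precisely the components of $\div(f_3)$ (maximal order $m = 2$). Theorem \ref{th:Jacobian}(3) then forces $\ord(J_{\U}, r^\perp) \equiv -1 \pmod 6$, hence $\geq 5$, along each component of $\div(f_1)$, and $\ord(J_{\U}, r^\perp) \equiv -1 \pmod 2$, hence $\geq 1$, along each component of $\div(f_3)$. Over $\ZZ[\omega]$ the restrictions to $\cD_{\U}(L)$ of the Borcherds products $F_1, F_3$ vanish along their mirrors to order exactly $3$ (the cube-root phenomenon coming from the inclusions $\mathbf{H}_{\U}(m,\beta) \subseteq \mathbf{H}_{\U}(|\lambda|^2 m, \lambda\beta)$), so $f_1^{1/3}$ and $f_3^{1/3}$ are holomorphic modular forms and $f_1^{5/3} f_3^{1/3}$ is a holomorphic form of weight $25 + 45 = 70$ with divisor exactly $5\,\div(f_1) + \div(f_3)$.

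Comparing the two, the quotient $J_{\U} / (f_1^{5/3} f_3^{1/3})$ is holomorphic of weight $0$ on the ball quotient, hence a constant, and it is nonzero since $J_{\U} \not\equiv 0$. Therefore $J_{\U} = c\, f_1^{5/3} f_3^{1/3}$ for some $c \in \CC^*$, so $\div(J_{\U}) = 5\,\div(f_1) + \div(f_3)$ precisely. In particular, any mirror of a reflection in $\U(L)$ must lie in $\div(f_1) \cup \div(f_3)$, since otherwise $J_{\U}$ would be forced to vanish there by Theorem \ref{th:Jacobian}(3) while $f_1^{5/3} f_3^{1/3}$ does not; and along each such mirror the order of $J_{\U}$ equals exactly $m - 1$. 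This is precisely the hypothesis of Theorem \ref{th:modularJacobian}, which then shows that $M_*(\U(L))$ is freely generated by $f_1^2, \cE_6, \cE_{12}, \cE_{18}$ and that $\U(L)$ is generated by reflections.

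The main obstacle is the exact divisor bookkeeping in the middle step: establishing that $f_1$ and $f_3$ vanish to order exactly $3$ along their mirrors, so that the fractional powers $f_1^{5/3}$ and $f_3^{1/3}$ are genuinely holomorphic and their weights sum to the weight $70$ of $J_{\U}$. This depends on the precise local multiplicities of the unitary Borcherds products, which are exactly the computations underlying the input data $F_1, F_2, F_3$; the weight coincidence $25 + 45 = 70$ serves as the numerical check that the divisor has been accounted for completely, leaving no room for spurious zeros. A minor secondary point is the character computation showing $f_1^2$ is unramified, which I would settle by exhibiting $F_1^2$ (equivalently, an appropriate square built from the weight-$3$ additive theta lifts) as a form with trivial character on $\Orth^+(L_\ZZ)$.
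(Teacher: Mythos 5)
Your core argument --- algebraic independence from the pullback lemma, the weight count $4 + (30+6+12+18) = 70$, the lower bounds $\ord(J_{\U}, r^{\perp}) \geq 5$ resp.\ $\geq 1$ on the hexaflection and biflection mirrors coming from Theorem \ref{th:Jacobian} (3), and the identification $J_{\U} = c\, f_1^{5/3} f_3^{1/3}$ by Koecher's principle followed by Theorem \ref{th:modularJacobian} --- is exactly the paper's argument. The one place you diverge is the step you label ``minor'': establishing that $f_1^2$ is modular with \emph{trivial} character on the full group $\U(L)$. You propose to deduce this from the assertion that $F_1^2$ has trivial character on $\Orth^+(L_{\ZZ})$, but that assertion is not established for this lattice and is not obviously true: $F_1$ is a weight-$15$ Borcherds product whose character on $\Orth^+(L_{\ZZ})$ has some finite order that you would have to compute, and the only structural information at hand (that $F_1^3$ factors into fifteen weight-$3$ products spanning the space of additive lifts) controls the cube of that character, not its square. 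Without this input you cannot legitimately feed $f_1^2$ into Theorem \ref{th:modularJacobian} with $\Gamma = \U(L)$, since the criterion requires generators with trivial character.

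The paper's proof is structured precisely to sidestep this point. It first observes that $f_1^{1/3}$ is holomorphic and that any character of the reflection group $\U_r(L)$ has order dividing $6$ on each generating reflection (these have order $2$, $3$ or $6$), so $f_1^2 = (f_1^{1/3})^6$ automatically has trivial character on $\U_r(L)$. Applying the Jacobian criterion with $\Gamma = \U_r(L)$ gives $M_*(\U_r(L)) = \CC[f_1^2, \cE_6, \cE_{12}, \cE_{18}]$. Since the Eisenstein series already lie in $M_*(\U(L))$, the subring $M_*(\U(L)) \subseteq M_*(\U_r(L))$ must have the form $\CC[f_1^{a}, \cE_6, \cE_{12}, \cE_{18}]$ for some $a \geq 2$; in particular it is a polynomial algebra, so by Theorem \ref{th:freeJacobian} (1) the group $\U(L)$ is generated by reflections, whence $\U(L) = \U_r(L)$ and $a = 2$. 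You should either adopt this two-step route or actually compute the character of $F_1$; everything else in your write-up, including the caveat about the exact multiplicity $3$ of $f_1$ and $f_3$ along their mirrors, is in order.
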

\begin{proof} Note that $f_1$ admits a holomorphic cube root. It is clear that on the reflection group $\U_r(L)$, $\sqrt[3]{f_1}$ has a character of order at most $6$ and therefore that $f_1^2 \in M_*(\U_r(L))$. The Jacobian $J_{\U}(f_1^2, \cE_6, \cE_{12}, \cE_{18})$ is a modular form of weight $70$ that vanishes on mirrors of reflections to at least the orders of the weight $70$ form $f_1^{5/3} f_3^{1/3}$, so these are equal up to a scalar and the claim follows from Theorem \ref{th:modularJacobian}. Now the smaller ring $M_*(\U(L))$ must be of the form $$M_*(\U(L)) = \mathbb{C}[f_1^a, \cE_6, \cE_{12}, \cE_{18}]$$ for some $a \ge 2$, since the Eisenstein series are already modular under $\U(L)$ by construction. In particular $M_*(\U(L))$ is free, so $\U(L) = \U_r(L)$ is already generated by reflections.
\end{proof}

\begin{remark} As a set of generators for $M_*(\U(L))$ one can also take the Eisenstein series of weights $6, 12, 18, 30$. We will need to use this fact in the next subsection. To prove this it suffices to show that $\cE_{30}$ cannot be written as a polynomial in $\cE_6, \cE_{12}, \cE_{18}$. We considered the Taylor expansions of the Eisenstein series about the sublattice $U \oplus U(3)$, which are power series in two variables $\mathfrak{z}_{\text{hol}} = (z_1, z_2)$ with coefficients in the ring $$M_*(\U(U \oplus U(3))) \cong M_{6*}(\Gamma_0(3)^+) = \mathbb{C}[e_3^2, s_6],$$ where $\Gamma_0(3)^+ = \langle \Gamma_0(3), \tau \mapsto -1/3\tau \rangle$ is the Fricke group, and $e_3$ and $s_6$ are the forms $$e_3(\tau) = 2 \Big( 1 - 9 \sum_{n=1}^{\infty} \sum_{d | n} d^2 \left( \frac{-3}{d}\right) q^n \Big) - \Big( 1 + 6 \sum_{n=1}^{\infty} \sum_{d | n} \left(\frac{-3}{d}\right) q^n \Big)^3 = 1-36q - 54q^2 - ...$$ and $s_6(\tau) = \eta(\tau)^6 \eta(3\tau)^6 = q - 6q^2 + 9q^3 \pm ...$. Let $\Omega := s_6(\lambda)^{1/6}$ for the CM point $\lambda = \frac{-3 + \sqrt{-3}}{6}$, such that $e_3(\lambda)^2 = -108 \Omega^6$. After rescaling the Taylor expansions by $\Omega^{-k}$ the coefficients lie in $\mathbb{Q}[e_3^2, s_6]$ (although the rational numbers that appear in the Eisenstein series of higher weights are rather complicated); for example, $$\cE_6\left(\tau, \frac{z_1}{2\pi i}, \frac{z_2}{2\pi i}\right) = -8 \Omega^6  \Big( e_3^2 + 72s_6 - \frac{126}{5} s_6^2 (z_1^6 + z_2^6) + ... \Big)$$ To see that $\cE_{30}$ is not a polynomial in $\cE_6, \cE_{12}, \cE_{18}$ it is sufficient to compute the Taylor coefficients up to and including degree $12$. The power series expansions are attached as an auxiliary file. We also include the Eisenstein series $\cE_{24}$ as a correctness check for the algorithm; this is indeed uniquely expressable as a polynomial in $\cE_6, \cE_{12}, \cE_{18}$.

\end{remark}

\subsection{The \texorpdfstring{$U\oplus U(3)\oplus 3A_2$}{} lattice over \texorpdfstring{$\ZZ[\omega]$}{}} \label{sec:U+U3+3A2}
We will need three Borcherds products on $L_{\ZZ} = U\oplus U(3) \oplus 3A_2$, whose input forms have the following principal parts at $\infty$:
\begin{align*}
&F_1:&  &24e_0+q^{-1/3}\sum_{v} e_v, \quad (v,v)=2/3,\; \ord(v)=3;& \\
&F_2:&  &24e_0 + q^{-1}e_0; &\\
&F_3:&  &450e_0+q^{-2/3}\sum_{u} e_u, \quad (u,u)=4/3,\; \ord(u)=3.&
\end{align*}
The unitary restrictions of $F_j$ are labelled $f_j$.  The following facts are easy to check:
\begin{enumerate}
\item $f_1 = f_2$.
\item $r^\perp \in \div(f_{1})$ if and only if $\sigma_{r, \omega}\in \U(L)$.
\item $r^\perp \in \div(f_3)$ if and only if $\sigma_{r, -1}\in \U(L)$ but $\sigma_{r, \omega}\not\in \U(L)$.
\end{enumerate}

\begin{theorem}
The algebra $M_*(\U(L))$ is freely generated by modular forms of weights $6$, $12$, $18$, $24$, $30$. 
\end{theorem}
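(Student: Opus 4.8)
The approach runs exactly parallel to the treatment of $U\oplus U(3)\oplus 2A_2$ in \S\ref{sec:U+U3+2A2}. Since $L$ has signature $(4,1)$ I must exhibit $n+1=5$ free generators, and I would take the four unitary Eisenstein series $\cE_6,\cE_{12},\cE_{18},\cE_{30}$ together with $f_1^2$ of weight $24$. Here $f_1$ is the weight-$12$ restriction of $F_1$; as in the earlier Eisenstein-lattice cases it has only triple zeros, so it possesses a holomorphic cube root $\sqrt[3]{f_1}$ of weight $4$, and $\sqrt[3]{f_1}$ carries a character of order dividing $6$ on the reflection group $\U_r(L)$, whence $f_1^2\in M_*(\U_r(L))$. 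These five forms have the required weights $6,12,18,24,30$.

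First I would prove that the five forms are algebraically independent, by the pullback trick. Restricting along the sub-ball $\cD_{\U}(U\oplus U(3)\oplus 2A_2)\subset\cD_{\U}(L)$ obtained by deleting one copy of $A_2$, the four Eisenstein series restrict to the Eisenstein series of weights $6,12,18,30$, which by the Remark in \S\ref{sec:U+U3+2A2} generate $M_*(\U(U\oplus U(3)\oplus 2A_2))$ and are therefore algebraically independent; meanwhile $f_1$, whose divisor is the union of hexaflection mirrors, vanishes identically on this sub-ball. A degree argument in $f_1^2$ then shows that $\cE_6,\cE_{12},\cE_{18},\cE_{30},f_1^2$ are algebraically independent over $\CC$, so by Theorem \ref{th:Jacobian}(2) their Jacobian $J_{\U}$ is nonzero; by Theorem \ref{th:Jacobian}(1) it is a cusp form of weight $5+(6+12+18+24+30)=95$ and character $\det^{-1}$.

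Next I would identify $J_{\U}$ explicitly. By Theorem \ref{th:Jacobian}(3), $J_{\U}$ vanishes to order $\equiv -1\equiv 5\pmod 6$ along each hexaflection mirror (i.e. along $\div(f_1)$) and to order $\equiv 1\pmod 2$ along each biflection mirror (i.e. along $\div(f_3)$), and by facts (1)--(3) preceding the theorem together with Lemma \ref{lem:Eisenstein} these are precisely the mirrors of reflections in $\U(L)$. The form $f_1^{5/3}f_3^{1/3}$, assembled from the cube roots of $f_1$ and $f_3$, is holomorphic of weight $\tfrac53\cdot 12+\tfrac13\cdot 225=95$ and vanishes to order exactly $5$ resp. $1$ along these two families of mirrors. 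Hence $J_{\U}/(f_1^{5/3}f_3^{1/3})$ is holomorphic of weight $0$ and, by Koecher's principle, a nonzero constant, so $J_{\U}$ vanishes precisely on the mirrors of reflections in $\U(L)$ with multiplicity $5$ along $\div(f_1)$ and $1$ along $\div(f_3)$---exactly the data required by Theorem \ref{th:modularJacobian}.

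Finally I would apply Theorem \ref{th:modularJacobian} to $\U_r(L)$, obtaining $M_*(\U_r(L))=\CC[\cE_6,\cE_{12},\cE_{18},f_1^2,\cE_{30}]$; as explained in the introduction and \S\ref{sec:U+U3+2A2}, the finite index of $\U_r(L)$ is a byproduct of this application via the trick of Theorem \ref{thm:2U+2A1}(1). Since the four Eisenstein series are already modular under the full group $\U(L)$, the subalgebra $M_*(\U(L))\subseteq M_*(\U_r(L))$ equals $\CC[\cE_6,\cE_{12},\cE_{18},f_1^{a},\cE_{30}]$ for some $a\ge 2$ and is in particular free; by Theorem \ref{th:freeJacobian}(1) this forces $\U(L)$ to be generated by reflections, so $\U(L)=\U_r(L)$, $a=2$, and $M_*(\U(L))$ is freely generated in weights $6,12,18,24,30$. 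The main obstacle is the third step: one must verify through Lemma \ref{lem:Eisenstein} that $\div(f_1)\cup\div(f_3)$ really exhausts the mirrors of reflections in $\U(L)$ and that the relevant reflection orders are exactly $6$ and $2$, so that the vanishing orders of $J_{\U}$ meet the multiplicities of Theorem \ref{th:modularJacobian} with equality and not merely as lower bounds.
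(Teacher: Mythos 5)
Your overall architecture matches the paper's proof: five forms of weights $6,12,18,24,30$ modular on $\U_r(L)$, algebraic independence via pullback to $U\oplus U(3)\oplus 2A_2$, identification of the weight-$95$ Jacobian with $f_1^{5/3}f_3^{1/3}$, and the concluding step that $M_*(\U(L))\subseteq M_*(\U_r(L))$ must be of the form $\CC[\dots,f_1^{2a},\dots]$, hence free, forcing $\U(L)=\U_r(L)$. However, there is a genuine gap in your algebraic-independence step. You assert that the Eisenstein series $\cE_{6},\cE_{12},\cE_{18},\cE_{30}$ on $U\oplus U(3)\oplus 3A_2$ restrict along the sub-ball to the Eisenstein series of the same weights on $U\oplus U(3)\oplus 2A_2$. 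This is exactly the point the paper flags as false in general: the pullback of an Eisenstein series differs from the Eisenstein series of the smaller lattice by a cusp form. For weight $6$ this is harmless, because every $\Orth^+$-modular cusp form of weight $6$ on $U\oplus U(3)\oplus 2A_2$ has vanishing unitary restriction (the unitary ring in weight $6$ is spanned by $\cE_6$ alone). But in weights $12$, $18$ and $30$ the unitary spaces on the smaller lattice do contain cusp forms, so the pullback of, say, $\cE_{30}$ is only known to be \emph{some} weight-$30$ element of $\CC[\cE_6,\cE_{12},\cE_{18},\cE_{30}]$, and nothing rules out that it lies in the subring $\CC[\cE_6,\cE_{12},\cE_{18}]$. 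If that happened, your four pullbacks would fail to be algebraically independent and the Jacobian argument would collapse.

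The paper circumvents this by not using the Eisenstein series of the large lattice at all in weights $12,18,30$: it constructs explicit preimages $X_{12},X_{18},X_{30}$ of the generators of $M_*(\U(U\oplus U(3)\oplus 2A_2))$ under the pullback $\varphi_{\U}$, as additive theta lifts of linear combinations of the vector-valued forms $t_{k,a,b}=E_6^a\,(\vartheta^{3b}E_{k-3-6(a+b),0})$, using the fact that pullback commutes with theta-contraction. By construction these $X_k$ pull back \emph{exactly} to the generators, so algebraic independence of $X_6,X_{12},X_{18},X_{30},f_1^2$ is immediate. To repair your argument you would either have to import this construction, or else verify directly (e.g.\ by a Taylor-expansion computation as in the Remark of \S\ref{sec:U+U3+2A2}) that the cusp-form corrections do not destroy the independence of the restricted Eisenstein series. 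The remaining steps of your proposal are sound and agree with the paper.
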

\begin{proof} There is a natural pullback map $$\varphi : M_*(\Orth^+(L_{\ZZ})) \longrightarrow M_*(\Orth^+(U \oplus U(3) \oplus 2A_2))$$ which induces a pullback map $$\varphi_{\U} : M_*(\U(L)) \longrightarrow M_*(\U(U \oplus U(3) \oplus 2A_2)).$$ First we note that the image of the Eisenstein series $\cE_6$ under $\varphi$ differs from the Eisenstein series on $U \oplus U(3) \oplus 2A_2$ by a cusp form. However, the structure theorem of the previous section shows that all $\Orth^+$-modular cusp forms of weight $6$ on $U \oplus U(3) \oplus 2A_2$ have vanishing unitary restriction. Therefore the unitary Eisenstein series $\cE_6$ on $U \oplus U(3) \oplus 3A_2$ is mapped exactly to $\cE_6$ under $\varphi_{\U}$.

We recall that $\cE_k$ is defined as the additive theta lift of the vector-valued Eisenstein series $E_{k+2 - \mathrm{rk}(L_{\ZZ})/2, 0}$ and that the pullback $\varphi$ commutes with a map called \emph{theta-contraction} $\theta$ on vector-valued modular forms (cf. \cite{Ma19}). It is not difficult to find preimages under $\theta$ of the Eisenstein series $E_{k-2, 0}$, $k \in \{12, 18, 30\}$ as linear combinations of vector-valued modular forms of the form $$t_{k, a, b} := E_6^a \cdot  \Big(\vartheta^{3b} E_{k - 3 - 6(a+b), 0} \Big),$$ where $E_6$ denotes the scalar Eisenstein series of weight $6$ and where $\vartheta$ is the componentwise Serre derivative; and since the action of $\Orth^+(L)$ on $L'/L$ commutes with $\vartheta$, the additive theta lifts of $t_{k, a, b}$ are modular forms for $\Orth^+(L_{\ZZ})$. By passing to the unitary restrictions we obtain preimages $X_6, X_{12}, X_{18}, X_{30} \in M_*(\U(L))$ under $\varphi_{\U}$ of the generators of $M_*(\U(U \oplus U(3) \oplus 2A_2))$.

Now $f_1^2$ has a trivial character under all reflections, so $$X_6, f_1^2, X_{12}, X_{18}, X_{30} \in M_*(\U_r(L)).$$ These forms are algebraically independent (because the pullbacks to $U \oplus U(3) \oplus 2A_2$ of $X_k$ are algebraically independent and $f_1$ is sent to zero), and their Jacobian has weight $95$, so it is a constant multiple of $f_1^{5/3} f_3^{1/3}$. From Theorem \ref{th:modularJacobian} we conclude $$M_*(\U_r(L)) = \mathbb{C}[X_6, X_{12}, X_{18}, f_1^2, X_{30}].$$ The subring $M_*(\U(L))$ is of the form $\mathbb{C}[X_6, X_{12}, X_{18}, f_1^{2a}, X_{30}]$ for some $a \in \mathbb{N}$ and is therefore freely generated, so $\U(L) = \U_r(L)$ is already generated by reflections.
\end{proof}

\begin{remark}
It was proved in \cite{AF02, Fre02} that $M_*(\widetilde{\U}(L))$ is generated by ten additive theta lifts of singular weight 3 that satisfy 270 cubic relations.
\end{remark}

\subsection{The \texorpdfstring{$2U\oplus D_4(2)$}{} lattice}
The root lattice $L_{\ZZ} = 2U \oplus D_4(2)$ can be realized as a Hermitian lattice over both $\mathbb{Z}[i]$ (for example with Gram matrix $\begin{psmallmatrix} 0 & 0 & 0 & 1/2 \\ 0 & 2 & 1+i & 0 \\ 0 & 1-i & 2 & 0 \\ 1/2 & 0 & 0 & 0 \end{psmallmatrix}$) and $\mathbb{Z}[e^{\pi i / 3}]$ (with Gram matrix $\begin{psmallmatrix} 0 & 0 & 0 & \lambda \\ 0 & 2 & 2\lambda & 0 \\ 0 & 2\overline{\lambda} & 2 & 0 \\ \overline{\lambda} & 0 & 0 & 0 \end{psmallmatrix}$ with $\lambda = (3 + \sqrt{-3}) / 6$).

There are three reflective Borcherds products on $2U\oplus D_4(2)$:
\begin{align*}
&F_1:& &12e_0+q^{-1/4}\sum_{v}e_v, \quad (v,v)=1/2, \ord(v)=4;&\\
&F_2:& &48e_0+q^{-1/2}\sum_{v}e_u, \quad (u,u)=1, \ord(u)=2;&\\
&F_3:& &24e_0+q^{-1}e_0.&
\end{align*}
In both cases below the unitary restriction of $F_i$ is labelled $f_i$.

\subsubsection{$2U \oplus D_4(2)$ over \texorpdfstring{$\ZZ[i]$}{}}\label{sec:2U+D42}
The forms $f_i$ have the following properties:
\begin{enumerate}
    \item $f_2/f_1$ is holomorphic;
    \item $r^\perp \in \div(f_1)$ if and only if $\sigma_{r, i}\in \U(L)$;
    \item $r^\perp \in \div(f_3f_2/f_1)$ if and only if $\sigma_{r, -1}\in \U(L)$ but $\sigma_{r, i}\not\in \U(L)$.
\end{enumerate}
By \cite[\S 3.5]{Wan20a}, the algebra of modular forms for the maximal reflection subgroup in the integral orthogonal group of $2U\oplus D_4(2)$ is freely generated by three forms of weight 4 and four forms of weight 6. The generators can be constructed as the additive lifts $\cE_4$ and $\cE_6$ of Jacobi Eisenstein series of weight $4$ and $6$, and five basic forms whose first Fourier--Jacobi coefficients are the generators of the algebra of weak Jacobi forms associated with the root system $F_4$ invariant under the Weyl group (see \cite{Wir92}). We label them as follows:
\begin{align*}
F_{6,1} &=\eta^{12}\phi_{0,F_4,1}\cdot \xi^{1/2} + O(\xi); \\
F_{4,1} &=\eta^{12}\phi_{-2,F_4,1}\cdot \xi^{1/2} + O(\xi); \\
F_{6,2} &=\eta^{24}\phi_{-6,F_4,2}\cdot \xi + O(\xi^2); \\
F_{4,2} &=\eta^{24}\phi_{-8,F_4,2}\cdot \xi + O(\xi^2); \\
F_{6,3} &=\eta^{36}\phi_{-12,F_4,3}\cdot \xi^{3/2} + O(\xi^2).
\end{align*}
Note that $F_{6,1}F_{4,1}$, $F_{6,2}$ and $\cE_6$ are modular forms with trivial character for the full orthogonal group. By Lemma \ref{lem:zero} their restrictions to $\U(3,1)$ over $\QQ(\sqrt{-1})$ are identically zero. In particular one of the restrictions of $F_{6,1}$ and $F_{4,1}$ is identically zero; after a computation, we find that the restriction of $F_{4,1}$ is identically zero and that of $F_{6, 1}$ is not.
Applying Proposition \ref{prop:nonzeroJ} to this case, 
we conclude that $J_{\U}(\cE_4, F_{6,1}, F_{4,2}, F_{6,3})$ is not identically zero so it must equal $f_1f_2^{1/2}f_3^{1/2}$ up to scalar. By applying Theorem \ref{th:modularJacobian}, we obtain the algebra structure:
\begin{theorem}\label{th:2U+D421}
The algebra of modular forms for $\U_r(L)$ is freely generated by forms of weight $4$, $4$, $6$, $6$; and the algebra of modular forms for $\U(L)$ is generated by forms of weight $4$, $4$, $12$, $12$, $12$ with a relation in weight $24$. 
\end{theorem}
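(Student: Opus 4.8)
The two assertions are proved separately: the reflection group $\U_r(L)$ via the Jacobian criterion, and the full group $\U(L)$ as an invariant subalgebra of the former. For $\U_r(L)$ the discussion preceding the statement already isolates the four orthogonal generators $\cE_4, F_{4,2}, F_{6,1}, F_{6,3}$ whose restrictions survive, together with the three generators $\cE_6, F_{6,2}, F_{4,1}$ that restrict to zero. First I would invoke Proposition \ref{prop:nonzeroJ} to conclude that $J_{\U}:=J_{\U}(\cE_4, F_{6,1}, F_{4,2}, F_{6,3})$ is not identically zero; it has weight $4+(4+4+6+6)=24$. To identify it I would compare divisors. By Lemma \ref{lem:Gaussian}, $\div(f_1)$ is the union of mirrors of tetraflections $\sigma_{r,i}$ and $\div(f_3 f_2/f_1)$ the union of mirrors of biflections $\sigma_{r,-1}$ that are not tetraflections; since each reflective Borcherds product restricted from $\Orth(6,2)$ to $\U(3,1)$ acquires a double zero along its mirror (as $r^{\perp}$ and $(ir)^{\perp}$ coincide in $\cD_{\U}(L)$), the weight-$24$ form $f_1 f_2^{1/2} f_3^{1/2}$ vanishes to order $3$ on tetraflection mirrors and order $1$ on biflection mirrors. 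By Theorem \ref{th:Jacobian}(3) the Jacobian vanishes to at least these orders, so $J_{\U}/(f_1 f_2^{1/2} f_3^{1/2})$ is holomorphic of weight $0$, hence constant. These orders are exactly $\ord(\alpha)-1$, so Theorem \ref{th:modularJacobian} gives $M_*(\U_r(L))=\CC[\cE_4, F_{4,2}, F_{6,1}, F_{6,3}]$ freely, of weights $4,4,6,6$.

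For $\U(L)$ I would first show that $\U_r(L)\subsetneq\U(L)$ is strict: otherwise $F_{6,1}$ would be a nonzero weight-$6$ form with trivial character on $\U(L)$, contradicting Lemma \ref{lem:zero}. Since every reflection of $\U(L)$ already lies in $\U_r(L)$, the larger group $\U(L)$ is not generated by reflections, so by Theorem \ref{th:freeJacobian}(1) the algebra $M_*(\U(L))$ cannot be free. Because $\U_r(L)$ is normal in $\U(L)$ (a conjugate of a reflection is a reflection), the finite quotient $G:=\U(L)/\U_r(L)$ acts faithfully on the free algebra $R=\CC[\cE_4, F_{4,2}, F_{6,1}, F_{6,3}]$, and $M_*(\U(L))=R^{G}$.

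The structure then follows once the $G$-action is pinned down. The Eisenstein series $\cE_4$ is $\U(L)$-invariant, and Lemma \ref{lem:zero} forces the $G$-action on the weight-$6$ space $\langle F_{6,1}, F_{6,3}\rangle$ to have no nonzero fixed vector, since such a vector would be a weight-$6$ invariant and hence zero. Granting $G\cong\ZZ/2$ (see below), this absence of a fixed vector forces the action in weight $6$ to be $-\mathrm{id}$, so $F_{6,1}$ and $F_{6,3}$ both transform by the order-two character of $G$, while $\cE_4$ and $F_{4,2}$ are invariant. Consequently $R^{G}$ is the subalgebra of elements of even total degree in $F_{6,1}, F_{6,3}$, namely $\CC[\cE_4, F_{4,2}, F_{6,1}^2, F_{6,1}F_{6,3}, F_{6,3}^2]$, whose only relation is $(F_{6,1}F_{6,3})^2 = F_{6,1}^2 \cdot F_{6,3}^2$ in weight $24$. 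A Hilbert-series comparison, $\frac{1}{(1-t^4)^2}\cdot\frac{1+t^{12}}{(1-t^{12})^2}$, confirms the five generators of weights $4,4,12,12,12$ together with a single relation in weight $24$.

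The main obstacle is the determination of $G=\U(L)/\U_r(L)$: the delicate point is to show that this quotient is exactly $\ZZ/2$ and that its nontrivial element fixes \emph{both} weight-$4$ generators, so that $F_{4,2}$, and not only $\cE_4$, survives as an invariant. I would settle this by exhibiting the extra automorphism explicitly as an element of $\Orth^+(L_{\ZZ})$ that commutes with the complex structure $J$ but is not a product of unitary reflections, and then computing its effect on the leading Taylor coefficients (on the Siegel domain about $\U(H)$) of the four generators; the $W(F_4)$-invariance built into the basic forms $F_{4,2}, F_{6,1}, F_{6,3}$ should make the weight-$4$ computation transparent, while the sign on the weight-$6$ forms is forced by Lemma \ref{lem:zero} as above.
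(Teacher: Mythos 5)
Your treatment of $M_*(\U_r(L))$ is exactly the paper's argument: after noting that the orthogonal generators $\cE_6$, $F_{6,2}$ and (one of) $F_{6,1}$, $F_{4,1}$ restrict to zero, Proposition \ref{prop:nonzeroJ} gives $J_{\U}(\cE_4, F_{6,1}, F_{4,2}, F_{6,3})\not\equiv 0$, the weight-$24$ and divisor comparison identifies it with $f_1 f_2^{1/2} f_3^{1/2}$, and Theorem \ref{th:modularJacobian} finishes. (One small caveat: deciding that it is $F_{4,1}$, not $F_{6,1}$, whose restriction vanishes is a computation, not something forced by Lemma \ref{lem:zero} alone; the paper states this explicitly as the outcome of a calculation.)

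For $\U(L)$ your invariant-theoretic framework is sound and lands on the correct ring, with the correct Hilbert series and the correct weight-$24$ relation, but the step you flag as ``the main obstacle'' is a genuine gap, and the route you propose to close it --- exhibiting an explicit coset representative of $\U(L)/\U_r(L)$ and computing its action on Taylor coefficients, after first proving the quotient is $\ZZ/2$ --- is both unproven and harder than necessary. The point you are missing is that the central scalar $i\cdot\mathrm{id}$ lies in $\U(L)$ but not in $\U_r(L)$ and acts on a weight-$k$ form by $i^{-k}$ (this is precisely the mechanism behind Lemma \ref{lem:zero}). This immediately gives the upper bound $M_*(\U(L))\subseteq\bigoplus_{4\mid k}M_k(\U_r(L))=\CC[x_4,y_4,x_6^2,x_6y_6,y_6^2]$ with no information about the full quotient group, so in particular you never need to prove $G\cong\ZZ/2$ or that the weight-$6$ action is $-\mathrm{id}$. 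What genuinely remains is the lower bound, i.e.\ that these five forms have trivial character on all of $\U(L)$; the paper's pattern for this (compare \S\ref{sec:2U+A23} and \S\ref{sec:2U+2A1}) is to control the characters of the generators through the corresponding orthogonal forms --- e.g.\ $\cE_4$ is modular for $\Orth^+(L_{\ZZ})\supseteq\U(L)$ with trivial character, and the characters of the basic $W(F_4)$-invariant lifts are bounded by recording which of their products and squares are $\Orth^+$-modular with trivial character --- rather than by locating a generator of the quotient. Your argument that $M_*(\U(L))$ cannot be free (via $\U(L)\neq\U_r(L)$ and Theorem \ref{th:freeJacobian}(1)) is correct and consistent with the relation in weight $24$.
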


\subsubsection{$2U \oplus D_4(2)$ over \texorpdfstring{$\ZZ[\omega]$}{}}\label{sec:2U+D423}
In this case, $\div(f_1 f_2 f_3)$ consists exactly of the mirrors $r^{\perp}$ where $\sigma_{r, -\omega} \in \U(L)$ but $\sigma_{r, \omega} \not\in\U(L)$. The forms $\cE_4, F_{4, 1}^2$ and $F_{4, 2}$ from above are modular forms with trivial character under the full orthogonal group whose weights are not multiples of six, which implies that their restrictions to $\U(3,1)$ over $\QQ(\sqrt{-3})$ are identically zero. We apply Proposition \ref{prop:nonzeroJ} to see that $J_{\U}(\cE_6, F_{6,1}, F_{6,2}, F_{6,3})$ is not identically zero and equals $f_1^{2/3}f_2^{2/3}f_3^{2/3}$ up to scalar. By Theorem \ref{th:modularJacobian}, we obtain:
\begin{theorem}
The algebra of modular forms for $\U_r(L)$ is freely generated by $4$ forms of weight $6$; and the algebra of modular forms for $\U(L)$ is generated by forms of weight $6$, $6$, $12$, $12$, $12$ with a relation in weight $24$. 
\end{theorem}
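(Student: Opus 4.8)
The plan is to follow the template of the preceding $\ZZ[i]$ realization, combining Proposition~\ref{prop:nonzeroJ} with the Jacobian criterion of Theorem~\ref{th:modularJacobian}. I start from the free algebra $M_*(\Orth_r(L_\ZZ))$ of \cite{Wan20a}, with the three weight-$4$ generators $\cE_4,F_{4,1},F_{4,2}$ and the four weight-$6$ generators $\cE_6,F_{6,1},F_{6,2},F_{6,3}$. First I would show that exactly the three weight-$4$ generators restrict to $0$ on $\cD_{\U}(L)$: the forms $\cE_4$, $F_{4,1}^2$, $F_{4,2}$ are modular with trivial character for the full orthogonal group and have weights $4,8,4$, none divisible by $6$, so Lemma~\ref{lem:zero}(2) kills their unitary restrictions. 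Since $J_{\Orth}$ is a reflective cusp form, its restriction to $\cD_{\U}(L)$ is not identically zero, and Proposition~\ref{prop:nonzeroJ} then gives that the four restrictions $\cE_6,F_{6,1},F_{6,2},F_{6,3}$ are algebraically independent over $\CC$.

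Next I would pin down the Jacobian $J_{\U}:=J_{\U}(\cE_6,F_{6,1},F_{6,2},F_{6,3})$. By Theorem~\ref{th:Jacobian}(1) it is a nonzero cusp form of weight $(n+1)+\sum_j k_j = 4+24 = 28$ and character $\det^{-1}$, vanishing along every mirror of a reflection of the reflection subgroup. The candidate $f_1^{2/3}f_2^{2/3}f_3^{2/3}$ has the same weight $\frac{2}{3}(6+24+12)=28$. The divisor description $\div(f_1f_2f_3)$ recorded above, read through Lemma~\ref{lem:Eisenstein}, shows that $L$ carries neither hexaflections nor pure biflections — a hexaflection mirror (maximal order $6$, required multiplicity $5$) would force $f_1f_2f_3$ to vanish to the impossible order $15/2$ — so every reflection mirror is the mirror of a triflection $\sigma_{r,-\omega}$, with required multiplicity $m-1=2$. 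Because the three orthogonal mirrors $r^{\perp},(\omega r)^{\perp},(\omega^2 r)^{\perp}$ collapse to a single unitary mirror, $f_1f_2f_3$ vanishes there to order $3$, so $f_1^{2/3}f_2^{2/3}f_3^{2/3}$ vanishes to order exactly $2$; matching divisors, $J_{\U}/(f_1^{2/3}f_2^{2/3}f_3^{2/3})$ is a holomorphic weight-$0$ form and hence constant. This same collapsing shows each $f_i$ vanishes to a multiple of $3$ along each triflection mirror, so by the character computation of Theorem~\ref{th:Jacobian}(3) the triflections act trivially and the four generators are genuine (trivial-character) forms for $\U_r(L)$. Applying Theorem~\ref{th:modularJacobian} to the manifestly finite-index group $\Gamma_1\le\U(L)$ cut out as the common kernel of the characters of $F_{6,1},F_{6,3}$ (which contains $\U_r(L)$), exactly as in the proof of Theorem~\ref{thm:2U+2A1}(1), shows $M_*(\Gamma_1)$ is free and $\Gamma_1$ is generated by reflections; since all reflections in $\U(L)$ already lie in $\U_r(L)$, this forces $\Gamma_1=\U_r(L)$, freely generated by the four weight-$6$ forms.

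For the full group I would compute the ring of $\U(L)$-invariants inside $M_*(\U_r(L))=\CC[\cE_6,F_{6,1},F_{6,2},F_{6,3}]$. The Eisenstein series $\cE_6$ and the form $F_{6,2}$ have trivial character under $\Orth^+(L_\ZZ)$, hence under $\U(L)$; the remaining generators $F_{6,1}$ and $F_{6,3}$ share a common quadratic character of $\Orth^+(L_\ZZ)$ whose restriction to $\U(L)$ remains nontrivial. Consequently the invariant subring is
$$M_*(\U(L)) = \CC[\cE_6,\;F_{6,2},\;F_{6,1}^2,\;F_{6,1}F_{6,3},\;F_{6,3}^2],$$
generated in weights $6,6,12,12,12$ and subject to the single relation $(F_{6,1}F_{6,3})^2 = F_{6,1}^2\cdot F_{6,3}^2$ in weight $24$, as claimed.

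The hard part will be the exact local analysis along the mirrors: confirming from the Borcherds input data and Lemma~\ref{lem:Eisenstein} that this lattice over $\ZZ[\omega]$ supports only triflections, and that the three $\omega$-related orthogonal mirrors each contribute a simple zero, so that every $f_i$ vanishes to order divisible by $3$. This divisibility is precisely what makes the generators descend to trivial-character forms on $\U_r(L)$ and makes $f_1^{2/3}f_2^{2/3}f_3^{2/3}$ a holomorphic form with the correct multiplicities. A secondary delicate point is checking that $F_{6,1}$ and $F_{6,3}$ carry the \emph{same} quadratic character on $\U(L)$, since this is what yields the three weight-$12$ generators and the single degree-$24$ relation rather than some other configuration.
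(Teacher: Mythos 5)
Your proposal follows essentially the same route as the paper's: kill the three weight-$4$ orthogonal generators with Lemma \ref{lem:zero}, get algebraic independence of the four weight-$6$ restrictions from Proposition \ref{prop:nonzeroJ}, identify the weight-$28$ Jacobian with $f_1^{2/3}f_2^{2/3}f_3^{2/3}$ by weight and divisor comparison, and conclude via Theorem \ref{th:modularJacobian} (with the finite-index trick of Theorem \ref{thm:2U+2A1}); the invariant-ring computation for $\U(L)$ is likewise what the paper intends. The only wobble is the step deducing that the four generators have trivial character on $\U_r(L)$ from the vanishing orders of the $f_i$ — those are different forms, and the clean argument is that the characters of $F_{6,1}$ and $F_{6,3}$ on $\U(L)$ are quadratic and hence automatically trivial on the order-three reflections, which is precisely the quadraticity you flag as the remaining delicate point.
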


\subsection{The \texorpdfstring{$2U(2)\oplus 2A_1$}{} lattice over \texorpdfstring{$\ZZ[i]$}{}}\label{sec:2U2+2A1}
The root lattice $L_{\ZZ} = 2U(2) \oplus 2A_1$ is the trace form of the Gaussian lattice $L$ with Gram matrix $\begin{psmallmatrix} 0 & 0 & 1 \\ 0 & 1 & 0 \\ 1 & 0 & 0 \end{psmallmatrix}$. There are ten Borcherds products of singular weight $1$ on $L$, and their input functions have the following principal parts at $\infty$:
\begin{align*}
&F_1:& &2 e_0 + q^{-1/4} e_{(0, 1/2, 0)} + q^{-1/4} e_{(0, i/2, 0)}; \\
&F_2:& &2 e_0 + q^{-1/4} e_{(0, 1/2, (1+i)/2)} + q^{-1/4} e_{(0, i/2, (1+i)/2)}; \\
&F_3:& &2 e_0 + q^{-1/4} e_{((1+i)/2, 1/2, 0)} + q^{-1/4} e_{((1+i)/2, i/2, 0)}; \\
&F_4:& &2 e_0 + q^{-1/4} e_{((1+i)/2, 1/2, (1+i)/2)} + q^{-1/4} e_{((1+i)/2, i/2, (1+i)/2)}; \\
&G_1:& &2 e_0 + q^{-1/4} e_{(0, 1/2, 1/2)} + q^{-1/4} e_{(0, i/2, 1/2)}; \\
&G_2:& &2 e_0 + q^{-1/4} e_{(1/2, 1/2, 0)} + q^{-1/4} e_{(1/2, i/2, 0)}; \\
&G_3:& &2 e_0 + q^{-1/4} e_{(1/2, 1/2, i/2)} + q^{-1/4} e_{(1/2, i/2, i/2)}; \\
&G_4:& &2 e_0 + q^{-1/4} e_{(0, 1/2, i/2)} + q^{-1/4} e_{(0, i/2, i/2)}; \\
&G_5:& &2 e_0 + q^{-1/4} e_{(i/2, 1/2, 0)} + q^{-1/4} e_{(i/2, i/2, 0)}; \\
&G_6:& &2 e_0 + q^{-1/4} e_{(i/2, 1/2, 1/2)} + q^{-1/4} e_{(i/2, i/2, 1/2)}. 
\end{align*}
When we consider the restrictions $f_j, g_j$ to the unitary group, we find that $f_1,...,f_4$ have only zeros of multiplicity two and therefore admit holomorphic square roots, and that $g_j$ have only simple zeros. Moreover, $g_1 = g_4$, $g_2 = g_5$ and $g_3 = g_6$. 
There is also a reflective Borcherds product $F$ of weight $4$, whose input form has principal part $8 e_0 + q^{-1/2} e_{(0, (1+i)/2, 0)}$, and whose associated reflection exchanges the two copies of $A_1$. We denote its unitary restriction by $f$; then $f = f_1 f_2 f_3 f_4$. The divisor $\div(f g_1 g_2 g_3)$ consists exactly of the mirrors $r^{\perp}$ of tetraflections $\sigma_{r, i} \in \U(L)$, and the square of any such tetraflection lies in $\widetilde{\U}(L)$. Since the divisors of $g_i$ are fixed by all tetraflections, the $g_i$ are modular forms (with character) for the group $$\Gamma = \langle \widetilde{\U}(L), \sigma_{r, i}: \; r^{\perp} \in \div(f g_1 g_2 g_3)\rangle;$$ moreover, $g_i$ transform without character under the tetraflections associated to $\div(f)$.

\begin{lemma}
The modular forms $g_1$, $g_2$ and $g_3$ are algebraically independent over $\CC$. Any three forms among $f_1$, $f_2$, $f_3$ and $f_4$ are algebraically independent over $\CC$.
\end{lemma}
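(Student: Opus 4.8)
The plan is to reduce both assertions to the non-vanishing of a single $3\times 3$ Jacobian. Since $L$ has signature $(2,1)$, the affine cone $\cA_{\U}$ has dimension $3$, so by Theorem \ref{th:Jacobian} (2) three modular forms on $\U(L)$ are algebraically independent over $\CC$ precisely when their Jacobian $J_{\U}$ is not identically zero. Thus the first claim becomes $J_{\U}(g_1,g_2,g_3)\not\equiv 0$, and (after a symmetry reduction) the second becomes $J_{\U}(f_1,f_2,f_3)\not\equiv 0$.

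For the forms $f_i$ I would first exploit symmetry. The four $F_i$ are interchanged by the isometries of $L$ realizing the translations of the discriminant form by $(1+i)/2$ in the first and third coordinates; these are $\mathcal{O}_{\F}$-linear, hence lie in $\U(L)$, and they act transitively on $\{f_1,f_2,f_3,f_4\}$ and therefore on the three-element subsets. So it is enough to prove $J_{\U}(f_1,f_2,f_3)\not\equiv 0$, and the other triples follow by transport of structure. Because each $f_i$ has only double zeros, the holomorphic square roots $\sqrt{f_i}$ exist, and the usual column manipulation gives
\[
J_{\U}(f_1,f_2,f_3)=c\,\sqrt{f_1f_2f_3}\;J_{\U}\!\left(\sqrt{f_1},\sqrt{f_2},\sqrt{f_3}\right)
\]
for a nonzero constant $c$, exactly as in the $U\oplus U(2)\oplus 2A_1$ analysis. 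It then suffices to show the weight $9/2$ Jacobian $J_{\U}(\sqrt{f_1},\sqrt{f_2},\sqrt{f_3})$ is nonzero. This is a holomorphic modular form whose zero divisor, by Theorem \ref{th:Jacobian} (3), is supported on the mirrors of the reflections fixing all three $\sqrt{f_i}$; comparing its weight and prescribed divisor with the explicit divisors of the remaining singular-weight-one products, Koecher's principle should force it to equal (up to scalar) a nonzero holomorphic product of those restrictions, and in particular it does not vanish identically.

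For the forms $g_i$ I would argue similarly but in weight $6$. By Theorem \ref{th:Jacobian} (1) the Jacobian $J_{\U}(g_1,g_2,g_3)$ is a holomorphic form of weight $6$ and character $\det^{-1}$. Since $g_1=g_4$, $g_2=g_5$, $g_3=g_6$ have simple zeros along distinct Heegner divisors and are modular for $\Gamma=\langle\widetilde{\U}(L),\sigma_{r,i}:r^{\perp}\in\div(fg_1g_2g_3)\rangle$, Theorem \ref{th:Jacobian} (3) determines the order of $J_{\U}(g_1,g_2,g_3)$ along each mirror of a reflection in $\Gamma$: it is $\equiv-1\bmod 2$ along biflection mirrors and $\equiv-1\bmod 4$ along the tetraflection mirrors in $\div(f)$. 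Matching this prescribed divisor and the weight $6$ against the explicit reflective Borcherds products identifies $J_{\U}(g_1,g_2,g_3)$, up to a scalar, with a nonzero holomorphic product, whence it is not identically zero. As a cross-check one could instead verify non-vanishing directly, by computing the leading coefficients of the Taylor expansions of $g_1,g_2,g_3$ on the Siegel domain about a cusp and confirming that the resulting $3\times 3$ matrix is nonsingular.

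The hard part will be the precise multiplicity bookkeeping underlying these identifications. The mirrors of $\U(L)$ carry reflections of different orders, and the tetraflections whose squares already lie in $\widetilde{\U}(L)$ must be handled with care, since they force the Jacobian to vanish to order $\equiv 3\bmod 4$ rather than to first order. Tracking which mirror contributes which multiplicity, and confirming that the resulting holomorphic form of the expected weight is \emph{exactly} the candidate Borcherds-product expression — so that Koecher's principle yields equality and not merely divisibility — is the delicate step. Once it is in place, Theorem \ref{th:Jacobian} (2) delivers the algebraic independence immediately.
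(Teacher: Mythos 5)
There is a genuine gap, and it sits at the heart of the lemma. Your main argument for non-vanishing runs: the Jacobian is holomorphic of weight $6$ (resp.\ $9/2$), Theorem \ref{th:Jacobian}~(3) forces its divisor to contain certain mirrors with certain multiplicities, a Borcherds-product expression has exactly that weight and divisor, so by Koecher's principle the Jacobian \emph{equals} that product up to a scalar and is therefore nonzero. But this reasoning only shows that the quotient (Jacobian)/(product) is a holomorphic modular form of weight zero, hence a constant --- and that constant may perfectly well be $0$. Equivalently, the vanishing-order congruences of Theorem \ref{th:Jacobian}~(3) are lower bounds that are vacuously satisfied by the zero form. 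Divisor-and-weight matching can identify a Jacobian \emph{once you already know it is nonzero} (this is exactly how the paper uses it in the sentence immediately following the lemma, where $J_{\U}(g_1,g_2,g_3)=f^{3/2}$ is deduced \emph{from} the lemma), but it cannot establish the non-vanishing itself. The Taylor-expansion computation you mention only as a ``cross-check'' is in fact the kind of independent input that would be needed if one took this route.

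The paper closes the gap differently: it quotes from \cite{WW20b} that $F_1,\dots,F_4,G_1,\dots,G_6$ satisfy five four-term quadratic relations and that any five of them with linearly independent squares are algebraically independent as \emph{orthogonal} modular forms, so their orthogonal Jacobian is a nonzero form whose zeros lie on Heegner divisors and whose restriction to $\cD_{\U}$ is therefore not identically zero. Using the identities $g_1=g_4$, $g_2=g_5$, $g_3=g_6$ one can choose five such orthogonal forms two of which restrict to zero on $\cD_{\U}$, and Proposition \ref{prop:nonzeroJ} (the block-matrix comparison of $J_{\Orth}$ with $J_{\U}$) then delivers the algebraic independence of the three surviving restrictions; the same device handles any three of $f_1,\dots,f_4$. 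Some such external or computational input is unavoidable here, and your proposal does not supply it. (A secondary, smaller issue: applying Theorem \ref{th:Jacobian}~(3) along the biflection mirrors in $\div(g_1g_2g_3)$ requires the $g_i$ to transform with trivial character under those biflections, which is not the case since each $g_i$ has a simple zero on its own mirror; only the tetraflections attached to $\div(f)$ act without character on the $g_i$.)
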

\begin{proof}
By Theorem 4.3 and Corollary 4.4 of \cite{WW20b}, the forms $F_1,...,F_4,G_1,...,G_6$ satisfy five four-term quadratic relations, and any five among them whose squares are linearly independent are already algebraically independent. The claim now follows from Proposition \ref{prop:nonzeroJ}.
\end{proof}

Since $J_{\U}(g_1,g_2,g_3)$ has weight $6$ and triple zeros along the tetraflections with mirrors in $\div(f)$, it equals $f^{3/2}$ up to scalar. Let $\U_f(L)$ be the subgroup of $\Gamma$ generated by tetraflections associated with $\div(f)$ and let $\U_{0}(L)$ be the subgroup of $\Gamma$ generated by $\U_f(L)$ and by the biflections (but not tetraflections) associated to $\div(g_1g_2g_3)$. Then we have 
\begin{theorem}
\begin{align*}
M_*(\Gamma)&=\CC[g_1^4, g_2^4, g_3^4] \; \text{is generated in weights} \; 4, 4, 4,\\
M_*(\U_{0}(L))&=\CC[g_1^2, g_2^2, g_3^2] \; \text{is generated in weights} \; 2, 2, 2,\\
M_*(\U_f(L))&=\CC[g_1, g_2, g_3] \; \text{is generated in weights} \; 1, 1, 1.
\end{align*}
\end{theorem}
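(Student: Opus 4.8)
The plan is to apply the Jacobian criterion (Theorem \ref{th:modularJacobian}) to each of the three groups, taking as given the identity $J_{\U}(g_1,g_2,g_3)=c\,f^{3/2}$ with $c\in\CC^\times$ established above. The computational backbone of all three cases is the multilinearity of the Jacobian: pulling the factor $e\,g_j^{e-1}$ out of the $j$-th column of the determinant in Theorem \ref{th:Jacobian}(4) gives
\begin{equation*}
J_{\U}(g_1^e,g_2^e,g_3^e)=e^3\,(g_1g_2g_3)^{e-1}\,J_{\U}(g_1,g_2,g_3)=c\,e^3\,(g_1g_2g_3)^{e-1}\,f^{3/2}
\end{equation*}
for $e\in\{1,2,4\}$. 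Since $g_1,g_2,g_3$ are algebraically independent, each of these is a nonzero cusp form with the explicit divisor $(e-1)\div(g_1g_2g_3)+3\,\div(f)$.

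Second, I would determine the characters of the generators. If $g_j$ vanishes to order $m_j:=\mathrm{ord}(g_j,r^\perp)$ along a mirror, then the local Taylor computation in the proof of Theorem \ref{th:Jacobian}(3) shows that a reflection $\sigma_{r,\alpha}$ scales $g_j$ by $\alpha^{m_j}$, so $g_j^e$ is fixed by $\sigma_{r,\alpha}$ exactly when $\alpha^{e m_j}=1$. Because the $g_j$ have only simple zeros, supported on $\div(g_1g_2g_3)$, and transform without character under the tetraflections with mirrors in $\div(f)$ (where $m_j=0$), this gives at once: $g_j$ is invariant under $\U_f(L)$; $g_j^2$ is invariant under $\U_0(L)$, being fixed by $\U_f(L)$ and by the biflections $\sigma_{r,-1}$, which contribute $(-1)^{2m_j}=1$; and $g_j^4$ is invariant under every tetraflection ($i^{4m_j}=1$) and biflection, hence under $\Gamma$.

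The groups $\U_f(L)$ and $\U_0(L)$ are not evidently of finite index, so I would handle them by the device of Theorem \ref{thm:2U+2A1}(1). As $\Gamma$ contains $\widetilde{\U}(L)$ it is of finite index; let $\Gamma_f$ (resp.\ $\Gamma_0$) be the intersection of the kernels on $\Gamma$ of the characters of $g_1,g_2,g_3$ (resp.\ $g_1^2,g_2^2,g_3^2$), a finite-index subgroup containing $\U_f(L)$ (resp.\ $\U_0(L)$). On $\Gamma_f$ the $g_j$ have trivial character, so by Theorem \ref{th:Jacobian}(3) every reflection of $\Gamma_f$ has mirror in $\div(J_{\U})=\div(f)$; these are tetraflection mirrors of maximal order $4$, along which $c\,f^{3/2}$ vanishes to order exactly $3=4-1$. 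Theorem \ref{th:modularJacobian} then yields $M_*(\Gamma_f)=\CC[g_1,g_2,g_3]$ and that $\Gamma_f$ is generated by reflections, forcing $\Gamma_f=\U_f(L)$. For $\Gamma_0$ the character computation of the previous paragraph shows that $\Gamma_0$ contains the order-$4$ tetraflections over $\div(f)$ but, over $\div(g_1g_2g_3)$, only the order-$2$ biflections (a tetraflection there scales $g_j^2$ by $-1$); the maximal orders $4$ and $2$ match the multiplicities $3$ and $1$ of $8c\,g_1g_2g_3\,f^{3/2}$, so Theorem \ref{th:modularJacobian} gives $M_*(\U_0(L))=M_*(\Gamma_0)=\CC[g_1^2,g_2^2,g_3^2]$. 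Finally, applying the criterion directly to the finite-index group $\Gamma$, whose reflections over all of $\div(fg_1g_2g_3)$ have order $4$ and whose generators have Jacobian $64c\,(g_1g_2g_3)^3 f^{3/2}$ with uniform multiplicity $3$, gives $M_*(\Gamma)=\CC[g_1^4,g_2^4,g_3^4]$.

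The step I expect to require the most care is the reflection bookkeeping that matches the reflection orders along each mirror with the multiplicities of the explicit Jacobians, and in particular the verification that the auxiliary groups $\Gamma_f$ and $\Gamma_0$ contain precisely the intended reflections. Here the explicit form of $J_{\U}(g_1^e,g_2^e,g_3^e)$ is decisive: via Theorem \ref{th:Jacobian}(3) it forces every reflection of the relevant group to lie on $\div(fg_1g_2g_3)$, ruling out hidden reflections (such as stray biflections in $\widetilde{\U}(L)$), after which the comparison of multiplicities with $m-1$ is immediate.
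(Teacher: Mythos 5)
Your proposal is correct and follows essentially the same route as the paper: the paper's proof consists precisely of the three Jacobian identities $J_{\U}(g_1^e,g_2^e,g_3^e)=\mathrm{const}\cdot(g_1g_2g_3)^{e-1}f^{3/2}$ for $e=1,2,4$ followed by an appeal to Theorem \ref{th:modularJacobian}. You have merely filled in the details the paper leaves implicit — the multilinearity computation, the character bookkeeping via $\alpha^{m_j}$, and the finite-index device from the proof of Theorem \ref{thm:2U+2A1}(1) that identifies $\U_f(L)$ and $\U_0(L)$ with the character kernels $\Gamma_f$ and $\Gamma_0$ — all of which the paper either asserts or alludes to.
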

\begin{proof} All of the claims follow from Theorem \ref{th:modularJacobian} after observing that the Jacobians $$J_{\U}(g_1, g_2, g_3) = f^{3/2}, \; J_{\U}(g_1^2, g_2^2, g_3^2) = f^{3/2} g_1 g_2 g_3, \; J_{\U}(g_1^4, g_2^4, g_3^4) = f^{3/2} g_1^3 g_2^3 g_3^3$$ have zeros of the necessary orders along the mirrors of reflections in the respective modular groups.
\end{proof}

Note that $\U_0(L)$ contains $\widetilde{\U}(L)$ because $g_i^2$ are modular with trivial character for $\widetilde{\U}(L)$.  Let $\chi$ be the character of $f=f_1f_2f_3f_4$ on $\U_0(L)$. Since $f$ has a nontrivial character under all tetraflections in $\U_f$, we have  $\ker(\chi) \le \widetilde{\U}(L)$. On the other hand, $F\in M_4(\widetilde{\Orth}^+(L_\ZZ))$, and therefore $f\in M_4(\widetilde{\U}(L))$, so $\widetilde{\U}(L) = \ker(\chi)$. Using the decomposition of the Jacobian (Theorem \ref{th:freeJacobian} (4)) we obtain
\begin{theorem}
$$
M_*(\widetilde{\U}(L))=\CC[g_1^2, g_2^2, g_3^2, f] \; \text{is generated in weights} \; 2,2,2,4 \; \text{with a relation in weight} \; 8.
$$
\end{theorem}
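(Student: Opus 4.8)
The plan is to realize $M_*(\widetilde{\U}(L))$ as a sum of character eigenspaces for the overgroup $\U_0(L)$, whose invariant ring $M_*(\U_0(L)) = \CC[g_1^2, g_2^2, g_3^2]$ is already known. Since $\widetilde{\U}(L) = \ker(\chi)$ is normal in $\U_0(L)$ with cyclic quotient generated by $\chi$, there is a decomposition
\[
M_*(\widetilde{\U}(L)) = \bigoplus_{j} M_*(\U_0(L), \chi^j),
\]
the sum ranging over representatives of $\langle \chi \rangle$. The task then reduces to computing the order of $\chi$ and identifying each eigenspace as a module over $\CC[g_1^2, g_2^2, g_3^2]$.

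First I would pin down the order of $\chi$ by evaluating it on a generating set of $\U_0(L)$, namely the tetraflections coming from $\U_f(L)$ together with the biflections whose mirrors lie in $\div(g_1 g_2 g_3)$. On a biflection attached to $\div(g_1 g_2 g_3)$ the form $f$ does not vanish, so $\chi$ is trivial there. On a tetraflection $\sigma_{r, i}$ with $r^\perp \in \div(f)$, the factorization $f = f_1 f_2 f_3 f_4$ into weight-one forms with double zeros along pairwise distinct mirrors (the $F_j$ have distinct principal parts) shows that $f$ vanishes to order exactly two along $r^\perp$; by the local computation in the proof of Theorem \ref{th:Jacobian}(3) this forces $\chi(\sigma_{r,i}) = i^2 = -1$. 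Hence $\chi$ has order two and
\[
M_*(\widetilde{\U}(L)) = M_*(\U_0(L)) \oplus M_*(\U_0(L), \chi).
\]

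Next I would identify the $\chi$-eigenspace as $f \cdot M_*(\U_0(L))$. The inclusion $\supseteq$ is immediate, since $f$ has character $\chi$ while the $g_i^2$ have trivial character. For the reverse inclusion, any $h \in M_*(\U_0(L), \chi)$ satisfies $\chi(\sigma_{r,i}) = -1$ along each $r^\perp \in \div(f)$, so the same local computation gives $\mathrm{ord}(h, r^\perp) \equiv 2 \pmod 4$; since $f$ vanishes there to order exactly two, $h/f$ is holomorphic along every such mirror (away from $\div(f)$ there is nothing to check). Thus $h/f$ is a holomorphic form of trivial character, i.e.\ $h/f \in M_*(\U_0(L))$. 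This yields $M_*(\widetilde{\U}(L)) = \CC[g_1^2, g_2^2, g_3^2] \oplus f \cdot \CC[g_1^2, g_2^2, g_3^2]$.

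Finally I would read off the presentation. Because $\chi^2$ is trivial, $f^2$ lies in $M_8(\U_0(L)) = \CC[g_1^2, g_2^2, g_3^2]$, so there is a unique polynomial identity $f^2 = P(g_1^2, g_2^2, g_3^2)$ in weight $8$. As $M_*(\widetilde{\U}(L))$ is free of rank two over $\CC[g_1^2, g_2^2, g_3^2]$ with basis $\{1, f\}$, the surjection $\CC[x_1, x_2, x_3, y] \to M_*(\widetilde{\U}(L))$ with $x_i \mapsto g_i^2$, $y \mapsto f$ has kernel generated by $y^2 - P$, giving generators in weights $2, 2, 2, 4$ and a single relation in weight $8$. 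The main obstacle is the surjectivity in the eigenspace identification: controlling the vanishing order of an arbitrary $\chi$-form so that division by $f$ remains holomorphic, which is precisely where the fact that $f$ vanishes to order exactly two (rather than some larger residue-$2$ multiple modulo $4$) along its mirrors is indispensable.
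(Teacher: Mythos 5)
Your argument follows the same route as the paper: decompose $M_*(\widetilde{\U}(L))$ into character eigenspaces for $\U_0(L)$, use the local order-of-vanishing computation at mirrors to show that the nontrivial eigenspace is exactly $f\cdot M_*(\U_0(L))$, and read off the hypersurface presentation from $f^2\in M_8(\U_0(L))$. The computation of $\chi$ on the generators of $\U_0(L)$ (trivial on the biflections since $f$ does not vanish on their mirrors, equal to $i^2=-1$ on the tetraflections since $f$ vanishes to order exactly two there) and the divisibility argument $\mathrm{ord}(h,r^\perp)\equiv 2\ (\mathrm{mod}\ 4)\Rightarrow f\mid h$ are both correct and are spelled out in more detail than in the paper, which compresses this step into an appeal to the decomposition of the Jacobian.

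There is, however, one genuine gap: you assume $\widetilde{\U}(L)=\ker(\chi)$ (and implicitly $\widetilde{\U}(L)\subseteq \U_0(L)$) rather than proving it, and this identification is the only place where the specific group $\widetilde{\U}(L)$ enters; without it your argument computes $M_*(\ker(\chi))$ for the character $\chi$ of $f$, not $M_*(\widetilde{\U}(L))$. The paper supplies both inclusions: $\widetilde{\U}(L)\subseteq \U_0(L)$ because the generators $g_i^2$ of the free algebra $M_*(\U_0(L))$ are modular with trivial character for $\widetilde{\U}(L)$; $\widetilde{\U}(L)\subseteq\ker(\chi)$ because $F$ lies in $M_4(\widetilde{\Orth}^+(L_\ZZ))$, so its restriction $f$ has trivial character on the unitary discriminant kernel; and $\ker(\chi)\subseteq\widetilde{\U}(L)$ because $\chi$ is nontrivial on every tetraflection in $\U_f(L)$, which are exactly the generators of $\U_0(L)$ modulo $\widetilde{\U}(L)$. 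You should add this step; the remainder of your proof then goes through as written.
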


We denote the multiplier systems of $f_i^{1/2}$ on $\U_0(L)$ by $\nu_i$. We define
$$
\U_1(L)=\{ \gamma \in \U_0(L) : \nu_2(\gamma)=\nu_3(\gamma)=\nu_4(\gamma) \}.
$$
We denote the common restriction of $\nu_2$, $\nu_3$ and $\nu_4$ to $\U_1(L)$ by $\nu$, such that $f_i^{1/2} \in M_{1/2}(\U_1(L),\nu)$ for $i=2,3,4$. The reflections contained in $\U_1(L)$ are exactly the tetraflections associated with $\div(f_1)$ and the biflections associated with $\div(g_1g_2g_3)$, and $\nu$ is trivial on all of them. 
We find that $J_{\U}(f_2^{1/2}, f_3^{1/2}, f_4^{1/2})=f_1^{3/2}g_1g_2g_3$ up to scalar. There is a natural generalization of Theorem \ref{th:modularJacobian} to modular forms of rational weights and multiplier system (analogous to \cite[Theorem 2.5]{WW20b} for orthogonal groups). This leads to the following:
\begin{theorem}
$$
M_*(\U_1(L), \nu):=\bigoplus_{k\in \ZZ} M_{k/2}(\U_1(L), \nu^k) = \CC[f_2^{1/2}, f_3^{1/2}, f_4^{1/2}] \; \text{is generated in weights} \; 1/2, 1/2, 1/2.
$$
\end{theorem}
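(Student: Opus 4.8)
The plan is to invoke the rational-weight, multiplier-system analogue of the Jacobian criterion Theorem~\ref{th:modularJacobian} (the unitary counterpart of \cite[Theorem 2.5]{WW20b}) with the three half-integral weight forms $f_2^{1/2}, f_3^{1/2}, f_4^{1/2} \in M_{1/2}(\U_1(L), \nu)$. Since $L$ has signature $(2,1)$ we have $n+1 = 3$, so exactly three algebraically independent generators are needed, and the criterion will simultaneously give the freeness of $M_*(\U_1(L), \nu)$ and the fact that $\U_1(L)$ is generated by reflections, provided the Jacobian of these three forms is nonzero and vanishes precisely along the mirrors of reflections in $\U_1(L)$, each to the order dictated by the reflection's order.

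First I would confirm the hypotheses. Each $f_i$ has only double zeros on $\cD_{\U}(L)$, so $f_i^{1/2}$ is holomorphic of weight $1/2$; and $\U_1(L)$ was defined precisely as the subgroup of $\U_0(L)$ on which $\nu_2, \nu_3, \nu_4$ coincide, so their common value $\nu$ is a consistent multiplier for all three forms. Algebraic independence is immediate from the preceding lemma (any three of $f_1, \dots, f_4$ are algebraically independent, hence so are their square roots), so $J_{\U}(f_2^{1/2}, f_3^{1/2}, f_4^{1/2})$ is nonzero by Theorem~\ref{th:Jacobian}(2). I would also record that $\U_1(L)$ has finite index in $\U(L)$: it sits inside $\U_0(L) \supseteq \widetilde{\U}(L)$, and it is cut out by the conditions $\nu_2 = \nu_3 = \nu_4$, which are of finite order (since $\nu_i^2$ is the character of $f_i$ on $\U_0(L)$) and hence define a finite-index subgroup.

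Next I would match the Jacobian against the reflections. By construction the reflections in $\U_1(L)$ are exactly the tetraflections with mirror in $\div(f_1)$ (order $4$) and the biflections with mirror in $\div(g_1 g_2 g_3)$ (order $2$), and $\nu$ is trivial on each. The Jacobian has weight $n+1 + 3 \cdot \tfrac12 = \tfrac92$; by Theorem~\ref{th:Jacobian}(3) it vanishes to order at least $3$ along $\div(f_1)$ and at least $1$ along $\div(g_1 g_2 g_3)$. Since $f_1^{3/2} g_1 g_2 g_3$ is holomorphic of weight $\tfrac92$ with exactly those zeros, comparing weights forces $J_{\U}(f_2^{1/2}, f_3^{1/2}, f_4^{1/2}) = f_1^{3/2} g_1 g_2 g_3$ up to a scalar. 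Thus the Jacobian vanishes exactly along the mirrors of reflections in $\U_1(L)$, to multiplicity $3 = 4-1$ on the tetraflection mirrors and $1 = 2-1$ on the biflection mirrors, exactly as the criterion demands.

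The step I expect to require the most care is justifying the generalized criterion itself with a genuine multiplier system in place of a finite-order character. The proof of Theorem~\ref{th:modularJacobian} forms quotients $J_t/J_{\U}$ and argues inductively that they are holomorphic forms of strictly smaller weight; here one must verify that these quotients remain single-valued modular forms for $\U_1(L)$ despite the fractional powers. This works because $\nu$ is trivial on every reflection of $\U_1(L)$, so the multiplier systems carried by the $f_i^{1/2}$ and by the Jacobian cancel consistently in each quotient — exactly the mechanism underlying the orthogonal analogue \cite[Theorem 2.5]{WW20b}. Granting this, Theorem~\ref{th:modularJacobian} yields $M_*(\U_1(L), \nu) = \CC[f_2^{1/2}, f_3^{1/2}, f_4^{1/2}]$.
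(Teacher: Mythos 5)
Your proposal is correct and follows essentially the same route as the paper: the paper's (very terse) justification consists precisely of the identity $J_{\U}(f_2^{1/2}, f_3^{1/2}, f_4^{1/2}) = f_1^{3/2} g_1 g_2 g_3$ up to a scalar, followed by an appeal to the rational-weight, multiplier-system generalization of Theorem \ref{th:modularJacobian}. The details you supply — the weight count $9/2$, the Koecher-principle comparison identifying the Jacobian, and the observation that $\nu$ is trivial on all reflections of $\U_1(L)$ so that the quotients appearing in the inductive step of the criterion remain single-valued — are exactly the ones the paper leaves implicit.
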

We further define 
$$
\U_2(L)=\{ \gamma \in \U_0(L) : \nu_1(\gamma)= \nu_2(\gamma)=\nu_3(\gamma)=\nu_4(\gamma) \}.
$$
From the decomposition of the Jacobian for $\U_1(L)$, we conclude
\begin{theorem}
$$
M_*(\U_2(L), \nu) = \CC[f_1^{1/2}, f_2^{1/2}, f_3^{1/2}, f_4^{1/2}]
$$
modulo a unique relation among the generators:
$$
f_1^2+f_2^2+f_3^2+f_4^2=0.
$$
In particular the Baily-Borel compactification of the ball quotient $\cD_{\U} / \U_2(L)$ is the Fermat quartic surface $x^4+y^4+z^4+w^4=0$ in $\PP^3(\CC)$. \end{theorem}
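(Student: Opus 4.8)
The plan is to realize $\U_2(L)$ as the kernel of a single character on $\U_1(L)$ and then to build $M_*(\U_2(L),\nu)$ out of the free algebra $M_*(\U_1(L),\nu)=\CC[f_2^{1/2},f_3^{1/2},f_4^{1/2}]$ produced by the previous theorem. Write $A:=\CC[f_2^{1/2},f_3^{1/2},f_4^{1/2}]=M_*(\U_1(L),\nu)$ and set $\chi:=\nu_1\nu^{-1}$, a character of $\U_1(L)$. Since on $\U_1(L)$ one has $\nu_2=\nu_3=\nu_4=\nu$, the defining condition $\nu_1=\nu_2=\nu_3=\nu_4$ of $\U_2(L)$ is exactly $\chi=1$, so $\U_2(L)=\ker(\chi)$ and $\U_1(L)/\U_2(L)\cong\operatorname{im}(\chi)$ is cyclic. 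To compute its order I would evaluate $\chi$ on a tetraflection $\sigma_{r,i}$ with $r^{\perp}\in\div(f_1)$: as $\nu$ is trivial on the reflections of $\U_1(L)$ and $f_1^{1/2}$ vanishes to order exactly one along $r^{\perp}$ (read off from the factorization $J_{\U}(f_2^{1/2},f_3^{1/2},f_4^{1/2})=f_1^{3/2}g_1g_2g_3$ together with Theorem \ref{th:freeJacobian}(4)), the order-of-vanishing principle in the proof of Theorem \ref{th:Jacobian}(3) gives $\nu_1(\sigma_{r,i})=i^{1}=i$, whence $\chi(\sigma_{r,i})=i$. Thus $\chi$ has order $4$ and $\U_1(L)/\U_2(L)\cong\ZZ/4\ZZ$.

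The core step is to decompose each space into $\chi$-isotypic components for this $\ZZ/4$-action,
\[
M_{k/2}(\U_2(L),\nu^{k})=\bigoplus_{j=0}^{3}M_{k/2}(\U_1(L),\nu^{k}\chi^{j}),
\]
and to identify the $j$-th component with $(f_1^{1/2})^{j}A$. Since $f_1^{1/2}\in M_{1/2}(\U_1(L),\nu_1)=M_{1/2}(\U_1(L),\nu\chi)$, multiplication by $(f_1^{1/2})^{j}$ certainly carries $A$ into the $j$-th component. For the reverse inclusion I would take $F\in M_{k/2}(\U_1(L),\nu^{k}\chi^{j})$ and apply the order principle at the tetraflection $\sigma_{r,i}$: this gives $i^{\ord(F,r^{\perp})}=(\nu^{k}\chi^{j})(\sigma_{r,i})=i^{j}$, so $\ord(F,r^{\perp})\equiv j\pmod 4$ and in particular $F$ vanishes to order at least $j$ along every tetraflection mirror. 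Because these mirrors constitute the single $\U_1(L)$-class $\div(f_1^{1/2})$, the quotient $F/(f_1^{1/2})^{j}$ is holomorphic on $\cD_{\U}$, hence by Koecher's principle a modular form; its multiplier is $(\nu^{k}\chi^{j})(\nu^{j}\chi^{j})^{-1}=\nu^{k-j}$, so it lies in $M_{(k-j)/2}(\U_1(L),\nu^{k-j})\subseteq A$. Therefore $M_{k/2}(\U_1(L),\nu^{k}\chi^{j})=(f_1^{1/2})^{j}A$ for $j=0,1,2,3$.

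Summing over $j$ shows that $M_*(\U_2(L),\nu)=\bigoplus_{j=0}^{3}(f_1^{1/2})^{j}A$ is a free $A$-module of rank $4$ with basis $1,f_1^{1/2},f_1,f_1^{3/2}$; equivalently $M_*(\U_2(L),\nu)=A[f_1^{1/2}]$ subject to the single relation $(f_1^{1/2})^{4}=f_1^{2}$, where the right-hand side is the unique representative of $f_1^{2}$ inside $A$. To finish I would identify that representative: the five four-term quadratic relations among the Borcherds products $F_1,\dots,F_4,G_1,\dots,G_6$ of \cite{WW20b}, in view of the coincidences $g_1=g_4,\,g_2=g_5,\,g_3=g_6$, restrict to the unitary group to yield $f_1^{2}+f_2^{2}+f_3^{2}+f_4^{2}=0$ after normalizing the square roots. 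This gives the presentation
\[
M_*(\U_2(L),\nu)=\CC[f_1^{1/2},f_2^{1/2},f_3^{1/2},f_4^{1/2}]\big/\big(f_1^{2}+f_2^{2}+f_3^{2}+f_4^{2}\big).
\]
The four generators share weight $1/2$ and multiplier $\nu$, so the map $[z]\mapsto[f_1^{1/2}(z):f_2^{1/2}(z):f_3^{1/2}(z):f_4^{1/2}(z)]$ is well defined on $\cD_{\U}/\U_2(L)$; taking $\operatorname{Proj}$ with each generator in degree one identifies the Baily--Borel compactification with the Fermat quartic $x^{4}+y^{4}+z^{4}+w^{4}=0$ in $\PP^{3}(\CC)$.

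I expect the main obstacle to be twofold. Structurally, the delicate point is the divisibility argument of the second paragraph, which depends on tracking the multiplier systems $\nu_i$ and the half-integral weights precisely enough that the order congruences of Theorem \ref{th:Jacobian}(3) apply verbatim; in particular one must check that $\chi$ is \emph{trivial} on the biflections of $\U_1(L)$ (so that $F$ is forced to vanish only along the tetraflection class and no spurious divisibility is imposed) and that $(f_1^{1/2})^{4}$ is genuinely the first power returning to $A$. Computationally, the hard part is pinning down that the quartic is the \emph{symmetric} Fermat relation with all coefficients equal, rather than some other relation $(f_1^{1/2})^{4}=p(f_2^{1/2},f_3^{1/2},f_4^{1/2})$; it is exactly this normalization, supplied by the explicit relations of \cite{WW20b}, that produces the Fermat quartic surface rather than a general hypersurface.
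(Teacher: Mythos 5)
Your argument is correct and is essentially the paper's own proof written out in full: the paper compresses everything into the phrase ``from the decomposition of the Jacobian for $\U_1(L)$'' (i.e.\ Theorem \ref{th:freeJacobian}(4) applied to $J_{\U}(f_2^{1/2},f_3^{1/2},f_4^{1/2})=f_1^{3/2}g_1g_2g_3$), which is exactly your identification of $\U_2(L)$ as the kernel of the order-four character $\nu_1\nu^{-1}$ on $\U_1(L)$ and the resulting adjunction of $f_1^{1/2}$ to $\CC[f_2^{1/2},f_3^{1/2},f_4^{1/2}]$ via the order-of-vanishing congruences of Theorem \ref{th:Jacobian}(3). The one external input --- that the degree-four expression of $(f_1^{1/2})^4$ inside the free algebra is precisely the symmetric relation $f_1^2+f_2^2+f_3^2+f_4^2=0$ --- comes, as you indicate, from the quadratic relations of \cite{WW20b} cited earlier in the section, and the paper supplies no more detail on that point than you do.
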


\section{A free algebra of modular forms on a ball quotient over \texorpdfstring{$\ZZ[\sqrt{-2}]$}{}}\label{sec:sqrt2}

We are aware of only one example of an arithmetic group $\Gamma \le \U(n, 1)$, defined over a base field $\F$ of discriminant $D_{\F} \ne -3, -4$, for which the associated algebra of modular forms is freely generated. This is the unitary group of the root lattice $L_{\ZZ} = U \oplus U(2) \oplus D_4$ viewed as a Hermitian lattice over $\mathbb{Z}[\sqrt{-2}]$. In this section we will show that the ring $M_*(\U(L))$ is the polynomial algebra on generators of weights $2, 8, 10, 16$.

The Hermitian lattice $L$ can be realized as $\mathbb{Z}[\sqrt{-2}]^4$ with Gram matrix $$\begin{psmallmatrix} 0 & 0 & 0 & 1/2 \\ 0 & 1 & (1 + \sqrt{-2})/2 & 0 \\ 0 & (1 - \sqrt{-2}) / 2 & 1 & 0 \\ 1/2 & 0 & 0 & 0 \end{psmallmatrix}.$$ There are 10 Borcherds products $F_1,...,F_{10}$ of weight $4$ associated to this lattice, each with input form with principal part $$8 e_0 + q^{-1/2} e_v,$$ where $v$ runs through the $10$ cosets of $L'/L$ of norm $1/2 + \mathbb{Z}$, and there is a weight 40 product $F$ with principal part $80 e_0 + q^{-1} e_0$. If $f_i$ and $f$ denote their restrictions to the unitary group then we have $$f = f_1 \cdot ... \cdot f_{10},$$ and $$r^{\perp} \in \mathrm{div}(f) \; \text{if and only if} \; \sigma_{r, -1} \in \U(L).$$

Recall from \cite[Theorem 3.20]{WW20c} that the ring of modular forms for $\Orth^+(L_{\ZZ})$ is the polynomial algebra on generators $$\cE_2, \cE_6, p_2, \cE_{10}, p_3, p_4, p_5,$$ where $\cE_6, \cE_{10}$ are the standard Eisenstein series, $\cE_2$ is the theta lift of the (unique) normalized Weil invariant, and $$p_k = e_1^k + e_2^k + e_3^k + e_4^k + e_5^k$$ where $e_1,...,e_5$ are the five holomorphic Eisenstein series of weight four,  normalized such that $\Orth^+(L)$ acts on them by permutations; and that these forms satisfy $$\cE_2 = \sqrt{e_1 + e_2 + e_3 + e_4 + e_5}.$$ By abuse of notation we also write $\cE_k$, $p_k$ for their unitary restrictions.

\begin{theorem} $$M_*(\U(L)) = \mathbb{C}[\cE_2, p_2, \cE_{10}, p_4].$$ The Jacobian of the generators is a nonzero constant multiple of $f$.
\end{theorem}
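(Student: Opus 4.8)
The plan is to apply the Jacobian criterion, Theorem \ref{th:modularJacobian}, to the four restrictions $\cE_2, p_2, \cE_{10}, p_4$, whose weights $2, 8, 10, 16$ together with $n+1 = 4$ sum to $40$, exactly the weight of $f$. First I would record that each of these forms is the restriction of an $\Orth^+(L_{\ZZ})$-invariant modular form: $\cE_2$ and $\cE_{10}$ are Eisenstein series, and $p_2, p_4$ are the symmetric power sums $\sum_i e_i^k$ on which $\Orth^+(L_{\ZZ})$ acts by permuting the $e_i$, so all four descend to $\U(L)$ with trivial character. Since $\F = \QQ(\sqrt{-2})$ has unit group $\{\pm 1\}$, Lemma \ref{lem:other23} guarantees that every reflection in $\U(L)$ is a biflection $\sigma_{r, -1}$; hence the maximal reflection order along each mirror is $m = 2$ and the multiplicity demanded by Theorem \ref{th:modularJacobian} is $m - 1 = 1$. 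By hypothesis the support of $\div(f)$ is exactly the set of these mirrors.

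The computational heart of the proof is to show that $\cE_2, p_2, \cE_{10}, p_4$ are algebraically independent over $\CC$, equivalently that their Jacobian $J_{\U}$ is not identically zero (Theorem \ref{th:Jacobian}(2)). Unlike the lattices of \S\ref{sec:twins}, I cannot invoke Proposition \ref{prop:nonzeroJ} here, since over $\QQ(\sqrt{-2})$ one cannot exhibit $n = 3$ of the seven orthogonal generators whose restrictions to $\cD_{\U}$ vanish identically (for example $\cE_6$ restricts to a nonzero unitary Eisenstein series); instead the restriction map $M_*(\Orth^+(L_{\ZZ})) \to M_*(\U(L))$ has a nontrivial kernel, and the three surplus generators $\cE_6, p_3, p_5$ become polynomials in the other four. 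I would therefore expand each of the four forms as a Taylor series in the ball coordinates about a zero-dimensional cusp on the Siegel domain, as in \S\ref{sec:Taylor}, with coefficients that are (quasi-)elliptic modular forms in $\tau$, and compute enough leading coefficients to exhibit a nonzero $4 \times 4$ Jacobian determinant, using the Serre-derivative form given in the Remark following Theorem \ref{th:Jacobian}. This is feasible because the $e_i$, and hence $p_2, p_4$, together with $\cE_2, \cE_{10}$, all have explicitly computable Fourier--Jacobi data.

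Granting $J_{\U} \not\equiv 0$, the remainder is formal. By Theorem \ref{th:Jacobian}(3), $J_{\U}$ vanishes along every mirror $r^{\perp}$ of a biflection $\sigma_{r, -1} \in \U(L)$ to odd order, in particular to order at least one; since $\div(f)$ consists precisely of these mirrors, each with multiplicity one, the quotient $J_{\U}/f$ is a holomorphic modular form of weight $40 - 40 = 0$, hence a constant, which is nonzero because $J_{\U} \not\equiv 0$. Thus $J_{\U} = c \cdot f$ with $c \neq 0$, so that $\div(J_{\U})$ is the sum of all mirrors of reflections in $\U(L)$, each with multiplicity $1 = m - 1$. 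Theorem \ref{th:modularJacobian} then gives that $M_*(\U(L))$ is freely generated by $\cE_2, p_2, \cE_{10}, p_4$ and, as a byproduct, that $\U(L)$ is generated by its biflections.

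The main obstacle is the algebraic independence in the second paragraph: because the clean vanishing argument of Proposition \ref{prop:nonzeroJ} is unavailable over $\QQ(\sqrt{-2})$, the nonvanishing of $J_{\U}$ must be established by a direct, lattice-specific Taylor-coefficient computation rather than by a structural comparison with the orthogonal Jacobian. A secondary technical point, needed to pass from ``$J_{\U}$ vanishes on each mirror'' to ``with multiplicity exactly one,'' is the verification that $f$ itself vanishes to order exactly one along each reflection mirror; this holds precisely because $\QQ(\sqrt{-2})$ carries no units beyond $\pm 1$, and hence none of the phenomena (holomorphic square or cube roots of Borcherds products) that inflate unitary multiplicities over $\QQ(\sqrt{-1})$ and $\QQ(\sqrt{-3})$.
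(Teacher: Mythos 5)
Your proposal is correct and follows essentially the same route as the paper: both reduce the theorem to the Jacobian criterion of Theorem \ref{th:modularJacobian} and isolate the only nontrivial point, the nonvanishing of $J_{\U}(\cE_2, p_2, \cE_{10}, p_4)$, which is settled by an explicit Taylor-expansion computation on the Siegel domain (the paper computes the expansions with coefficients in $M_*(\Gamma_0(2)) = \CC[e_2, e_4]$ up to degree $19$, guided by the observation that the quasi-pullback of $f$ to $U \oplus U(2)$ has weight $52$). Your remarks on why Proposition \ref{prop:nonzeroJ} is unavailable over $\QQ(\sqrt{-2})$ and on the simple vanishing of $f$ along each mirror are consistent with the paper's treatment.
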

\begin{proof} Clearly $J_{\U} = J_{\U}(\cE_2, p_2, \cE_{10}, p_4)$ has weight $40$ and vanishes on all mirrors $r^{\perp}$ with $\sigma_{r, -1} \in \U(L)$. The proof follows immediately from Theorem \ref{th:modularJacobian} as soon as we can show that $J_{\U}$ is not identically zero. To this end we consider the Taylor expansions of these forms on the standard Siegel domain: these are power series in two variables $z_1, z_2$ with coefficients in the ring $$M_*(\Gamma_0(2)) = \mathbb{C}[e_2, e_4],$$ where $e_2(\tau) = 2E_2(2\tau) - E_2(\tau) = 1 + 24q + ...$ and $e_4(\tau) = E_4(2\tau) = 1 + 240q^2 + ...$ One can compute that the quasi-pullback of the product $f$ to $U \oplus U(2)$ has weight $52$, so its Taylor coefficients vanish in degree less than $12$. Since the computation is not very difficult, we worked out the power series expansions of all generators of $M_*(\Orth^+(L_{\ZZ}))$ to degree $19$. These are presented in an auxiliary file. Using these power series one can check that the Jacobian is indeed nonzero.
\end{proof}

\begin{remark} For Hermitian lattices $L$ over $\mathbb{Z}[i]$ or $\mathbb{Z}[e^{2\pi i / 3}]$ such that $M_*(\U(L))$ and $M_*(\Orth^+(L_{\ZZ}))$ are both free, the unitary domain $\cD_{\U}$ was cut out of $\cD$ as the simultaneous zero locus of modular forms of certain weights. Here the computation of the vanishing ideal of $\cD_{\U}$ seems nontrivial. Using the Taylor expansions from the proof of the above theorem, one can show that $\cD_{\U}$ is exactly the simultaneous zero locus of the orthogonal modular forms $$16 \cE_6 - 7 \cE_2^3,$$ $$3200 p_3 - 1632 \cE_2 \cE_{10} - 300 \cE_2^2 p_2 + 115 \cE_2^6$$ and \begin{align*}&153600 p_5 - 163200 \cE_2 p_2 \cE_{10} - 1680000 \cE_2^2 p_4 + 315000 \cE_2^2 p_2^2 \\&+ 437920 \cE_2^5 \cE_{10} - 193500 \cE_2^6 p_2 - 6575 \cE_2^{10} - 147968 \cE_{10}^2.\end{align*}
\end{remark}

\section{Some interesting non-free algebras of unitary modular forms}\label{sec:nonfree}
Theorem \ref{th:freeJacobian} (4) determines the irreducible factorization of the Jacobian of the generators of a free algebra of modular forms for $\Gamma$. This factorization determines the character group of $\Gamma$ and can be used to compute interesting algebras of modular forms for certain subgroups of $\Gamma$. We give several examples below.

\subsection{The lattice \texorpdfstring{$U\oplus U(3)\oplus A_2$}{} over \texorpdfstring{$\ZZ[\omega]$}{}.} There are six Borcherds products of weight 3 on $U\oplus U(3)\oplus A_2$ with principal parts of the form
$$
6e_0+q^{-1/3}(e_v+e_{-v}), \quad (v,v)=2/3,\; \ord(v)=3.
$$
We label their unitary restrictions $f_i$, $1\leq i\leq 6$. The reflective Borcherds product with principal part $36e_0+q^{-1}e_0$ then restricts to $\prod_{i=1}^6 f_i$. Recall from \S\ref{sec:twins} that $M_*(\widetilde{\U}(L))$ is freely generated by 3 forms of weight 3 whose Jacobian is $$J_{\U}=\prod_{i=1}^6 f_i^{2/3},$$ that the forms $f_i$ span a 3-dimensional space, and that any linearly independent three $f_i$ can be taken as a set of generators. In view of the irreducible decomposition of the Jacobian, the character group has order $3^6 = 729$, each character determined by uniquely its values on the triflections associated to $\mathrm{div}(f_i)$. If $\widetilde{\U}'(L)$ is the commutator subgroup of $\widetilde{\U}(L)$, then 
$$
M_*(\widetilde{\U}'(L)) = \CC[f_i^{1/3}, 1\leq i \leq 6] / R
$$
with an ideal of relations $R$. Since the generators have distinct characters on $\widetilde{\U}(L)$, any relation among them must be of the form $$P(f_1,...,f_6) = 0.$$ (That is,  $f_i^{1/3}$ only appears with exponent a multiple of three). By applying \cite[Theorem 5.6]{WW20c} we see that all relations among $X_i = f_i^{1/3}$ are generated from three three-term cubic relations:
$$
X_1^3+X_6^3=X_5^3, \quad X_4^3+X_6^3=X_2^3, \quad X_4^3+X_5^3=X_3^3.
$$

\subsection{The lattice \texorpdfstring{$2U(3)\oplus A_2$}{} over \texorpdfstring{$\ZZ[\omega]$}{}.} Recall from \S\ref{sec:twins} that $M_*(\widetilde{\U}(L))$ for $L_{\ZZ} = 2U(3) \oplus A_2$ is freely generated by 3 forms of weight 1. The lattice $L$ can be realized as $\mathbb{Z}[e^{2\pi i / 3}]$ with the diagonal Gram matrix $\mathrm{diag}(1, 1, -1)$, and the discriminant kernel $\widetilde{\U}(L)$ is exactly the principal congruence subgroup of level $1 - e^{2\pi i / 3}$. In this context the structure is an earlier-known theorem of Feustel and Holzapfel, who also determined the structure of several related rings of modular forms; for details see the book \cite{H86} and Shiga's paper \cite{Shi88}.

There are 45 Borcherds products of singular weight $1$ on $2U(3)\oplus A_2$ with principal parts of the form
$$
2e_0+q^{-1/3}(e_v+e_{-v}), \quad (v,v)=2/3,\; \ord(v)=3.
$$
The unitary restrictions of nine of these products, denoted $f_1,...,f_9$, have zeros of multiplicity 3 and therefore have well-defined cube roots. The remaining 36 products restrict to 12 distinct unitary Borcherds products $g_1,...,g_{12}$, each with zeros only of multiplicity 1.

The restriction of the Borcherds product with principal part $18e_0+q^{-1}e_0$ is equal to $\prod_{i=1}^9 f_i$ up to scalar. These products $f_i$ span a 3-dimensional space and any three that are linearly independent form a set of generators. Let $\nu_i$ be the multiplier system of $f_i^{1/3}$ on $\widetilde{\U}(L)$, and let $\widetilde{\U}'(L)$ be the subgroup of $\widetilde{\U}(L)$ on which all $\nu_i$ coincide. We denote the common restriction of $\nu_i$ to $\widetilde{\U}'(L)$ by $\nu$. Recall that the Jacobian associated with $M_*(\widetilde{\U}(L))$ is $\prod_{i=1}^9f_i^{2/3}$.  From the irreducible decomposition of the Jacobian, we conclude
$$
M_*(\widetilde{\U}'(L), \nu):= \bigoplus_{k\in \ZZ} M_{k/3}(\widetilde{\U}'(L), \nu^k)  = \CC[f_i^{1/3}, 1\leq i \leq 9] / R,
$$
and all relations among the nine generators $X_i = f_i^{1/3}$ are generated by six three-term cubic relations which are defined over $\mathbb{Z}[e^{2\pi i / 3}]$. With respect to the ordering we used we found the relations
\begin{align*}
&X_1^3 + X_6^3 = X_9^3 & 
&X_7^3 + X_8^3 = X_9^3 &
&X_1^3 + X_2^3 = X_3^3 & \\
&X_3^3 + X_5^3 = X_7^3 & 
&X_6^3 + \zeta_3 X_8^3 = X_3^3 &
&X_3^3 + X_4^3 = \zeta_3X_1^3 &     
\end{align*}
where $\zeta_3=e^{2\pi i /3}$.

\subsection{The lattice \texorpdfstring{$2U(2)\oplus D_4$}{} over \texorpdfstring{$\ZZ[i]$}.}\label{sec:2U2+D4extra}  There are 36 Borcherds products of singular weight $2$ on $2U(2)\oplus D_4$ with principal parts of the form
$$
4e_0+q^{-1/2}e_v, \quad (v,v)=1,\; \ord(v)=2.
$$
The unitary restrictions of $12$ of these products, labelled $f_1,...,f_{12}$, have zeros only of multiplicity two and therefore holomorphic square roots. The remaining $24$ products restrict to $12$ distinct unitary products, labelled $g_1,...,g_{12}$, all with zeros only of multiplicity one.

The restriction of the Borcherds product with principal part $48e_0+q^{-1}e_0$ equals $\prod_{i=1}^{12} f_i$. Recall from \S\ref{sec:twins} that $M_*(\widetilde{\U}(L))$ is freely generated by 4 forms of weight 2 whose Jacobian is $J_{\U}=\prod_{i=1}^{12} f_i^{1/2}$. In particular the products $f_i$ span a four-dimensional space and any four that span this space can be taken as algebra generators.  In view of the irreducible decomposition of the Jacobian, the character group of $\widetilde{\U}(L)$ has order $2^{12}$. Let $\widetilde{\U}'(L)$ be the commutator subgroup of $\widetilde{\U}(L)$.  Then
$$
M_*(\widetilde{\U}'(L)) = \CC[f_i^{1/2}, 1\leq i \leq 12] / R,
$$
where $R$ is generated by $8$ three-term quadratic relations, i.e. linear relations among the $f_i$. With respect to our ordering of $X_i = f_i^{1/2}$ we found the relations
\begin{align*}
&X_1^2+X_{10}^2=X_{12}^2& &X_2^2+X_{11}^2=X_{10}^2& &X_3^2+X_{8}^2=X_{12}^2& &X_4^2+X_{11}^2=X_{8}^2& \\
&X_5^2+X_{10}^2=X_{8}^2& &X_6^2+X_{7}^2=X_{10}^2& &X_1^2+X_{11}^2=X_{9}^2& &X_4^2+X_{7}^2=X_{12}^2&
\end{align*}

\subsection{The lattice \texorpdfstring{$U\oplus U(2)\oplus D_4$}{} over \texorpdfstring{$\ZZ[i]$}.} There are 10 Borcherds products of weight 4 on $U\oplus U(2)\oplus D_4$ with principal parts of the form
$$
8e_0+q^{-1/2}e_v, \quad (v,v)=1,\; \ord(v)=2.
$$
We denote their unitary restrictions by $f_i$ for $1\leq i\leq 10$. 
The unitary restriction of the Borcherds product with principal part $80e_0+q^{-1}e_0$ equals $\prod_{i=1}^{10}f_i$. Recall from \S\ref{sec:twins} that $M_*(\widetilde{\U}(L))$ is freely generated by 4 forms of weight 4 and their Jacobian is $J_{\U}=\prod_{i=1}^{10} f_i^{1/2}$. In particular these $f_i$ span a 4-dimensional space and any four that span the space can be taken as algebra generators.  In view of the irreducible decomposition of the Jacobian, there are exactly $2^{10}$ characters of $\widetilde{\U}(L)$. Let $\widetilde{\U}'(L)$ be the commutator subgroup of $\widetilde{\U}(L)$.  Then
$$
M_*(\widetilde{\U}(L)) = \CC[f_i^{1/2}, 1\leq i \leq 10] / R
$$
Similarly to the case $U\oplus U(3)\oplus A_2$, in the notation in \cite[Remark 3.19]{WW20c} all relations among the ten generators $X_i = f_i^{1/2}$ are determined by the six three-term quadratic relations
\begin{align*}
&X_4^2+X_6^2=X_1^2,& &X_2^2+X_5^2=X_4^2,& &X_3^2+X_5^2=X_1^2,& \\
&X_4^2+X_7^2=X_9^2,& &X_5^2+X_8^2=X_9^2,& &X_7^2+X_{10}^2=X_6^2.&
\end{align*}

\section{Open questions and conjectures}
There are several questions related to our work in this paper that we have been unable to answer.
\begin{enumerate}
    \item In \cite{VS17}, Vinberg and Shvartsman proved that the algebra of modular forms for $\Orth(n,2)$ is never free when $n>10$. It would be interesting to know if there is an analogue of this in the unitary case. The Jacobian of the generators of a free algebra of unitary modular forms vanishes precisely on mirrors of reflections. It is natural to guess that any such modular form arises from a reflective (orthogonal) Borcherds product by restriction. The classification of reflective modular forms on type IV symmetric domains leads to the following more precise conjecture:
    \begin{conjecture}
    \begin{itemize}
        \item[(i)] When $d=-1$ or $-3$, the algebra of modular forms for a finite-index subgroup of some $\U(L)$ is never free if $n>5$. 
        \item[(ii)] When $\abs{d}>3$, the algebra of modular forms for a finite-index subgroup of some $\U(L)$ is never free. 
    \end{itemize}
    \end{conjecture}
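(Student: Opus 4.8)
The plan is to derive both parts from the Jacobian criterion of Theorem \ref{th:freeJacobian}, combined with the (conjectural) classification of reflective modular forms on type IV domains pulled back through the embedding $\U(n,1)\hookrightarrow\Orth(2n,2)$ of \S\ref{sec:embedding}. Suppose $M_*(\Gamma)$ is free with generators $F_1,\dots,F_{n+1}$. By Theorem \ref{th:freeJacobian} the group $\Gamma$ is generated by reflections and the Jacobian $J_{\U}=J_{\U}(F_1,\dots,F_{n+1})$ is a nonzero cusp form whose divisor is $\sum_{\pi}(m_\pi-1)\,\pi$, summed over all mirrors of reflections in $\Gamma$, where $m_\pi$ is the maximal reflection order along $\pi$. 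Thus freeness forces the existence of a \emph{reflective} unitary cusp form, one whose zeros lie only on reflection mirrors. The entire strategy is to show that such forms cannot exist in the stated ranges, so that $M_*(\Gamma)$ is never free.

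For part (ii), when $\abs{d}>3$ the only units are $\pm1$, so every reflection in $\Gamma$ is a biflection and $J_{\U}$ vanishes to order exactly one along each mirror. By Lemma \ref{lem:kernelreflection}(3) no such biflection lies in $\widetilde{\U}(L)$; and by Lemmas \ref{lem:other23} and \ref{lem:other1} a biflection $\sigma_{r,-1}\in\U(L)$ is attached to an orthogonal reflection only along the \emph{long} vector $\sqrt{d}\,r$, while the exceptional biflections of Lemma \ref{lem:other1}(2) have no associated orthogonal reflection at all. The first step is to record precisely the orthogonal divisorial data of a reflective $J_{\U}$ under restriction, and then to argue that no reflective orthogonal modular form can carry exactly this long-root mirror configuration. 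The key input is the statement flagged in the conjecture, that every reflective unitary modular form is the restriction of a reflective Borcherds product on $\Orth(2n,2)$ in the sense of \cite{Bor98, Hof14}; granting this, the classification of reflective orthogonal forms (building on \cite{Wan20}) should exclude every Hermitian lattice over $\QQ(\sqrt{d})$ with $\abs{d}>3$, since the long-root systems forced above are incompatible with the very restrictive Weyl-chamber geometry of a reflective lattice.

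For part (i), with $d=-1$ or $-3$ and $n>5$, the embedding lands in $\Orth(2n,2)$ with $2n>10$, which is exactly the threshold beyond which free algebras of orthogonal modular forms cease to exist by Vinberg--Shvartsman \cite{VS17}. The plan is to combine the reflective-form consequence above with the finiteness of reflective type IV lattices of Ma \cite{Ma18}: a free unitary algebra for $n>5$ would, via the conjectural lifting, produce a reflective orthogonal form in signature $(2n,2)$ with $2n\geq 12$, and since reflective forms force $\cD^+(L)/\Gamma$ to be not of general type and occur only for finitely many lattices of bounded rank, one expects the relevant reflective data to be empty once $2n>10$, yielding precisely $n>5$. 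A more self-contained alternative is a volume inequality in the spirit of \cite{VS17, Ma18}: the weight $n+1+\sum_j k_j$ of $J_{\U}$ bounds the degree of $\div(J_{\U})$ against the complex-hyperbolic K\"ahler class from above, while the reflective divisor $\sum_\pi(m_\pi-1)\pi$ has degree growing like the Hirzebruch--Mumford volume of $\cD_{\U}/\Gamma$; for $\rank_{\mathcal{O}_\F}L$ large these growth rates become incompatible, forcing $J_{\U}\equiv 0$ and hence non-freeness.

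The main obstacle, common to both parts, is the lifting statement: proving that an arbitrary reflective unitary cusp form descends from a reflective orthogonal Borcherds product. The difficulty is genuine, because the unitary Heegner divisors satisfy the extra inclusions $\mathbf{H}_{\U}(m,\beta)\subseteq\mathbf{H}_{\U}(\abs{\lambda}^2 m,\lambda\beta)$ for all $\lambda\in\mathcal{O}_\F\setminus\{0\}$, so the passage from a unitary mirror divisor to an admissible input for the Weil representation of $\SL_2(\ZZ)$ is neither unique nor guaranteed to exist. I expect the cleanest resolution to bypass lifting entirely and to run the volume inequality directly on the ball quotient, replacing the orthogonal classification by an effective comparison: bound the weight of a reflective unitary form from above by the number and weights of generators, bound it from below by a sum of local mirror volumes, and show the lower bound eventually dominates once $\rank_{\mathcal{O}_\F}L>6$. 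Making the constants explicit, in particular the local mirror-volume contributions of the tetraflections and hexaflections that occur when $d=-1,-3$, is the decisive and still-open technical point.
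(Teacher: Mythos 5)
The statement you are asked to prove is, in the paper, an open \emph{conjecture}: it appears in the final section precisely because the authors do not have a proof, and they explicitly identify the missing ingredient (that every reflective unitary modular form should arise by restriction from a reflective orthogonal Borcherds product, together with a classification of such forms). Your proposal is therefore not comparable to a proof in the paper --- there is none --- and, as you yourself concede in the last sentence, it is a strategy whose ``decisive and still-open technical point'' is unresolved. Both parts of your argument rest on unproven inputs: part (ii) on the lifting statement plus an unspecified classification that ``should exclude'' the relevant lattices, and part (i) on either the same lifting or an effective volume inequality whose constants you have not computed. A sketch that defers the essential difficulty to a conjecture is not a proof of that conjecture.

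Beyond the global issue, part (i) contains a concrete logical error. Vinberg--Shvartsman \cite{VS17} rules out \emph{free algebras} of orthogonal modular forms for $\Orth(n,2)$ with $n>10$; it does not rule out \emph{reflective} orthogonal modular forms in those signatures. Reflective Borcherds products exist on lattices of signature up to $(26,2)$ (the Borcherds form $\Phi_{12}$ on $\mathrm{II}_{26,2}$ being the standard example), so your claim that ``one expects the relevant reflective data to be empty once $2n>10$'' is false: a free unitary algebra for $6\le n\le 13$ would only produce a reflective form in signature $(2n,2)$ with $12\le 2n\le 26$, a range in which such forms genuinely occur. Closing the argument would require showing that none of these reflective orthogonal forms restricts to a unitary cusp form with exactly the mirror multiplicities $m_\pi-1$ demanded by Theorem \ref{th:freeJacobian}, which is a much finer statement than anything in \cite{VS17} or \cite{Ma18}. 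The correct reading of the paper is that the heuristic analogy with the orthogonal free-algebra threshold $n=10$ motivates the bound $n>5$, but no mechanism currently converts that analogy into a proof.
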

    \item Similarly to \cite[Conjecture 5.2]{Wan20}, we have the following conjecture.
    \begin{conjecture}
    Let $\Gamma<\U(L)$ be a finite-index subgroup and $\Gamma_1$ be a reflection subgroup of $\U(L)$ containing $\Gamma$. If $M_*(\Gamma)$ is free, then the smaller algebra $M_*(\Gamma_1)$ is also free.
    \end{conjecture}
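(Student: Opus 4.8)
The plan is to reduce the statement to the Jacobian criterion (Theorem \ref{th:modularJacobian}) applied to $\Gamma_1$. By Theorem \ref{th:freeJacobian}, freeness of $M_*(\Gamma)$ already forces $\Gamma$ to be a reflection group and produces free generators with $M_*(\Gamma)=\CC[F_1,\ldots,F_{n+1}]$; by hypothesis $\Gamma_1$ is also generated by reflections, and since $\Gamma\le\Gamma_1$ we have an inclusion of graded algebras $M_*(\Gamma_1)\subseteq M_*(\Gamma)$ over which $M_*(\Gamma)$ is module-finite (the standard integral-dependence argument: for $F\in M_*(\Gamma)$ the symmetric functions of the $\Gamma_1$-translates of $F$ lie in $M_*(\Gamma_1)$, and this needs no normality). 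So it suffices to exhibit $n+1$ forms $G_1,\ldots,G_{n+1}\in M_*(\Gamma_1)$ with trivial character whose Jacobian is nonzero and vanishes precisely on the mirrors of reflections in $\Gamma_1$, each to order $m'_r-1$ with $m'_r=\max\{\ord(\alpha):\sigma_{r,\alpha}\in\Gamma_1\}$; then Theorem \ref{th:modularJacobian} finishes the proof.

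The main tool connecting the two groups is the chain rule. For any algebraically independent $G_1,\ldots,G_{n+1}\in M_*(\Gamma_1)$, writing each $G_i$ as a polynomial in the $F_j$ gives the factorization
\[
J_{\U}(G_1,\ldots,G_{n+1})=\det\!\Big(\tfrac{\partial G_i}{\partial F_j}\Big)\cdot J_{\U}(F_1,\ldots,F_{n+1}),
\]
where $J_{\U}(F_1,\ldots,F_{n+1})$ vanishes exactly on the $\Gamma$-mirrors to order $m_r-1$ (Theorem \ref{th:freeJacobian}(3)) and the cofactor $\det(\partial G_i/\partial F_j)\in M_*(\Gamma)$ cuts out the ramification of the finite map $\cA_{\U}/\Gamma\to\cA_{\U}/\Gamma_1$. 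The problem thus becomes choosing the $G_i$ so that this cofactor contributes exactly the extra vanishing: the mirrors appearing in $\Gamma_1$ but not $\Gamma$, together with the excess $m'_r-m_r$ along mirrors where $\Gamma_1$ contains higher-order reflections.

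The case where $\Gamma$ is \emph{normal} in $\Gamma_1$ is where I expect real progress to be possible by invariant theory. Then $W:=\Gamma_1/\Gamma$ acts on $M_*(\Gamma)=\CC[F_1,\ldots,F_{n+1}]$ by graded automorphisms with $M_*(\Gamma_1)=M_*(\Gamma)^W$, and $W$ is generated by the images of the reflections of $\Gamma_1$ (as these generate $\Gamma_1$). Each such reflection fixes its mirror pointwise, so its image fixes a hypersurface in $\mathrm{Spec}(M_*(\Gamma))=\CC^{n+1}$ pointwise. If the generator weights $k_j$ were all equal, $W$ would act linearly on $V=\mathrm{span}_\CC(F_1,\ldots,F_{n+1})$ with each reflection acting as a pseudoreflection, and the Chevalley--Shephard--Todd theorem would immediately give that $M_*(\Gamma)^W$ is a polynomial algebra; this is the cleanest route and verifies the conjecture in that special situation.

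The hard part, and the reason the statement remains a conjecture, is twofold. First, the weights $k_j$ are almost never equal, so $W$ acts on a \emph{weighted} polynomial ring by graded rather than linear automorphisms; one would need a form of Chevalley--Shephard--Todd for graded automorphisms fixing a possibly non-linear height-one subvariety, and it is unclear whether the images of reflections are ``reflections'' in a sense strong enough to force polynomiality. Second, in general $\Gamma$ need not be normal in $\Gamma_1$, so there is no group $W$ and no invariant-theoretic description of $M_*(\Gamma_1)$; passing to the normal core $\bigcap_{\gamma\in\Gamma_1}\gamma\Gamma\gamma^{-1}$ destroys exactly the freeness we rely on. Absent these, the only available route is the case-by-case construction of Section \ref{sec:more}: build the candidate Jacobian as a product of roots of Hofmann's unitary Borcherds products \cite{Hof14} cutting out each $\Gamma_1$-class of mirrors, match its weight $n+1+\sum_i\mathrm{wt}(G_i)$, and verify algebraic independence of the $G_i$ by Taylor expansion on the Siegel domain. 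Making this uniform — proving that such generators \emph{always} exist for an arbitrary reflection overgroup $\Gamma_1$ — is the principal obstacle I foresee.
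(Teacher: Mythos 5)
The statement you are trying to prove is not a theorem of the paper at all: it appears in the closing section as an open conjecture (the unitary analogue of \cite[Conjecture 5.2]{Wan20}), and the paper offers no proof of it. So there is no proof to compare against, and the question is only whether your argument settles the problem; it does not, as you yourself concede. The pieces that do hold up: the integrality argument showing $M_*(\Gamma)$ is module-finite over $M_*(\Gamma_1)$ is correct and indeed needs no normality, since the elementary symmetric functions of the finitely many $\Gamma_1$-translates of a $\Gamma$-form are $\Gamma_1$-invariant, holomorphic and homogeneous, hence modular forms for $n\ge 2$; the chain-rule factorization $J_{\U}(G_1,\ldots,G_{n+1})=\det(\partial G_i/\partial F_j)\cdot J_{\U}(F_1,\ldots,F_{n+1})$ is valid because $M_*(\Gamma_1)\subseteq M_*(\Gamma)=\CC[F_1,\ldots,F_{n+1}]$; and your normal, equal-weight case is a genuine (and not vacuous) partial result, since $W=\Gamma_1/\Gamma$ then acts faithfully and linearly on the span of the generators, the image of each reflection of $\Gamma_1$ fixes the codimension-one image of its mirror and is therefore a pseudoreflection, and Chevalley--Shephard--Todd applies. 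Note that several free algebras in the paper do have generators all of one weight (e.g. $\widetilde{\U}(L)$ for $L_\ZZ=2U(2)\oplus D_4$ over $\ZZ[i]$, weights $2,2,2,2$, or $2U(3)\oplus A_2$, weights $1,1,1$), so this case has content.

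The two obstructions you name are exactly why the statement remains a conjecture, and neither is circumvented by your outline. With unequal weights, $W$ acts on the weighted polynomial ring $M_*(\Gamma)$ by graded but non-linear automorphisms; the fixed locus of the induced automorphism of $\CC^{n+1}=\operatorname{Specm}(M_*(\Gamma))$ is then a possibly non-linear hypersurface, and there is no Chevalley--Shephard--Todd theorem in this setting that would force $M_*(\Gamma)^W$ to be polynomial. Without normality there is no group action on $M_*(\Gamma)$ at all, and passing to the normal core destroys the freeness hypothesis, as you observe. One further point to keep honest in any future attack via Theorem \ref{th:modularJacobian}: even granted candidate invariants $G_i$, the criterion requires the divisor of $J_{\U}(G_1,\ldots,G_{n+1})$ to be \emph{exactly} the mirrors of $\Gamma_1$ with multiplicities $m'_r-1$, $m'_r=\max\{\ord(\alpha):\sigma_{r,\alpha}\in\Gamma_1\}$, so the cofactor $\det(\partial G_i/\partial F_j)$ must cut out precisely the new mirrors and the excess orders $m'_r-m_r$ along old ones, with no superfluous zeros elsewhere; nothing in the factorization itself guarantees such a choice of $G_i$ exists. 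In sum, your proposal correctly maps the natural routes and proves the special case indicated, but the general statement is open, consistent with the paper's presentation of it as a conjecture rather than a theorem.
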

    \item In Theorem \ref{th:twins}, one of our assumptions is that there are $n$ generators whose restrictions are identically zero. Can we remove this assumption in the theorem? Equivalently, if $M_*(\Gamma)$ is free, is $M_*(\Gamma_{\U})$ also free? Here, $\Gamma= \widetilde{\Orth}^+(L_\ZZ)$ or $\Orth^+(L_\ZZ)$.
\end{enumerate}

\bigskip

\noindent
\textbf{Acknowledgements} 
The authors would like to thank Daniel Allcock, Fabien Cl\'ery, Eberhard Freitag and Riccardo Salvati Manni for helpful discussions. H. Wang is grateful to Max Planck Institute for Mathematics in Bonn for its hospitality and financial support. 

\section{Appendix: Tables}

On the following pages we list the algebras of modular forms for Eisenstein and Gaussian lattices determined in this paper. In each table, $L_{\mathbb{Z}}$ is the underlying $\mathbb{Z}$-lattice. $\mathbf{S}$ is a possible Hermitian Gram matrix for $L$. The modular groups $\widetilde{\U}$, $\U$, $\U_r$ denote the discriminant kernel, unitary group and maximal reflection subgroup, respectively; for the other modular groups the reader is referred to the section in the final column. The fourth and fifth columns contain the weights of the generators and the weights of the defining relations (if any).

\clearpage
\begin{table}[htbp]
\caption{Algebras of modular forms on Eisenstein lattices.}
\label{tab:appendix1}
\renewcommand\arraystretch{1.1}
\noindent\[
\begin{array}{cccccc}
L_{\mathbb{Z}} & \mathbf{S} & \Gamma & \text{Generators} & \text{Relations} &  \text{Reference}\\
\hline
& \\[-4mm]
\multirow{2}{*}{$2U\oplus A_2$} & \multirow{2}{*}{$\begin{psmallmatrix} 0 & 0 & i/ \sqrt{3} \\ 0 & 1 & 0 \\ -i/ \sqrt{3} & 0 & 0 \end{psmallmatrix} $ }  & \widetilde{\U} & 6, 9, 12 & - & \text{Table \ref{tab:3kernel}} \\
		  &	 & \U & 6, 12, 18 & - & \text{Table \ref{tab:3full}} \\
\hline
& \\[-4mm]
\multirow{2}{*}{$U \oplus U(3) \oplus A_2$} & \multirow{2}{*}{$ \begin{psmallmatrix} 0 & 0 & 1 \\ 0 & 1 & 0 \\ 1 & 0 & 0 \end{psmallmatrix}$} & \widetilde{\U} & 3, 3, 3 & - & \text{Table \ref{tab:3kernel}} \\
 & & \U & 6, 12, 18 & - & \text{Theorem \ref{th:3extra}} \\
\hline
& \\[-4mm]
\multirow{2}{*}{$2U(2) \oplus A_2$} & \multirow{2}{*}{$ \begin{psmallmatrix} 0 & 0 & 2i/\sqrt{3} \\ 0 & 1 & 0 \\ -2i/\sqrt{3} & 0 & 0 \end{psmallmatrix}$} & \widetilde{\U} & 2, 2, 3 & - & \text{Table \ref{tab:3kernel}} \\
 & & \U & 6, 6, 12 & - & \text{Theorem \ref{th:3extra}} \\
\hline
& \\[-4mm]
\multirow{4}{*}{$ 2U \oplus A_2(2)$} & \multirow{4}{*}{$\begin{psmallmatrix} 0 & 0 & i/\sqrt{3} \\ 0 & 2 & 0 \\ -i/\sqrt{3} & 0 & 0 \end{psmallmatrix}$} & \U_1 & 1, 4, 6 & - & \S\ref{sec:2U+A22} \\
 & & \U_2 & 3, 6, 12 & - & \\
 & & \U_3 & 6, 6, 6 & - & \\
 & & \U & 6, 6, 12 & - & \\
\hline
& \\[-4mm]
2U(3) \oplus A_2 & \begin{psmallmatrix} 0 & 0 & i \sqrt{3} \\ 0 & 1 & 0 \\ -i\sqrt{3} & 0 & 0 \end{psmallmatrix} & \widetilde{\U} & 1, 1,1 & - & \text{Table \ref{tab:3kernel}} \\
\hline
& \\[-4mm]
\multirow{3}{*}{$2U \oplus A_2(3)$} & \multirow{3}{*}{$ \begin{psmallmatrix} 0 & 0 & i/\sqrt{3} \\ 0 & 3 & 0 \\ -i/\sqrt{3} & 0 & 0 \end{psmallmatrix} $} & \U_1 & 1, 4, 6 & - & \S\ref{sec:2U+A23} \\
 & & \U_r & 2, 4, 6 & - & \\
 & & \U & 6, 6, 6, 12 & 18 & \\
 \hline
 & \\[-4mm]
\multirow{2}{*}{$ 2U \oplus D_4$} & \multirow{2}{*}{$\begin{psmallmatrix} 0 & 0 & 0 & i/\sqrt{3} \\ 0 & 1 & i/\sqrt{3} & 0 \\ 0 & -i/\sqrt{3} & 1 & 0 \\ -i/\sqrt{-3} & 0 & 0 & 0 \end{psmallmatrix} $} & \widetilde{\U} & 6, 8, 12, 18 & - & \text{Table \ref{tab:3kernel}} \\
  & & \U & 6, 12, 18, 24 & - & \text{Table \ref{tab:3full}} \\ [3ex]
  \hline
  & \\[-4mm]
 \multirow{3}{*}{$2U \oplus 2A_2$} & \multirow{3}{*}{$\begin{psmallmatrix} 0 & 0 & 0 & i/\sqrt{3} \\ 0 & 1 & 0 & 0 \\ 0 & 0 & 1 & 0 \\ -i/\sqrt{3} & 0 & 0 & 0 \end{psmallmatrix}$} & \widetilde{\U} & 6,6,9,9,12 & 18 & \S\ref{sec:2U+2A2} \\
  & & \U_1 & 6,6,9,12 & - & \\
  & & \U & 6,12,12,18 & - & \\
  \hline
  & \\[-4mm]
 U \oplus U(3) \oplus 2A_2 & \begin{psmallmatrix} 0 & 0 & 0 & 1 \\ 0 & 1 & 0 & 0 \\ 0 & 0 & 1 & 0 \\ 1 & 0 & 0 & 0 \end{psmallmatrix} & \U & 6, 12, 18, 30 & - & \S\ref{sec:U+U3+2A2} \\ 
  \hline
  & \\[-4mm]
2U(2) \oplus D_4 & \begin{psmallmatrix} 0 & 0 & 0 & 2i/\sqrt{3} \\ 0 & 1 & i/\sqrt{3} & 0 \\ 0 & -i/\sqrt{3} & 1 & 0 \\ -2i/\sqrt{-3} & 0 & 0 & 0 \end{psmallmatrix} & \widetilde{\U} & 2, 2, 2, 6 & - & \text{Table \ref{tab:3kernel}} \\ 
  \hline
 & \\[-4mm]
\multirow{2}{*}{$ 2U \oplus D_4(2)$} & \multirow{2}{*}{$\begin{psmallmatrix} 0 & 0 & 0 & i/\sqrt{3} \\ 0 & 2 & 2i/\sqrt{3} & 0 \\ 0 & -2i/\sqrt{3} & 2 & 0 \\ -i/\sqrt{-3} & 0 & 0 & 0 \end{psmallmatrix} $} & \U_r & 6, 6, 6, 6 & - & \text{\S \ref{sec:2U+D423}} \\
  & & \U & 6, 6, 12, 12, 12 & 24 &  \\ [4ex]
  \hline
 & \\[-4mm]
 U \oplus U(3) \oplus 3A_2 & \begin{psmallmatrix} 0 & 0 & 0 & 0 & 1 \\ 0 & 1 & 0 & 0 & 0 \\ 0 & 0 & 1 & 0 & 0 \\ 0 &  0 & 0 & 1 & 0  \\ 1 & 0 & 0 & 0 & 0 \end{psmallmatrix} & \U & 6, 12, 18, 24, 30 & - & \S\ref{sec:U+U3+3A2} \\ 
  \hline  
 & \\[-4mm]
\multirow{2}{*}{$2U \oplus E_6$} & \multirow{2}{*}{$\begin{psmallmatrix} 0 & 0 & 0 & 0 & i/\sqrt{3} \\ 0 & 1 & i/\sqrt{3} & 0 & 0 \\ 0 & -i/\sqrt{3} & 1 & i/\sqrt{3} & 0 \\ 0 & 0 & -i/\sqrt{3} & 1 & 0 \\ -i/\sqrt{3} & 0 & 0 & 0 & 0 \end{psmallmatrix}$} & \widetilde{\U} & 6, 12, 15, 18, 24 & - & \text{Table \ref{tab:3kernel}} \\
  & & \U & 6, 12, 18, 24, 30 & - & \text{Theorem \ref{th:3extra}} \\ [6ex]
  \hline
\end{array} 
\]
\end{table}

\clearpage
\begin{table}[htbp]
\caption{Algebras of modular forms on Eisenstein lattices, continued.}
\label{tab:appendix2}
\renewcommand\arraystretch{1.1}
\noindent\[
\begin{array}{cccccc}
L_{\mathbb{Z}} & \mathbf{S} & \Gamma & \text{Generators} & \text{Relations} &  \text{Reference}\\
\hline
& \\[-4mm]  
\multirow{2}{*}{$2U \oplus 3A_2$} & \multirow{2}{*}{$\begin{psmallmatrix} 0 & 0 & 0 & 0 & i/\sqrt{3} \\ 0 & 1 & 0 & 0 & 0 \\ 0 & 0 & 1 & 0 & 0 \\ 0 & 0 & 0 & 1 & 0 \\ -i/\sqrt{3} & 0 & 0 & 0 & 0 \end{psmallmatrix}$} & \U_1 & 3, 6, 6, 9, 12 & - & \S\ref{sec:2U+3A2} \\
  & & \U & 6, 6, 12, 12, 18 & - & \\ [3ex]
  \hline
  & \\[-4mm]
  2U \oplus E_8 & \begin{psmallmatrix} 0 & 0 & 0 & 0 & 0 & i/\sqrt{3} \\ 0 & 1 & i/\sqrt{3} & 0 & 0 & 0  \\ 0 & -i/\sqrt{3} & 1 & i/\sqrt{3} & 0 & 0 \\ 0 & 0 & -i/\sqrt{3} & 1 & i/\sqrt{3} & 0 \\ 0 & 0 & 0 & -i/\sqrt{3} & 1 & 0 \\ -i/\sqrt{3} & 0 & 0 & 0 & 0 & 0 \end{psmallmatrix} & \U & 12, 18, 24, 30, 36, 42 & - & \text{Table \ref{tab:3full}} \\ 
  \hline 
\end{array} 
\]
\end{table}

\begin{table}[htbp]
\caption{Algebras of modular forms on Gaussian lattices.}
\label{tab:appendix3}
\renewcommand\arraystretch{1.1}
\noindent\[
\begin{array}{cccccc}
L_{\mathbb{Z}} & \mathbf{S} & \Gamma & \text{Generators} & \text{Relations} &  \text{Reference}\\
\hline
& \\[-4mm]
\multirow{4}{*}{$2U \oplus 2A_1$} & \multirow{4}{*}{$\begin{psmallmatrix} 0 & 0 & 1/2 \\ 0 & 1 & 0 \\1/2 & 0 & 0\end{psmallmatrix}$} & \widetilde{\U}_1 & 2, 3, 4 & - & \S\ref{sec:2U+2A1} \\
 & & \widetilde{\U}_r & 4, 4, 6 & - & \\
 & & \widetilde{\U} & 4, 8, 10, 12 & 20 & \\
 & & \U & 4, 8, 12 & - & \\
  \hline
& \\[-4mm] 
\multirow{3}{*}{$2U \oplus 2A_1(2)$} & \multirow{3}{*}{$\begin{psmallmatrix} 0 & 0 & 1/2 \\ 0 & 2 & 0 \\ 1/2 & 0 & 0 \end{psmallmatrix}$} & \U_1 & 1, 3, 4 & - & \S\ref{sec:2U+2A12} \\
  & & \U_r & 2, 4, 6 & - & \\
  & & \U & 4, 4, 8, 12 & 16 & \\
  \hline
& \\[-4mm]  
\multirow{7}{*}{$ U \oplus U(2) \oplus 2A_1$} & \multirow{7}{*}{$ \begin{psmallmatrix} 0 & 0 & (1+i)/2 \\ 0 & 1 & 0 \\  (1-i)/2 & 0 & 0 \end{psmallmatrix} $} & \widetilde{\U}_1 & 1, 1, 1 & - & \S\ref{sec:U+U2+2A1} \\
  & & \widetilde{\U}_r & 2, 2, 2 & - & \\
  & & \widetilde{\U} & 4, 4, 4, 6 & 12 & \\
  & & \U_1 & 2, 2, 2 & - & \\
  & & \U_2 & 3, 4, 8 & - & \\
  & & \U_3 & 4, 4, 4 & - & \\
  & & \U & 4, 8, 12 & - & \\
  \hline
& \\[-4mm]  
\multirow{6}{*}{$2U(2) \oplus 2A_1$} & \multirow{6}{*}{$\begin{psmallmatrix} 0 & 0 & 1 \\ 0 & 1 & 0 \\ 1 & 0 & 0 \end{psmallmatrix}$} & \U_1 & 1/2, 1/2, 1/2 & - & \S\ref{sec:2U2+2A1} \\
  & & \U_2 & 1/2, 1/2, 1/2, 1/2 & 2 & \\
  & & \U_f & 1, 1, 1 & - & \\
  & & \U_0 & 2, 2, 2 & - & \\
  & & \widetilde{\U} & 2, 2,2, 4 & 8 & \\
  & & \Gamma & 4, 4, 4 & - & \\
  \hline
& \\[-4mm]    
\multirow{2}{*}{$ 2U \oplus D_4$} & \multirow{2}{*}{$\begin{psmallmatrix} 0 & 0 & 0 & 1/2 \\ 0 & 1 & (1+i)/2 & 0 \\ 0 & (1-i)/2 & 1 & 0 \\ 1/2 & 0 & 0 & 0 \end{psmallmatrix}$} & \widetilde{\U} & 4, 8, 8, 12 & - & \text{Table \ref{tab:1kernel}} \\
 & & \U & 4, 12, 16, 24 & - & \text{Table \ref{tab:1full}} \\ [3ex]
 \hline
   & \\[-4mm] 
2U(2) \oplus D_4 & \begin{psmallmatrix} 0 & 0 & 0 & 1 \\ 0 & 1 & (1+i)/2 & 0 \\ 0 & (1-i)/2 & 1 & 0 \\ 1 & 0 & 0 & 0 \end{psmallmatrix} & \widetilde{\U} & 2, 2, 2, 2 &  - & \text{Table \ref{tab:1kernel}} \\
  \hline
\end{array} 
\]
\end{table}

\clearpage

\begin{table}[htbp]
\caption{Algebras of modular forms on Gaussian lattices, continued.}
\label{tab:appendix4}
\renewcommand\arraystretch{1.1}
\noindent\[
\begin{array}{cccccc}
L_{\mathbb{Z}} & \mathbf{S} & \Gamma & \text{Generators} & \text{Relations} &  \text{Reference}\\
 \hline
& \\[-4mm]  
 \multirow{2}{*}{$ U \oplus U(2) \oplus D_4$} & \multirow{2}{*}{$ \begin{psmallmatrix} 0 & 0 & 0 & (1+i)/2 \\ 0 & 1 & (1+i)/2 & 0 \\ 0 & (1-i)/2 & 1 & 0 \\ (1-i)/2 & 0 & 0 & 0 \end{psmallmatrix} $} & \widetilde{\U} & 4, 4, 4, 4 & - & \text{Table \ref{tab:1kernel}} \\
  & & \U & 8, 12, 16, 20 & - & \text{Table \ref{tab:1full}} \\ [3ex]
  \hline
  & \\[-4mm] 
  2U \oplus 4A_1 & \begin{psmallmatrix} 0 & 0 & 0 & 1/2 \\ 0 & 1 & 0 & 0 \\ 0 & 0 & 1 & 0 \\ 1/2 & 0 & 0 & 0 \end{psmallmatrix} & \U & 4, 4, 8, 12 & - & \text{Table \ref{tab:1full}} \\
  \hline
  & \\[-4mm] 
 \multirow{2}{*}{ $2U \oplus D_4(2)$} & \multirow{2}{*}{$\begin{psmallmatrix} 0 & 0 & 0 & 1/2 \\ 0 & 2 & 1+i & 0 \\ 0 & 1-i & 2 & 0 \\ 1/2 & 0 & 0 & 0 \end{psmallmatrix}$} & \U_r & 4, 4, 6, 6 & - & \S\ref{sec:2U+D42} \\
  & & \U & 4, 4, 12, 12, 12 & 24 &  \\ [2ex]
  \hline
  & \\[-4mm] 
\multirow{2}{*}{$2U \oplus D_6$} & \multirow{2}{*}{$\begin{psmallmatrix} 0 & 0 & 0 & 0 & 1/2 \\ 0 & 2 & (1+i)/2 & 1/2 & 0 \\ 0 & (1-i)/2 & 1 & (1+i)/2 & 0 \\ 0 & 1/2 & (1-i)/2 & 1 & 0 \\ 1/2 & 0 & 0 & 0 & 0 \end{psmallmatrix}$} & \widetilde{\U} & 4, 6, 8, 12, 16 & - & \text{Table \ref{tab:1kernel}} \\
  & & \U & 4, 8, 12, 12, 16 & - & \text{Table \ref{tab:1full}} \\ [4ex]
  \hline
  & \\[-4mm] 
  2U \oplus E_8 & \begin{psmallmatrix} 0 & 0 & 0 & 0 & 0 & 1/2 \\ 0 & 1 & 1/2 & 1/2 & 0  & 0 \\ 0 & 1/2 & 1 & 1/2 & 1/2 & 0 \\ 0 & 1/2 & 1/2 & 1 & (1+i)/2 & 0 \\ 0 & 0 & 1/2 & (1-i)/2 & 1 & 0 \\ 1/2 & 0 & 0 & 0 & 0 &0 \end{psmallmatrix} & \U & 4, 12, 16, 24, 28, 36 & - & \text{Table \ref{tab:1full}} \\
  \hline
  & \\[-4mm] 
 \multirow{2}{*}{$ 2U \oplus D_8$} & \multirow{2}{*}{$\begin{psmallmatrix} 0 & 0 & 0 & 0 & 0 & 1/2 \\ 0 &1 & 1/2 & 1/2 & 1/2 & 0 \\ 0 & 1/2 & 1 & 1/2 & 1/2 & 0 \\ 0 & 1/2 & 1/2 & 1 & (1+i)/2 & 0 \\ 0 & 1/2 & 1/2 & (1-i)/2 & 1 & 0 \\ 1/2 & 0 & 0 & 0 & 0 & 0 \end{psmallmatrix}$} & \widetilde{\U} & 4, 4, 8, 12, 12, 16 & - & \text{Table \ref{tab:1kernel}} \\
  & & \U & 4, 8, 8, 12, 12, 16 & - & \text{Table \ref{tab:1full}} \\ [6ex]
  \hline
\end{array} 
\]
\end{table}

\bibliographystyle{plainnat}
\bibliofont
\bibliography{refs}

\end{document}